\theoremstyle{plain}
\newtheorem{theorem}{Theorem}[section]
\newtheorem{lemma}[theorem]{Lemma}
\newtheorem{corollary}[theorem]{Corollary}
\newtheorem{proposition}[theorem]{Proposition}
\newtheorem{claim}{Claim}
\newtheorem{thmAlfa}{Theorem}
\theoremstyle{definition}
\newtheorem*{definition*}{Definition}
\newtheorem{definition}[theorem]{Definition}
\theoremstyle{remark}
\newtheorem{remark}[theorem]{Remark}
\newcommand{\ee}{\varepsilon}
\newcommand{\nn}{\mathbb{N}}
\newcommand{\co}{\text{cof}}
\newcommand{\eps}{\varepsilon}
\newcommand{\Kdb}{\mathbb K}
\newcommand{\Ndb}{\mathbb N}
\newcommand{\Rdb}{\mathbb R}
\newcommand{\cal}{\mathcal}
\newcommand{\supp}{\text{supp}\,}
\newcommand{\diam}{\text{diam}\,}
\begin{document}

%\allowdisplaybreaks

\title[]{Asymptotic smoothness and universality in Banach spaces}

\author{R.M.~Causey}
\address{R.M~Causey}
\email{rmcausey1701@gmail.com}

\author{G.~Lancien}
\address{G.~Lancien, Laboratoire de Math\'ematiques de Besan\c con, Universit\'e Bourgogne Franche-Comt\'e, 16 route de Gray, 25030 Besan\c con C\'edex, Besan\c con, France}
\email{gilles.lancien@univ-fcomte.fr}

\subjclass[2020]{Primary: 46B20. Secondary: 46B03, 46B06}
\keywords{Asymptotic smoothness in Banach spaces, universality, complexity}

\begin{abstract}  For $1<p\leqslant \infty$, we study the complexity and the existence of universal spaces for two classes of separable Banach spaces, denoted $\textsf{A}_p$ and $\textsf{N}_p$, and related to asymptotic smoothness in Banach spaces. We show that each of these classes is Borel in the class of separable Banach spaces. Then we build small families of Banach spaces that are both injectively and surjectively universal for these classes. Finally, we prove the optimality of this universality result, by proving in particular that none of these classes admits a universal space. 
\end{abstract}

\maketitle

\setcounter{tocdepth}{1}
%\tableofcontents

\section{Introduction}

The notion of asymptotic uniform smoothness has become very important in the
recent developments of the linear and non-linear geometry of Banach spaces. In a recent work \cite{CauseyLancienT_p} we have proved that the class $\textsf{T}_p \cap \textsf{Sep}$ of all separable Banach spaces admitting an equivalent $p$-asymptotically uniformly smooth norm is analytic non Borel in the class $\textsf{Sep}$ of all separable Banach spaces and that there exists a space $U_p \in \textsf{T}_p \cap \textsf{Sep}$ such that any space in $\textsf{T}_p \cap \textsf{Sep}$ is both isomorphic to a subspace and to a quotient of $U_p$. For a Banach space $X$, the property $\textsf{T}_p$ can be characterized in terms of some infinite game and in terms of the existence of upper-$\ell_p$-estimates for weakly null trees of infinite height in $B_X$, the unit ball of $X$ (see \cite{CauseyPositivity2018}).  

In Section \ref{properties}, we introduce the properties $\textsf{A}_p$ and $\textsf{N}_p$, which are both slightly weaker than  $\textsf{T}_p$ (and  $\textsf{N}_p$ is weaker than $\textsf{A}_p$). We give their definitions in terms of finite two players games and recall their main characterizations. First we give their characterizations in terms of  upper-$\ell_p$-estimates for weakly null trees of finite height in the unit ball. We also recall their dual characterizations. For that purpose we need to introduce the so-called Szlenk derivation in a dual Banach space and the associated notions of $q$-summable Szlenk index and convex Szlenk index for a Banach space. The aim of this paper is to address, for $\textsf{A}_p$ and $\textsf{N}_p$, the questions  solved for $\textsf{T}_p$ in \cite{CauseyLancienT_p}. 

In Section \ref{descriptive}, we  introduce the necessary framework, built by B. Bossard in \cite{Bossard2002}, for conducting a meaningful study of the topological complexity of a class of separable Banach spaces. Then, we use the dual characterizations of $\textsf{A}_p$ and $\textsf{N}_p$ to show the following. 

\begin{thmAlfa} Let $p\in (1,\infty]$. The classes $\emph{\textsf{Sep}} \cap \emph{\textsf{A}}_p$ and $\emph{\textsf{Sep}} \cap \emph{\textsf{N}}_p$ are Borel.
\end{thmAlfa}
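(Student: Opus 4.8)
The plan is to leverage the dual characterizations of $\textsf{A}_p$ and $\textsf{N}_p$ — phrased in terms of the Szlenk derivation on the dual ball — together with the Bossard coding of separable Banach spaces, so that the whole verification reduces to checking that a handful of natural quantities (weak*-slicings, norms of evaluation functionals, iterated Szlenk derivatives) depend in a Borel way on the code of the space. Concretely, I would fix a universal space $\mathcal{U}$ (say $C(\Delta)$ or $C[0,1]$) and work in the standard Borel structure on the set $\textsf{SB}(\mathcal U)$ of closed subspaces, identifying a separable Banach space $X$ with its isometric copy $X\subseteq\mathcal U$; one then has a sequence of Borel selectors $(d_n)_{n}$ with $\{d_n(X):n\in\nn\}$ dense in $B_X$, and dually a weak*-dense sequence in $B_{X^*}$ obtained from a countable norming set. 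The key technical point, already used by Bossard and in subsequent work, is that the Szlenk derivation $s_\eps$ acting on weak*-compact subsets of $B_{X^*}$ — and its transfinite iterates $s_\eps^\xi$ — can be encoded by well-founded trees on $\nn$ whose membership is Borel, so that statements of the form ``$s_\eps^\xi(B_{X^*})=\emptyset$'' or ``the Szlenk index is $\leqslant\omega$'' become Borel conditions on $X$.

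The main steps, in order: first, recall precisely (from Section~\ref{properties}) the dual characterizations — for $\textsf{A}_p$, that the convex Szlenk index condition (equivalently, a uniform quantitative bound at the first derivation, valid at every scale $\eps>0$) holds; for $\textsf{N}_p$, the corresponding $q$-summable Szlenk index condition, where $q$ is the conjugate exponent. Second, for each rational $\eps>0$ and each finite subset of the dense sequences in $B_X$ and $B_{X^*}$, write down the elementary relation expressing ``this finite configuration witnesses one step of the Szlenk derivation at scale $\eps$''; this is a closed (hence Borel) condition in the coordinates. Third, assemble these into the tree $T(X,\eps)$ on finite sequences, show $X\mapsto T(X,\eps)$ is a Borel map into the standard Borel space of trees, and invoke the fact that $\{T:\ T\text{ is well-founded with rank}\leqslant\alpha\}$ is Borel for each fixed countable $\alpha$ — in our case the relevant ranks are $\leqslant\omega$ (or uniformly bounded by $\omega$ across scales for $\textsf{A}_p$, with the extra $\ell_q$-summability condition on the scales for $\textsf{N}_p$), which keeps us inside the Borel hierarchy rather than merely analytic. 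Fourth, intersect over the countably many rational scales and add the (Borel, being a countable conjunction of arithmetic conditions on real sequences) $\ell_q$-summability constraint to obtain $\textsf{Sep}\cap\textsf{N}_p$; the case $\textsf{A}_p$ is the analogous but simpler intersection without the summability bookkeeping. The case $p=\infty$ is handled the same way, the relevant estimate degenerating to a boundedness condition.

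I expect the main obstacle to be Step three: verifying carefully that the passage from ``abstract Szlenk derivative of the weak*-compact set $B_{X^*}$'' to ``concrete well-founded tree built from the countable dense/norming sequences'' is faithful and Borel — i.e., that one does not lose or gain derivation steps by replacing the genuine weak*-closure operations by their countable approximations, and that the resulting tree assignment is measurable in the code. This is where the quantitative, finite-game formulations of $\textsf{A}_p$ and $\textsf{N}_p$ from Section~\ref{properties} earn their keep: because those properties are witnessed by \emph{finite} plays, the associated trees have height $\leqslant\omega$ and the Borelness of ``well-founded of rank $\leqslant\omega$'' is elementary, sidestepping the $\mathbf{\Pi}^1_1$-completeness phenomena that make $\textsf{T}_p$ non-Borel. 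A secondary, more routine obstacle is the standard but slightly tedious task of showing the various selector sequences $(d_n(X))$ and the dual norming sequences can be chosen in a Borel-in-$X$ fashion; this is by now a well-oiled part of the Bossard framework recalled in Section~\ref{descriptive}, so I would cite it rather than reprove it.
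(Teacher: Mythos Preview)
Your overall strategy---use Bossard's coding and the dual Szlenk-type characterizations, then verify that the relevant derivations are Borel in the code---is the paper's. But two things need correction.

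First, you have the dual characterizations swapped: by Theorems~\ref{atheorem} and~\ref{ntheorem}, it is $\textsf{A}_p$ that is characterized by $q$-summable Szlenk index (the $\ell_q$-summability constraint you describe), while $\textsf{N}_p$ is characterized by the \emph{convex} Szlenk index being of power type $q$, i.e.\ $Cz(X,\eps)\leqslant K\eps^{-q}$. So $\textsf{N}_p$ is the harder case, not the simpler one.

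Second, and this is the genuine gap: your plan never explains how to make the convex Szlenk derivation $c_\eps$ Borel. Each step of $c_\eps$ takes a weak$^*$-closed convex hull, and it is not clear how to encode that faithfully via your well-founded trees on countable dense sequences---the convex hull of a countable approximation need not approximate the convex hull. The paper sidesteps this entirely: it replaces $c_\eps$ by the derivation $k_\eps$ defined via weak$^*$-open half-spaces and the Kuratowski index of noncompactness (Proposition~\ref{Cz=Kz} shows $c_\eps$ and $k_\eps$ are comparable on convex sets), and then proves directly that $k_\eps:\mathcal F(K)\to\mathcal F(K)$ is Borel (Proposition~\ref{Borelderivations}) by expressing $\{F: k_\eps(F)\subset O\}$ through countably many slices and the open condition ``$F$ contains $n$ points pairwise $\delta$-separated''. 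This substitution of $k_\eps$ for $c_\eps$ is the key technical idea for $\textsf{N}_p$, and your proposal has no analog.

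On the $\textsf{A}_p$ side your outline would work once the swap is fixed, but the paper is more direct than the tree-rank route: rather than encoding by well-founded trees, it works in $\mathcal F(B_{\ell_\infty})$ via the quotient coding through $\ell_1$ (Propositions~\ref{equivalentcoding} and~\ref{F(K)coding}), proves once that $s_\eps$ is a Borel self-map of $\mathcal F(K)$, and then writes ``$q$-summable Szlenk index'' as the countable formula
\[
\bigcup_{c\geqslant 1}\ \bigcap_{\sum_{i=1}^n \eps_i^q>c}\ \bigl\{F: s_{\eps_1}\cdots s_{\eps_n}(F)=\varnothing\bigr\}
\]
over rational $\eps_i$. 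Your tree construction would in effect reprove the Borelness of $s_\eps$, so it is a detour rather than a genuinely different argument.
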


Next, we start our construction of universal families for ${\textsf{Sep}} \cap {\textsf{A}}_p$ and ${\textsf{Sep}} \cap {\textsf{N}}_p$. This will take a few steps. In Subsection \ref{modelspaces}, we build a first family of model spaces for $\textsf{A}_p$ and $\textsf{N}_p$. If $q$ is the conjugate exponent of $p$ and $\theta \in (0,1)$, we denote $T_{q,\theta}^*$ the dual of the $q$-convexification of $T_\theta$, the Tsirelson space of parameter $\theta$, and we show that $T_{q,\theta}^* \in \textsf{A}_p$. We modify slightly the construction to get a typical $\textsf{N}_p$ space $U_{q,\theta}^*$. In fact to complete this family, we need to introduce another parameter $M$, infinite sequence in $\Ndb$, and associated spaces $T_{q,\theta,M}^*$ and $U_{q,\theta,M}^*$. In Subsection \ref{pressdown} we introduce the key notion of pressdown norm associated with a Banach space $Z$ with finite dimensional decomposition $\textsf{E}$ and a Banach space $T$ with a $1$-unconditional basis. The associated Banach space is denoted $Z_\wedge^T(\textsf{E})$ and we show that if $T$ has $\textsf{A}_p$ (resp. $\textsf{N}_p$), then $Z_\wedge^T(\textsf{E})$ has $\textsf{A}_p$ (resp. $\textsf{N}_p$). Then, in Subsection \ref{gliding}, we gather technical results about the interaction of gliding hump arguments and quotient maps. 

In Section \ref{FDD's}, we take a crucial step, by showing that for any space $X$ in ${\textsf{Sep}} \cap {\textsf{A}}_p$ (resp. in ${\textsf{Sep}} \cap {\textsf{N}}_p$), there exist $\theta\in (0,1)$ and  Banach spaces $Z,Y$ with FDD's $\textsf{{F}}$, $\textsf{{H}}$, respectively, such that $X$ is isomorphic to a subspace of $Z^{T^*_{q,\theta}}_\wedge(\textsf{{F}})$, and to a quotient of $Y^{T^*_{q,\theta}}_\wedge(\textsf{{H}})$ (resp. to a subspace of $Z^{U^*_{q,\theta}}_\wedge(\textsf{{F}})$, and to a quotient of $Y^{U^*_{q,\theta}}_\wedge(\textsf{{H}})$).

In Section \ref{universal} we take the final step of our construction, which is to use the complementably universal space for Banach spaces with an FDD built by Schechtman in \cite{Schechtman1975}. We denote $W$ this universal space and $\textsf{J}$ its finite dimensional decomposition. Then we show:

\begin{thmAlfa} Fix $1<p\leqslant \infty$ and let $q$ be its conjugate exponent. Let $X$ be a separable Banach space. Then
\begin{enumerate}[(i)]
\item $X$ has $\textsf{\emph{A}}_p$ if and only if there exist $\theta\in (0,1)$ and an infinite sequence $M$ in $\Ndb$ such that $X$ is isomorphic to a subspace of $W^{T^*_{q,\theta,M}}_\wedge(\textsf{\emph{J}})$ if and only if there exist $\theta\in (0,1)$ and an infinite sequence $M$ in $\Ndb$ such that $X$ is isomorphic to a quotient of $W^{T^*_{q,\theta,M}}_\wedge(\textsf{\emph{J}})$.
\item $X$ has $\textsf{\emph{N}}_p$ if and only if there exist $\theta\in (0,1)$ and and an infinite sequence $M$ in $\Ndb$ such that $X$ is isomorphic to a subspace of $W^{U^*_{q,\theta,M}}_\wedge(\textsf{\emph{J}})$ if and only if there exist $\theta\in (0,1)$ and an infinite sequence $M$ in $\Ndb$ such that $X$ is isomorphic to a quotient of $W^{U^*_{q,\theta,M}}_\wedge(\textsf{\emph{J}})$.
\end{enumerate}
\end{thmAlfa}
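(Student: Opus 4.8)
The plan is to prove, in each of (i) and (ii), the three-term chain of equivalences by establishing the two ``easy'' implications and the two ``hard'' ones; since the pressdown theorem of Subsection~\ref{pressdown} and the main result of Section~\ref{FDD's} are both stated uniformly for $\textsf{A}_p$ (with the model spaces $T^*_{q,\theta}$ and $T^*_{q,\theta,M}$) and for $\textsf{N}_p$ (with $U^*_{q,\theta}$ and $U^*_{q,\theta,M}$), it suffices to carry out the argument for $\textsf{A}_p$, the case of $\textsf{N}_p$ being obtained by replacing $T$ by $U$ throughout. Write (a), (b), (c) for the three conditions in (i).

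\emph{The easy implications (b)$\Rightarrow$(a) and (c)$\Rightarrow$(a).} By Subsection~\ref{modelspaces}, $T^*_{q,\theta,M}$ has $\textsf{A}_p$ for every $\theta\in(0,1)$ and every infinite $M\subseteq\Ndb$, so the pressdown theorem gives that $W^{T^*_{q,\theta,M}}_\wedge(\textsf{J})$ has $\textsf{A}_p$. Since the class of spaces with $\textsf{A}_p$ is closed under passing to subspaces and — via their dual characterizations recalled in Section~\ref{properties} — also under passing to quotients, any space isomorphic to a subspace or to a quotient of $W^{T^*_{q,\theta,M}}_\wedge(\textsf{J})$ has $\textsf{A}_p$.

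\emph{The implication (a)$\Rightarrow$(b).} Let $X\in\textsf{Sep}\cap\textsf{A}_p$. By the main result of Section~\ref{FDD's} there are $\theta\in(0,1)$ and a Banach space $Z$ with an FDD $\textsf{F}=(F_k)$ such that $X$ embeds isomorphically into $Z^{T^*_{q,\theta}}_\wedge(\textsf{F})$, so it is enough to embed $Z^{T^*_{q,\theta}}_\wedge(\textsf{F})$ into $W^{T^*_{q,\theta,M}}_\wedge(\textsf{J})$ for a suitable $M$. By Schechtman's theorem $Z$ is isomorphic to a complemented subspace of $W$, and after a standard perturbation (cf.\ Subsection~\ref{gliding}), possibly passing to a blocking of $\textsf{F}$, we may assume that this copy of $Z$ is the closed subspace of $W$ spanned by the blocks $[\textsf{J},I_k]$ over a sequence of successive intervals $I_1<I_2<\cdots$ of $\Ndb$, with $F_k$ carried onto $[\textsf{J},I_k]$. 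The crux is the comparison of pressdown norms: if $M$ is chosen appropriately in terms of $(I_k)$ — so that the basis of $T^*_{q,\theta}$ is carried isometrically onto the subsequence of the basis of $T^*_{q,\theta,M}$ indexed by $(\min I_k)_k$, which is the very purpose for which the parametrized family $T^*_{q,\theta,M}$ was introduced in Subsection~\ref{modelspaces} — then the norm induced by $W^{T^*_{q,\theta,M}}_\wedge(\textsf{J})$ on the closed span of $F_1,F_2,\dots$ is equivalent to the norm of $Z^{T^*_{q,\theta}}_\wedge(\textsf{F})$. This in turn reduces to a comparison of the interval partitions entering the two pressdown norms, using that the basis vectors of $T^*_{q,\theta,M}$ indexed outside $M$ contribute as little as possible. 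Composing the two embeddings gives (b).

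\emph{The implication (a)$\Rightarrow$(c), and conclusion.} By Section~\ref{FDD's} there are $\theta\in(0,1)$ and a Banach space $Y$ with an FDD $\textsf{H}$ such that $X$ is a quotient of $Y^{T^*_{q,\theta}}_\wedge(\textsf{H})$; it thus suffices to realize $Y^{T^*_{q,\theta}}_\wedge(\textsf{H})$ as a quotient of some $W^{T^*_{q,\theta,M}}_\wedge(\textsf{J})$. Here the full strength of Schechtman's theorem is used: $Y$ is isomorphic to a \emph{complemented} subspace of $W$, which, after perturbation and possibly blocking $\textsf{H}$, may be taken to be the span of blocks $[\textsf{J},I_k]$ over successive intervals $I_1<I_2<\cdots$, complemented by the coordinate projection $\sum_k P^{\textsf{J}}_{I_k}$. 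For the appropriate $M$, this coordinate projection has norm one also as an operator on $W^{T^*_{q,\theta,M}}_\wedge(\textsf{J})$, by $1$-unconditionality of the basis of $T^*_{q,\theta,M}$ and the definition of the pressdown norm, and its range is, by the norm comparison of the previous step, isomorphic to $Y^{T^*_{q,\theta}}_\wedge(\textsf{H})$. Hence $Y^{T^*_{q,\theta}}_\wedge(\textsf{H})$ is complemented in, in particular a quotient of, $W^{T^*_{q,\theta,M}}_\wedge(\textsf{J})$, and composing with the quotient map onto $X$ yields (c). The same argument, with $U^*_{q,\theta}$ and $U^*_{q,\theta,M}$ replacing $T^*_{q,\theta}$ and $T^*_{q,\theta,M}$, proves (ii). The main obstacle throughout is the pressdown norm comparison, together with the (standard but delicate) reduction — via the gliding-hump techniques of Subsection~\ref{gliding} — of an arbitrary complemented embedding of $Z$ (resp.\ $Y$) into $W$ to one whose image is a coordinate block subspace compatible with the given FDD; it is exactly the bookkeeping of the shift from the index set of $\textsf{F}$ (resp.\ $\textsf{H}$) to that of $\textsf{J}$ that forces the auxiliary parameter $M$ into the statement.
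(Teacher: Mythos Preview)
Your overall strategy matches the paper's exactly: combine Theorem~\ref{tedious} with Schechtman's space $W$ and a pressdown-norm comparison to pass from $Z^{T^*_{q,\theta}}_\wedge(\textsf{F})$ (resp.\ $Y^{T^*_{q,\theta}}_\wedge(\textsf{H})$) to $W^{T^*_{M,q,\theta}}_\wedge(\textsf{J})$; the easy directions and the $\textsf{N}_p$ case are handled identically.

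There is, however, a misstep in how you invoke Schechtman's theorem. You write that after a ``standard perturbation (cf.\ Subsection~\ref{gliding}), possibly passing to a blocking of $\textsf{F}$'', one may assume the image of $Z$ in $W$ is spanned by interval blocks $[\textsf{J},I_k]$. But Subsection~\ref{gliding} contains nothing of the sort --- it is about lifting functionals through quotient maps, and is used only inside the proof of Theorem~\ref{tedious}. More importantly, no such reduction is needed: the version of Schechtman's theorem quoted at the start of Section~\ref{universal} already gives an operator $A:Z\to W$ with $A(H_n)=J_{m_n}$ for a fixed increasing sequence $(m_n)$, together with the norm-$1$ projection $P=\sum_n P^{\textsf{J}}_{m_n}$. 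So the FDD alignment comes for free, and $M=(m_n)_n$ is handed to you directly.

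The paper then proves (this is Proposition~\ref{Schechtman}) that for this $M$, $A$ extends to an isomorphism of $Z^{T^*_{q,\theta}}_\wedge(\textsf{H})$ onto a $1$-complemented subspace of $W^{T^*_{M,q,\theta}}_\wedge(\textsf{J})$, via the explicit two-sided estimate $\tfrac12\|z\|_\wedge\leqslant\|Az\|_\wedge\leqslant 2\|z\|_\wedge$ obtained by translating interval partitions of $\nn$ (indexing $\textsf{H}$) into interval partitions of $\nn$ (indexing $\textsf{J}$) through the isometry $(e_n)_{n=1}^\infty\subset T^*_{q,\theta}\longleftrightarrow (e_{m_n})_{n=1}^\infty\subset T^*_{M,q,\theta}$. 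Your sketch of this comparison is correct in spirit, but the phrase ``the basis vectors of $T^*_{q,\theta,M}$ indexed outside $M$ contribute as little as possible'' is too vague; the actual argument requires care in matching partitions (e.g.\ the sets $K_n=J_n''\cup J_{n+1}'$ in the paper's proof). In short: drop the appeal to Subsection~\ref{gliding}, use Schechtman's theorem in its precise form, and the rest of your outline is the paper's proof.
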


In the concluding Section \ref{optimal}, we show that this result is optimal. More precisely, we introduce yet another two players game to show the following. 

\begin{thmAlfa}
Fix $1<p\leqslant \infty$. If $U$ is any Banach space with $\textsf{\emph{N}}_p$, then there exists a Banach space $X$ with $\textsf{\emph{A}}_p$ such that $X$ is not isomorphic to any subspace of any quotient of $U$. More precisely, if $q$ is the conjugate exponent of $p$, then there exists $\theta \in (0,1)$ such that $T_{q,\theta}^*$ is not isomorphic to any subspace of any quotient of $U$.
\end{thmAlfa}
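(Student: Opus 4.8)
The plan is to attach to each separable Banach space $Z$ a ``Tsirelson parameter'' $\tau(Z)\in[0,1]$, read off from a new finite two-player game, with the following properties: $\tau$ is an isomorphic invariant; $\tau$ does not decrease when one passes to a subspace and when one passes to a quotient; $\tau(T^*_{q,\theta})=\theta$ for every $\theta\in(0,1)$; and $\tau(W^{U^*_{q,\sigma,M}}_\wedge(\textsf{J}))\geqslant\sigma$ for every $\sigma\in(0,1)$ and every infinite sequence $M$ in $\Ndb$. Granting this, the theorem is immediate. Let $U$ have $\textsf{N}_p$. By Theorem~B(ii) there are $\sigma\in(0,1)$ and an infinite sequence $M$ in $\Ndb$ such that $U$ is isomorphic to a quotient of $W':=W^{U^*_{q,\sigma,M}}_\wedge(\textsf{J})$; put $\theta:=\sigma/2\in(0,1)$. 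By Subsection~\ref{modelspaces}, $T^*_{q,\theta}$ has $\textsf{A}_p$. If $T^*_{q,\theta}$ were isomorphic to a subspace of a quotient of $U$, then, since a quotient of a quotient of $W'$ is a quotient of $W'$, it would be isomorphic to a subspace of a quotient of $W'$; hence, by isomorphic invariance and monotonicity, $\theta=\tau(T^*_{q,\theta})\geqslant\tau(W')\geqslant\sigma=2\theta$, which is absurd. So some such $\theta$ has the required property, and $T^*_{q,\theta}\in\textsf{A}_p$ is the asserted $X$.

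\emph{The game.} Refining the finite games of Section~\ref{properties}, for $Z$ and $n\in\Ndb$ I would play the $n$-round game between a first player producing weak neighbourhoods of the origin (equivalently, finite-codimensional subspaces) and a second player answering with norm-one vectors of $Z$ in them; the value $\tau_n(Z)$ of the play records, roughly, the largest parameter $\theta$ for which the resulting $n$-vector configuration, once arranged along an admissible family of depth $n$, is still forced to obey the estimate characteristic of the Tsirelson norm with parameter $\theta$. Since deeper configurations are more constraining, $n\mapsto\tau_n(Z)$ is non-increasing; put $\tau(Z):=\inf_n\tau_n(Z)$. The characterizations recalled in Section~\ref{properties} should tie $\textsf{N}_p$ to the positivity of $\tau$ (with $\textsf{A}_p$ corresponding to a uniform strengthening), and, as is classical for Tsirelson-type constructions, a self-similarity (blocking) argument inside the model spaces absorbs isomorphism constants, making $\tau$ a genuine isomorphic invariant rather than a near-isometric one. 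The equality $\tau(T^*_{q,\theta})=\theta$ is then read off from the self-similar admissibility structure of $T_\theta$, carried through the $q$-convexification and duality: the inequality $\tau(T^*_{q,\theta})\geqslant\theta$ comes from the defining $\theta$-admissibility inequalities, while $\tau(T^*_{q,\theta})\leqslant\theta$ is witnessed, for each $n$, by normalized weakly null trees of height $n$ in $B_{T^*_{q,\theta}}$ built along the admissibility hierarchy, every branch of which defeats the $\theta'$-estimate whenever $\theta'>\theta$.

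\emph{Monotonicity and the lower bound on $W'$.} That $\tau$ does not increase under passing to a subspace is immediate from the game. The delicate point is the quotient case, for which I would combine the pressdown construction of Subsection~\ref{pressdown} with the interaction of gliding-hump selections and quotient maps of Subsection~\ref{gliding}. On the one hand $W'=W^{U^*_{q,\sigma,M}}_\wedge(\textsf{J})$ inherits from $U^*_{q,\sigma,M}$ the $\sigma$-Tsirelson estimates at every admissible depth along block trees adapted to the FDD $\textsf{J}$, which gives $\tau(W')\geqslant\sigma$. On the other hand, since these are upper estimates and quotient maps only contract, it suffices to show that a normalized weakly null tree in a subspace of a quotient $W'/Y$ can be lifted, along $\textsf{J}$ and up to perturbations made as small as desired, to a block tree in $W'$ that is admissibly spread enough to carry the $\sigma$-estimates; pushing the resulting estimate back down through the quotient map and absorbing the perturbations by choosing the skips large yields $\tau\geqslant\sigma$ on every subspace of every quotient of $W'$. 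Combined with the first paragraph, this finishes the proof.

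\emph{Main obstacle.} The core difficulty is the quotient half of the monotonicity step: one must perform the simultaneous gliding-hump selection so that the lift of a weakly null tree from $W'/Y$ into $W'$ is genuinely a block tree along $\textsf{J}$, is admissibly spread enough to inherit the $\sigma$-scaled upper estimates, and approximates the image of a block tree of the right shape closely enough that, for each fixed height $n$, the loss incurred is absorbed into the comparison between the two constants $\theta$ and $\sigma=2\theta$. The remaining technical point is the upper bound $\tau(T^*_{q,\theta})\leqslant\theta$, which requires choosing, at each of the $n$ levels of a tree, blocks of the Tsirelson basis saturating the admissibility constraint while staying normalized and weakly null.
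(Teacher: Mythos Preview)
Your instinct—attach a game-based invariant that detects the Tsirelson parameter and is monotone under subspaces and quotients—is exactly the paper's strategy, but the execution has genuine gaps. The detour through Theorem~B is both unnecessary and problematic: the statement concerns an arbitrary (possibly non-separable) $U$, while Theorem~B applies only to separable spaces, and the parameter $\sigma$ in $U^*_{q,\sigma,M}$ (an $\textsf{N}_p$ model) is not the same object as $\theta$ in $T^*_{q,\theta}$ (an $\textsf{A}_p$ model); you never justify $\tau(W^{U^*_{q,\sigma,M}}_\wedge(\textsf{J}))\geqslant\sigma$ beyond asserting it. More fundamentally, your $\tau$ is not defined precisely enough to carry the argument. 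The phrase ``largest $\theta$ for which the configuration obeys the depth-$n$ Tsirelson estimate'' hides the real work of specifying the admissible families and weights, and your hope that ``self-similarity absorbs isomorphism constants'' into a single number in $[0,1]$ cannot be realised at any fixed depth: any finite-depth game value scales by the Banach--Mazur distance.

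The paper resolves this by making the game explicit via Schreier families $\mathcal{S}_l$ and weights $\mathbb{S}^l_F$, defining $\phi_l(X,p)$ as the value of the $\Phi(l,p,C)$ game. This $\phi_l$ is \emph{not} isomorphism-invariant, but it is submultiplicative, $\phi_{k+l}\leqslant\phi_k\phi_l$, so that $U\in\textsf{N}_p$ gives $\phi_l(U,p)\leqslant a^l$ with $a=\phi_1(U,p)=\textsf{n}_p(U)<\infty$; an explicit computation in $T_\theta$ gives the lower bound $\phi_{2l-1}(T^*_{q,\theta},p)\geqslant\theta^{-l/q}$. Choosing $\theta$ with $\theta^{-1/q}>a^2$ and letting $l\to\infty$ kills the Banach--Mazur and quotient constants, yielding the contradiction—so the genuine isomorphic invariant is the exponential growth rate of $\phi_l$, not any single $\tau_n$. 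Quotient monotonicity is also far simpler than your gliding-hump proposal: one lifts the game move by move using the elementary fact that every weak neighbourhood $V$ of $0$ in $X$ admits a weak neighbourhood $U$ of $0$ in $X/Y$ with $U\cap\tfrac13 B_{X/Y}\subset Q(B_X\cap V)$, giving $\phi_l(X/Y,p)\leqslant 3\,\phi_l(X,p)$ with no FDD structure required.
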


\section{The properties and their characterizations}\label{properties}

All Banach spaces are over the field $\mathbb{K}$, which is either $\mathbb{R}$ or $\mathbb{C}$.   We denote $B_X$ (resp. $S_X$) the closed unit ball (resp. sphere) of a Banach space $X$.    By \emph{subspace}, we shall always mean closed subspace.   Unless otherwise specified, all spaces are assumed to be infinite dimensional. For a Banach space $X$, we denote $W_X$ the set of weak open neighborhoods of $0$ in $X$, by $\co(X)$ the set of closed finite-codimensional subspaces of $X$ and by $K_X$ the set of norm compact subsets of $B_X$. For $X,Y$ Banach spaces, a bounded linear map $Q:X \to Y$ is called a \emph{quotient map} if it is onto and induces an isometry from $X/\ker(Q)$ onto $Y$. Let $X$ and $Y$ be two isomorphic Banach spaces, the \emph{Banach-Mazur distance} from $X$ to $Y$ is $d_{BM}(X,Y)=\inf\{\|T\|\,\|T^{-1}\|,\ T\ \text{isomorphism from}\ X\ \text{onto}\ Y\}$. 

Let $D$ be a set. We denote $D^{\leqslant n}=\cup_{i=1}^n D^i$, $D^{<\omega}=\cup_{i=1}^\infty D^i$,  $D^\omega$ the set of all infinite sequences whose members lie in $D$ and $D^{\leqslant \omega}=D^{<\omega}\cup D^\omega$.  For $s,t\in D^{<\omega}$, we let $s\smallfrown t$ denote the concatenation of $s$ with $t$. We let $|t|$ denote the length of $t$.  For $0\leqslant i\leqslant|t|$, we let $t|_i$ denote the initial segment of $t$ having length $i$, where $t|_0=\varnothing$ is the empty sequence.  If $s\in \{\varnothing\}\cup D^{<\omega}$, we let $s\prec t$ denote the relation that $s$ is a proper initial segment of $t$.

We start with the definition of the Szlenk index.  For a Banach space $X$, $K\subset X^*$ weak$^*$-compact, and $\ee>0$, we let $s_\ee(K)$ denote the set of $x^*\in K$ such that for each weak$^*$-neighborhood $V$ of $x^*$, $\text{diam}(V\cap K)\ge \ee$.    We define the transfinite derivations 
\[s_\ee^0(K)=K,\ \ s^{\xi+1}_\ee(K)=s_\ee(s^\xi_\ee(K)),\] and if $\xi$ is a limit ordinal, \[s^\xi_\ee(K)=\bigcap_{\zeta<\xi}s_\ee^\zeta(K).\]  
For convenience, we let $s_0(K)=K$.     If there exists an ordinal $\xi$ such that $s^\xi_\ee(K)=\varnothing$, we let $Sz(K,\ee)$ denote the minimum such ordinal, and otherwise we write $Sz(K,\ee)=\infty$.   We let $Sz(K)=\sup_{\ee>0} Sz(K,\ee)$, where $Sz(K)=\infty$ if $Sz(K,\ee)=\infty$ for some $\ee>0$.   We let $Sz(X,\ee)=Sz(B_{X^*},\ee)$ and $Sz(X)=Sz(B_{X^*})$. In this work, we will exclusively be concerned with Banach spaces $X$ such that $Sz(X)\leqslant \omega$, where $\omega$ is the first infinite ordinal.  By compactness, $Sz(X)\leqslant \omega$ if and only if $Sz(X,\ee)$ is a natural number for each $\ee>0$. We recall that $Sz(X)<\infty$ if and only if $X$ is \emph{Asplund} (see \cite{Lancien2006} and references therein). One characterization of Asplund spaces is that every separable subspace has a separable dual.

For $1\leqslant q<\infty$, we say $X$ has $q$-\emph{summable Szlenk index} provided there exists a constant $c>0$ such that for any $n\in\nn$ and any $\ee_1, \ldots, \ee_n \geqslant 0$ such that $s_{\ee_1}\ldots s_{\ee_n}(B_{X^*})\neq \varnothing$, $\sum_{i=1}^n \ee_i^q \leqslant c^q$.   In the $q=1$ case, we refer to this as \emph{summable Szlenk index} rather than $1$-summable Szlenk index. 

We shall also need a somewhat slower derivation and the corresponding index, called the \emph{convex Szlenk index} $Cz(X)$, introduced in \cite{GKL2001}, which is defined identically from the following derivation: for $K\subset X^*$ weak$^*$-compact, and $\ee>0$, $c_\eps(K)$ is the weak$^*$-closed convex hull of $s_\eps(K)$.

\medskip
We now define the asymptotic smoothness properties that we shall study by the means of two different two-players games on a Banach space $X$ (and their variants). Fix $1<p\leqslant \infty$ and let $q$ be its conjugate exponent. For $n \in \Ndb$, we denote $\ell_p^n$ the space $\Kdb^n$ equipped with the $p$-norm: $\|a\|_p=(\sum_{i=1}^n|a_i|^p)^{1/p}$, $a\in \Kdb^n$. For $c>0$ and $n\in\nn$, we define the $A(c,p,n)$ game and the $N(c,p,n)$ game.  In the $A(c,p,n)$ game, Players I and II take turns choosing $U_i\in W_X$ and $x_i\in U_i\cap B_X$, respectively, until $(x_i)_{i=1}^n$ has been chosen.  Player I wins if \[\max\Bigl\{\Bigl\|\sum_{i=1}^n a_ix_i\Bigr\|:(a_i)_{i=1}^n\in B_{\ell_p^n}\Bigr\}\leqslant c,\] and Player II wins otherwise. The \emph{spatial} $A(c,p,n)$ game on $X$ is similar, except Player I chooses $Y_i\in \co(X)$ and Player II chooses $x_i\in B_{Y_i}$. The conditions for Player I or Player II winning are the same as in the $A(c,p,n)$ game. The \emph{compact spatial} game is also similar, except Player I chooses $Y_i\in \co(X)$ and Player II chooses $C_i\in K_{Y_i}$, where for $Y\in \co(X)$, $K_Y$ denotes the set of norm-compact subsets of $B_Y$.  Player I wins if \[\max\Bigl\{\Bigl\|\sum_{i=1}^n a_ix_i\Bigr\|:(a_i)_{i=1}^n \in B_{\ell_p^n}, (x_i)_{i=1}^n \in \prod_{i=1}^n C_i\Bigr\}\leqslant c,\] and Player II wins otherwise. 

The $N(c,p,n)$ game is similar to the $A(c,p,n)$ game. Only the winning condition is modified. Player I wins if $\Bigl\|\sum_{i=1}^n x_i\Bigr\|\leqslant c n^{1/p},$ and Player II wins otherwise. The modifications needed to define the \emph{spatial} $N(c,p,n)$ and the \emph{compact spatial} $N(c,p,n)$ game are identical to those for the $A(c,p,n)$ games. 

Let us now precise what we mean by strategies in these games. We define only the notions of strategies and winning strategies for Player I. For a Banach space $X$ and $n\in\nn$, an $n$-\emph{strategy}  is a function $\chi:B_X^{<n}\to W_X$.    A \emph{spatial} $n$-\emph{strategy} is a function $\chi:B_X^{<n}\to \co(X)$. A \emph{compact spatial} $n$-\emph{strategy} is a function $\chi:K_X^{<n}\to \co(X)$. If $\chi$ is an $n$-strategy, then we say $(x_i)_{i=1}^n\subset B_X$ is $\chi$-\emph{admissible} if $x_j\in \chi((x_i)_{i=1}^{j-1})$ for all $1\leqslant j\leqslant n$.   The notion of $\chi$-admissibility for a spatial $n$-strategy is defined similarly. If $\chi$ is a compact spatial $n$-strategy, we say $(C_i)_{i=1}^n \in K_X^n$ is $\chi$-admissible if $C_j\subset \chi((C_i)_{i=1}^{j-1})$ for all $1\leqslant j\leqslant n$.  For any type of strategy in any of the games defined above, we say the strategy is a \emph{winning strategy} if any sequence admissible with respect to it satisfies the winning condition of the game for Player I. 

It is known (see \cite{CauseyPositivity2018}, Section 3) that each of these games is determined. That is, in each game, either Player I or Player II has a winning strategy. We let $\textsf{a}_{p,n}(X)$ denote the infimum of $c>0$ such that Player I has a winning strategy in the $A(c,p,n)$ game, and we let $\textsf{a}_p(X)=\sup_n \textsf{a}_{p,n}(X)$. We note that $\textsf{a}_p(X)$ is the infimum of $c>0$ such that for each $n\in\nn$, Player I has a winning strategy in the $A(c,p,n)$ game if such a $c$ exists, and $\textsf{a}_p(X)=\infty$ otherwise. We let $\textsf{n}_{p,n}(X)$ denote the infimum of $c>0$ such that Player I has a winning strategy in the $N(c,p,n)$ game, and $\textsf{n}_p(X)=\sup_n \textsf{n}_{p,n}(X)$. 

We shall also use the following infinite game. First, denote $c_{00}$ the space of all finitely supported scalar sequences. Then, for two sequences $(e_n)_{n=1}^\infty$, $(f_n)_{n=1}^\infty$ in (possibly different) Banach spaces and for $c>0$, we write $(e_n)_{n=1}^\infty \lesssim_c (f_n)_{n=1}^\infty$ provided that 
$$\forall (a_n)_{n=1}^\infty\in c_{00}\ \ \ \Bigl\|\sum_{n=1}^\infty a_ne_n\Bigr\| \leqslant c\Bigl\|\sum_{n=1}^\infty a_nf_n\Bigr\|.$$ 
For a Banach space $T$ with basis $(e_i)_{i=1}^\infty$ and $c>0$, we define the \emph{spatial} $(T,c)$ game on $X$. Players I and II take turns choosing $Y_i\in \co(X)$ and $x_i\in B_{Y_i}$, respectively.  Player I wins if $(x_i)_{i=1}^\infty\lesssim_c (e_i)_{i=1}^\infty$, and Player II wins otherwise. The notions of $\omega$-strategies, admissibility and $\omega$-winning strategies are defined identically. 

\begin{definition} Let $p\in (1,\infty]$. 
We define  $\textsf{A}_p$ to be the class of all Banach spaces $X$ such that $\textsf{a}_p(X)<\infty$
and $\textsf{N}_p$ the class of all Banach spaces $X$ such that $\textsf{n}_p(X)<\infty$. 
\end{definition}

The following proposition relies on routine approximation arguments.

\begin{proposition} Fix $1<p\leqslant \infty$ and let $X$ be a Banach space.   
\begin{enumerate}[(i)]
\item $X$ has $\textsf{\emph{A}}_p$ if and only if there exists $c>0$ such that for all $n\in\nn$, Player I has a winning strategy in the spatial $A(c,p,n)$ game if and only if there exists $c>0$ such that for all $n\in\nn$, Player I has a winning strategy in the compact spatial $A(c,p,n)$ game. 
\item $X$ has $\textsf{\emph{N}}_p$ if and only if there exists $c>0$ such that for all $n\in\nn$, Player I has a winning strategy in the spatial $N(c,p,n)$ game if and only if there exists $c>0$ such that for all $n\in\nn$, Player I has a winning strategy in the compact spatial $N(c,p,n)$ game.
\end{enumerate}
\end{proposition}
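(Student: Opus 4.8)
The plan is to run a short cycle of implications among the three conditions ``there exists $c>0$ such that for every $n$ Player I wins the general, resp.\ spatial, resp.\ compact spatial $A(c,p,n)$ game'', recalling that by the characterization recorded just before the proposition $X$ has $\textsf{A}_p$ exactly when such a $c$ exists for the general games, and similarly for $\textsf{N}_p$ with the $N$-games. I will write only the $\textsf{A}_p$ argument; the $\textsf{N}_p$ one is identical, with the coefficient vector $(1,\dots,1)$ and the target $cn^{1/p}$ replacing $B_{\ell_p^n}$ and $c$. Two of the implications are immediate. Since every $U\in W_X$ contains a member of $\co(X)$, namely the intersection of the kernels of the finitely many functionals defining $U$, a winning general $n$-strategy $\chi$ for Player I becomes a winning spatial $n$-strategy of the same constant once Player I replaces each prescribed $U_i$ by some $Y_i\in\co(X)$ with $Y_i\subseteq U_i$: then $B_{Y_i}\subseteq U_i\cap B_X$, so every spatial-admissible play is a legal general play and satisfies the same winning inequality. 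And a winning compact spatial $n$-strategy $\chi\colon K_X^{<n}\to\co(X)$ yields the winning spatial $n$-strategy $\chi'(x_1,\dots,x_{i-1}):=\chi(\{x_1\},\dots,\{x_{i-1}\})$ of the same constant, since playing singletons in the compact game reproduces exactly the spatial game.

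For the converse that the spatial condition implies the general one, I would let Player I simulate the spatial game. Suppose $\chi$ is a winning spatial $n$-strategy of constant $c$; fix $\eta>0$ and put $\eps_1=\dots=\eps_n=\eta/n$. Player I plays the general game while carrying an internal spatial play $(\tilde x_1,\dots,\tilde x_{i-1})$. At stage $i$ Player I computes $Y_i=\chi(\tilde x_1,\dots,\tilde x_{i-1})\in\co(X)$, writes $Y_i=\bigcap_j\ker x_{i,j}^*$ with $(x_{i,j}^*)_j$ spanning the finite-dimensional annihilator $Y_i^\perp$, and plays $U_i=\{x:|x_{i,j}^*(x)|<\delta_i\text{ for all }j\}\in W_X$, where $\delta_i$ is chosen, after $Y_i$ is known, small enough that any $x_i\in U_i\cap B_X$ lies within $\eps_i$ of some $\tilde x_i\in B_{Y_i}$; this is possible since $X=Y_i\oplus F_i$ with $F_i$ finite dimensional and the $F_i$-component of a vector is controlled by $\max_j|x_{i,j}^*(\cdot)|$. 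Player I then appends $\tilde x_i$ to the internal play. That play is $\chi$-admissible, so $\|\sum_{i=1}^n a_i\tilde x_i\|\leqslant c$ for all $(a_i)\in B_{\ell_p^n}$, whence by the triangle and Hölder inequalities $\|\sum_{i=1}^n a_ix_i\|\leqslant c+(\sum_{i=1}^n\eps_i^q)^{1/q}\leqslant c+\sum_{i=1}^n\eps_i=c+\eta$. Since $\eta$ was arbitrary, Player I wins the general $A(c+1,p,n)$ game for every $n$.

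For the converse that the spatial condition implies the compact spatial one, again take a winning spatial $n$-strategy $\chi$ of constant $c$, fix $\eta>0$, and set $\eps_i=\eta/n$. In the compact game, whenever Player II reveals $C_i\subseteq B_{Y_i}$ at stage $i$, Player I records a finite $\eps_i$-net $N_i\subseteq C_i$ (possible by norm-compactness), and, writing $s$ for a tuple in $N_1\times\dots\times N_{i-1}$, Player I's stage-$i$ move is $Y_i:=\bigcap_s\chi(s)\in\co(X)$, a finite intersection. By induction on $i$ one checks that every ``net sequence'' $(x_1,\dots,x_i)$ with $x_k\in N_k$ is $\chi$-admissible: the prefix is admissible by hypothesis, and $x_i\in N_i\subseteq C_i\subseteq B_{Y_i}\subseteq B_{\chi(x_1,\dots,x_{i-1})}$ because $Y_i$ was defined as an intersection containing $\chi(x_1,\dots,x_{i-1})$. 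So every length-$n$ net sequence satisfies $\|\sum_{i=1}^n a_ix_i\|\leqslant c$; approximating an arbitrary $(x_1,\dots,x_n)\in\prod_{i=1}^n C_i$ within $\eps_i$ by a net sequence then gives, as above, $\|\sum_{i=1}^n a_ix_i\|\leqslant c+\eta$ for all $(a_i)\in B_{\ell_p^n}$. Hence Player I wins the compact spatial $A(c+1,p,n)$ game for every $n$, and closing the cycle with the recorded characterization of $\textsf{A}_p$ (resp. $\textsf{N}_p$) finishes the proof.

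I expect the one genuinely non-formal point to be this last step: in the compact game Player I must commit to the finite-codimensional subspace $Y_i$ before $C_i$ is revealed, yet must afterwards respect the spatial strategy against every point Player II can still extract from $C_i,C_{i+1},\dots$; taking $Y_i$ to be the intersection of the spatial strategy's responses over all prefixes drawn from the previously recorded finite nets is exactly what forces every future net sequence to be admissible, while norm-compactness of the $C_j$ keeps those intersections finite, hence in $\co(X)$. Everything else---checking that the strategies described are legitimate, in that each move depends only on information already revealed, and the elementary perturbation estimates---is routine; these are the approximation arguments alluded to before the statement.
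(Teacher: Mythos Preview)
Your argument is correct. The paper does not actually give a proof of this proposition; it only states that it ``relies on routine approximation arguments'' and moves on. Your write-up supplies exactly those routine arguments, and the four implications you run through (general $\Rightarrow$ spatial via $Y\subseteq U$, compact spatial $\Rightarrow$ spatial via singletons, spatial $\Rightarrow$ general via a bounded projection onto a finite-dimensional complement, spatial $\Rightarrow$ compact spatial via finite $\varepsilon$-nets and intersecting the spatial strategy over all net-prefixes) are the expected ones. In particular, you correctly identify and handle the only delicate point: in the compact game Player I must choose $Y_i$ before seeing $C_i$, so the intersection defining $Y_i$ can only run over nets of the \emph{previous} compact sets $C_1,\dots,C_{i-1}$; this keeps the intersection finite and hence in $\co(X)$, while still forcing every net sequence to be $\chi$-admissible.
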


We now recall the main characterizations of these classes. We refer to \cite{CauseyFovelleLancien} for an overview of these properties and complete references. The results stated here come from \cite{CauseyIllinois2018} and \cite{Causey3.5}. Before to give these characterizations, we need more notation. Given $D$ a weak neighborhood base of $0$ in $X$ and $(x_t)_{t\in D^{<\omega}}\subset X$, we say $(x_t)_{t\in D^{<\omega}}$ is  \emph{weakly null}   provided that for each $t\in \{\varnothing\}\cup D^{<\omega}$, $(x_{t\smallfrown (U)})_{U\in D}$ is a weakly null net. Here $D$ is directed by reverse inclusion. 

\begin{theorem}[\cite{CauseyIllinois2018}]\label{atheorem} 
Fix $1<p\leqslant \infty$ and let $q$ be conjugate to $p$. Let $X$ be a Banach space. The following are equivalent  
\begin{enumerate}[(i)]
\item $X\in \textsf{\emph{A}}_p$. 
\item There exists a constant $c>0$ such that for any weak neighborhood base $D$ at $0$ in $X$, any $n\in\nn$,  and any weakly null collection $(x_t)_{t\in D^{\leqslant n}}\subset B_X$, there exists $t\in D^n$ such that $\|\sum_{i=1}^na_ix_{t|_i}\|\leqslant c\|a\|_p$ for all $a\in \Kdb^n$. 
\item $X$ has $q$-summable Szlenk index.   
\end{enumerate}
\end{theorem}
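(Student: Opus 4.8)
The plan is to prove the three-way equivalence by the cycle (i) $\Leftrightarrow$ (ii) and (ii) $\Leftrightarrow$ (iii), since the combinatorial characterization (ii) is the natural bridge between the game-theoretic definition of $\textsf{A}_p$ and the dual-side Szlenk estimate (iii). The whole argument rests on the duality between weakly null trees in $B_X$ and the Szlenk derivation of $B_{X^*}$, together with the determinacy of the games recalled above.

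First, for (i) $\Rightarrow$ (ii): assume $\textsf{a}_p(X)=:c_0<\infty$ and fix $c>c_0$. By Proposition~2.x (the compact spatial reformulation), for each $n$ Player I has a winning compact spatial $n$-strategy $\chi_n:K_X^{<n}\to \co(X)$ in the $A(c,p,n)$ game. Given a weak neighborhood base $D$ at $0$ and a weakly null collection $(x_t)_{t\in D^{\le n}}\subset B_X$, I would run $\chi_n$ against this tree: at stage $j$, having produced $t|_{j-1}\in D^{j-1}$, Player I plays $Y_j=\chi_n(\{x_{t|_0}\},\dots,\{x_{t|_{j-1}}\})\in\co(X)$ (identifying singletons with compact sets), and because $(x_{t|_{j-1}\smallfrown(U)})_{U\in D}$ is a weakly null net, for $U$ small enough the vector $x_{t|_{j-1}\smallfrown(U)}$ lies in (a slightly enlarged) $B_{Y_j}$; choosing such a $U$ defines $t|_j$. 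Then $\{x_{t|_i}\}_{i=1}^n$ is (up to an $\eps$) $\chi_n$-admissible, so Player I's winning condition gives the desired upper-$\ell_p$ estimate along the branch $t$, with constant $c$ (after absorbing the $\eps$'s). The converse (ii) $\Rightarrow$ (i) is the same correspondence read in reverse: a spatial $n$-strategy for Player I is essentially a way of picking, at each node, a finite-codimensional space, and property (ii) says that \emph{every} weakly null tree refined through such spaces has a good branch; a compactness/diagonal argument (or appeal to determinacy: if Player II had a winning strategy, iterating it would produce a weakly null tree violating (ii)) upgrades this to an honest winning strategy, showing $\textsf{a}_p(X)\le c$.

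For the equivalence (ii) $\Leftrightarrow$ (iii), I would use the standard dictionary between the Szlenk derivation and weakly null trees. The key quantitative lemma is: $s_{\eps_1}\cdots s_{\eps_n}(B_{X^*})\neq\varnothing$ if and only if, roughly, one can find a weakly null collection $(x_t)_{t\in D^{\le n}}$ in $B_X$ together with functionals witnessing that no branch admits an upper-$\ell_p$-type estimate better than $\big(\sum_i \eps_i^p\big)^{1/p}$ — more precisely, the separation of $\eps_i$-slices at level $i$ forces, by a Hahn–Banach/biorthogonality argument, a branch $(x_{t|_i})$ on which $\|\sum a_i x_{t|_i}\|\gtrsim \|(\eps_i a_i)\|_{p}$ type behavior cannot be avoided, and conversely a weakly null tree violating the $\ell_p$-estimate with exponents $\eps_i$ produces a nonempty iterated Szlenk derivative. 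Granting this lemma (which is exactly the content of the cited \cite{CauseyIllinois2018}, \cite{Causey3.5}), property (ii) — a uniform bound $c$ on \emph{all} branch estimates over \emph{all} $n$ — translates precisely into: whenever $s_{\eps_1}\cdots s_{\eps_n}(B_{X^*})\neq\varnothing$, one has $\sum_{i=1}^n \eps_i^p \le c^p$ (up to a fixed constant factor), which is the definition of $q$-summable Szlenk index with $q$ conjugate to $p$ — note $\sum \eps_i^p$ is the relevant sum because $\|\cdot\|_p$ on the primal side dualizes to the $\ell_q$-sum on slices, and the bookkeeping exponent in the summable Szlenk condition is the one matching the \emph{primal} $\ell_p$-estimate. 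I would be careful here that the passage between "there exists $t$ with a good estimate" (an $\exists$ over branches) and "the derivative is empty" (a statement about all of $B_{X^*}$) is handled by the tree/derivation correspondence in the right direction, using weak$^*$-compactness of $B_{X^*}$ to pass from finite approximations to the derivation.

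The main obstacle I anticipate is the bookkeeping in (ii) $\Leftrightarrow$ (iii): matching constants and exponents so that a \emph{single} constant $c$ in (ii) yields a \emph{single} constant in the $q$-summable Szlenk definition and vice versa, uniformly in $n$ and in the choice of the $\eps_i$'s (including the subtlety that the $\eps_i$ in the Szlenk condition are allowed to be $0$, and the $p=\infty$ endpoint where $\|\cdot\|_p$ is the sup-norm and $q$-summability degenerates to boundedness of the number of nonzero $\eps_i$'s — i.e. $Sz(X)\le\omega$ with linearly-bounded indices). Since the statement is quoted from \cite{CauseyIllinois2018}, the honest write-up is to recall the tree–derivation lemma precisely, cite it, and then spell out the two translations; the genuinely new content is nil, so the "proof" is really a guided reduction to the cited results, and the care lies entirely in not losing track of which exponent ($p$ or $q$) governs which side.
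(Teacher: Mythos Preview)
The paper does not prove this theorem; it is quoted from \cite{CauseyIllinois2018} and stated without proof, so there is no in-paper argument to compare against. Your proposal is effectively a sketch of how the cited proof goes, and the overall architecture is the standard and correct one: (i)$\Leftrightarrow$(ii) via determinacy of the $A(c,p,n)$ games (a winning strategy for Player~II, run against all weak neighborhoods, \emph{is} a weakly null tree with no good branch, contradicting (ii); conversely a winning strategy for Player~I produces a good branch in any given tree), and (ii)$\Leftrightarrow$(iii) via the well-known correspondence between weakly null trees in $B_X$ and iterated Szlenk derivations of $B_{X^*}$.

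There is, however, a genuine error in the exponent bookkeeping for (ii)$\Leftrightarrow$(iii). You assert that the translation of (ii) is ``whenever $s_{\eps_1}\cdots s_{\eps_n}(B_{X^*})\neq\varnothing$, one has $\sum_{i=1}^n \eps_i^p \le c^p$'' and call this the definition of $q$-summable Szlenk index. It is not: the $q$-summable condition is $\sum_i \eps_i^q \le c^q$, with the \emph{conjugate} exponent $q$. Your own sentence ``$\|\cdot\|_p$ on the primal side dualizes to the $\ell_q$-sum on slices'' is exactly right, but you then draw the opposite conclusion. Concretely: if a branch $(x_i)$ satisfies $\|\sum a_i x_i\|\le c\|a\|_p$ and functionals $(y^*_i)$ witness the $\eps_i$-derivations with $\text{Re}\,y^*_i(x_i)\gtrsim \eps_i$, testing with $a_i=\eps_i^{q-1}/\|(\eps_j)\|_q^{q-1}$ and H\"older gives $\|(\eps_i)\|_q\lesssim c$, i.e.\ the $\ell_q$-sum of the $\eps_i$, not the $\ell_p$-sum. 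Relatedly, your description of the $p=\infty$ endpoint is wrong: there $q=1$, and $1$-summable Szlenk index means $\sum_i \eps_i \le c$, which is a genuine summability condition, not merely a bound on the number of nonzero $\eps_i$'s (the latter would only give $Sz(X,\eps)$ bounded, i.e.\ $Sz(X)\le\omega$).

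Since, as you yourself note, the entire content of this equivalence is getting the exponents to line up, this is the one place you cannot be imprecise; the structural outline is fine, but the stated conclusion in your (ii)$\Leftrightarrow$(iii) paragraph is incorrect as written.
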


\begin{theorem}[\cite{Causey3.5}]\label{ntheorem} 
Fix $1<p\leqslant \infty$ and let $q$ be conjugate to $p$. Let $X$ be a Banach space. The following are equivalent. 
\begin{enumerate}[(i)]
\item $X\in \textsf{\emph{N}}_p$. 
\item There  exists a constant $c>0$ such that for any $n\in\nn$ and any weakly null collection  $(x_t)_{t\in D^{\le n}}$ in $B_X$, there exists $t\in D^n$ such that $\|\sum_{i=1}^nx_{t|_i}\|\le cn^{1/p}$.
\item There exists a constant $K>0$ such that 
$$\forall \eps \in (0,1),\ \ Cz(X,\eps)\leqslant {K}{\eps^{-q}}.$$
\end{enumerate}
\end{theorem}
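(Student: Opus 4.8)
The equivalence (i)$\Leftrightarrow$(ii) is the finite-height tree characterization, obtained from the game formulation by the same routine translation used in Theorem~\ref{atheorem}. First I would show (i)$\Rightarrow$(ii): if $\mathsf{n}_p(X)<\infty$, fix $c>\mathsf{n}_p(X)$, so that for every $n$ Player~I has a winning strategy $\chi$ in the (spatial or compact spatial, by the Proposition) $N(c,p,n)$ game. Given a weakly null collection $(x_t)_{t\in D^{\le n}}\subset B_X$ indexed by a weak neighborhood base $D$ of $0$, I would play against $\chi$: at stage $j$, $\chi$ has produced some $U_j\in W_X$; since $(x_{t|_{j-1}\smallfrown(U)})_{U\in D}$ is weakly null, some $U\in D$ with $U\subset U_j$ has $x_{t|_{j-1}\smallfrown(U)}\in U_j$, and we extend $t$ by that $U$. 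After $n$ steps the admissible play forces $\|\sum_{i=1}^n x_{t|_i}\|\le cn^{1/p}$, which is exactly (ii). Conversely, for (ii)$\Rightarrow$(i), given the constant $c$ from (ii) and $n\in\nn$, one builds a winning strategy for Player~I in the spatial $N(c+\eps,p,n)$ game by a standard compactness/finite-$\eps$-net argument: the strategy is ``extract the tree data from II's legal responses'', and (ii) guarantees a branch with the required estimate; the usual stability of the conclusion under small perturbations absorbs the $\eps$. This is the ``routine approximation'' already invoked for the Proposition and Theorem~\ref{atheorem}, so I would only sketch it and cite \cite{CauseyPositivity2018, Causey3.5}.

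For the equivalence with (iii), the convex Szlenk index condition, I would invoke the duality machinery from \cite{Causey3.5}: the point is that the $N_p$ game on $X$ is, up to constants, dual to a lower-$\ell_q$-type averaging condition on the Szlenk derivation of $B_{X^*}$, where one uses the \emph{convex} derivation $c_\eps$ rather than $s_\eps$ because the winning condition $\|\sum_{i=1}^n x_i\|\le cn^{1/p}$ (equal weights) is an averaged, rather than supremal, estimate and so interacts with the weak$^*$-closed convex hull. Concretely, I would recall that for an Asplund space with $Sz(X)\le\omega$, the quantitative decay rate of $Cz(X,\eps)$ as a function of $\eps$ controls, and is controlled by, the best constant $c$ in (ii), via the standard correspondence: $Cz(X,\eps)\le K\eps^{-q}$ for all $\eps\in(0,1)$ is equivalent to the existence of $c$ such that weakly null trees of every finite height in $B_X$ admit a branch with an upper-$(cn^{1/p})$ equal-norm estimate. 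This is precisely the content of \cite[Theorem~3.5 / Proposition~...]{Causey3.5} restated in the present language, and I would cite it, indicating the two implications: the ``derivation controls trees'' direction uses a weak$^*$-compactness extraction of functionals along a well-chosen branch; the ``trees control derivation'' direction builds, from a slowly decaying $Cz$, a weakly null tree witnessing a large averaged norm by a Hahn--Banach/James-type separation at each level.

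\textbf{Main obstacle.}
The genuinely nontrivial ingredient is the equivalence (ii)$\Leftrightarrow$(iii), i.e.\ the passage between the primal tree condition and the dual convex-Szlenk estimate with the correct exponent $q$ and a uniform constant $K$. The subtlety, by contrast with Theorem~\ref{atheorem} (where $q$-summability of the Szlenk index corresponds to upper-$\ell_p$ estimates for \emph{arbitrary} coefficients), is that here we only have control on \emph{equal-weight} sums $\|\sum x_i\|\le cn^{1/p}$, so one cannot simply read off a full $\ell_p$-upper estimate; the matching dual object is the \emph{convex} Szlenk derivation, and tracking how a polynomial bound $K\eps^{-q}$ on $Cz(X,\eps)$ transfers to a single uniform constant $c$ (and back) requires the averaging lemmas of \cite{Causey3.5}. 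Since that theorem is explicitly cited as the source, my plan is to reduce (i)$\Leftrightarrow$(ii)$\Leftrightarrow$(iii) to its statement rather than reprove it: I would carry out in detail only the elementary game-to-tree translation (i)$\Leftrightarrow$(ii), and for (ii)$\Leftrightarrow$(iii) give the reduction to \cite{Causey3.5}, noting the role of $c_\eps$ versus $s_\eps$ and the bookkeeping of constants.
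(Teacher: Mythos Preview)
The paper does not prove this theorem at all: it is stated as a result quoted from \cite{Causey3.5}, with no argument given in the present paper. Your proposal is therefore consistent with (and indeed more detailed than) what the paper does---you correctly defer the substantive equivalence (ii)$\Leftrightarrow$(iii) to \cite{Causey3.5}, and your added sketch of the routine game-to-tree translation (i)$\Leftrightarrow$(ii) is accurate and goes slightly beyond what the paper includes.
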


Denote $\textsf{{D}}_1$ the class of all Banach spaces with Szlenk index at most $\omega$ and $\textsf{{T}}_p$ the ``infinite game version'' of $\textsf{{A}}_p$ (see \cite{CauseyFovelleLancien} for the precise definition). We recall the following inclusions.  
\begin{theorem}\label{containments}\  
\begin{enumerate}[(i)]
\item $\textsf{\emph{D}}_1=\bigcup_{1<p\leqslant \infty}\textsf{\emph{T}}_p=\bigcup_{1<p\leqslant \infty}\textsf{\emph{A}}_p=\bigcup_{1<p\leqslant \infty}\textsf{\emph{N}}_p$. 
\item For $1<p<\infty$, $\textsf{\emph{T}}_p\subsetneq \textsf{\emph{A}}_p\subsetneq \textsf{\emph{N}}_p$. 
\item $\textsf{\emph{T}}_\infty\subsetneq \textsf{\emph{A}}_\infty= \textsf{\emph{N}}_\infty$.
\end{enumerate}
\end{theorem}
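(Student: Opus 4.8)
The plan is to argue by contradiction. Fix $U$ with $\textsf{N}_p$. By Theorem~\ref{ntheorem} there is a constant $K$ with $Cz(U,\eps)\leqslant K\eps^{-q}$ for every $\eps\in(0,1)$ (equivalently, a $c>0$ such that Player I wins every spatial $N(c,p,n)$ game on $U$), and this is the only information about $U$ that I will use. The first reduction is the elementary observation that ``subspace of a quotient of $U$'' coincides with ``quotient of a subspace of $U$'': if $X$ is a subspace of $U/W$ and $V=Q^{-1}(X)$ for the quotient map $Q\colon U\to U/W$, then $X\cong V/W$. Since any subspace $V\subseteq U$ inherits $Cz(V,\eps)\leqslant Cz(U,\eps)\leqslant K\eps^{-q}$, it is enough to produce $\theta\in(0,1)$ such that $T^*_{q,\theta}$ is not isomorphic to any quotient of any Banach space $V$ with $Cz(V,\eps)\leqslant K\eps^{-q}$. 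Equivalently, one must show that no equivalent renorming of $T^*_{q,\theta}$ has a uniformly bounded convex‑Szlenk‑$q$‑modulus, and that the best available bound tends to $\infty$ as $\theta\downarrow 0$ (this is the direction in which $T^*_{q,\theta}$ degenerates toward $\ell_1$, so that $\textsf{a}_p(T^*_{q,\theta})\to\infty$, while $T^*_{q,\theta}\in\textsf{A}_p$ for each fixed $\theta$ by the model‑space construction of Section~\ref{modelspaces}).

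Next I would introduce the new two‑player game. On a Banach space $V$ it is a variant of the spatial $(T,c)$ game of Section~\ref{properties} with $T=T^*_{q,\theta}$ (basis $(t^*_i)$) in which the comparison constant is quantified existentially: Players I and II alternately choose $Y_i\in\co(V)$ and $x_i\in B_{Y_i}$, and Player I wins if $(t^*_i)_{i=1}^\infty\lesssim_\gamma (x_i)_{i=1}^\infty$ for \emph{some} $\gamma>0$ (i.e.\ the sequence played in $V$ asymptotically dominates the $T^*_{q,\theta}$‑basis). The reason for the unquantified $\gamma$ is that ``Player I has a winning strategy'' then becomes an \emph{isomorphic} invariant of $V$ — renorming only changes $\gamma$ — which is exactly what neutralizes the distortion of a putative embedding. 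This game passes to subspaces of $V$ trivially (Player I restricts his finite‑codimensional moves), is determined by the same argument as the $A$‑ and $N$‑games, and — crucially — its value is monotone under passing to a quotient $V\to V/W$: a winning strategy on $V/W$ is converted into one on $V$ by the gliding‑hump/quotient‑map technology of Section~\ref{gliding}, which is precisely what allows Player I to lift Player II's moves (after perturbation) despite quotient maps destroying weak nullity. The first structural claim is then: \emph{if $T^*_{q,\theta}$ is isomorphic to a quotient of a subspace of $V$, then Player I has a winning strategy in this game on $V$.} One first checks it on $T^*_{q,\theta}$ itself — Player I plays the tail subspaces generated by $(t^*_j)_{j\geqslant N_i}$ with $N_i\uparrow\infty$ rapidly, forcing Player II to produce a block sequence of $(t^*_i)$, and in a Tsirelson‑type space every normalized block sequence is, up to a universal constant, equivalent to (in particular dominates, with the roles arranged correctly) the basis — and then transfers the strategy through the subspace and quotient operations as above.

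The second, quantitative, claim is the obstruction coming from $\textsf{N}_p$: \emph{there is $\theta_0=\theta_0(K,q)\in(0,1)$ such that for every $\theta<\theta_0$ and every $V$ with $Cz(V,\eps)\leqslant K\eps^{-q}$, Player II wins the asymptotic $T^*_{q,\theta}$‑domination game on $V$.} Here I would exploit the Tsirelson recursion to calibrate how heavily the basis $(t^*_i)$ loads the convex‑Szlenk budget: along late block sequences one has $\bigl\|\sum_{i=n}^{2n-1}t^*_i\bigr\|_{T^*_{q,\theta}}\asymp \theta^{-1/q}n^{1/p}$, and iterating the admissibility structure produces, inside $T^*_{q,\theta}$, convex‑Szlenk‑type witnesses whose ``mass'' is bounded below by a quantity that diverges as $\theta\downarrow 0$. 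Consequently, were Player I to win the game on $V$ with some constant $\gamma$, his strategy together with the winning condition would manufacture weakly null trees in $V$ dominating those witnesses, forcing $Cz(V,\delta)>K\delta^{-q}$ for suitable small $\delta$ once $\theta<\theta_0(K,q,\gamma)$; determinacy and the fact that only the ``relevant'' values of $\gamma$ (those below a bound read off from $K$) need be considered remove the quantifier on $\gamma$. Choosing any $\theta<\theta_0(K,q)$ and combining the two claims yields the contradiction, and since $T^*_{q,\theta}\in\textsf{A}_p$, the space $X=T^*_{q,\theta}$ is the required witness; the analogous statement for $\textsf{N}_p$ and $U^*_{q,\theta}$ follows by the same scheme.

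The main obstacle I anticipate is the passage of the new game through the quotient operation: this is not the one‑line restriction that the subspace case is, because quotient maps are neither weak–weak open in the direction needed nor preserve weakly null collections, and making the ``asymptotic $T^*_{q,\theta}$‑domination'' genuinely monotone under ``subspace of a quotient'' — with no loss that a subsequent choice of $\theta$ cannot absorb — is exactly what forces both the asymptotic (unquantified‑$\gamma$) formulation of the winning condition and the full deployment of the gliding‑hump machinery of Section~\ref{gliding}. The second delicate point is the calibration of $\theta_0(K,q)$: one has to distil, from the single inequality $Cz(V,\eps)\leqslant K\eps^{-q}$, a uniform ceiling on every asymptotic‑domination constant realizable inside $V$, and then match it against sharp lower bounds for the iterated admissibility structure of $T_\theta$ and its $q$‑convexification; getting these two estimates to cross as $\theta\downarrow 0$ is where the fine combinatorics of the Tsirelson construction does the real work.
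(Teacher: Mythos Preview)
Your proposal does not address the stated theorem at all. Theorem~\ref{containments} is about the set-theoretic relations among the classes $\textsf{D}_1$, $\textsf{T}_p$, $\textsf{A}_p$, $\textsf{N}_p$: that their unions over $p$ coincide with $\textsf{D}_1$, and that for each fixed $p$ the inclusions $\textsf{T}_p\subset\textsf{A}_p\subset\textsf{N}_p$ hold and are strict (with $\textsf{A}_\infty=\textsf{N}_\infty$). What you have written is instead a sketch of Theorem~\ref{nouniversal}, the non-existence of a universal $\textsf{N}_p$ space: you fix $U\in\textsf{N}_p$ and try to find $\theta$ so that $T^*_{q,\theta}$ does not embed in any quotient of $U$. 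That is a different statement, with a different quantifier structure, and none of your argument bears on items (i)--(iii) of Theorem~\ref{containments}.

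For the record, the paper does not prove Theorem~\ref{containments} either; it is recalled from the literature (see the reference to \cite{CauseyFovelleLancien} just before Theorems~\ref{atheorem} and~\ref{ntheorem}). A proof of (i) amounts to showing that $Sz(X)\leqslant\omega$ forces a power-type Szlenk estimate for some $q<\infty$, hence $X\in\textsf{T}_p$ for some $p$; the inclusions $\textsf{T}_p\subset\textsf{A}_p\subset\textsf{N}_p$ are immediate from the definitions of the games. The strictness in (ii) requires exhibiting concrete spaces in $\textsf{A}_p\setminus\textsf{T}_p$ and in $\textsf{N}_p\setminus\textsf{A}_p$; the model spaces $T^*_{q,\theta}$ and $U^*_{q,\theta}$ of Section~\ref{modelspaces} are natural candidates, but the arguments needed are distinguishing arguments for a \emph{single} space, not the ``no universal space'' argument you outline. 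The equality $\textsf{A}_\infty=\textsf{N}_\infty$ in (iii) follows from the equivalence of summable Szlenk index and the convex-Szlenk condition at $q=1$.

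If your intent was actually to prove Theorem~\ref{nouniversal}, note that the paper's approach differs from yours: rather than an asymptotic-domination game with unquantified $\gamma$ and the gliding-hump machinery of Section~\ref{gliding}, the paper introduces the $\Phi(l,p,C)$ game indexed by the Schreier families $\mathcal{S}_l$, proves directly that the associated constants $\phi_l$ are submultiplicative in $l$ and pass (up to absolute constants) to subspaces and quotients, and then shows the explicit lower bound $\phi_{2l-1}(T^*_{q,\theta},p)\geqslant\theta^{-l/q}$. Comparing growth rates in $l$ gives the contradiction without any appeal to Section~\ref{gliding}.
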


\section{Descriptive Set Theory and asymptotic smoothness}\label{descriptive}

\subsection{Background} We recall the setting introduced by B. Bossard in \cite{Bossard2002} in order to apply the tools from descriptive set theory to the class $\textsf{Sep}$ of separable Banach spaces. 

A \emph{Polish space} (resp. \emph{topology}) is a separable completely metrizable space (resp. topology). A set $X$ equipped with a $\sigma$-algebra is called a \emph{standard Borel space} if the $\sigma$-algebra is generated by a Polish topology on $X$. A subset of such a  standard Borel space $X$ is called \emph{Borel} if it is an element of the $\sigma$-algebra and it is called \emph{analytic} (or a $\Sigma_1^1$ set) if there exists a standard Borel space $Y$ and a Borel subset $B$ of $X \times Y$ such that $A$ is the projection of $B$ on the first coordinate. The complement of an analytic set is called a \emph{coanalytic} set (or a $\Pi_1^1$ set). A subset $A$ of standard Borel space $X$ is called \emph{$\Sigma_1^1$-hard} if for every $\Sigma_1^1$ subset $B$ of a standard Borel space $Y$, there exists a Borel map $f:Y \to X$ such that $f^{-1}(A)=B$ and it is called \emph{$\Sigma_1^1$-complete} if it is both $\Sigma_1^1$ and $\Sigma_1^1$-hard. 

Let $X$ be a Polish space. Then, the set $\cal F(X)$ of all closed subsets of $X$ can be equipped with its \emph{Effros-Borel structure}, defined as the $\sigma$-algebra generated by the sets $\{F \in \cal F(X),\ F\cap U\neq \emptyset\}$, where $U$ varies over the open subsets of $X$. Equipped with this $\sigma$-algebra, $\cal F(X)$ is a standard Borel space. 

Following Bossard, we now introduce the fundamental coding of separable Banach spaces. It is well known that $C(\Delta)$, the space of scalar valued continuous functions on the Cantor space $\Delta=\{0,1\}^\Ndb$, equipped with the sup-norm, contains an isometric linear copy of every separable Banach space. We equip $\cal F(C(\Delta))$ with its corresponding Effros-Borel structure. Then, we denote 
$$\textsf{{SB}}=\{F\in \cal F(C(\Delta)),\ F\ \text{is a linear subspace of}\ C(\Delta)\},$$
considered as a subspace of  $\cal F(C(\Delta))$. Then $\textsf{{SB}}$ is a Borel subset of $\cal F(C(\Delta))$ (\cite{Bossard2002}, Proposition 2.2) and therefore a standard Borel space, that we call the \emph{standard Borel space of separable Banach spaces}. 

Let us now denote $\simeq$ the isomorphism equivalence relation on $\textsf{{SB}}$. The \emph{fundamental coding of separable Banach spaces} is the quotient map $c:\textsf{{SB}} \to \textsf{{SB}}/\simeq$. We can now give the following definition.

\begin{definition} A family $\textsf{{G}}\subset \textsf{{SB}}/\simeq$ is Borel (resp. analytic, coanalytic) if $c^{-1}(\textsf{{G}})$  is Borel (resp. analytic, coanalytic) in $\textsf{{SB}}$. 
\end{definition}
This will allow us to describe the complexity of classes of separable Banach spaces that are stable under linear isomorphisms, such as $\textsf{T}_p$, $\textsf{A}_p$ and $\textsf{N}_p$. It will sometimes be convenient to use another coding of separable Banach spaces, by using the fact that any separable Banach space is a quotient of $\ell_1$. For a sequence $\overline{x}=(x_n)_{n=1}^\infty \in \ell_1^\omega$, define $c_d(\overline{x})=\langle \ell_1/\overline{sp}(\overline{x})\rangle$ (where $\langle E \rangle$ denotes the equivalence class of a separable Banach space in $\textsf{{SB}}/\simeq$). The following is taken from \cite{Bossard2002} (Proposition 2.8).

\begin{proposition}\label{equivalentcoding}
A family $\textsf{\emph{G}}\subset \textsf{\emph{SB}}/\simeq$ is Borel (resp. analytic, coanalytic) if and only if $c_d^{-1}(\textsf{\emph{G}})$  is Borel (resp. analytic, coanalytic) in $\ell_1^\omega$. \end{proposition}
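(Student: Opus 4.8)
The plan is to build Borel maps $\Psi\colon\textsf{SB}\to\ell_1^\omega$ and $\Phi\colon\ell_1^\omega\to\textsf{SB}$ that are compatible with the two codings, meaning $c_d\circ\Psi=c$ and $c\circ\Phi=c_d$ as maps into $\textsf{SB}/\simeq$; equivalently, $\ell_1/\overline{sp}(\Psi(X))\simeq X$ for every $X\in\textsf{SB}$, and $\Phi(\overline x)\simeq\ell_1/\overline{sp}(\overline x)$ for every $\overline x\in\ell_1^\omega$. Granting this, the statement follows formally: for any $\textsf{G}\subset\textsf{SB}/\simeq$ one has $c^{-1}(\textsf{G})=\Psi^{-1}(c_d^{-1}(\textsf{G}))$ and $c_d^{-1}(\textsf{G})=\Phi^{-1}(c^{-1}(\textsf{G}))$, and the preimage of a Borel (resp.\ analytic, coanalytic) set under a Borel map is again Borel (resp.\ analytic, coanalytic).

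For $\Psi$, I would recall the standard fact (see \cite{Bossard2002}) that there are Borel maps $d_n\colon\textsf{SB}\to C(\Delta)$ with $(d_n(X))_{n=1}^\infty$ a dense sequence in $B_X$ for every $X$. Then $Q_X\colon\ell_1\to X$, $Q_X((a_n)_n)=\sum_n a_nd_n(X)$, is a well-defined quotient map (the series converges absolutely, and $\overline{Q_X(B_{\ell_1})}=B_X$ since $(d_n(X))_n$ is dense in $B_X$), so $\ell_1/\ker Q_X$ is isometric to $X$. The point is that $X\mapsto\ker Q_X$ is Borel as a map into $\mathcal F(\ell_1)$: for a norm-open ball $B(a,r)\subset\ell_1$ one has $\ker Q_X\cap B(a,r)\neq\varnothing$ iff $\operatorname{dist}(a,\ker Q_X)<r$ iff $\|Q_X a\|_X<r$, and $X\mapsto\|Q_X a\|_X=\|\sum_n a_nd_n(X)\|_{C(\Delta)}$ is Borel (a pointwise limit of Borel functions composed with the norm). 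The Kuratowski--Ryll-Nardzewski selection theorem then yields Borel maps $\psi_n\colon\textsf{SB}\to\ell_1$ with $(\psi_n(X))_n$ dense in $\ker Q_X$, and $\Psi(X)=(\psi_n(X))_n$ satisfies $\overline{sp}(\Psi(X))=\ker Q_X$, hence $c_d(\Psi(X))=\langle\ell_1/\ker Q_X\rangle=\langle X\rangle=c(X)$.

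For $\Phi$, I would use the canonical presentation of the quotient. Writing $(e_n)$ for the unit vector basis of $\ell_1$, define $N\colon\ell_1^\omega\times c_{00}(\Qdb)\to[0,\infty)$ by
\[ N\bigl(\overline x,(q_n)_n\bigr)=\inf\Bigl\{\Bigl\|\sum_n q_ne_n-\sum_k b_kx_k\Bigr\|_{\ell_1}:(b_k)_k\in c_{00}(\Qdb)\Bigr\}. \]
For fixed $\overline x$, $N(\overline x,\cdot)$ is exactly the quotient norm of $\ell_1/\overline{sp}(\overline x)$ restricted to the dense rational linear span of the images of the $e_n$, so the Banach space associated to $(c_{00}(\Qdb),N(\overline x,\cdot))$ is isometric to $\ell_1/\overline{sp}(\overline x)$; moreover $\overline x\mapsto N(\overline x,q)$ is Borel, being a countable infimum of continuous functions. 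Feeding $N$ into the standard Borel construction that realizes a Borel family of seminorms on $c_{00}(\Qdb)$ as a family of subspaces of $C(\Delta)$ (again \cite{Bossard2002}) produces a Borel $\Phi$ with $\Phi(\overline x)\simeq\ell_1/\overline{sp}(\overline x)$, i.e.\ $c\circ\Phi=c_d$.

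The only real content beyond this bookkeeping is importing the two descriptive-set-theoretic ingredients: the Borel selection of a dense sequence in $\ker Q_X$, whose Borelness rests on the metric-surjection identity $\operatorname{dist}(a,\ker Q_X)=\|Q_X a\|$; and the Borel realization inside $C(\Delta)$ of an abstract space presented by a Borel family of seminorms. Both are standard in Bossard's framework — this statement being precisely Proposition~2.8 of \cite{Bossard2002} — so the proof consists in assembling them as above.
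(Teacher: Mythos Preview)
Your proof is correct and is essentially the standard argument. Note that the paper does not actually prove this proposition: it simply quotes it as Proposition~2.8 of \cite{Bossard2002}, so there is no ``paper's own proof'' to compare against; what you have written is a faithful reconstruction of the Bossard argument, and the two key ingredients you isolate (Borel selection of a dense sequence in $\ker Q_X$ via the metric-surjection identity, and Borel realization in $C(\Delta)$ of a Borel family of seminorms) are exactly the ones used there.
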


Let $K$ be the closed unit ball of $\ell_\infty$ equipped with the weak$^*$-topology induced by $\ell_1$. Then $K$ is a metrizable compact space. We denote $\cal F(K)$ the set of closed subsets of $K$. The \emph{Vietoris topology} on $\cal F(K)$ is the topology generated by the sets of the form $\{F\in \cal F(K),\ F\subset O\}$ and $\{F\in \cal F(K),\ F\cap O \neq \varnothing \}$, for $O$ open subset of $K$. It is a Polish topology as it is compact and metrizable. Then the Borel $\sigma$-algebra associated with this topology is generated by the sets $\{F\in \cal F(K),\ F\cap O \neq \varnothing \}$, for $O$ open subset of $K$. It can also be described as the $\sigma$-algebra generated by $\{F\in \cal F(K),\ F \subset O\}$, for $O$ open subset of $K$. We shall also use the following (\cite{Bossard2002}, Lemma 4.14).

\begin{proposition}\label{F(K)coding} Define $k:\ell_1^\omega \to \cal F(K)$ so that, for $\overline{x}\in \ell_1^\omega$, $k(\overline{x})$ is the closed unit ball of the orthogonal of the linear span of $\overline{x}$ in $\ell_1$. Then $k$ is Borel.
\end{proposition}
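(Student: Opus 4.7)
The plan is to show that $k$ is upper semi-continuous from $\ell_1^\omega$ into $\cal F(K)$ and then to deduce Borel measurability from the description of the Borel $\sigma$-algebra that the paper has already recalled.

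First, for $x \in \ell_1$ let $\widehat{x} : \ell_\infty \to \Kdb$ denote the linear functional $\widehat{x}(f) = \sum_i x(i)f(i)$, which by the very definition of the weak$^*$ topology is weak$^*$-continuous on $\ell_\infty$. By definition of the annihilator in the duality $\ell_1$--$\ell_\infty$,
$$k(\overline x) = K \cap \bigcap_{n=1}^\infty \ker(\widehat{x_n}),$$
so $k(\overline x)$ is weak$^*$-closed in $K$, and contains $0$; in particular $k(\overline x)$ is a nonempty compact subset of $K$.

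Next, I would verify that the graph of $k$ is closed in $\ell_1^\omega \times K$. Suppose $\overline x_m \to \overline x$ in $\ell_1^\omega$ (so $x_{m,n} \to x_n$ in $\ell_1$ for every fixed $n$) and $f_m \to f$ weak$^*$ in $K$, with $f_m \in k(\overline x_m)$ for all $m$. For each fixed $n$, since $\|f_m\|_\infty \le 1$,
$$|\widehat{x_n}(f)| \le |\widehat{x_n}(f - f_m)| + |(\widehat{x_n} - \widehat{x_{m,n}})(f_m)| \le |\widehat{x_n}(f - f_m)| + \|x_n - x_{m,n}\|_1,$$
and both summands tend to $0$, the first by weak$^*$-continuity of $\widehat{x_n}$ and the second by the coordinate convergence in $\ell_1^\omega$. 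Hence $\widehat{x_n}(f) = 0$ for every $n$, so $f \in k(\overline x)$.

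Compactness of $K$ combined with the closed graph property yields that for every closed $C \subseteq K$ the set $\{\overline x \in \ell_1^\omega : k(\overline x) \cap C \neq \varnothing\}$ is closed: given $\overline x_m \to \overline x$ with a witness $f_m \in k(\overline x_m) \cap C$, extract a weak$^*$-convergent subsequence $f_{m_j} \to f \in C$ by compactness of $K$ and closedness of $C$, then apply the closed graph to conclude $f \in k(\overline x) \cap C$. The paper recalls that the Borel $\sigma$-algebra on $\cal F(K)$ is generated by the sets $\{F : F \cap O \neq \varnothing\}$ for $O$ open in $K$. Since $K$ is metrizable each such $O$ is $F_\sigma$, $O = \bigcup_j C_j$ with $C_j$ closed, so
$$k^{-1}\bigl(\{F : F \cap O \neq \varnothing\}\bigr) = \bigcup_j \{\overline x : k(\overline x) \cap C_j \neq \varnothing\}$$
is an $F_\sigma$ subset of $\ell_1^\omega$, hence Borel. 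The only nontrivial ingredient is the closed graph estimate displayed above; once that is in place, compactness of $K$ and metrizability force the rest, and in particular there is no need for any analytic/coanalytic machinery.
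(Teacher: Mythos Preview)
Your argument is correct. The paper does not actually prove this proposition; it simply cites it as Lemma~4.14 in Bossard~\cite{Bossard2002}, so there is no ``paper's own proof'' to compare against. What you have written is a clean and standard proof: you show that the incidence relation $\{(\overline x,f)\in \ell_1^\omega\times K : f\in k(\overline x)\}$ is closed, then use compactness and metrizability of $K$ to deduce that $\{\overline x: k(\overline x)\cap C\neq\varnothing\}$ is closed for every closed $C\subset K$, and finally write each open $O$ as an $F_\sigma$ to conclude that the preimage of each Vietoris subbasic set is $F_\sigma$, hence Borel.

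One terminological quibble: what you call ``the graph of $k$'' is not literally the graph (which lives in $\ell_1^\omega\times\cal F(K)$) but the incidence relation in $\ell_1^\omega\times K$. The argument is unaffected, but you may want to rephrase. Otherwise the proof is complete; in fact you have shown slightly more than Borel measurability, namely that $k$ is upper semi-continuous as a compact-valued map, which is exactly the content of your closed-incidence-relation computation.
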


\subsection{The classes $\textsf{Sep}\cap \textsf{A}_p$ and $\textsf{Sep}\cap \textsf{N}_p$ are Borel}

For $\eps>0$, we consider the following derivations on $\cal F(K)$. For $F\in \cal F(K)$, 

\begin{enumerate}[(i)]
    \item $s_\eps(F)=F\setminus \cup\{V,\ V\ \text{weak}^*\text{open set so that}\ \diam(S\cap F)\le \eps\},$
    \item $c_\eps(F)\ \text{is the weak$^*$ closed convex hull of $s_\eps(F)$},$
    \item $k_\eps(F)=F\setminus \cup\{S,\ S\ \text{weak}^*\text{open half space so that}\ \alpha(S\cap F)\le \eps\},$ where $\alpha(A)$ is the Kuratowski index of $A\subset K$ defined by 
$$\alpha(A)=\sup\{\delta\ge 0,\ \forall n\in \Ndb\ \exists x_1^*,\ldots,x_n^* \in A\ \|x_i^*-x_j^*\|\geqslant \delta\ \text{for}\ i\neq j\}.$$
\end{enumerate}
Associated with the derivations $s_\eps$, $c_\eps$ and $k_\eps$ we define $Sz(F)$, the Szlenk index of $F$, $Cz(F)$, the convex Szlenk index of $F$ and $Kz(F)$. We recall the following estimates from \cite{HajekLancien2007}.

\begin{proposition}\label{Cz=Kz} For any $F$ weak$^*$ closed convex subset of $K$ and any $\eps >0$ we have that 
$$c_{4\eps}(F)\subset k_{2\eps}(F)\subset c_\eps(F).$$
\end{proposition}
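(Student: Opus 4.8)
The plan is to prove the two inclusions separately, working contrapositively in each case, and to exploit the fact that $F$ is weak$^*$-closed \emph{convex}, so that a point removed by the $c_\eps$ or $k_\eps$ derivation can be detected by a single weak$^*$-open half-space (a slice). For the second inclusion $k_{2\eps}(F)\subset c_\eps(F)$, I would take $x^*\in F\setminus c_\eps(F)$ and show $x^*\notin k_{2\eps}(F)$. Since $c_\eps(F)$ is the weak$^*$-closed convex hull of $s_\eps(F)$, the Hahn--Banach separation theorem (in the locally convex space $(\ell_\infty, w^*)$) gives a weak$^*$-open half-space $S$ with $x^*\in S$ and $S\cap c_\eps(F)=\varnothing$; in particular $S\cap s_\eps(F)=\varnothing$. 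The key point is then: if a weak$^*$-open set $S$ misses $s_\eps(F)$, then $S\cap F$ can be covered by finitely many relatively weak$^*$-open subsets of $F$ each of diameter $\le \eps$ (this is the standard compactness argument underlying the Szlenk derivation, using weak$^*$-compactness of $\overline{S\cap F}$ — or one restricts to a slightly smaller slice $S'$ with $x^*\in S'$ and $\overline{S'}^{\,w^*}\subset S$). A set that is a finite union of pieces of diameter $\le\eps$ has Kuratowski index $\le 2\eps$ (if two points lie in pieces of diameter $\le\eps$ and we have more than the number of pieces many points, two lie in the same piece; a cleaner bound: $n+1$ points forced into $n$ pieces of diameter $\le\eps$ give two within $\eps$, so $\alpha \le \eps \le 2\eps$). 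Hence $S$ (or $S'$) witnesses $x^*\notin k_{2\eps}(F)$, possibly after shrinking; here one may need to absorb a small loss, which is exactly what the factor $2$ on the left versus $1$ inside the convex hull on the right is buying. I should double-check whether the clean constant is $\eps$ or whether the $2\eps$ is genuinely needed because of the half-space-versus-open-set mismatch; in either case $2\eps$ suffices.

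For the first inclusion $c_{4\eps}(F)\subset k_{2\eps}(F)$, I would again argue contrapositively: suppose $x^*\in F\setminus k_{2\eps}(F)$, so there is a weak$^*$-open half-space $S$ with $x^*\in S$ and $\alpha(S\cap F)\le 2\eps$. The goal is to show $x^*\notin c_{4\eps}(F)$, i.e.\ to separate $x^*$ from $\overline{\mathrm{conv}}^{\,w^*}(s_{4\eps}(F))$ by a weak$^*$-open half-space. The natural candidate separating functional is the one already defining $S$. So it suffices to prove $s_{4\eps}(F)\subset K\setminus S'$ for some slightly smaller half-space $S'\ni x^*$ with $\overline{S'}^{\,w^*}\subset S$ — equivalently, that $s_{4\eps}(F)\cap S=\varnothing$ up to a harmless shrinking — i.e.\ that every point of $F$ lying in (a slightly shrunk) $S$ is removed by $s_{4\eps}$. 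Now if $y^*\in S\cap F$, then a small enough weak$^*$-neighborhood $V$ of $y^*$ satisfies $V\cap F\subset S\cap F$, and since $\alpha(S\cap F)\le 2\eps$ one has $\mathrm{diam}(S\cap F)$ controlled: indeed a set of Kuratowski index $\le 2\eps$ has diameter $\le$ something like $4\eps$ — more precisely, if $\alpha(A)\le 2\eps$ then $A$ cannot contain an infinite $2\eps$-separated set, hence is totally bounded at scale $2\eps$, and a standard pigeonhole/Ramsey argument bounds $\mathrm{diam}(A)$ by a multiple of $\eps$; the factor $4$ on the left is chosen to make this go through. Thus $\mathrm{diam}(V\cap F)\le 4\eps$, so $y^*\notin s_{4\eps}(F)$. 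Taking weak$^*$-closed convex hulls and using that $S'$ (hence $\overline{S'}^{\,w^*}$) is convex and misses $s_{4\eps}(F)$ gives $x^*\in S'$ disjoint from $c_{4\eps}(F)$, as desired.

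The main obstacle, and the only place real care is needed, is the passage between the three distinct ``scales'': diameter of a relatively open piece, Kuratowski (separation) index of a slice, and diameter of a slice. Concretely, the quantitative lemma I must nail down is: \emph{for a bounded set $A$, $\mathrm{diam}(A)$, $\alpha(A)$, and the least $\delta$ such that $A$ is a finite union of sets of diameter $\le\delta$ are all comparable up to the constants $2$ and $4$ appearing in the statement}. The inequality $\alpha(A)\le \mathrm{diam}(A)$ is trivial; the reverse $\mathrm{diam}(A)\le C\alpha(A)$ requires an argument (a totally bounded set whose $\delta$-separated subsets have bounded cardinality has diameter $O(\delta)$ — but actually one wants it uniformly, which forces the precise constant). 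I expect the bookkeeping of which half-space to shrink to, so that closures stay inside open sets and convex hulls can be separated, to be routine but the thing most likely to hide an off-by-a-factor error; the convexity of $F$ is what makes the half-space (rather than general open set) formulation of $k_\eps$ compatible with Hahn--Banach separation, and should be invoked explicitly.
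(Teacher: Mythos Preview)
The paper does not include its own proof of this proposition; it is quoted from H\'ajek--Lancien \cite{HajekLancien2007}. So I assess your attempt directly.

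Your argument for the second inclusion $k_{2\eps}(F)\subset c_\eps(F)$ is correct. Separating $x^*\notin c_\eps(F)$ from $s_\eps(F)$ by a weak$^*$-open half-space $S$, shrinking to a half-space $S'\ni x^*$ with $\overline{S'}\cap F\subset S$, covering the weak$^*$-compact set $\overline{S'}\cap F$ by finitely many weak$^*$-open sets $V_i$ with $\diam(V_i\cap F)\le\eps$, and applying the pigeonhole principle gives $\alpha(S'\cap F)\le\eps\le 2\eps$. As you suspect, the factor $2$ is slack here.

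For the first inclusion $c_{4\eps}(F)\subset k_{2\eps}(F)$, however, there is a genuine gap. You take a half-space $S\ni x^*$ with $\alpha(S\cap F)\le 2\eps$ and then claim that ``a set of Kuratowski index $\le 2\eps$ has diameter $\le$ something like $4\eps$ \ldots\ a standard pigeonhole/Ramsey argument bounds $\diam(A)$ by a multiple of $\eps$''. This is false for the index $\alpha$ as defined here, namely the supremum of $\delta$ for which $A$ contains \emph{arbitrarily large} $\delta$-separated subsets. Any finite set has $\alpha=0$: for instance $A=\{0,u\}$ with $\|u\|=1$ satisfies $\alpha(A)=0$ but $\diam(A)=1$. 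Total boundedness at scale $2\eps$ says nothing whatsoever about diameter, so you cannot deduce $\diam(V\cap F)\le 4\eps$ for $V\subset S$, and the argument stops there.

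The correct route runs in the opposite direction and uses an ingredient you do not mention: weak$^*$-lower semicontinuity of the norm. Since $k_{2\eps}(F)$ is the intersection of $F$ with closed half-spaces, it is weak$^*$-closed and convex; hence it suffices to show $s_{4\eps}(F)\subset k_{2\eps}(F)$. Given $y^*\in s_{4\eps}(F)$ and a half-space $S\ni y^*$, every weak$^*$-neighbourhood of $y^*$ meets $F$ in a set of diameter $>4\eps$ and therefore contains a point of $F$ at norm distance $>2\eps$ from $y^*$. Using metrizability of $(K,w^*)$ one obtains a sequence $(c_n)\subset S\cap F$ with $c_n\to y^*$ weak$^*$ and $\|c_n-y^*\|>2\eps$. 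For each fixed $j$, weak$^*$-lsc of the norm gives $\liminf_n\|c_n-c_j\|\ge\|y^*-c_j\|>2\eps$, so one recursively extracts an infinite subsequence with all pairwise distances $>2\eps$, forcing $\alpha(S\cap F)\ge 2\eps$ (the precise convention on strict versus non-strict inequalities in the derivations absorbs the boundary case). The conceptual point is that a single element of $s_{4\eps}(F)$ inside a slice \emph{generates} arbitrarily large separated families there, which contradicts small $\alpha$; your attempt to run the implication the other way, from small $\alpha$ to small diameter, cannot work.
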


For the derivation $s_\eps$, the following statement is due to B. Bossard. Its proof can be found in his PhD thesis \cite{Bossard1994}, but unfortunately not in his paper \cite{Bossard2002}. We detail here an adaptation of this proof for the derivation $k_\eps$.

\begin{proposition}\label{Borelderivations} Let $\eps>0$. Then, the maps $s_\eps:\cal F(K) \to \cal F(K)$ and $k_\eps:\cal F(K) \to \cal F(K)$ are Borel. 
\end{proposition}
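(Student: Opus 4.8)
The plan is to prove that $s_\eps$ and $k_\eps$ are Borel by showing that the preimage of each subbasic Borel set generating the Effros–Vietoris $\sigma$-algebra on $\mathcal F(K)$ is Borel. Since the Borel structure on $\mathcal F(K)$ is generated by the sets $\{F : F\cap O\neq\varnothing\}$ for $O$ open in $K$, it suffices to check that for a fixed open $O\subset K$, the set $\{F\in\mathcal F(K): \partial_\eps(F)\cap O\neq\varnothing\}$ is Borel, where $\partial_\eps$ is either $s_\eps$ or $k_\eps$. Because $K$ is compact metrizable and second countable, I can fix a countable basis $(O_m)_{m\in\Ndb}$ of $K$ and reduce to the case $O=O_m$; moreover it is enough to treat $O$ ranging over a countable basis, so I will work with basic open sets throughout.

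First I would handle the derivation $s_\eps$ (following Bossard) as a warm-up template, and then adapt it to $k_\eps$. The key is to rewrite the condition ``$x^*\in s_\eps(F)$'' in a countable first-order way. For $s_\eps$: a point $x^*\in F$ fails to be removed precisely when \emph{every} weak$^*$-open set $V$ containing $x^*$ has $\diam(V\cap F)>\eps$; equivalently, for every basic open $V\ni x^*$ there exist $y^*,z^*\in V\cap F$ with $\|y^*-z^*\|>\eps$. Thus $s_\eps(F)\cap O\neq\varnothing$ iff there is a basic open $V\subset O$ such that for every basic open $W\subset V$ with $W\cap F\neq\varnothing$ one can find within $W\cap F$ two points at distance close to (but the delicate point is: exceeding, in the limit) $\eps$. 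The standard move to make this Borel is to pass to a countable dense set and use closed balls: $s_\eps(F)\cap O\neq\varnothing$ iff for each $n$ there is a basic open $V$ with $\overline V\subset O$ so that for every basic open $W$ meeting $F$ inside $V$, the set $W\cap F$ contains points at distance $\ge\eps-1/n$, and then take the intersection over $n$. Each clause ``$W\cap F\neq\varnothing$'' is Borel by definition of the Effros structure, and ``$W\cap F$ contains two points at distance $\ge\delta$'' is Borel since it can be written using countably many conditions of the form $F\cap(W\cap B)\neq\varnothing$ for $B$ ranging over a countable family of small balls; countable unions and intersections of Borel conditions are Borel, giving the claim.

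For $k_\eps$ the structure is the same but with two changes: the open sets $S$ one removes along are weak$^*$-open \emph{half-spaces}, and the smallness parameter is the Kuratowski index $\alpha$ rather than the diameter. A weak$^*$-open half-space in $K$ is of the form $\{x^*: \langle x^*,y\rangle < r\}$ with $y\in\ell_1$, $r\in\Rdb$; since $\ell_1$ has a countable dense subset $(y_j)$ and $r$ can be taken rational, there is a countable family $(S_j)$ of basic half-spaces, and every weak$^*$-open half-space is an increasing union of basic ones, which lets me reduce $k_\eps(F)\cap O\neq\varnothing$ to a countable formula exactly as before. The condition ``$\alpha(S\cap F)\le\eps$'' must be unpacked: by definition $\alpha(A)\le\eps$ means that for every $\delta>\eps$ there is some $N$ such that one cannot find $N$ points of $A$ pairwise $\ge\delta$ apart; equivalently, negating, $\alpha(S\cap F)>\eps$ iff for some rational $\delta>\eps$, for all $N$ there exist $x_1^*,\dots,x_N^*\in S\cap F$ pairwise $\ge\delta$ apart. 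The existence of $N$ points of $F$ inside a fixed open set, pairwise at distance $\ge\delta$, is a Borel condition on $F$: it is a countable union over $N$-tuples of basic balls $(B_1,\dots,B_N)$ that are pairwise $\delta$-separated (a condition on the balls alone) of the Borel set $\bigcap_{i=1}^N\{F: F\cap(S\cap B_i)\neq\varnothing\}$. Assembling these pieces with countable Boolean operations shows $\{F: k_\eps(F)\cap O\neq\varnothing\}$ is Borel.

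The main obstacle is the usual subtlety with strict versus non-strict inequalities in ``limit'' conditions: the derivation is defined by removing points that sit in a neighborhood of diameter (or $\alpha$-index) $\le\eps$, so the complementary ``surviving'' condition involves every neighborhood having diameter $>\eps$ (strict), which is not obviously the intersection over $n$ of the conditions ``$\ge\eps-1/n$''. The careful way around this — and the place where one must be precise, as Bossard is in his thesis — is to exploit that $F$ is \emph{compact}: on a compact set the diameter of $V\cap F$ over shrinking basic neighborhoods $V$ of a point is attained, and one sets up the quantifiers so that $s_\eps(F)$ (resp.\ $k_\eps(F)$) is realized as a countable intersection $\bigcap_n D_n(F)$ of Borel sets where $D_n$ uses threshold $\eps$ but only finitely many neighborhoods at ``resolution $1/n$,'' so that the strict inequality emerges in the limit precisely because of compactness. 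I would state and use the compactness of $K$ explicitly at this point, mirroring the $s_\eps$ argument from \cite{Bossard1994}, and then observe that the half-space and Kuratowski-index modifications do not interact with this limiting argument — they only replace one countable index family (basic open sets, pairs of balls) by another (basic half-spaces, separated $N$-tuples of balls) — so the same bookkeeping yields Borelness of $k_\eps$.
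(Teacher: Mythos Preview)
Your approach contains a genuine gap at exactly the point you flag as delicate. You propose to show that $\{F : s_\eps(F) \cap O \neq \varnothing\}$ is Borel via the characterization: ``for each $n$ there is a basic open $V$ with $\overline V\subset O$ so that for every basic open $W\subset V$ with $W\cap F\neq\varnothing$, the set $W\cap F$ contains points at distance $\ge\eps-1/n$.'' But this is not equivalent to $s_\eps(F)\cap O\neq\varnothing$: even if a surviving point $x^*\in s_\eps(F)\cap V$ exists, there may be another point $y^*\in F\cap V$ which is isolated in $F$ (or near which $F$ has small diameter), and a small basic $W\subset V$ around $y^*$ violates your condition while $s_\eps(F)\cap O$ is still nonempty. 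The quantification ``for every $W\subset V$'' is too strong --- you need it to hold only \emph{near the surviving point}, but you have eliminated that point from the statement. Compactness of $K$ does not rescue this as you suggest, because the set $F\cap O$ over which you are implicitly quantifying existentially is not compact; nor does a limit over $n$ help, since the witnessing $V$'s may move.

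The paper sidesteps this entirely by working with the \emph{other} generating family for the Effros--Borel structure, namely $\{F : F\subset O\}$ for $O$ open (recalled just before Proposition~\ref{F(K)coding}). One then has $k_\eps(F)\subset O$ if and only if every $x^*\in F\setminus O$ lies in some half-space $S\in\mathcal S$ with $\alpha(S\cap F)\le\eps$; since $F\setminus O$ is weak$^*$-\emph{compact}, this reduces to the existence of a finite $I\subset\mathcal S$ of such half-spaces covering $F\setminus O$, giving directly
\[
\{F : k_\eps(F)\subset O\} = \bigcup_{I\in\mathcal P_f(\mathcal S)}\Bigl[\bigl\{F : F\subset O\cup\textstyle\bigcup_{S\in I}S\bigr\}\cap\bigcap_{S\in I}\{F:\alpha(S\cap F)\le\eps\}\Bigr],
\]
a countable union of Borel sets, with no strict/non-strict issue and no limit in $n$. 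Your treatment of ``$\{F:\alpha(S\cap F)\le\eps\}$ is Borel'' is essentially correct and matches the paper's lemma; only the outer quantifier structure needs repair, and switching generating families as the paper does is the clean fix.
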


\begin{proof}
First we fix a dense sequence $(x_n)_{n=1}^\infty$ in $\ell_1$ and a dense sequence $(r_k)_{k=1}^\infty$ in $\Rdb$ and we denote by $\cal S$ the countable set of weak$^*$ open slices of the form $\{\text{Re}\,x_n>r_k\}$ or $\{\text{Re}\,x_n<r_k\}$. We also fix a countable base $\cal V$ of open sets for the topology of $K$. For $x^*\in K$, we denote $\cal S(x^*)$ the set of elements of $\cal S$ containing $x^*$ and $\cal V(x^*)$ the set of elements of $\cal V$ containing $x^*$.

Fix now $O$ an open subset of $K$. We need to show that $H(O)=\{F \in \cal F(K),\ k_\eps(F)\subset O\}$ is Borel. First note that an easy approximation argument implies that
\begin{align*}
H(O)=\{F\in \cal F(K),\ \forall x^*\in F\setminus O\ \exists S\in \cal S(x^*)\ \alpha(S\cap F)\leqslant \eps\}.
\end{align*}
Then, using weak$^*$-compactness, we deduce that 
$$H(O)=\bigcup_{I\in \cal P_f(\cal S)}\Big[\Big\{F\in \cal F(K),\ F\subset (\bigcup_{S\in I}S)\cup O\Big\}\cap \bigcap_{S\in I}\Big\{F \in \cal F(K),\ \alpha(S\cap F)\leqslant \eps\Big\}\Big],$$ 
where $\cal P_f(\cal S)$ denotes the set of finite subsets of $\cal S$. This set being countable and the sets $\{F\in \cal F(K),\ F\subset (\bigcup_{S\in I}S)\cup O\}$ being open, we only need to show the following lemma.

\begin{lemma} For any $S\in \cal S$ and any $\eps>0$, the set $\{F \in \cal F(K),\ \alpha(S\cap F)\leqslant \eps\}$ is Borel.
\end{lemma}

Let us fix $S\in \cal S$ and $\eps>0$. Noting that $\alpha(S \cap F)>\eps$ if and only if there exists $\delta>\eps$ such that for all $n\in \Ndb$, there exists $x_1^*,\ldots,x_n^*$ in $S\cap F$ such that $\|x_i^*-x_j^*\|> \delta$ for all $i\neq j$. We only need to show that for all $n\in \Ndb$, the set 
$$A_n=\{F\in \cal F(K),\ \exists x_1^*,\ldots,x_n^* \in S\cap F\ \|x_i^*-x_j^*\|> \delta\ \text{for all}\  i\neq j\}$$
is Borel. 

It is easy to see that if $\|x_i^*-x_j^*\|> \delta$ for all $i\neq j$, then there exist $V_1,\ldots,V_n$ so that for all $i\leqslant n$, $V_i\in \cal V(x_i^*)$ and for all $i \neq j$ and all $(x^*,y^*)\in V_i\times V_j$, $\|x^*-y^*\|>\delta$. So let us denote 
$$\cal W_n=\{(V_1\times \cdots \times V_n)\in \cal V^n,\ \forall (x_1^*,\ldots,x_n^*)\in (V_1\times \cdots \times V_n)\ \forall i\neq j\ \|y_i^*-y_j^*\|> \delta\}.$$
Then we can write 
$$A_n=\bigcup_{(V_1\times \cdots \times V_n)\in \cal W_n} \bigcap_{i=1}^n \{F\in \cal F(K),\ F\cap V_i \cap S \neq \emptyset\}.$$
This shows that $A_n$ is open in $\cal F(K)$ and finishes the proof.

\end{proof}

We can now  prove our regularity result on the classes $\textsf{A}_p$ and $\textsf{N}_p$.

\begin{theorem} Let $p\in (1,\infty]$. The classes $\emph{\textsf{Sep}} \cap \emph{\textsf{A}}_p$ and $\emph{\textsf{Sep}} \cap \emph{\textsf{N}}_p$ are Borel.
\end{theorem}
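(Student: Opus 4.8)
The plan is to use the dual characterizations of $\textsf{A}_p$ and $\textsf{N}_p$ recorded in Theorems \ref{atheorem}(iii) and \ref{ntheorem}(iii), together with the Borelness of the Szlenk-type derivations established in Proposition \ref{Borelderivations} and the equivalence of $Cz$ with $Kz$ from Proposition \ref{Cz=Kz}. The key point is that membership in $\textsf{A}_p$ is equivalent to having $q$-summable Szlenk index, and membership in $\textsf{N}_p$ is equivalent to $Cz(X,\eps)\leqslant K\eps^{-q}$ for some constant $K$; both of these are conditions on the iterated derivations of the dual unit ball, and by Proposition \ref{F(K)coding} one can realize this dual unit ball (up to passing through the coding $c_d$) as the image under the Borel map $k$ of a point in $\ell_1^\omega$. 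By Proposition \ref{equivalentcoding}, it suffices to show that $c_d^{-1}(\textsf{Sep}\cap\textsf{A}_p)$ and $c_d^{-1}(\textsf{Sep}\cap\textsf{N}_p)$ are Borel in $\ell_1^\omega$.

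\textbf{Steps.} First I would fix $\overline{x}\in\ell_1^\omega$ and note that $K(\overline{x}):=k(\overline{x})$ is the unit ball of $(\ell_1/\overline{\mathrm{sp}}(\overline{x}))^*$ inside $K$, and that $\overline{x}\mapsto k(\overline{x})$ is Borel from $\ell_1^\omega$ to $\cal F(K)$. Second, for fixed $\eps>0$ and fixed $n\in\nn$, the map $F\mapsto s_\eps^n(F)$ (the $n$-fold iterate) is Borel as a composition of $n$ copies of the Borel map $s_\eps$ from Proposition \ref{Borelderivations}; likewise $F\mapsto k_\eps^n(F)$ is Borel, and via Proposition \ref{Cz=Kz} the convex index $Cz$ can be controlled by the index $Kz$. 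Third, I would translate the index conditions into countable combinations of Borel sets. For $\textsf{A}_p$: $X$ has $q$-summable Szlenk index iff there is $c\in\nn$ such that for every $n$ and every choice of positive rationals $\eps_1,\dots,\eps_n$ with $\sum\eps_i^q>c^q$ one has $s_{\eps_1}\cdots s_{\eps_n}(B_{X^*})=\varnothing$; since the derivations $s_{\eps_i}$ are Borel and the emptiness of a closed set is a Borel condition on $\cal F(K)$, each such requirement is Borel, and we take a countable union over $c$ and a countable intersection over the (countably many) relevant finite rational tuples. For $\textsf{N}_p$: using Theorem \ref{ntheorem}(iii) and Proposition \ref{Cz=Kz}, the condition $Cz(X,\eps)\leqslant K\eps^{-q}$ for all $\eps\in(0,1)$ is, up to absorbing constants, equivalent to a condition of the form: there exists $K\in\nn$ such that for all rational $\eps\in(0,1)$, $k_\eps^{\lceil K\eps^{-q}\rceil}(B_{X^*})=\varnothing$ (one has to be slightly careful passing between $c_\eps$ and $k_\eps$ via the inclusions $c_{4\eps}(F)\subset k_{2\eps}(F)\subset c_\eps(F)$, which only changes the constants involved, not the Borel nature). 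Again this is a countable union over $K$ of countable intersections over rational $\eps$ of Borel conditions. Finally, composing with the Borel maps $c_d$ and $k$ (and using that it suffices to test emptiness on $k(\overline{x})$ rather than literally on $B_{X^*}$, since a weak$^*$ continuous quotient identification is in play) yields that $c_d^{-1}(\textsf{Sep}\cap\textsf{A}_p)$ and $c_d^{-1}(\textsf{Sep}\cap\textsf{N}_p)$ are Borel, and Proposition \ref{equivalentcoding} completes the proof.

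\textbf{Main obstacle.} The routine-looking but genuinely delicate point is the bookkeeping that connects the abstract derivations $s_\eps, c_\eps$ on subsets of $X^*$ (as defined in Section \ref{properties}) with the concrete derivations on $\cal F(K)$ used in Proposition \ref{Borelderivations}, and then with the coding map $k$: one must check that $k(\overline{x})$, sitting inside the fixed compact metrizable space $K$, carries the weak$^*$ topology of the dual ball of $\ell_1/\overline{\mathrm{sp}}(\overline{x})$ faithfully enough that the iterated derivations computed in $\cal F(K)$ agree (as to emptiness, and up to the universal constants in Proposition \ref{Cz=Kz}) with those defining $q$-summable Szlenk index and the convex Szlenk index. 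Once that identification is in place, the descriptive-set-theoretic part — finitely iterating Borel maps, testing emptiness (a Borel condition), and taking countable unions and intersections over rational parameters and integer constants — is mechanical. I would also remark that the restriction to $Sz(X)\leqslant\omega$ (equivalently $Sz(X,\eps)\in\nn$ for each $\eps$) is exactly what makes the relevant number of iterations finite and thus keeps everything at the Borel level rather than merely coanalytic.
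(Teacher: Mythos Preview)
Your proposal is correct and follows essentially the same route as the paper's proof: both use the dual characterizations of $\textsf{A}_p$ and $\textsf{N}_p$ (Theorems~\ref{atheorem} and~\ref{ntheorem}), express the $q$-summable Szlenk index and power-type $Cz$ conditions as countable Boolean combinations of Borel sets via the Borel derivations $s_\eps$ and $k_\eps$ from Proposition~\ref{Borelderivations} (using Proposition~\ref{Cz=Kz} to pass between $Cz$ and $Kz$), and then transfer to $\textsf{SB}$ through the coding maps of Propositions~\ref{equivalentcoding} and~\ref{F(K)coding}. Your ``main obstacle'' is somewhat overstated: since $k(\overline{x})$ is by definition the dual unit ball of $\ell_1/\overline{\mathrm{sp}}(\overline{x})$ sitting inside $K=(B_{\ell_\infty},w^*)$ with its genuine weak$^*$ topology, the derivations computed in $\cal F(K)$ agree on the nose with the abstract ones, so no extra identification argument is needed.
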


\begin{proof} It follows from Proposition \ref{Borelderivations} that for fixed $\eps_1,\ldots,\eps_n>0$, the set 
$$\{F\in \cal F(K),\ s_{\eps_1}\ldots s_{\eps_n}(F) \neq \emptyset\}$$ 
is Borel. Let $q$ be the conjugate exponent of $p$. We have that $F$ has $q$-summable Szlenk index if and only if  
$$F \in \bigcup_{c\ge1}\bigcap_{\sum_{i=1}^n \eps_i^q>c}\{G\in \cal F(K), s_{\eps_1}\ldots s_{\eps_n}(G) = \emptyset\}.$$
Of course this union and this intersection can be taken countable, so that the condition ``having a $q$-summable Szlenk index'' is Borel. We can now apply Propositions \ref{equivalentcoding} and \ref{F(K)coding} to deduce that the class of separable Banach spaces with $q$-summable Szlenk index is Borel. Finally we use the fact that spaces in $\textsf{A}_p$ are exactly those with $q$-summable Szlenk index (Theorem \ref{atheorem}), to conclude that the class $\textsf{Sep} \cap \textsf{A}_p$ is Borel. 

Similarly, we deduce from Proposition \ref{Borelderivations} that the set of all $F\in \cal F(K)$ such that there exists $C\ge 1$ so that $Kz(F,\eps)\leqslant C\eps^{-p}$ for all $\eps\in (0,1)$ is a Borel subset of $\cal F(K)$. We recall (Theorem \ref{ntheorem}) that a Banach space is in $\textsf{N}_p$ if and and only if its convex Szlenk index is of power type $p$. So we conclude, applying Propositions \ref{equivalentcoding}, \ref{F(K)coding} and \ref{Cz=Kz} that the class $\textsf{Sep} \cap \textsf{N}_p$ is Borel. 

\end{proof}

\section{First tools}

\subsection{Model spaces}\label{modelspaces} In this subsection, we introduce the fundamental spaces that we will use to build our universal families for $\textsf{{A}}_p \cap \textsf{{Sep}}$ and $\textsf{{N}}_p \cap \textsf{{Sep}}$. 

For subsets $F,G$ of $\nn$, we write $F<G$ to mean that either $F=\varnothing$, $G=\varnothing$, or $\max F<\min G$. For $n\in\nn$ and $F\subset \nn$, we write $n\leqslant F$ to mean that $F\neq \varnothing$ and $n\leqslant \min F$.   For $f\in c_{00}$, we let $\supp(f)=\{i\in\nn:f(e_i)\neq 0\}$. For $f,g\in c_{00}$, we write $f<g$ to mean that $\supp(f)<\supp(g)$.  For $n\in\nn$ and $f\in c_{00}$, we write $n\leqslant f$ to mean that $n\leqslant \supp(f)$. 

Before introducing our spaces, we need to recall the construction of the \emph{$q$-convexification} $X^q$ of a Banach space $X$. First, if $X$ is a Banach space with Schauder basis $(e_j)_{j=1}^\infty$ and $C\ge 1$, then $(e_j)_{j=1}^\infty$ is said to be  \emph{$C$-unconditional},  if for all $(a_j)_{j=1}^\infty \in c_{00}$ and all $(\eps_j)_{j=1}^\infty \in \{-1,1\}^\Ndb$, 
$$\Bigl\|\sum_{j=1}^\infty \eps_ja_je_j\Bigr\|_X \leqslant C\Bigl\|\sum_{j=1}^\infty a_je_j\Bigr\|_X.$$
Let now $q\in [1,\infty)$ and $X$ be a Banach space with a normalized $1$-unconditional basis $(e_j)_{j=1}^\infty$. We set 
$$X^q=\Big\{x=(x_j)_{j=1}^\infty \in \Kdb^\Ndb,\ x^q=\sum_{j=1}^\infty |x_j|^qe_j \in X\Big\}$$
and endow it with the norm $\|x\|_{X^q}=\|x^q\|_X^{1/q}$. We also denote $(e_j)_{j=1}^\infty$ the sequence of coordinate vectors in $X^q$. It is clear that $(e_j)_{j=1}^\infty$ is a  normalized $1$-unconditional basis of $X^q$ and that $X^1$ is isometric to $X$. Also, the triangle inequality implies that $X^q$ is $q$-convex with constant $1$, meaning that for any $x_1,\ldots,x_n \in X^q$ (we write $x_k=(x_{k,j})_{j=1}^\infty$, for $1\le k\le n$), we have
$$\Big\|\sum_{j=1}^\infty\big(|x_{1,j}|^q+\cdots+|x_{n,j}|^q\big)^{1/q}e_j\Big\|_{X^q} \leqslant \big(\|x_1\|_{X^q}^q+\cdots+\|x_n\|_{X^q}^q\big)^{1/q}.$$
Note that that if $x_1,\ldots,x_n \in X^q$ have disjoint supports with respect to $(e_j)_{j=1}^\infty$, then $(x_1+\cdots+x_n)^q=x_1^q+\cdots+x_n^q$ and 
$$\|x_1+\cdots+x_n\|_{X^q}^q\leqslant \|x_1\|_{X^q}^q+\cdots+\|x_n\|_{X^q}^q.$$

We now proceed with the construction of our model spaces, starting with the model spaces for property $\textsf{A}_p$. Fix $p\in (1,\infty]$ and let $q$ be its conjugate exponent. Fix also $\theta \in (0,1)$. We recall that the Tsirelson space $T_{\theta^q}$ is the completion of $c_{00}$ under the implicitly defined norm 
\[\|x\|_{T_{\theta^q}} = \max\Bigl\{\|x\|_{c_0}, \theta^q \sup\bigl\{\sum_{i=1}^n \|I_ix\|_{T_{\theta^q}} : n\in \nn, n\leqslant I_1<\ldots < I_n\bigr\}\Bigr\}.\]
This norm is built as the limit of the inductively defined following sequence of norms: $\|x\|_0=\|x\|_\infty$ and, for $l \in \Ndb$:
$$\|x\|_l=\max\Bigl\{\|x\|_{l-1}, \theta^q \sup\bigl\{\sum_{i=1}^n \|I_ix\|_{l-1} : n\in \nn, n\leqslant I_1<\ldots < I_n\bigr\}\Bigr\}.$$
We refer the reader to the book by Casazza and Shura \cite{CasazzaShura}  for all the necessary background on Tsirelson spaces. We recall that the canonical basis of $T_{\theta^q}$ is $1$-unconditional and we let $T_{q,\theta}$ be the $q$-convexification of $T_{\theta^q}$. It follows from our preceding remarks that the canonical basis of $T_{q,\theta}$ is shrinking. It is also known that $T_{\theta^q}$ is reflexive, from which we have that its canonical basis is boundedly complete and therefore, so is the canonical basis of $T_{q,\theta}$. In particular $T_{q,\theta}$ is reflexive. 

It will be convenient for us to describe $T_{q,\theta}$ through a norming subset of its dual. So denote $(e_i^*)_{i=1}^\infty$ the dual basis of the canonical basis of $c_{00}$ and define 
\[K_0= \{\lambda e^*_i: i\in\nn, |\lambda|\leqslant 1\},\] \[K_l=K_{l-1}\cup \Bigl\{\theta \sum_{i=1}^n a_if_i: n\in\nn, n\leqslant f_1<\ldots < f_n, f_i\in K_{l-1}, f_1\neq 0, (a_i)_{i=1}^n \in B_{\ell_p^n}\Bigr\},\]  and \[K=\bigcup_{l=0}^\infty K_l.\]  When necessary, we will write $K^\theta_i$ in place of $K_i$.  

\begin{proposition} For all $x\in c_{00}$, $\|x\|_{T_{q,\theta}}=\sup_{f\in K}|f(x)|$. Moreover, the closed unit ball of $T_{q,\theta}^*$ is $\overline{\text{co}}(K)$, the closed convex hull of $K$.
\end{proposition}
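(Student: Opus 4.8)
The plan is to reduce everything to the defining inductive construction of the Tsirelson norm together with the isometric duality $(\ell_p^n)^*=\ell_q^n$. For $x\in c_{00}$ write $x^q=\sum_j|x_j|^qe_j$ and let $\|\cdot\|_l$ denote the $l$-th norm in the inductive definition of $\|\cdot\|_{T_{\theta^q}}$. I would prove by induction on $l\geqslant 0$ that
\[\Bigl(\sup_{f\in K_l}|f(x)|\Bigr)^q=\|x^q\|_l\qquad\text{for all }x\in c_{00}.\]
Since the norms $\|\cdot\|_l$ increase to $\|\cdot\|_{T_{\theta^q}}$ on $c_{00}$ and $K=\bigcup_l K_l$, letting $l\to\infty$ yields $\sup_{f\in K}|f(x)|=\|x^q\|_{T_{\theta^q}}^{1/q}=\|x\|_{T_{q,\theta}}$, which is the first assertion. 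The base case $l=0$ is immediate: both sides equal $\max_j|x_j|^q$.

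For the inductive step I would first record a standard fact about the norming sets: each $K_l$ is stable under coordinate restrictions, that is, if $f\in K_l$ and $E\subset\nn$ then the functional obtained by setting to $0$ all coordinates of $f$ outside $E$ still belongs to $K_l$. This follows by an easy induction on $l$, discarding the summands that become $0$ and noting that the admissibility requirement $n\leqslant f_1<\dots<f_n$ and the constraint $(a_i)_{i=1}^n\in B_{\ell_p^n}$ are preserved under passing to a subfamily. The other ingredient is that for scalars $c_1,\dots,c_n$ one has $\sup\{|\sum_{i=1}^na_ic_i|:(a_i)_{i=1}^n\in B_{\ell_p^n}\}=(\sum_{i=1}^n|c_i|^q)^{1/q}$. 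Putting these together, the ``second half'' of $\sup_{f\in K_l}|f(x)|$ equals $\theta\sup\{(\sum_{i=1}^n|f_i(x)|^q)^{1/q}: n\leqslant f_1<\dots<f_n,\ f_i\in K_{l-1}\}$; raising to the power $q$ and applying the inductive hypothesis for $K_{l-1}$ to the restricted vectors $E_ix$ with $E_i=\supp f_i$ (for the ``$\leqslant$'' direction) and the restriction fact (for the ``$\geqslant$'' direction, to produce $f_i\in K_{l-1}$ supported in prescribed sets $I_i$ and nearly attaining $\|I_ix^q\|_{l-1}$) identifies it with $\theta^q\sup\{\sum_{i=1}^n\|I_ix^q\|_{l-1}:n\leqslant I_1<\dots<I_n\}$. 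Combined with the inductive hypothesis for $K_{l-1}$, which accounts for the ``first half'', this is precisely $\|x^q\|_l$.

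For the ``moreover'' part, the first assertion gives $|f(x)|\leqslant\|x\|_{T_{q,\theta}}$ for every $f\in K$ and every $x\in c_{00}$, hence $K\subset B_{T_{q,\theta}^*}$, so the weak$^*$-closed convex hull $C$ of $K$ is contained in $B_{T_{q,\theta}^*}$. Conversely $C$ is weak$^*$-closed, convex and balanced, and by the first assertion together with the density of $c_{00}$ in $T_{q,\theta}$ it is $1$-norming; a standard Hahn--Banach separation argument in $(T_{q,\theta}^*,w^*)$ shows that a $1$-norming weak$^*$-closed convex balanced subset of the unit ball of a dual space must be the whole unit ball, so $C=B_{T_{q,\theta}^*}$. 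Since $T_{q,\theta}$ is reflexive, the weak and weak$^*$ topologies on $T_{q,\theta}^*$ coincide, so the weak$^*$-closed convex hull of $K$ equals the norm-closed convex hull $\overline{\text{co}}(K)$, and the proof is complete.

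The only step requiring genuine care is the inductive step, and there the delicate point is to match the admissibility condition ``$n\leqslant f_1<\dots<f_n$'' on the functional side with ``$n\leqslant I_1<\dots<I_n$'' on the vector side; this is exactly what restriction-stability of the $K_l$ provides, the remaining content being the $\ell_p$--$\ell_q$ duality and routine bookkeeping with supports (including the harmless case of vanishing summands).
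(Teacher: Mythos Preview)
Your proof is correct and follows the same approach as the paper: both establish by induction on $l$ that $\|x^q\|_l=\sup_{f\in K_l}|f(x)|^q$, and both deduce the second assertion from reflexivity (you spell out the intermediate Hahn--Banach separation argument that the paper leaves implicit). Your treatment is simply a fleshed-out version of the paper's two-line sketch.
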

\begin{proof} An easy induction shows that for all $x\in c_{00}$ and all $l\in \{0\}\cup \Ndb$, we have $\|x^q\|_{l}=\sup_{f\in K_l}|f(x)|^q$. We deduce immediately the first statement. The second assertion then follows from the reflexivity of $T_{q,\theta}$.
\end{proof}

\begin{proposition} Let $f_1,\ldots,f_n \in B_{T_{q,\theta}^*}$ such that $n\leqslant f_1<\cdots<f_n$. Then, 
$$\forall (b_i)_{i=1}^n\in B_{\ell_p^n},\ \ \theta\sum_{i=1}^nb_if_i \in B_{T_{q,\theta}^*}.$$
\end{proposition}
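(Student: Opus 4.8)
The plan is to prove the stronger pointwise estimate $|(\theta\sum_{i=1}^n b_if_i)(x)|\le\|x\|_{T_{q,\theta}}$ for every $x\in c_{00}$, which suffices since $c_{00}$ is dense in $T_{q,\theta}$. The tool is the dictionary between the $q$-convexification norm and the Tsirelson norm: writing $P_E$ for the restriction to a finite coordinate set $E\subset\nn$, one has $(P_Ez)^q=P_E(z^q)$, hence $\|P_Ez\|_{T_{q,\theta}}^q=\|P_E(z^q)\|_{T_{\theta^q}}$ for all $z\in c_{00}$; this, together with the defining inequality of $\|\cdot\|_{T_{\theta^q}}$ and H\"older's inequality, will produce the bound.

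Fix $x\in c_{00}$. Throw away those $i\le n$ with $f_i(x)=0$, and let $S=\{i_1<\dots<i_m\}$ index the rest; for $i\in S$ set $E_i=\supp f_i\cap\supp x$, a nonempty finite set. Since $n\le f_1<\dots<f_n$, the sets $(E_i)_{i\in S}$ are pairwise disjoint, $E_{i_1}<\dots<E_{i_m}$, and $m\le n\le\min E_{i_1}$, so $(E_i)_{i\in S}$ is an admissible family for the norm of $T_{\theta^q}$. As $\|f_i\|\le 1$ and $f_i(x)=f_i(P_{E_i}x)$, we have $|f_i(x)|\le\|P_{E_i}x\|_{T_{q,\theta}}$. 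Hence, since the subvector $(b_i)_{i\in S}$ lies in $B_{\ell_p^m}$, H\"older's inequality with exponent $q$ gives
$$\Bigl|\Bigl(\theta\sum_{i=1}^n b_if_i\Bigr)(x)\Bigr|\ \le\ \theta\sum_{i\in S}|b_i|\,\|P_{E_i}x\|_{T_{q,\theta}}\ \le\ \theta\Bigl(\sum_{i\in S}\|P_{E_i}x\|_{T_{q,\theta}}^q\Bigr)^{1/q}.$$

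To conclude, rewrite $\|P_{E_i}x\|_{T_{q,\theta}}^q=\|P_{E_i}(x^q)\|_{T_{\theta^q}}$ and apply the defining inequality of $\|\cdot\|_{T_{\theta^q}}$ to the admissible family $(E_i)_{i\in S}$ and the vector $x^q$:
$$\theta^q\sum_{i\in S}\|P_{E_i}(x^q)\|_{T_{\theta^q}}\ \le\ \|x^q\|_{T_{\theta^q}}\ =\ \|x\|_{T_{q,\theta}}^q.$$
Plugging this into the previous display yields $|(\theta\sum_i b_if_i)(x)|\le\theta\cdot\theta^{-1}\|x\|_{T_{q,\theta}}=\|x\|_{T_{q,\theta}}$, as desired.

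No step is really an obstacle; the only care needed is the bookkeeping of the first step — checking that intersecting the supports with $\supp x$ and discarding the null terms leaves an admissible family and keeps the coefficients in the correct ball — which is immediate from the definitions, the rest being the routine translation between $T_{\theta^q}$ and its $q$-convexification. (One could instead argue from $B_{T^*_{q,\theta}}=\overline{\text{co}}(K)$: approximate each $f_i$ in norm by a convex combination of elements of $K$, compose with $P_{\supp f_i}$ to recover the support condition, and expand the product $\theta\sum_i b_if_i$ into a convex combination of functionals $\theta\sum_i b_ih_i\in K$; this is correct but requires the auxiliary fact $P_E(K)\subseteq K$ and is messier.)
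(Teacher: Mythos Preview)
Your proof is correct and takes a genuinely different route from the paper's. The paper argues on the dual side: it uses the identification $B_{T_{q,\theta}^*}=\overline{\text{co}}(K)$, observes that $K$ is closed under interval projections so that each $f_i$ lies in the closed convex hull of $\{g\in K:\supp(g)\subset I_i\}$ for suitable intervals $n\le I_1<\cdots<I_n$, and then uses that $\theta\sum b_ig_i\in K$ by definition whenever the $g_i\in K$ have the right supports; multilinearity in the $g_i$'s and convexity finish the job. This is essentially the alternative you sketch in your final parenthetical, though the paper handles it more cleanly via closed convex hulls and interval projections rather than norm approximation.

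Your approach instead works on the primal side, bounding the action on each $x\in c_{00}$ directly via the dictionary $\|P_E z\|_{T_{q,\theta}}^q=\|P_E(z^q)\|_{T_{\theta^q}}$ and the implicit inequality defining the Tsirelson norm. This has the pleasant feature of avoiding the identification $B_{T_{q,\theta}^*}=\overline{\text{co}}(K)$ (which in the paper is deduced from reflexivity), and it makes the role of H\"older's inequality and the $q$-convexification completely transparent. The paper's argument, on the other hand, is more structural and is the one that generalizes verbatim to the $U_{q,\theta}$ and $T_{M,q,\theta}$ variants introduced immediately afterward. One small cosmetic point: the sets $E_i=\supp f_i\cap\supp x$ need not be intervals, while the paper's Tsirelson formula is stated for intervals $I_1<\cdots<I_n$; of course replacing each $E_i$ by the interval $[\min E_i,\max E_i]$ (which are still successive and satisfy $m\le\min E_{i_1}$) and using $1$-unconditionality fixes this immediately.
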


\begin{proof} Since $K$ is closed under interval projections, for positive integers $l\leqslant m$, 
$$\{f\in B_{T_{q,\theta}^*}, \supp(f)\subset [l,m]\}=\overline{\text{co}}\{g\in K,\ \supp(g)\subset [l,m]\}.$$
Let now $n\leqslant I_1<\cdots < I_n$ be intervals such that $\supp(f_i)\subset I_i$. It is clear from the definition of $K$ that for $g_1,\ldots,g_n$ so that $g_i\in K\cap \text{span}\{e_j: j\in I_i\}$, $\theta\sum_{i=1}^n b_ig_i\in K$. It then follows that $\theta\sum_{i=1}^nb_if_i \in \overline{\text{co}}(K)= B_{T_{q,\theta}^*}$.
\end{proof}

We now turn to our model spaces for property $\textsf{N}_p$ and modify the construction of the above norming subset. We let $U_{q,\theta}$ be the completion of $c_{00}$ with respect to the norm $\|x\|_{U_{q,\theta}}=\sup_{f\in L}|f(x)|$, where \[L_0= \{\lambda e^*_i: i\in\nn, |\lambda|\leqslant 1\},\] \[L_l=L_{l-1}\cup \Bigl\{\frac{\theta}{n^{1/p}} \sum_{i=1}^n a_if_i: n\in\nn, 2 \leqslant n\leqslant f_1<\ldots < f_n, f_i\in L_{l-1}, f_1\neq 0\Bigr\},\]  and \[L=\bigcup_{l=0}^\infty L_l.\] 
We also note that the norm $\|\cdot\|_{U_{q,\theta}}$ can be defined on $c_{00}$ by \[\|x\|_{U_{q,\theta}}=\lim_l |x|_{U_{q,\theta},l}\] where \[|x|_{U_{q,\theta},0}=\|x\|_{c_0}\] and \[|x|_{U_{q,\theta},l} = \max\Bigl\{|x|_{U_{q,\theta},l-1},  \sup\bigl\{{\theta}{n^{-1/p}}\sum_{i=1}^n |I_ix|_{U_{q,\theta},l-1}:2\leqslant n\leqslant I_1<\ldots I_n\bigr\}\Bigr\}.\]

With a similar argument we obtain

\begin{proposition} Let $f_1,\ldots,f_n \in B_{U_{q,\theta}^*}$ such that $n\leqslant f_1<\cdots<f_n$. Then, $$\theta n^{-1/p}\sum_{i=1}^nf_i \in B_{U_{q,\theta}^*}.$$
\end{proposition}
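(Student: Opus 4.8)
\section*{Proof proposal}

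The plan is to establish directly that $\bigl\|\theta n^{-1/p}\sum_{i=1}^n f_i\bigr\|_{U_{q,\theta}^*}\leqslant 1$, by testing this functional against an arbitrary $x\in c_{00}$ with $\|x\|_{U_{q,\theta}}\leqslant 1$ (such $x$ being dense in $B_{U_{q,\theta}}$) and invoking the recursive description of $\|\cdot\|_{U_{q,\theta}}$ through the norms $|\cdot|_{U_{q,\theta},l}$. This is the $\textsf{N}_p$-analogue of the preceding proposition for $T_{q,\theta}^*$; one could instead mimic that proof using the norming set $L$ together with the fact that $B_{U_{q,\theta}^*}$ is the weak$^*$-closed convex hull of $L$, but the direct route is shorter because the recursive norm already involves interval restrictions.

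Here are the steps. The case $n=1$ is trivial since $\|\theta f_1\|_{U_{q,\theta}^*}\leqslant\theta<1$, so assume $n\geqslant 2$. The support condition $n\leqslant f_1<\cdots<f_n$ forces each $f_i$ to be finitely supported (except possibly $f_n$, which can be localized once we fix the test vector), and after fixing $x\in c_{00}$ we may choose finite intervals $I_1<\cdots<I_n$ with $n\leqslant\min I_1$ and $\supp f_i\subseteq I_i$ (placing any $f_i=0$ wherever is convenient). Since $\supp f_i\subseteq I_i$ we have $f_i(x)=f_i(I_ix)$, whence
\[
\Bigl|\Bigl\langle \theta n^{-1/p}\sum_{i=1}^n f_i,\,x\Bigr\rangle\Bigr|\leqslant \theta n^{-1/p}\sum_{i=1}^n\|f_i\|_{U_{q,\theta}^*}\,\|I_ix\|_{U_{q,\theta}}\leqslant \theta n^{-1/p}\sum_{i=1}^n\|I_ix\|_{U_{q,\theta}}.
\]
On the other hand, because $2\leqslant n\leqslant I_1<\cdots<I_n$, the recursive definition gives $|x|_{U_{q,\theta},l}\geqslant\theta n^{-1/p}\sum_{i=1}^n|I_ix|_{U_{q,\theta},l-1}$ for every $l$; letting $l\to\infty$ and using that $|y|_{U_{q,\theta},l}$ increases to $\|y\|_{U_{q,\theta}}$ yields $\|x\|_{U_{q,\theta}}\geqslant\theta n^{-1/p}\sum_{i=1}^n\|I_ix\|_{U_{q,\theta}}$. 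Combining the two displays gives $\bigl|\langle\theta n^{-1/p}\sum_i f_i,\,x\rangle\bigr|\leqslant\|x\|_{U_{q,\theta}}\leqslant 1$ for all such $x$, hence $\theta n^{-1/p}\sum_{i=1}^n f_i\in B_{U_{q,\theta}^*}$.

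The argument is essentially routine; the only points needing a word of care are the passage to the limit in $l$ (monotone convergence of a finite sum, legitimate since $|\cdot|_{U_{q,\theta},l}$ increases to $\|\cdot\|_{U_{q,\theta}}$ on $c_{00}$) and the bookkeeping that lets us replace a possibly empty or semi-infinite support block by a genuine finite interval — both harmless, as a single nonzero block, or the empty sum, satisfies the estimate with room to spare because $\theta<1$. If instead one prefers the $L$-based proof paralleling the $T_{q,\theta}^*$ case, the one genuine subtlety there (absent in the $\ell_p^n$-ball setting of that proposition) is that restricting an element of $L$ to an interval may reduce the number of surviving blocks from $n$ to some $m\leqslant n$, forcing a renormalization by the factor $(m/n)^{1/p}\leqslant 1$; this is absorbed by the fact that $L$ is balanced, and it is exactly what makes ``$L$ is stable under interval restrictions'' slightly less immediate than for the set $K$.
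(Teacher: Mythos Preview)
Your proof is correct. The paper does not give an argument here, pointing instead to the $T_{q,\theta}^*$ case; that approach uses the norming set $L$, the identification of $B_{U_{q,\theta}^*}$ as the weak$^*$-closed convex hull of $L$, and closure of $L$ under interval projections. Your route is different: you test the candidate functional against an arbitrary $x\in c_{00}$ and read the required bound directly from the implicit equation for $\|\cdot\|_{U_{q,\theta}}$, via the monotone limit of the $|\cdot|_{U_{q,\theta},l}$'s. This sidesteps the verification that $L$ is stable under interval restrictions --- which, as you correctly note, is less immediate than for $K$, since restricting $\frac{\theta}{n^{1/p}}\sum_{i=1}^n f_i\in L$ to an interval may kill some blocks $f_i$ and force one to absorb a factor $(m/n)^{1/p}\leqslant 1$ using that $L$ is balanced. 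Your approach trades that bookkeeping for a routine passage to the limit in $l$, and is the cleaner bargain in this setting.
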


We shall need one last generalization of the above families. For that purpose, fix also $M=(m_n)_{n=1}^\infty \in [\Ndb]^\omega$ (the set of increasing sequences in $\Ndb$). We define the spaces $T_{M,q,\theta}$ and $U_{M,q,\theta}$ to be the completions of $c_{00}$ with respect to the norms $\|x\|_{T_{M,q,\theta}}=\sup_{f\in K_M}|f(x)|$ and $\|u\|_{U_{M,q,\theta}}=\sup_{f\in L_M}|f(x)|$, where \[K_{0,M}= \{\lambda e^*_i: i\in\nn, |\lambda|\leqslant 1\},\] \[K_{l,M}=K_{l-1,M}\cup \Bigl\{\theta \sum_{i=1}^n a_if_i: n\in\nn, m_n\leqslant f_1<\ldots < f_n, f_i\in K_{l-1,M}, f_1\neq 0, (a_i)_{i=1}^n \in B_{\ell_p^n}\Bigr\},\]  and \[K_M=\bigcup_{l=0}^\infty K_{l,M},\] and \[L_{0,M}= \{\lambda e^*_i: i\in\nn, |\lambda|\leqslant 1\},\] \[L_{l,M}=L_{l-1,M}\cup \Bigl\{\frac{\theta}{n^{1/p}} \sum_{i=1}^n a_if_i: n\in\nn, 2 \leqslant n, m_n\leqslant f_1<\ldots < f_n, f_i\in L_{l-1,M}, f_1\neq 0\Bigr\},\]  and \[L_M=\bigcup_{l=0}^\infty L_{l,M}.\] Of course, $T_{q,\theta}=T_{\nn,q,\theta}$ and $U_{q,\theta}=U_{\nn,q,\theta}$. 

Then, we have the following straightforward generalization of the previous propositions.

\begin{proposition} Let $f_1,\ldots,f_n \in B_{T_{M,q,\theta}^*}$ such that $m_n\leqslant f_1<\cdots<f_n$. Then, 
$$\forall (b_i)_{i=1}^n\in B_{\ell_p^n},\ \ \theta\sum_{i=1}^nb_if_i \in B_{T_{M,q,\theta}^*}.$$

Let $f_1,\ldots,f_n \in B_{U_{M,q,\theta}^*}$ such that $m_n\le f_1<\cdots<f_n$. Then, $$\theta n^{-1/p}\sum_{i=1}^nf_i \in B_{U_{M,q,\theta}^*}.$$

\end{proposition}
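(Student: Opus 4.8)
The plan is to replay the proofs of the two preceding, non-parametrized propositions essentially verbatim; the only genuinely new ingredient is the observation that the order condition $m_n\leqslant f_1<\cdots<f_n$ is stable under passing to subcollections precisely because $M$ is increasing. I would carry out the details for $K_M$ and $T_{M,q,\theta}$; the argument for $L_M$ and $U_{M,q,\theta}$ is word-for-word the same, with $(b_i)_{i=1}^n\in B_{\ell_p^n}$ replaced by the all-ones coefficient vector (which is admissible in the defining clause of $L_M$) and the normalization $n^{-1/p}$ inserted, the constraint $2\leqslant n$ in that clause causing no problem thanks to the reindexing remark below.

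First I would check that $K_M$ is closed under interval projections: if $I\subset\nn$ is an interval and $f\in K_{l,M}$, then $P_If\in K_{l,M}$, where $P_I$ denotes restriction to the coordinates in $I$. This is an induction on $l$, the base case being immediate. For the inductive step, write $f=\theta\sum_{i=1}^n a_if_i$ with $m_n\leqslant f_1<\cdots<f_n$ and $f_i\in K_{l-1,M}$, and let $j_1<\cdots<j_k$ enumerate the indices $i$ with $P_If_i\neq 0$. Then $k\leqslant n$, the functionals $P_If_{j_1}<\cdots<P_If_{j_k}$ lie in $K_{l-1,M}$ by the inductive hypothesis, and $m_k\leqslant m_n\leqslant\min\supp f_1\leqslant\min\supp(P_If_{j_1})$ because $(m_n)_n$ is increasing; since $(a_{j_t})_{t=1}^k\in B_{\ell_p^k}$, we get $P_If=\theta\sum_{t=1}^k a_{j_t}(P_If_{j_t})\in K_{l,M}$. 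As $P_I$ is weak$^*$-continuous (being the adjoint of a bounded projection on $T_{M,q,\theta}$) and $B_{T_{M,q,\theta}^*}=\overline{\text{co}}(K_M)$, it follows exactly as in the unparametrized case that for positive integers $l\leqslant m$,
\[\{f\in B_{T_{M,q,\theta}^*}:\supp f\subset[l,m]\}=\overline{\text{co}}\{g\in K_M:\supp g\subset[l,m]\}.\]

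With this I would conclude as before. Given $f_1,\dots,f_n\in B_{T_{M,q,\theta}^*}$ with $m_n\leqslant f_1<\cdots<f_n$, pick successive intervals $I_1<\cdots<I_n$ with $m_n\leqslant\min I_1$ and $\supp f_i\subset I_i$, so each $f_i$ lies in $\overline{\text{co}}\{g\in K_M:\supp g\subset I_i\}$. For any $g_i\in K_M$ with $\supp g_i\subset I_i$ and any $(b_i)_{i=1}^n\in B_{\ell_p^n}$, discarding the vanishing $g_i$'s and reindexing exactly as above (again using $m_k\leqslant m_n\leqslant\min I_1$ for the surviving indices) gives $\theta\sum_{i=1}^n b_ig_i\in K_M$. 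A routine convexity argument then upgrades this: fixing all but the $i$-th slot, the set of $h\in B_{T_{M,q,\theta}^*}$ with $\theta\bigl(b_ih+\sum_{j\neq i}b_jg_j\bigr)\in\overline{\text{co}}(K_M)$ is weak$^*$-closed, convex, and contains $\{g\in K_M:\supp g\subset I_i\}$, hence contains the closed convex hull of that set, in particular $f_i$; iterating over $i=1,\dots,n$ yields $\theta\sum_{i=1}^n b_if_i\in\overline{\text{co}}(K_M)=B_{T_{M,q,\theta}^*}$. I do not anticipate any real obstacle: the whole content sits in the reindexing observation that keeps the $m_n$-condition valid when one drops functionals, and the remaining bookkeeping (weak$^*$-closedness plus convexity) is identical to that in the propositions already established.
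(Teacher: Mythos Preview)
Your proposal is correct and follows precisely the route the paper intends: the paper gives no proof for this proposition, merely calling it a ``straightforward generalization of the previous propositions,'' and your argument is exactly the expected generalization of those earlier proofs (closure of $K_M$ under interval projections, identification of the supported part of $B_{T_{M,q,\theta}^*}$ with the closed convex hull of the supported part of $K_M$, then a slot-by-slot convexity upgrade). The one point worth making explicit---that dropping vanishing terms preserves the admissibility condition because $m_k\leqslant m_n$ for $k\leqslant n$---is indeed the only place where the parameter $M$ intervenes, and you have handled it correctly.
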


We are now ready to relate these families to properties $\textsf{A}_p$ and $\textsf{N}_p$.
\begin{proposition}\label{modelApNp} Fix $1<p\leqslant \infty$ and let $1/p+1/q=1$. For any $\theta\in (0,1)$ and $M\in[\nn]^\omega$, $T^*_{M,q,\theta}$ has $\textsf{\emph{A}}_p$ and $U^*_{M,q,\theta}$ has $\textsf{\emph{N}}_p$. 

\end{proposition}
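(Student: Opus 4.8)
The plan is to prove the statement for $T^*_{M,q,\theta}$ using the characterization in Theorem~\ref{atheorem}(iii), namely that $X$ has $\textsf{A}_p$ if and only if $X$ has $q$-summable Szlenk index. Concretely, I would show that there is a constant $c>0$ (depending on $\theta$ but not on $M$, or at worst depending on $M$ as well, which is harmless) such that whenever $\eps_1,\dots,\eps_n\geqslant 0$ satisfy $s_{\eps_1}\cdots s_{\eps_n}(B_{T^*_{M,q,\theta}})\neq\varnothing$, then $\sum_{i=1}^n\eps_i^q\leqslant c^q$. The natural route is to pass through the norming set $K_M$: since $T_{M,q,\theta}$ is reflexive with shrinking $1$-unconditional basis and $B_{T^*_{M,q,\theta}}=\overline{\mathrm{co}}(K_M)$, the Szlenk derivation can be estimated on the basis vectors. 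One exhibits, starting from a point surviving $n$ applications of $s_{\eps_i}$, a sequence of successive blocks $f_1<f_2<\cdots$ with $\|f_i\|$ comparable to $\eps_i$ lying (up to the constant) in the ball of $T^*_{M,q,\theta}$, and because of the $q$-convexification built into the norming set (the coefficients $(a_i)\in B_{\ell_p^n}$), combining these blocks forces $\theta(\sum\eps_i^q)^{1/q}\lesssim 1$ — this is exactly where Proposition~\ref{modelApNp}'s predecessors about $\theta\sum b_if_i\in B_{T^*_{M,q,\theta}}$ get used, in the form of a lower estimate rather than an upper one.

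More precisely, the key technical step is the standard "$\ell_p$-spreading implies small Szlenk derivative" argument, which I would organize as follows. First, fix $\eps>0$ and $x^*\in s_\eps(K')$ for $K'$ a $w^*$-closed convex subset of $B_{T^*_{M,q,\theta}}$; by definition there is a net in $K'$ converging $w^*$ to $x^*$ but staying at distance $\geqslant\eps/2$ from $x^*$, and after a perturbation and using that $K_M$ is the generating set, one extracts a further point whose "difference from $x^*$" has norm $\gtrsim\eps$ and is essentially supported far out on the basis (a gliding hump). Iterating $n$ times produces differences $g_1,g_2,\dots,g_n$ which can be arranged to be successive blocks with $m_n\leqslant \supp(g_1)<\cdots<\supp(g_n)$ (here is where the parameter $M$ enters: one just starts the gliding hump late enough), each with $\|g_i\|\gtrsim\eps_i$; by Hahn--Banach pick norm-one $y_i\in T_{M,q,\theta}$ supported on $\supp(g_i)$ with $g_i(y_i)\gtrsim\eps_i$, and normalize. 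Then the vector $\theta\sum_{i=1}^n b_i g_i$ with a suitable choice of $(b_i)\in B_{\ell_p^n}$ (dual to the $\ell_q$ side) lies in $B_{T^*_{M,q,\theta}}$ by the Proposition preceding \ref{modelApNp}, and pairing it against an appropriate norm-one combination of the $y_i$ gives $\theta(\sum\eps_i^q)^{1/q}\lesssim 1$, i.e.\ $\sum\eps_i^q\lesssim\theta^{-q}$. For $U^*_{M,q,\theta}$ the argument is identical but uses Theorem~\ref{ntheorem}(iii) and the convex Szlenk index: one shows $Cz(U^*_{M,q,\theta},\eps)\lesssim\eps^{-q}$, using Proposition~\ref{Cz=Kz} to pass between $c_\eps$ and $k_\eps$ if convenient, and the relevant norming-set inequality $\theta n^{-1/p}\sum f_i\in B_{U^*_{M,q,\theta}}$ in place of the $\ell_p$-weighted one; the replacement of $B_{\ell_p^n}$ by the single averaging weight $n^{-1/p}$ is exactly what degrades $q$-summability to the weaker power-type-$q$ convex Szlenk estimate.

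Alternatively — and perhaps more cleanly — I would prove the $\textsf{A}_p$ statement directly from Theorem~\ref{atheorem}(ii), the weakly null tree characterization: given a weakly null collection $(x_t)_{t\in D^{\leqslant n}}\subset B_{T^*_{M,q,\theta}}$, use weak nullity together with a diagonal/gliding hump argument along a branch to choose $t\in D^n$ so that $x_{t|_1},\dots,x_{t|_n}$ are, up to an arbitrarily small perturbation, a successive block sequence with $m_n\leqslant \supp(x_{t|_1})<\cdots<\supp(x_{t|_n})$; then the dual of the Tsirelson-type norming inequality (which controls block sequences in $T^*_{M,q,\theta}$ from above by an $\ell_p$-norm of the coefficients, with constant $\theta^{-1}$) gives $\|\sum_{i=1}^n a_i x_{t|_i}\|\leqslant c\|a\|_p$. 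The $\textsf{N}_p$ case is the same with the averaging norming inequality and Theorem~\ref{ntheorem}(ii). The main obstacle in either approach is the bookkeeping of the gliding hump: one must make sure that after perturbing finitely many $x_{t|_i}$ (or the Szlenk-derivative differences $g_i$) to have disjoint finite supports, the perturbations are small enough that the estimates survive, and that the supports can be pushed past $m_n$ — this requires choosing the perturbation scales and the branch in the right order, but it is the routine part of such arguments and carries no conceptual difficulty.
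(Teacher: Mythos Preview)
Your alternative approach via Theorem~\ref{atheorem}(ii) is correct and is in essence the paper's proof. The paper goes one step more directly: rather than invoking the tree characterization, it exhibits a winning strategy for Player~I in the spatial $A(c,p,n)$ game on $T^*_{M,q,\theta}$ for any $c>\theta^{-1}$, which is the very definition of $\textsf{A}_p$. Player~I plays tail subspaces $Y_k=\{x:\min\supp(x)\geqslant k\}$, starting with $k=m_n$, so that after small perturbations Player~II's choices $x_1,\dots,x_n$ become a successive block sequence $m_n\leqslant y_1<\cdots<y_n$ in $B_{T^*_{M,q,\theta}}$; the Proposition immediately preceding~\ref{modelApNp} then gives $\|\sum a_iy_i\|\leqslant\theta^{-1}\|a\|_p$, hence $\|\sum a_ix_i\|\leqslant c$. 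The $\textsf{N}_p$ case is identical with the other norming inequality. So your second route matches the paper up to the cosmetic difference of game versus tree language.

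Your first (Szlenk) approach, however, has a duality error. For $X=T^*_{M,q,\theta}$ to have $q$-summable Szlenk index one must derive $B_{X^*}=B_{T_{M,q,\theta}}$, not $B_{T^*_{M,q,\theta}}=\overline{\mathrm{co}}(K_M)$; you have placed your derivations and your gliding-hump differences $g_i$ in the wrong ball. Even setting that aside, the final pairing step does not do what you claim: the Proposition you cite yields the \emph{upper} bound $\|\theta\sum b_ig_i\|\leqslant 1$, and pairing an element of norm at most one against a norm-one test vector cannot produce a \emph{lower} bound of the form $\theta(\sum\eps_i^q)^{1/q}\lesssim 1$. A Szlenk route can be made to work --- with the $g_i$ correctly placed in $T_{M,q,\theta}$, one would use the disjoint-support upper-$\ell_q$ estimate coming from $q$-convexification together with a telescoping bound on the surviving point --- but it is a detour compared with the direct game argument, and your write-up does not carry it through.
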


\begin{proof} For $c>\theta^{-1}$, let us quickly describe the winning strategy for Player I in the spatial $A(c,p,n)$ game in $T^*_{M,q,\theta}$. Player I chooses $Y_1=\{x \in T^*_{M,q,\theta},\ m_n\le \supp(x)\}$. Then, after each choice of $x_i$ by Player II, Player I picks $Y_{i+1}=\{x \in T^*_{M,q,\theta},\ k_{i+1}\le \supp(x)\}$, for some $k_{i+1}>k_i$ so that, there exist small enough perturbations $y_1,\ldots,y_n$ of $x_1,\ldots,x_n$ satisfying $m_n\leqslant y_1<\cdots<y_n$ to ensure, thanks to the previous proposition, that for all $(a_i)_{i=1}^n \in B_{\ell_p^n}$,
$$\Big\|\sum_{i=1}^n a_iy_i\Big\|_{T_{M,q,\theta}^*}^p \leqslant \theta^{-p}\ \ \text{and}\ \ \Big\|\sum_{i=1}^n a_ix_i\Big\|_{T_{M,q,\theta}^*}^p \leqslant c.$$

To address property $\textsf{N}_p$ in $U^*_{M,q,\theta}$, we similarly use the fact that for $n\in \Ndb$ and $m_n\leqslant f_1<\cdots<f_n$, with $f_i\in B_{U^*_{M,q,\theta}}$, we have
${\theta}{n^{-1/p}}\sum_{i=1}^n f_i\in B_{U^*_{M,q,\theta}}.$
\end{proof}

The next result goes somewhat in the other direction. Any $\textsf{{A}}_p$ (resp. $\textsf{{N}}_p$) space has a $T_{q,\theta}^*$ (resp. $U_{q,\theta}^*$) like behavior. 
\begin{theorem}\label{upper1} 
Let $X$ be a Banach space and $p\in (1,\infty]$. Let $q$ be the conjugate exponent of $p$. 
\begin{enumerate}[(i)]
\item If $X$ has $\textsf{\emph{A}}_p$, then there exists $\theta_0\in (0,1)$ such that for all $\theta \in (0,\theta_0]$ Player I has a winning strategy in the  spatial $(T_{q,\theta}^*,1)$ game in $X$. 
\item  If $X$ has $\textsf{\emph{N}}_p$, then there exists $\theta_0\in (0,1)$ such that  for all $\theta \in (0,\theta_0]$ Player I has a winning strategy in the  spatial $(U_{q,\theta}^*,1)$ game in $X$. 
\end{enumerate}
\end{theorem}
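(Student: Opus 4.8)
The plan is to exploit the characterizations of $\textsf{A}_p$ and $\textsf{N}_p$ in terms of upper estimates for weakly null trees of finite height (Theorems~\ref{atheorem} and \ref{ntheorem}), together with a block/iteration argument that matches the nested structure defining the norming sets $K^\theta$ and $L$. I will treat case (i) in detail; case (ii) is formally identical with the sequences $(\theta\sum a_if_i)$ replaced by the averages $(\theta n^{-1/p}\sum f_i)$ and with Theorem~\ref{ntheorem} in place of Theorem~\ref{atheorem}. Fix the constant $c>0$ witnessing that $X\in\textsf{A}_p$ via condition (ii) of Theorem~\ref{atheorem}: for every $n$ and every weakly null collection $(x_t)_{t\in D^{\leqslant n}}\subset B_X$ there is $t\in D^n$ with $\|\sum_{i=1}^n a_i x_{t|_i}\|\leqslant c\|a\|_p$ for all $a$. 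Choose $\theta_0\in(0,1)$ with $c\theta_0\leqslant 1$ (so $c\theta\leqslant1$ for all $\theta\leqslant\theta_0$); I will show Player~I wins the spatial $(T^*_{q,\theta},1)$ game.

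The key point is that a vector $f\in K^\theta$ is built from the atoms $e_i^*$ by finitely many applications of the operation $(f_1,\dots,f_n)\mapsto \theta\sum a_i f_i$ with $n\leqslant f_1<\dots<f_n$ and $(a_i)\in B_{\ell_p^n}$; so to "dominate" the basis of $T^*_{q,\theta}$ by the vectors Player~II produces, it suffices to produce, at every node of the relevant tree, a vector satisfying the $\textsf{A}_p$-type $\ell_p$-upper estimate with constant $1$ (after multiplying by $\theta$), and to do so along every branch simultaneously. I would organize this as an $\omega$-strategy that, in rounds, processes the recursive definition of $K^\theta$: at stage $l$ Player~I forces Player~II's choices to behave like elements of $K_l^\theta$. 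Concretely, because the game allows Player~I to pass to finite-codimensional subspaces at each turn, Player~I first (using a Ramsey/stabilization argument, or directly the tree characterization applied inside ever-smaller subspaces) arranges that any admissible sequence $(x_i)$ is a small perturbation of a block sequence with rapidly increasing supports relative to a fixed FDD obtained from a weakly null net; then an inductive application of Theorem~\ref{atheorem}(ii), once for each "level" of the $K^\theta_l$ construction, shows that for any finitely supported scalars $(b_j)$, $\|\sum b_j x_{j}\|_X \leqslant \|\sum b_j e_j\|_{T_{q,\theta}}$, i.e. $(x_i)\lesssim_1 (e_i)$ in the $T^*_{q,\theta}$-side — here one uses that the norm of $T_{q,\theta}$ (equivalently the norming set $K^\theta$ of $T^*_{q,\theta}$) is exactly the smallest norm closed under the operation that $\textsf{A}_p$ makes available with gain $\theta$, and $c\theta\leqslant1$ absorbs the loss. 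The duality "subspace of $X$ satisfies $\ell_p$-upper tree estimates $\Leftrightarrow$ the quotient functionals satisfy the Tsirelson-type lower estimate" is the mechanism converting the Szlenk/tree information into domination over $(e_i^*)_{i=1}^\infty$.

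The main obstacle is the simultaneity over all branches of the tree: Theorem~\ref{atheorem}(ii) only yields one good branch $t\in D^n$, whereas the $(T^*_{q,\theta},1)$ game requires that \emph{every} admissible sequence be $\lesssim_1$-dominated, and moreover the recursive structure of $K^\theta$ nests these estimates arbitrarily deeply. The standard device to overcome this is a pruning/fusion argument: one uses the tree characterization not once but along a carefully chosen branch of an auxiliary tree of weakly null nets, replacing "there exists a good branch" by "after passing to a suitable subtree, every branch is good," at the cost of shrinking the weak neighborhoods (equivalently the subspaces $Y_i$) — this is exactly where the spatial formulation and the freedom to choose $Y_i\in\co(X)$ at each step is essential. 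I would therefore first prove a lemma stating that for each $n$, Player~I has a winning strategy in the spatial $A(\theta^{-1},p,n)$ game that is "coherent" in $n$ (the strategies for different $n$ are restrictions of one $\omega$-strategy), then feed coherent families of such strategies, one per level $l$ of the $K^\theta_l$ recursion, into a diagonal $\omega$-strategy; verifying that the resulting admissible sequences lie in the ball of $T^*_{q,\theta}$ is then a bookkeeping induction on $l$ using the Proposition on $\theta\sum b_i f_i\in B_{T^*_{q,\theta}}$.
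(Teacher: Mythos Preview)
Your high-level plan---induct on the level $l$ of $K_l^\theta$, using the $\textsf{A}_p$ estimate once per level and absorbing the constant via $c\theta\leqslant 1$---matches the paper's, and the final ``bookkeeping induction on $l$'' is exactly how the paper concludes. The gap is in your proposed mechanism for handling the simultaneity problem. You propose to first prove that the winning strategies in the spatial $A(\theta^{-1},p,n)$ games can be taken ``coherent in $n$ (the strategies for different $n$ are restrictions of one $\omega$-strategy).'' But a single spatial $\omega$-strategy whose length-$n$ restriction wins $A(c,p,n)$ for every $n$ is precisely a winning strategy in the spatial $(\ell_p,c)$ game, i.e.\ the property $\textsf{T}_p$. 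Since $\textsf{T}_p\subsetneq\textsf{A}_p$ (Theorem~\ref{containments}), no pruning/fusion argument can manufacture such coherence from $\textsf{A}_p$ alone, and your lemma is false as stated for a general $\textsf{A}_p$ space.

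The paper avoids this by working with the \emph{compact} spatial games. From $\textsf{A}_p$ one gets, for each $n$, a winning strategy $\chi_n$ in the compact spatial $A(c,p,n)$ game, where Player~II plays compact subsets of $B_X$. The $\omega$-strategy $\chi$ is then defined explicitly: at stage $l$, Player~I intersects the finitely many subspaces
\[
\bigcap_{j\leqslant l+1}\ \bigcap_{k<j}\ \bigcap_{\substack{I_1<\cdots<I_k\\ I_i\subset[1,l]}} \chi_j\bigl((B_X\cap\mathrm{span}\{x_m:m\in I_i\})_{i=1}^k\bigr).
\]
No coherence between the $\chi_n$ is needed; they are run in parallel, one for each future block pattern. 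The compact-set formulation is exactly what makes this work: once $(x_i)_{i=1}^\infty$ is $\chi$-admissible and $n\leqslant I_1<\cdots<I_n$, the sets $C_i=B_X\cap\mathrm{span}\{x_m:m\in I_i\}$ form a $\chi_n$-admissible sequence (each $x_m$ with $m\in I_j$ has $m\geqslant n$, so it already lies in $\chi_n(C_1,\ldots,C_{j-1})$). This yields $\theta\sum a_iu_i\in B_X$ for all $u_i\in C_i$ and $(a_i)\in B_{\ell_p^n}$, and then the map $A(\sum a_ie_i^*)=\sum a_ix_i$ sends $K_l\to B_X$ by the induction on $l$ you outlined. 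The moral: the right fix for ``only one good branch'' is not to make the finite strategies cohere, but to let Player~II's choices in the finite games be compact spans of already-chosen vectors, and to anticipate all block decompositions by intersecting.
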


\begin{proof}$(i)$ There exists a constant $c>1$ such that for all $n\in\nn$, Player I has a winning strategy $\chi_n$ in the compact spatial $A(c,p,n)$ game.  We define a winning  spatial $\omega$ strategy $\chi$ for Player I in the $(T_{q,\theta}^*,1)$ game, for  $\theta \in (0,\frac{1}{c}]$.  

Let $\chi(\varnothing)=\chi_1(\varnothing)$.   Assume that for some $l\in\nn$,  $\chi((x_i)_{i=1}^k)$ has been defined for all $(x_i)_{i=1}^k\in B_X^{<l}$.   Fix $(x_i)_{i=1}^l\in {B_X}^l$.    Define \[\chi((x_i)_{i=1}^l) = \Bigl(\bigcap_{j=1}^{l+1} \chi_j(\varnothing)\Bigr)\cap \Bigl(\bigcap_{j=1}^{l+1} \bigcap_{k=1}^{j-1} \bigcap_{I_1<\ldots<I_k,I_i\subset [1,l]} \chi_j\Bigl(\bigl(B_X\cap \text{span}\{x_m:m\in I_i\}\bigr)_{i=1}^k\Bigr)\Bigr). \]

This completes the recursive construction. We now fix $(x_i)_{i=1}^\infty$ $\chi$-admissible. Assume now that $2\le n\in\nn$ and $n\leqslant I_1<\ldots < I_n$ and denote $C_i=B_X\cap \text{span}\{x_m:m\in I_i\}$. We claim that $(C_i)_{i=1}^n$ is $\chi_n$-admissible. Indeed, for any $1\le j\le n$ and any $m\in I_j$, we have that $m\ge n$, which implies that 
$$x_m\in  \bigcap_{k=1}^{n-1} \bigcap_{J_1<\ldots<J_k,J_i\subset [1,m-1]} \chi_n\Bigl(\bigl(B_X\cap \text{span}\{x_m:m\in J_i\}\bigr)_{i=1}^k\Bigr)\Bigr.$$
In particular, $x_m \in \chi_n(C_1,\ldots,C_{j-1})$, which proves our claim. Since $\chi_n$ is a winning strategy for Player I in the compact spatial $A(c,p,n)$ game, we obtain that
\[\forall (u_i)_{i=1}^n \in \prod_{i=1}^n C_i\ \ \forall (a_i)_{i=1}^n\in B_{\ell_p^n},\ \ \theta\sum_{i=1}^n a_iu_i\in B_X\]  
For $a=(a_i)_{i=1}^\infty \in c_{00}$, we define now $A(\sum_{i=1}^\infty a_ie_i^*)=\sum_{i=1}^\infty a_ix_i$, where $(e_i^*)_{i=1}^\infty$ is the canonical basis of $T^*_{q,\theta}$. It remains to show that $A$ maps $B_{T^*_{q,\theta}}\cap c_{00}$ into $B_X$. We adopt the notation used in the construction of $T_{q,\theta}$ for the sets $K=\cup_{l=0}^\infty K_l$ and recall that $B_{T^*_{q,\theta}}$ is the closed, convex hull of $K$. Therefore, it is sufficient to show that $A(K)\subset B_X$. For this we prove by induction on $l$ that $A(K_l)\subset B_X$. The base case follows from the fact that any admissible sequence must lie in $B_X$. Assume the result has been proved for $l\ge 0$, and let $f \in K_l\setminus K_{l-1}$. Then there exists $n\geqslant 2$, $n\leqslant f_1<\cdots<f_n \in K_{l-1}$ and $(a_i)_{i=1}^n\in B_{\ell_p^n}$ such that $f=\theta \sum_{i=1}^na_if_i$. By induction hypothesis, we have that $u_i=A(f_i)\in B_X$. Note also that $2\leqslant n\le u_1<\cdots<u_n$. The above discussion then implies that $Af=\theta\sum_{i=1}^n a_iu_i \in B_X$.

The proof of $(ii)$ is an inessential modification of $(i)$. 
\end{proof}

Let $T$ be a Banach space with basis $(e_n)_{n=1}^\infty$ and $C\ge 1$. We recall that that $(e_n)_{n=1}^\infty$ is \emph{$C$-right dominant} if for all $(a_k)_{k=1}^n \in \Kdb^n$, $i_1<\cdots<i_n$ and $j_1<\cdots<j_n$ with $i_k\le j_k$ for all $1\le k \le n$, we have
$$\Big\|\sum_{k=1}a_ke_{{i_k}}\Big\|_T \leqslant C \Big\|\sum_{k=1}a_ke_{{j_k}}\Big\|_T.$$
The definition of a $C$-left dominant basis is obtained by exchanging the places of $i_k$'s and $j_k$'s in the above inequality. 

We will need the following lemma on interlaced subsequences of the canonical basis $(e_n)_{n=1}^\infty$ of our model spaces. 

\begin{lemma}\label{shuffle} Let $p\in (1,\infty]$, $q$ be its conjugate exponent and $\theta\in (0,1)$. 
Let $(k_i)_{i=1}^\infty, (l_i)_{i=1}^\infty$ be two sequences of integers such that $1\le k_1< l_1<k_2< l_2<\ldots$.   Then, for all $(a_i)_{i=1}^\infty \in c_{00}$
\begin{enumerate}[(i)]
\item  $\big\|\sum_{i=1}^\infty a_ie_{l_i}\big\|_{T_{q,\theta}} \leqslant 3 \big\|\sum_{i=1}^\infty a_ie_{k_i}\big\|_{T_{q,\theta}}$.
\item  $\big\|\sum_{i=1}^\infty a_ie_{l_i}\big\|_{U_{q,\frac{\theta}{2}}} \leqslant  \big\|\sum_{i=1}^\infty a_ie_{k_i}\big\|_{U_{q,\theta}}$.
\end{enumerate}
\end{lemma}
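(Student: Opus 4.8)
The plan is to prove both statements by working directly with the norming sets $K = K^\theta$ of $T_{q,\theta}$ and $L = L^\theta$ of $U_{q,\theta}$, using the defining inductive structure of these sets. For part (i) the key point is that passing from the indices $(k_i)$ to the indices $(l_i)$ only \emph{increases} supports, and since $K$ is defined by admissibility conditions of the form ``$n \le f_1 < \dots < f_n$'', a functional acting on $\sum a_i e_{l_i}$ can be ``pulled back'' to act on $\sum a_i e_{k_i}$ at the cost of a bounded constant. More precisely, given $g \in K$, I would write $g$ in its tree form and, for each coordinate functional $e_j^*$ appearing in $g$ that is paired with some $e_{l_i}$, replace it by $e_{k_i}^*$; the resulting object $\tilde g$ need not itself lie in $K$ because the interlacing $k_1 < l_1 < k_2 < l_2 < \cdots$ can violate the strict gap condition $n \le f_1 < \cdots < f_n$ at the bottom level, but it will lie in $3K$ (or a bounded multiple) because at most a controlled overlap is created — this is where the constant $3$ comes from. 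Since $\|\sum a_i e_{l_i}\|_{T_{q,\theta}} = \sup_{g \in K}|g(\sum a_i e_{l_i})| = \sup_{g \in K}|\tilde g(\sum a_i e_{k_i})| \le 3\sup_{h \in K}|h(\sum a_i e_{k_i})|$, the estimate follows.

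\textbf{The technical heart.} The cleaner way to organize the overlap bookkeeping, which I would use instead of a direct support-surgery argument, is an induction on the level $l$ of the norming set. I claim: for every $g \in K_l$ there exist $g^{(1)}, g^{(2)}, g^{(3)} \in K_l$ (some possibly zero), each with support contained in $\{k_i : i \ge 1\}$, such that $g(\sum_i a_i e_{l_i}) = \sum_{r=1}^3 g^{(r)}(\sum_i a_i e_{k_i})$ for every finitely supported $(a_i)$, and moreover the supports of the three pieces, expressed in the $k$-index set, are ``well separated'' in the sense needed to iterate. The factor $3$ is forced because when one splits a block $\theta\sum_{i=1}^n a_i f_i$ with $n \le f_1 < \cdots < f_n$ (supports in the $l$-set), after translation to the $k$-set the very first functional $f_1$ may now start too low relative to $n$; one discards its first one or two terms (absorbed into a coordinate functional, hence into a separate piece of $K$) and the remaining tail satisfies the admissibility condition. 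Keeping a running count shows no more than three pieces are ever needed. For $U_{q,\theta}$ the same strategy applies, but here the admissibility constant $\theta n^{-1/p}$ improves to $\theta (n-1)^{-1/p}$ or $\theta(n-2)^{-1/p}$ after deleting terms, and since we have \emph{halved} the Tsirelson parameter on the left-hand side ($U_{q,\theta/2}$ versus $U_{q,\theta}$) the factor $(\theta/2)/\theta = 1/2$ more than compensates the ratio $(n/(n-2))^{1/p} \le n/(n-2) \le 2$ for $n \ge 4$ (and small $n$ is handled directly), which is exactly why (ii) has constant $1$ rather than $3$.

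\textbf{Execution order and main obstacle.} I would (1) state and prove the block-splitting lemma for a single application of the implicit norm operation, isolating how an interlaced-index shift turns one admissible block into a coordinate functional plus an admissible block over the shifted indices; (2) run the induction on $l$ to get the decomposition of an arbitrary $g \in K_l$ (resp.\ $L_l$) into finitely many pieces over the $k$-indices; (3) take suprema to conclude. The main obstacle is the bookkeeping in step (2): controlling the number of pieces and ensuring the separation property is preserved under nesting, since a single deletion at a low level can cascade. The honest fix is to track, for each node of the functional's tree, how far its support has ``shifted left'' relative to the admissibility threshold, and to observe this shift is bounded by a constant independent of the depth — for $T_{q,\theta}$ it never forces more than three simultaneous pieces, for $U_{q,\theta}$ the extra slack from the $\theta/2$ means one can afford to be even more wasteful and still land with constant $1$. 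Everything else is routine once the splitting lemma is in hand.
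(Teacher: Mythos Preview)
Your plan contains a real gap, and it is exactly the one you flag yourself as the ``main obstacle'': the cascading. You assert that a functional $g\in K_l$ (or $L_l$) can be split into at most three pieces $g^{(1)},g^{(2)},g^{(3)}\in K_l$ over the $k$-indices, but you never prove it. At a single node $\theta\sum_{i=1}^n a_if_i$ the shift from $l$-supports to $k$-supports can indeed be repaired by peeling off a boundary term, but each $f_i$ is itself a tree of lower-level nodes, each of which needs the same repair; a naive count gives a number of pieces (or a multiplicative constant) that grows with the depth of the tree, and your ``bounded shift independent of depth'' heuristic is not an argument. In particular, for part (i) the constant $3$ does not fall out of any version of the induction you describe --- if you try to close the loop $|x|_{r+1}\leqslant \theta|a_t|+C|y|_{r+1}$, you need $\theta|a_t|\leqslant 0$, which fails.

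The paper handles the two parts quite differently from your sketch. For (i) it does not argue directly at all: it reduces to the Tsirelson space $T_{\theta^q}$ via $q$-convexification, then invokes the known fact (Casazza--Shura, Prop.\ I.12) that for any increasing $(n_i)$ one has $(e_{n_{2i}})\lesssim_3 (e_{n_i})$ in $T_{\theta^q}$, together with $1$-right dominance. Setting $m_{2i-1}=k_i$, $m_{2i}=l_i$ gives $(e_{l_i})=(e_{m_{2i}})\lesssim_3(e_{m_i})\lesssim_1(e_{m_{2i-1}})=(e_{k_i})$. So the constant $3$ is imported, not produced by your induction.

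For (ii) the paper runs an induction, but on the \emph{norm} side (on the level $|\cdot|_{\vartheta,r}$), not on the norming set. The point that dissolves your cascading problem is this: from $|x|_{\frac{\theta}{2},r+1}=\frac{\theta}{2n^{1/p}}\sum_{i=1}^n|I_ix|_{\frac{\theta}{2},r}$ one peels off exactly one coordinate $a_te_{l_t}$ (where $t=\min\{j:l_j\in I_1\}$). After removal, every remaining index $j$ satisfies $k_j\geqslant k_{t+1}>l_t\geqslant n$, so the translated intervals $J_1<\cdots<J_n$ are again admissible and one obtains
\[
|x|_{\frac{\theta}{2},r+1}\leqslant \tfrac{1}{2}|a_t|+\tfrac{1}{2}\cdot\frac{\theta}{n^{1/p}}\sum_{i=1}^n|J_iy|_{\theta,r}\leqslant \tfrac{1}{2}|y|_{\theta,0}+\tfrac{1}{2}|y|_{\theta,r+1}\leqslant |y|_{\theta,r+1}.
\]
The crucial feature is that the peeled term is bounded by $|y|_{\theta,0}$, which is dominated by the \emph{full} norm $|y|_{\theta,r+1}$; this is what prevents any accumulation across levels. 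Your version (delete ``one or two'' terms, compare $(n/(n-2))^{1/p}$ to $2$, worry about $n<4$) is both more complicated and incomplete --- only one deletion is needed, no case analysis on $n$ arises, and the halved parameter is used to split $1=\tfrac12+\tfrac12$, not to beat a ratio of the form $n/(n-2)$.
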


\begin{proof}$(i)$ This follows from the fact that $T_{q,\theta}$ is the $q$-convexification of $T_{\theta^q}$, $1$-right dominance of the canonical basis of $T_{\theta^q}$, and the fact that for any $1\leqslant n_1<n_2<\cdots,$ $(e_{n_{2i}})_{i=1}^\infty \lesssim_c  (e_{n_{i}})_{i=1}^\infty$ in $T_{\theta^q}$ (see Proposition I.12 in  \cite{CasazzaShura}). 

$(ii)$ We prove by induction on $r$ that for any $(a_i)_{i=1}^\infty \in c_{00}$, \[\Bigl|\sum_{i=1}^\infty a_ie_{l_i}\Bigr|_{\frac{\theta}{2},r}\leqslant \Bigl|\sum_{i=1}^\infty a_ie_{k_i}\Bigr|_{\theta,r}.\]   The $r=0$ case is trivial.   Assume the result holds for some $r$ and fix $(a_i)_{i=1}^\infty\in c_{00}$. Let $x=\sum_{i=1}^\infty a_ie_{l_i}$ and $y=\sum_{i=1}^\infty a_ie_{k_i}$. If $|x|_{\frac{\theta}{2},r+1}=|x|_{\frac{\theta}{2},r}$, then  
\[ |x|_{\frac{\theta}{2},r+1}=|x|_{\frac{\theta}{2},r} \leqslant |y|_{\theta,r}\leqslant |y|_{\theta,r+1}.\]  
Assume $|x|_{\frac{\theta}{2},r+1}>|x|_{\frac{\theta}{2},r}$. Then for some $2\leqslant n<I_1<\ldots <I_n$, 
\[|x|_{\frac{\theta}{2},r+1}=\frac{\theta}{2n^{1/p}}\sum_{i=1}^n |I_ix|_{\frac{\theta}{2},r}.\]  
Note that for each $1\leqslant i\leqslant n$, $I_ix\neq 0$. Indeed, note first that $I_ix$ must be non-zero for at least two values of $i$, otherwise it would be true that $|x|_{\frac{\theta}{2},r+1}=|x|_{\frac{\theta}{2},r}$. Then, if $I_ix=0$ for some $i$, then we could omit all such $i$'s and replace $\frac{\theta}{2n^{1/p}}$ with $\frac{\theta}{2m^{1/p}}$, where $m=|\{i:I_ix\neq 0\}|\in (1,n)$, which would lead to the following contradiction  \[|x|_{\frac{\theta}{2},r+1}\geqslant \frac{\theta}{2m^{1/p}}\sum_{i:I_ix\neq 0}|I_ix|_{\frac{\theta}{2},r}>\frac{\theta}{2n^{1/p}}\sum_{i=1}^n|I_ix|_{\frac{\theta}{2},r}=|x|_{\frac{\theta}{2},r+1}.\]  
So $\{j\in\nn: l_j\in I_1\}\neq \emptyset$ and we can set $t=\min \{j\in\nn: l_j\in I_1\}$. Define 
\[G_i=\left\{\begin{array}{ll} \{j\in\nn: l_j\in I_i\} & : 1<i\leqslant n \\ \{j\in\nn:l_j\in I_1\}\setminus \{t\} & : i=1, \end{array}\right.\]  
\[H_i =\left\{\begin{array}{ll} \{j\in\nn: l_j\in I_i\} & : 1<i\leqslant n \\ \{j\in\nn:k_j\in I_1\} & : i=1, \end{array}\right.\] and let $J_1, \ldots, J_n$ be the smallest intervals such that $\{k_j:j\in H_i\}\subset J_i$. Note that $n\leqslant J_1<\ldots <J_n$. By the properties of $(k_i)_{i=1}^\infty, (l_i)_{i=1}^\infty$, we have 
\begin{align*} 
|x|_{\frac{\theta}{2},r} & = \frac{\theta}{2n^{1/p}}\sum_{i=1}^n |I_ix|_{\frac{\theta}{2},r} \leqslant \frac{\theta}{2n^{1/p}}|a_t||e_{l_t}|_{\frac{\theta}{2},r} + \frac{\theta}{2n^{1/p}} \sum_{i=1}^n \Bigl|\sum_{j\in G_i}a_je_{l_j}\Bigr|_{\frac{\theta}{2},r} \\ 
& \leqslant \frac{1}{2}\Bigl|\sum_{i=1}^\infty a_ie_{k_i}\Bigr|_{\theta,0}+ \frac{\theta}{2n^{1/p}} \sum_{i=1}^n \Bigl|\sum_{j\in H_i}a_je_{k_j}\Bigr|_{\theta,r} \ \ \ \text{(by the inductive hypothesis)}
\\ & = \frac{1}{2}\Bigl|\sum_{i=1}^\infty a_ie_{k_i}\Bigr|_{\theta,0}+ \frac{\theta}{2n^{1/p}} \sum_{i=1}^n \Bigl|J_i\sum_{j=1}^\infty a_je_{k_j}\Bigr|_{\theta,r} \leqslant \frac{1}{2}\Bigl|\sum_{i=1}^\infty a_ie_{k_i}\Bigr|_{\theta,0}+\frac{1}{2}\Bigl|\sum_{i=1}^\infty a_ie_{k_i}\Bigr|_{\theta,r+1}\\ 
&\leqslant\Bigl|\sum_{i=1}^\infty a_ie_{k_i}\Bigr|_{\theta,r+1}=|y|_{\theta,r+1}. 
\end{align*}

\begin{remark}\upshape  If $k_1<l_1<k_2<l_2<\ldots$, then also $l_1<k_2<l_2<k_3<\ldots$, $k_2<l_2<k_3<l_3<\ldots$, etc.    From this it follows that for $m=0,1,2,\ldots$,  
\[T_{q,\theta} \supset (e_{l_{m+i}})_{i=1}^\infty\lesssim_{3^{2m+1}} (e_{k_i})_{i=1}^\infty\subset T_{q,\theta}\]  
and 
\[U_{q,2^{-(2m+1)}\theta} \supset (e_{l_{m+i}})_{i=1}^\infty\lesssim (e_{k_i})_{i=1}^\infty\subset U_{q,\theta}.\]
\end{remark}

\end{proof}

\subsection{The spaces $Z^T_\wedge(\textsf{E})$ and $Z^T_\vee(\textsf{E})$}\label{pressdown}

We recall that a \emph{finite dimensional decomposition} for a Banach space $Z$ is a sequence $\textsf{E}=(E_n)_{n=1}^\infty$ of finite dimensional, non-zero subspaces of $Z$ such that for any $z \in Z$, there exists a unique sequence $(z_n)_{n=1}^\infty \in \prod_{n=1}^\infty E_n$ such that $z=\sum_{n=1}^\infty z_n$. Then, we let $P^\textsf{E}_n$ denote the canonical projections $P^\textsf{E}_n(z) = z_n$, where $z=\sum_{n=1}^\infty z_n$ and $(z_n)_{n=1}^\infty \in \prod_{n=1}^\infty E_n$. For a finite or cofinite subset $I$ of $\nn$, we let $P^\textsf{E}_I=I^\textsf{E}=\sum_{n\in I}P^\textsf{E}_n$.  When no confusion can arise, we omit the superscript and simply denote $I^\textsf{E}$ by $I$. It follows from the principle of uniform boundedness that $\sup\{\|I^\textsf{E}\|,\ I\subset \Ndb\ \text{is an interval}\}$ is finite. We refer to this quantity as the \emph{projection constant} of $\textsf{E}$  in $Z$. If the projection constant of $\textsf{E}$  in $Z$ is $1$, we say $\textsf{E}$ is \emph{bimonotone}. It is well-known that if $\textsf{E}$ is an FDD for $Z$, then there exists an equivalent norm $|\ |$ on $Z$ such that $\textsf{E}$ is a bimonotone FDD of $(Z,|\ |)$. We denote $c_{00}(\textsf{E})$ the space of finite linear combinations of elements in $E_1,\ldots,E_n,\ldots$.

A sequence $\textsf{F}=(F_n)_{n=1}^\infty$ is called a \emph{blocking} of the the FDD $\textsf{E}$ of $Z$ if there exists an increasing sequence $1=m_0<m_1<\cdots<m_n<\cdots$ such that for all $n\in \Ndb$, $F_n=\oplus_{i=m_{n-1}}^{m_n-1}E_i$.
	
We also need to recall some basics on dual FDD's. If $Z$ is a Banach space with FDD $\textsf{E}=(E_n)_{n=1}^\infty$, we let $\textsf{E}^*$ denote the sequence $(E^*_n)_{n=1}^\infty$. Here, $E^*_n$ is identified with the sequence $((P^\textsf{E}_n)^*(Z^*))_{n=1}^\infty$. This identification need not be isometric if $\textsf{E}$ is not bimonotone in $Z$.  We let $Z^{(*)}=\overline{c_{00}(\textsf{E}^*)} \subset Z^*$. The FDD $\textsf{E}$ is said to be \emph{shrinking} if $Z^{(*)}=Z^*$, which occurs if and only if any bounded block sequence with respect to $\textsf{E}$ is weakly null. The FDD $\textsf{E}$ is said to be \emph{boundedly complete} if $\textsf{E}^*$ is a shrinking FDD of $Z^{(*)}$ (in that case $Z$ is canonically isomorphic to $(Z^{(*)})^*)$.

%We let $[\nn]^{<\omega}$ denote the set of all finite, non-empty subsets of $\nn$.  We let $[\nn]$ denote the set of all infinite subsets of $\nn$.  We identify subsets of $\nn$ with strictly increasing sequences of members of $\nn$ by identifying a set with the sequence obtained by listing its members in strictly increasing order.  

Given a Banach space $Z$ with FDD $\textsf{E}$ and a Banach space $T$ with normalized, $1$-unconditional basis, we define two associated spaces, $Z^T_\wedge(\textsf{E})$ and $Z^T_\vee(\textsf{E})$.    Each will be the completion of $c_{00}(\textsf{E})$ with respect to the quantities $\|\cdot\|_\wedge, \|\cdot\|_\vee$ defined below.    

For $z \in c_{00}(\textsf{E})$, we define 
\[\|z\|_\vee = \sup \Bigl\{\Bigl\|\sum_{i=1}^\infty \|I^\textsf{E}_i z\|_Z e_{\min I_i}\Bigr\|_T: I_1<I_2<\ldots, I_i\text{\ an interval}\Bigr\}.\]
We call this norm the \emph{lift up norm} associated with $Z$, $\textsf{E}$ and $T$.

We also define \[[z]_\wedge = \inf\Bigl\{\Bigl\|\sum_{i=1}^\infty \|I^\textsf{E}_iz\|_Z e_{\min I_i}\Bigr\|_T: I_1<I_2<\ldots, \nn=\cup_{i=1}^\infty I_i\Bigr\}\] and \[\|z\|_\wedge = \inf\Bigl\{\sum_{i=1}^n [z_i]_\wedge:n\in\nn, z_i\in c_{00}(\textsf{E}), z=\sum_{i=1}^n z_i\Bigr\}.\] 
We call this norm the \emph{press norm} associated with $Z$, $\textsf{E}$ and $T$. 

It is easily checked that $\textsf{E}$ is a FDD for $Z^T_\wedge(\textsf{E})$ and $Z^T_\vee(\textsf{E})$. The following classical convexity lemma will be useful. 
\begin{lemma}\label{pressdown_conv} Assume moreover that $\textsf{\emph{E}}$ is a bimonotone FDD of $Z$. Let $I_1<\cdots<I_n$ intervals of $\Ndb$ and $z_1,\ldots,z_n \in c_{00}(\textsf{\emph{E}})$ with $\supp(z_i)\subset I_i$, then  
$$z_1+\cdots+z_n \in \overline{\text{conv}}\big\{y_1+\cdots+y_n,\ \supp(y_i)\subset I_i,\ [y_i]_{\wedge}\le \|z_i\|_{\wedge}\big\}.$$
\end{lemma}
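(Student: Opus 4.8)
The plan is to fix a small $\eps>0$, choose for each $i$ a near-optimal finite decomposition of $z_i$, rescale its summands to turn it into a genuine convex combination, form a single convex combination of the sums $y_1+\cdots+y_n$ by taking a "product" of these $n$ convex combinations, and then let $\eps\to 0$.

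\textbf{Support reduction.} By definition of $\|\cdot\|_\wedge$, for each $i$ pick a decomposition $z_i=\sum_{j=1}^{k_i}w_i^j$ with $w_i^j\in c_{00}(\textsf{E})$ and $\sum_{j=1}^{k_i}[w_i^j]_\wedge<\|z_i\|_\wedge+\eps$. Since $\supp(z_i)\subset I_i$, we have $\sum_j I^\textsf{E}_i w_i^j=I^\textsf{E}_i z_i=z_i$, so we may replace $w_i^j$ by $I^\textsf{E}_i w_i^j$. This does not increase $[\cdot]_\wedge$: for any partition $\Ndb=\bigcup_k J_k$ into successive intervals, $J_k^\textsf{E}(I^\textsf{E}_iw)=(J_k\cap I_i)^\textsf{E}w$ and bimonotonicity of $\textsf{E}$ gives $\|(J_k\cap I_i)^\textsf{E}w\|_Z\le\|J_k^\textsf{E}w\|_Z$, so the vector $\sum_k\|J_k^\textsf{E}(I^\textsf{E}_iw)\|_Z e_{\min J_k}$ is coordinatewise dominated by $\sum_k\|J_k^\textsf{E}w\|_Z e_{\min J_k}$, hence has smaller $T$-norm by $1$-unconditionality; taking the infimum over partitions yields $[I^\textsf{E}_iw]_\wedge\le[w]_\wedge$. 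Thus we may and do assume $\supp(w_i^j)\subset I_i$ for all $i,j$; discarding zero summands we may assume each $w_i^j\neq 0$, hence $[w_i^j]_\wedge>0$ (the index $i$ with $z_i=0$ needs no argument, as we may take $y_i=0$).

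\textbf{The product of convex combinations.} Set $\sigma_i=\sum_{j=1}^{k_i}[w_i^j]_\wedge$, so that $\|z_i\|_\wedge\le\sigma_i<\|z_i\|_\wedge+\eps$, put $\lambda_{i,j}=[w_i^j]_\wedge/\sigma_i$ and $y_i^j=\lambda_{i,j}^{-1}w_i^j$. Then $\sum_{j=1}^{k_i}\lambda_{i,j}=1$, $z_i=\sum_{j=1}^{k_i}\lambda_{i,j}y_i^j$, $\supp(y_i^j)\subset I_i$, and, using positive homogeneity of $[\cdot]_\wedge$, $[y_i^j]_\wedge=\sigma_i$. A direct computation, expanding and using $\sum_{j_i}\lambda_{i,j_i}=1$ for each $i$, gives
\[z_1+\cdots+z_n=\sum_{j_1=1}^{k_1}\cdots\sum_{j_n=1}^{k_n}\Bigl(\prod_{i=1}^n\lambda_{i,j_i}\Bigr)\bigl(y_1^{j_1}+\cdots+y_n^{j_n}\bigr),\]
and the coefficients $\prod_{i}\lambda_{i,j_i}$ are nonnegative and sum to $1$. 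Now rescale: put $\widehat y_i^{\,j}=(\|z_i\|_\wedge/\sigma_i)\,y_i^j$, so $\supp(\widehat y_i^{\,j})\subset I_i$ and $[\widehat y_i^{\,j}]_\wedge=\|z_i\|_\wedge$. Applying the same coefficients to the vectors $\widehat y_1^{\,j_1}+\cdots+\widehat y_n^{\,j_n}$ produces the point $\sum_{i=1}^n(\|z_i\|_\wedge/\sigma_i)\,z_i$, which therefore lies in $\text{conv}\bigl\{y_1+\cdots+y_n:\supp(y_i)\subset I_i,\ [y_i]_\wedge\le\|z_i\|_\wedge\bigr\}$.

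\textbf{Passing to the limit.} Since $\|z_i\|_\wedge\le\sigma_i<\|z_i\|_\wedge+\eps$, the point $\sum_{i=1}^n(\|z_i\|_\wedge/\sigma_i)\,z_i$ tends to $z_1+\cdots+z_n$ as $\eps\to0^+$, while it always belongs to the convex hull above; hence $z_1+\cdots+z_n$ lies in the closure of that convex hull, which is the assertion. I expect the genuinely nonroutine point to be the "product of convex combinations" identity of the second paragraph: it is what allows the $n$ separately chosen decompositions of $z_1,\dots,z_n$ to be merged into a single convex combination of sums, and the bookkeeping that keeps every summand supported in its interval $I_i$ (this is precisely where bimonotonicity of $\textsf{E}$ is used). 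The remaining ingredients — positive homogeneity of $[\cdot]_\wedge$ and the elementary limiting argument — are straightforward.
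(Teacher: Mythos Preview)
Your proof is correct. The paper does not actually prove this lemma; it introduces it only as ``the following classical convexity lemma will be useful'' and states it without argument. Your write-up supplies exactly the standard proof one would expect: pick near-optimal decompositions of each $z_i$ for $\|\cdot\|_\wedge$, use bimonotonicity of $\textsf{E}$ together with $1$-unconditionality of the basis of $T$ to force the summands to be supported in $I_i$ without increasing $[\cdot]_\wedge$, turn each decomposition into a convex combination by normalizing, take the tensor product of these convex combinations, and pass to the limit. The only mildly delicate points---the support reduction (where bimonotonicity enters) and the handling of indices $i$ with $z_i=0$---are both dealt with correctly.
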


The duality between press down and lift up norms is described by the following proposition (see Proposition 2.1 in \cite{CauseyNavoyan}). 

\begin{proposition} Let $Z$ be a Banach space with bimonotone FDD $\textsf{\emph{E}}$ and let $T$ be a Banach space with normalized, $1$-unconditional basis. 

\begin{enumerate}[(i)]\item $(Z^T_\wedge(\textsf{\emph{E}}))^{(*)}=(Z^{(*)})_\vee^{T^{(*)}}(\textsf{\emph{E}}^*)$. 

\item $(Z^T_\vee(\textsf{\emph{E}}))^{(*)}=(Z^{(*)})_\wedge^{T^{(*)}}(\textsf{\emph{E}}^*)$. 

\end{enumerate}

\end{proposition}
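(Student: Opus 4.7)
The plan is to reduce the equality of Banach spaces to an equality of norms on the dense subspace $c_{00}(\textsf{E}^*)$. For (i), both sides are by definition the completion of $c_{00}(\textsf{E}^*)$ in some norm, so it suffices to show, for every $z^*\in c_{00}(\textsf{E}^*)$, that
$$\|z^*\|_{(Z^{(*)})^{T^{(*)}}_\vee(\textsf{E}^*)} = \sup\{|\langle z^*,z\rangle| : z\in c_{00}(\textsf{E}),\ \|z\|_\wedge\leqslant 1\}.$$
The right-hand side is exactly $\|z^*\|_{(Z_\wedge^T(\textsf{E}))^{(*)}}$; this uses that $\textsf{E}$ is bimonotone in $Z_\wedge^T(\textsf{E})$, which is routine from the bimonotonicity of $\textsf{E}$ in $Z$ and $1$-unconditionality of the basis of $T$.

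For the $\leqslant$ direction, fix $z\in c_{00}(\textsf{E})$ together with a decomposition $z=\sum_{k=1}^m z_k$ and, for each $k$, an interval partition $(I^k_j)_j$ of $\nn$. Bounding each term by $|\langle I^k_j z^*, I^k_j z_k\rangle|\leqslant \|I^k_j z^*\|_{Z^{(*)}}\|I^k_j z_k\|_Z$ and applying the duality between $T$ and $T^{(*)}$ (valid because $(e_n)$ is $1$-unconditional) gives
$$|\langle z^*,z_k\rangle|\leqslant \Bigl\|\sum_j \|I^k_j z_k\|_Z\, e_{\min I^k_j}\Bigr\|_T \cdot \Bigl\|\sum_j \|I^k_j z^*\|_{Z^{(*)}}\, e^*_{\min I^k_j}\Bigr\|_{T^{(*)}}.$$
The second factor is bounded by $\|z^*\|_\vee$. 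Taking the infimum over the partitions $(I^k_j)_j$ produces $|\langle z^*,z_k\rangle|\leqslant [z_k]_\wedge \|z^*\|_\vee$, and summing in $k$ then passing to the infimum over decompositions yields $|\langle z^*,z\rangle|\leqslant \|z\|_\wedge\|z^*\|_\vee$.

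For the $\geqslant$ direction, fix $\eps>0$ and choose, by the definition of the lift-up norm, disjoint intervals $I_1<\dots<I_n$ covering $\supp(z^*)$ such that $\|\sum_i \beta_i e^*_{\min I_i}\|_{T^{(*)}}\geqslant \|z^*\|_\vee-\eps$, where $\beta_i:=\|I_iz^*\|_{Z^{(*)}}$. The definition of the dual norm together with $1$-unconditionality of the basis of $T$ produces scalars $a_i\geqslant 0$ with $\|\sum_i a_i e_{\min I_i}\|_T\leqslant 1$ and $\sum_i a_i\beta_i\geqslant \|z^*\|_\vee-2\eps$. Bimonotonicity of $\textsf{E}$ in $Z$ identifies $\|I_iz^*\|_{Z^{(*)}}$ with $\|I_iz^*\|_{Z^*}$, so for each $i$ I can select $z_i\in I_i(Z)\subset c_{00}(\textsf{E})$ with $\|z_i\|_Z=1$ and $\langle z^*,z_i\rangle\geqslant \beta_i-\delta$. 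Setting $z=\sum_i a_i z_i$, the partition consisting of $(I_i)_{i=1}^n$ together with singletons outside $\bigcup_i I_i$ realizes $[z]_\wedge\leqslant \|\sum_i a_i e_{\min I_i}\|_T\leqslant 1$, hence $\|z\|_\wedge\leqslant 1$, while $\langle z^*,z\rangle$ lies within $O(\eps+\delta)$ of $\|z^*\|_\vee$. Letting $\delta,\eps\to 0$ completes (i).

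Part (ii) is proved by the dual argument, with the roles of sup and inf in the two norm definitions exchanged. The main obstacle is that the press-down side now sits in $(Z^T_\vee(\textsf{E}))^{(*)}$, so the outer infimum defining $\|\cdot\|_\wedge$ must be witnessed against genuine elements of $Z^T_\vee(\textsf{E})$ rather than of $c_{00}(\textsf{E})$. Lemma~\ref{pressdown_conv} enters precisely here, supplying the convex combinations needed to replace a candidate norming element by one that is a sum of blocks with controlled $[\cdot]_\wedge$-norm. With this convexification in hand, the same pairing-and-selection machinery yields both inequalities and gives $(Z^T_\vee(\textsf{E}))^{(*)}=(Z^{(*)})^{T^{(*)}}_\wedge(\textsf{E}^*)$.
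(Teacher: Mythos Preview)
The paper itself does not prove this proposition; it simply cites \cite{CauseyNavoyan}, Proposition 2.1. So there is no ``paper's proof'' to compare against, and I will assess your argument on its own merits.

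Your proof of (i) is correct and is the standard argument: the $\leqslant$ direction is the H\"older-type pairing estimate, and the $\geqslant$ direction is the explicit construction of an almost-norming element supported on a finite interval partition. Both are carried out carefully.

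Your treatment of (ii), however, is not a proof. The paragraph is vague, and the appeal to Lemma~\ref{pressdown_conv} is misplaced. The $\leqslant$ direction of (ii) (that is, $\|z^*\|_{(Z^T_\vee(\textsf{E}))^{(*)}}\leqslant\|z^*\|_\wedge$) follows by exactly the same pairing estimate as in (i) with the roles of $z$ and $z^*$ exchanged; no convexification lemma is needed. The genuine content is in the $\geqslant$ direction: you must show that the dual norm of $z^*$ as a functional on $Z^T_\vee(\textsf{E})$ is at least $\|z^*\|_\wedge$. Since $\|\cdot\|_\wedge$ is an \emph{infimum}, there is no obvious candidate norming vector to construct, and Lemma~\ref{pressdown_conv} does not supply one: that lemma decomposes a sum of blocks into convex combinations of vectors with controlled $[\cdot]_\wedge$, which goes in the wrong direction for producing a single $z\in Z^T_\vee(\textsf{E})$ that norms $z^*$.

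The clean way to obtain (ii) is to deduce it from (i). Apply (i) with the pair $(Z,T)$ replaced by $(Z^{(*)},T^{(*)})$; this gives
\[
\bigl((Z^{(*)})^{T^{(*)}}_\wedge(\textsf{E}^*)\bigr)^{(*)} = Z^T_\vee(\textsf{E}),
\]
using that $(Z^{(*)})^{(*)}=Z$ and $(T^{(*)})^{(*)}=T$ on finitely supported vectors, both of which follow from bimonotonicity and $1$-unconditionality respectively. Then (ii) amounts to the statement that $W=(W^{(*)})^{(*)}$ on $c_{00}$ for $W=(Z^{(*)})^{T^{(*)}}_\wedge(\textsf{E}^*)$, which again is a routine consequence of bimonotonicity of the FDD in $W$ (finitely supported functionals suffice to norm finitely supported vectors). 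You should replace your last paragraph with this argument, or else give a genuine direct construction for the $\geqslant$ direction.
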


It is known that if the basis of $T$ is shrinking, then the FDD $\textsf{E}$ of $Z^T_\wedge(\textsf{E})$ is shrinking. In item $(iii)$ of the next proposition, we include a separate proof which is illustrative in the case that $T$ has $\textsf{N}_p$. 

\begin{proposition} Fix $1<p\leqslant \infty$ and let $T$ be a Banach space with normalized, $1$-unconditional, shrinking basis. Let $Z$ be a Banach space with FDD $\textsf{\emph{E}}$.  
\begin{enumerate}[(i)]
\item  If $T$ has $\textsf{\emph{A}}_p$, then so does $Z^T_\wedge(\textsf{\emph{E}})$. 
\item If $T$ has $\textsf{\emph{N}}_p$, then so does $Z^T_\wedge(\textsf{\emph{E}})$. 
\item If $T$ has $\textsf{\emph{N}}_p$, then $\textsf{\emph{E}}$ is shrinking in $Z^T_\wedge(\textsf{\emph{E}})$. 
\end{enumerate}

\end{proposition}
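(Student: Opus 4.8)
The plan is to prove the three statements in the order $(i)$, $(ii)$, $(iii)$, using in each case the ``spatial'' formulations of $\textsf{A}_p$ and $\textsf{N}_p$ from the earlier proposition, together with the fact that a shrinking basis of $T$ makes $\textsf{E}$ a shrinking FDD of $Z^T_\wedge(\textsf{E})$. The guiding idea for $(i)$ and $(ii)$ is that a block sequence with respect to $\textsf{E}$ in $Z^T_\wedge(\textsf{E})$ essentially behaves, as far as the norm $\|\cdot\|_\wedge$ is concerned, like the corresponding sequence of basis vectors of $T$ indexed by the ``first supports''; so a winning strategy for Player~I in the relevant game on $T$ can be pulled back to a winning strategy on $Z^T_\wedge(\textsf{E})$, at the cost of a harmless constant coming from the perturbation/gliding-hump step.

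For $(i)$: suppose $T$ has $\textsf{A}_p$, so for some $c>0$ and every $n$ Player~I wins the compact spatial $A(c,p,n)$ game on $T$, which (since $(e_i)$ is a basis) amounts to the tree/$\ell_p$-upper-estimate for block sequences of $(e_i)$. Fix $n$; I would have Player~I, in the compact spatial $A(c',p,n)$ game on $Z^T_\wedge(\textsf{E})$ (with $c'$ slightly larger than $c$), choose successively tail subspaces $\bigcup_{j\ge k_i}E_j$ with $k_1<k_2<\cdots$ growing fast enough that, after Player~II produces compact sets $C_i$, one can perturb any choice $z_i\in C_i$ to a finitely supported block vector $\tilde z_i$ with $\supp(\tilde z_i)\subset(k_i,k_{i+1})$ and $\|\tilde z_i-z_i\|$ as small as desired (this is exactly the kind of gliding-hump-with-compact-sets argument the paper isolates in Subsection~\ref{gliding}). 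For such a block sequence, $\|\sum_i a_i\tilde z_i\|_\wedge$ is controlled by $\bigl\|\sum_i \|\tilde z_i\| a_i e_{\min\supp \tilde z_i}\bigr\|_T$ up to the projection constant; since $(\tilde z_i/\|\tilde z_i\|)$ maps (via $e_{\min\supp\tilde z_i}\mapsto$ itself) to a normalized block sequence of $(e_i)$ in $B_T$ on which Player~I's estimate applies, one gets $\|\sum_i a_i\tilde z_i\|_\wedge\le c\|a\|_p$ up to constants, and then undo the perturbation. Writing out the bookkeeping so that all the accumulated constants stay below a fixed $c'$ independent of $n$ is the routine part.

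For $(ii)$ the argument is identical, replacing the $\ell_p$-upper-estimate winning condition by the $\|\sum_i x_i\|\le cn^{1/p}$ condition of the $N$-game; nothing new is needed. For $(iii)$: to show $\textsf{E}$ is shrinking in $Z^T_\wedge(\textsf{E})$ when $T$ has $\textsf{N}_p$, it suffices to show every normalized block sequence $(z_i)$ with respect to $\textsf{E}$ is weakly null, equivalently that $\|z^*\|\cdot$-norms of averages go to zero; concretely one shows that for any normalized block sequence, $\frac1n\|\sum_{i=1}^n z_i\|_\wedge\to 0$, which combined with unconditionality-type spreading gives that no $\ell_1$-spreading model is generated, hence (by a standard argument, or by invoking that $\textsf{N}_p\subset \textsf{D}_1$, i.e. $Sz\le\omega$, hence Asplund, hence the block basis is shrinking) $\textsf{E}$ is shrinking. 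Here the $\textsf{N}_p$ estimate on $T$ enters through exactly the computation in $(ii)$: a long block average in $Z^T_\wedge(\textsf{E})$ has $\wedge$-norm $\lesssim n^{1/p}$, so its normalized version tends to $0$ since $p>1$.

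I expect the main obstacle to be $(i)$/$(ii)$: namely, making the perturbation step fully rigorous in the setting of the \emph{press} norm $\|\cdot\|_\wedge$, which is defined by an infimum over decompositions and over interval coverings of $\nn$, so that small-norm perturbations of a vector do not obviously lead to small changes in the relevant lift-up expressions. One has to argue that a good decomposition $z=\sum z_i$ nearly realizing $\|z\|_\wedge$ can be chosen compatible with the block structure (using bimonotonicity of $\textsf{E}$, which we may assume after renorming, together with Lemma~\ref{pressdown_conv}), and that restricting it to tail subspaces changes the norm by a controlled amount. Once this compatibility is in hand, everything reduces to the corresponding statement in $T$ via the inequality $\|\sum_i a_i\tilde z_i\|_\wedge\le (\text{proj.\ const.})\cdot\bigl\|\sum_i\|\tilde z_i\|\,a_ie_{\min\supp\tilde z_i}\bigr\|_T$ applied to block vectors, and the converse bound needed to transfer Player~I's win back is the straightforward direction.
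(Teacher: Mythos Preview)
Your plan for $(i)$ and $(ii)$ is the paper's plan: lift a winning spatial strategy for Player~I from $T$ to $Z^T_\wedge(\textsf{E})$ by playing tail subspaces with respect to $\textsf{E}$, perturb to finitely supported blocks, and reduce to the $T$-estimate. You also correctly isolate Lemma~\ref{pressdown_conv} and the bimonotone renorming. The gap is the final inequality you write: there is no reason why
\[
\Bigl\|\sum_i a_i\tilde z_i\Bigr\|_\wedge \leqslant C\,\Bigl\|\sum_i\|\tilde z_i\|\,a_i\,e_{\min\supp\tilde z_i}\Bigr\|_T
\]
should hold for a general $1$-unconditional $T$; collapsing each block to a single basis vector of $T$ is simply not what the definition of $[\cdot]_\wedge$ produces. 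What the paper does instead is this. After Lemma~\ref{pressdown_conv} reduces to disjointly supported $z_1<\cdots<z_n$ with $[z_i]_\wedge\leqslant 1$, choose for each $z_i$ an interval covering $I_{i,1}<\cdots<I_{i,j_i}$ of $\supp(z_i)$ realizing $[z_i]_\wedge$, and set $u_i=\sum_j\|I^{\textsf{E}}_{i,j}z_i\|_Z\,e_{\min I_{i,j}}\in B_T$. The concatenated covering then gives
\[
\Bigl\|\sum_i a_iz_i\Bigr\|_\wedge\leqslant\Bigl[\sum_i a_iz_i\Bigr]_\wedge\leqslant\Bigl\|\sum_i a_iu_i\Bigr\|_T,
\]
and it is to the block sequence $(u_i/\|u_i\|_T)_{i=1}^n\subset B_T$---not to $(e_{\min\supp z_i})$---that one applies the winning strategy on $T$. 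Because that strategy may be taken to play tails of the shrinking basis of $T$, and $\min\supp u_i\geqslant\min\supp z_i$, the admissibility bookkeeping works. This is the missing computational step.

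For $(iii)$, both of the routes you name are invalid as stated. ``Asplund $\Rightarrow$ the FDD is shrinking'' is false: the summing basis of $c_0$ is a Schauder basis of an Asplund space which is not shrinking. Likewise, ``no $\ell_1$-spreading model $\Rightarrow$ shrinking'' fails for the same example (every block sequence of the summing basis generates a $c_0$-type spreading model, yet the basis is not shrinking). What does work is the direct argument you begin but do not finish: if $\textsf{E}$ were not shrinking in $Z^T_\wedge(\textsf{E})$, some normalized block sequence $(z_i)$ admits $z^*$ and a subsequence with $\text{Re}\,z^*(z_{i_k})>\varepsilon$; but from $(ii)$ one can extract, for every $n$, a further $n$-term subsequence satisfying $\bigl\|\sum_{j=1}^n z_{i_{k_j}}\bigr\|_\wedge\leqslant cn^{1/p}$, contradicting the lower bound $n\varepsilon/\|z^*\|$ for large $n$. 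The paper packages this by quoting Theorem~\ref{containments}: $Z^T_\wedge(\textsf{E})\in\textsf{N}_p\subset\textsf{D}_1=\bigcup_{s>1}\textsf{T}_s$, so every bounded block sequence has a subsequence dominated by an $\ell_r$ basis with $r<\infty$, hence is weakly null.
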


\begin{proof} Let us explain the argument for $(i)$. First, we assume, as we may after renorming, that the FDD $\textsf{E}$ of $Z$ is monotone. Since $T$ has $\textsf{A}_p$, there exists $c>0$ such that for all $n\in \Ndb$, Player I has a winning strategy in the spatial $A(c,p,n)$ game on $T$. Let us now fix $n\in \Ndb$. Let $\chi:B_T^{<n}\to \co(T)$ be a winning spatial strategy for Player I. Since the basis $(e_i)$ of $T$ is shrinking, we may assume, by another approximation argument, that $\chi$ takes values in $\{T_n,\ n\in \Ndb\}$, where $T_n$ denotes the closed linear span of $\{e_k,\ k\geqslant n\}$. We may also assume that whenever $u_1,\ldots,u_n \in c_{00}$ are such that $(u_1,\ldots,u_n)$ is $\chi$-admissible, we have $u_1<\cdots<u_n$. We now try to define a winning strategy for Player I in the $A(c,p,n)$ game on $Z^T_\wedge(\textsf{{E}})$. For that purpose we denote $Z_n$ the closed linear span of $\cup_{k\ge n}E_k$ in $Z^T_\wedge(\textsf{{E}})$. Again by approximation and replacing $c$ by $c'>c$, it is enough to define $\psi$ on $(B_{Z^T_\wedge(\textsf{{E}})}\cap c_{00}(\textsf{E})^{<n}$. Then everything is in place to support our next claim. We can build $\psi$ with the property that if $z_1,\ldots,z_n \in c_{00}(\textsf{E})$ are such that $(z_1,\ldots,z_n)$ is $\psi$-admissible, then $z_1<\cdots<z_n$ and for each $1\leqslant i\leqslant n$, there exists intervals $I_{i,1}<\cdots<I_{i,j_i}$ covering the support of $z_i$ so that $[z_i]_{\wedge}= \|u_i\|_T$, where $u_i=\sum_{j=1}^{j_i} \|I^\textsf{E}_jz_i\|_Z e_{\min I_{i,j}}$ and $(v_1,\ldots,v_n)$ is $\chi$-admissible, where $v_i=u_i(\|u_i\|_T)^{-1}$. Then
$$\Big\|\sum_{i=1}^n a_iz_i\Big\|_{\wedge}^p \leqslant \Big[\sum_{i=1}^n a_iz_i\Big]_\wedge^p \leqslant \Big\|\sum_{i=1}^n a_iu_i\Big\|_{T}^p \leqslant c^p\sum_{i=1}^n |a_i|^p[z_i]_\wedge^p.$$
The second inequality is due to the fact that $(I_{i,j})_{(i,j)}$ is an interval covering of the support of $\sum_{i=1}^n a_iz_i$ and the last inequality comes from the $\chi$-admissibility of $(v_1,\ldots,v_n)$. Then, the conclusion of our proof follows from Lemma \ref{pressdown_conv}.

The proof of $(ii)$ is similar.

We prove $(iii)$. Assume $T$ and therefore $Z^T_\wedge(\textsf{{E}})$ have $\textsf{N}_p$. Then, $Z^T_\wedge(\textsf{{E}})$ has $\textsf{T}_s$, for $1<s<p$. It follows that any bounded block sequence $(z_k)_k$ in $Z^T_\wedge(\textsf{{E}})$ admits a subsequence which is dominated by the $\ell_r$ basis, where $r$ is the conjugate exponent of $s$. We then easily deduce that $(z_k)_k$ is weakly null, which shows that $\textsf{E}$ is a shrinking FDD of $Z^T_\wedge(\textsf{{E}})$.

\end{proof}

As a direct consequence of the previous corollary and Proposition \ref{modelApNp}, we get
\begin{corollary} Let $p\in (1,\infty]$, $q$ be the conjugate exponent of $p$, $\theta \in (0,1)$ and $M\in[\nn]^\omega$. Then, for any Banach space $Z$ with  FDD $\textsf{\emph{E}}$, $Z^{T^*_{q,\theta,M}}_\wedge(\textsf{\emph{E}})$ has $\textsf{\emph{A}}_p$ and $Z^{U^*_{q,\theta,M}}_\wedge(\textsf{\emph{E}})$ has $\textsf{\emph{N}}_p$. 
\end{corollary}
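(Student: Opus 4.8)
The plan is to combine the two structural results that immediately precede this corollary. By Proposition~\ref{modelApNp}, the model spaces $T^*_{q,\theta,M}$ and $U^*_{q,\theta,M}$ themselves have $\textsf{A}_p$ and $\textsf{N}_p$ respectively; and the preceding proposition transfers these properties from a Banach space $T$ with a normalized, $1$-unconditional, shrinking basis to the press-down space $Z^T_\wedge(\textsf{E})$, for any $Z$ with an FDD $\textsf{E}$. So the only thing that needs checking before the two results can be chained together is the hypothesis on the basis of $T$: namely that the canonical bases of $T^*_{q,\theta,M}$ and $U^*_{q,\theta,M}$ are normalized, $1$-unconditional, and shrinking.

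First I would record that $T_{q,\theta,M}$ is the $q$-convexification of a Tsirelson-type space whose defining norming set is built from the $1$-unconditional canonical basis of $c_{00}$; by the remarks in Subsection~\ref{modelspaces}, the $q$-convexification preserves being a normalized $1$-unconditional basis, so the canonical basis of $T_{q,\theta,M}$ (and, by the analogous construction, of $U_{q,\theta,M}$) is normalized and $1$-unconditional, hence so is its dual basis in $T^*_{q,\theta,M}$ (resp.\ $U^*_{q,\theta,M}$). Next, the reflexivity argument given for $T_{q,\theta}$ carries over verbatim to $T_{q,\theta,M}$: the underlying Tsirelson space $T_{\theta^q,M}$ is reflexive, its canonical basis is therefore shrinking and boundedly complete, hence so is the canonical basis of the $q$-convexification $T_{q,\theta,M}$, making $T_{q,\theta,M}$ reflexive; consequently its dual basis in $T^*_{q,\theta,M}$ is again shrinking (the dual of a reflexive space with a shrinking, boundedly complete basis has a shrinking basis). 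For $U_{q,\theta,M}$ the same chain works once one knows that space is reflexive, which follows because $U_{q,\theta,M}$ still has a shrinking and boundedly complete canonical basis by the same Tsirelson-style argument (alternatively, since $U^*_{q,\theta,M}$ has $\textsf{N}_p$ it has $\textsf{T}_s$ for $1<s<p$, so it is Asplund and one reduces to checking bounded completeness of the predual basis).

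With these verifications in hand the proof is immediate: apply the previous proposition with $T=T^*_{q,\theta,M}$ (which has $\textsf{A}_p$ by Proposition~\ref{modelApNp} and has a normalized, $1$-unconditional, shrinking basis by the above) to conclude $Z^{T^*_{q,\theta,M}}_\wedge(\textsf{E})$ has $\textsf{A}_p$; and with $T=U^*_{q,\theta,M}$ to conclude $Z^{U^*_{q,\theta,M}}_\wedge(\textsf{E})$ has $\textsf{N}_p$.

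I do not expect a genuine obstacle here, since this is a direct corollary; the only mild care needed is in confirming the shrinking (equivalently, reflexivity) of the model spaces and their duals, so that the hypothesis ``$T$ has a normalized, $1$-unconditional, shrinking basis'' of the preceding proposition is legitimately met. This is entirely standard Tsirelson-space bookkeeping, parallel to the reflexivity discussion already carried out for $T_{q,\theta}$ in Subsection~\ref{modelspaces}.
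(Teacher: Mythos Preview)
Your proposal is correct and follows exactly the route the paper takes: the corollary is stated there as ``a direct consequence of the previous [proposition] and Proposition~\ref{modelApNp}'', with no further argument. You are simply more explicit than the paper in checking that the canonical bases of $T^*_{M,q,\theta}$ and $U^*_{M,q,\theta}$ are normalized, $1$-unconditional and shrinking so that the preceding proposition applies; your verifications are sound (and indeed the shrinking property can also be read off directly from the fact that these spaces have $\textsf{N}_p$, via the same argument used for item~(iii) of that proposition).
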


\subsection{Gliding hump and quotient maps}\label{gliding}
In this subsection we gather a few general results on quotient maps with an application to the general $(T,c)$ games.  

\begin{proposition}\label{dual1} 
Let $X$ be a Banach space and $Z$ be a Banach space with shrinking FDD $\textsf{\emph{E}}$ and let $Q:Z\to X$ be a quotient map. For simplicity, we will identify in our notation $X^*$ with its image in $Z^*$ by the isometry $Q^*$.  Then, for any finite-codimensional subspace $Y$ of $X$,  any $\delta\in (0,\frac{1}{20})$, and any $j\in\nn$, there exists $l\in (j,\infty)$ such that for any interval $I\subset [l,\infty)$ and any $x^*\in S_{X^*}$ such that $\|x^*-P^{\textsf{\emph{E}}^*}_Ix^*\|\leqslant \delta$,  there exists $z\in B_Z$ such that $Qz\in Y$ and $\text{\emph{Re\,}}x^*(Qz)\geqslant \frac13$. 
\end{proposition}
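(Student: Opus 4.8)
The plan is to use the quotient map $Q:Z\to X$ together with the shrinking FDD $\textsf{E}$ to produce, for each unit functional $x^*$ that is nearly supported on a tail interval $I\subset[l,\infty)$, a norm-one vector $z\in B_Z$ whose image $Qz$ lies in the prescribed finite-codimensional subspace $Y$ and on which $x^*$ acts with real part at least $\frac13$. First I would fix the finite-codimensional subspace $Y$ of $X$, say $Y=\bigcap_{k=1}^d\ker y_k^*$ with $y_1^*,\dots,y_d^*\in S_{X^*}$, and record that $Q^{**}$ is a quotient map from $Z^{**}=(Z^{(*)})^*$ onto $X^{**}$ (here using that $\textsf{E}$ is shrinking so $Z^{(*)}=Z^*$). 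The point is that since $Q$ is a quotient map, for any $x^{**}\in B_{X^{**}}$ there is $z^{**}\in B_{Z^{**}}$ with $Q^{**}z^{**}=x^{**}$; applied to a suitable $x^{**}$ this will give us the required vector after a gliding-hump/finite-support approximation.

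The core of the argument is a gliding-hump choice of the cut-off $l$. Because $\textsf{E}^*$ is an FDD of $Z^*=Z^{(*)}$, for each of the functionals $y_k^*\in X^*\subset Z^*$ we have $\|y_k^*-P^{\textsf{E}^*}_{[1,m)}y_k^*\|\to 0$ as $m\to\infty$; choose $l>j$ so large that $\|y_k^*-P^{\textsf{E}^*}_{[1,l)}y_k^*\|<\delta'$ for all $k\le d$, for a $\delta'$ to be specified (something like $\delta'\le \tfrac1{20}$ suffices). Now fix an interval $I\subset[l,\infty)$ and $x^*\in S_{X^*}$ with $\|x^*-P^{\textsf{E}^*}_Ix^*\|\le\delta$. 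Set $w^*=P^{\textsf{E}^*}_Ix^*\in Z^*$, so $\|w^*\|\ge 1-\delta$ and $\supp_{\textsf{E}^*}(w^*)\subset I\subset[l,\infty)$. Pick $z_0\in B_Z$ with $\mathrm{Re}\,w^*(z_0)\ge 1-\delta-\varepsilon_0$, and then replace $z_0$ by its projection $z_1=P^{\textsf{E}}_Jz_0$ onto a finite interval $J$ large enough that $\|z_0-z_1\|$ is tiny; since $w^*$ lives on $I\subset[l,\infty)$ we may further arrange $J\subset[l,\infty)$ (cut off the initial part of $z_0$ using that $w^*$ kills $E_k$ for $k<l$, up to a small error), so that $z_1\in B_Z$ is finitely supported in $[l,\infty)$ and still $\mathrm{Re}\,w^*(z_1)\ge \tfrac12$, say. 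This is essentially the content — and the slight subtlety — of the gliding-hump step: trading the near-support of $x^*$ for an honest finite tail support of the witnessing vector.

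It remains to push $z_1$ into $Q^{-1}(Y)$. Since $z_1$ is supported on $[l,\infty)$ and each $y_k^*$ agrees with $P^{\textsf{E}^*}_{[1,l)}y_k^*$ up to $\delta'$, we get $|y_k^*(Qz_1)|=|\langle Q^*y_k^*,z_1\rangle|=|\langle y_k^*,z_1\rangle|\le \|y_k^*-P^{\textsf{E}^*}_{[1,l)}y_k^*\|\,\|z_1\|\le\delta'$, i.e. $Qz_1$ is within $d\delta'$ of $Y$ in the appropriate sense. Because $Y$ has finite codimension $d$, a standard correction: choose $x'\in X$ with $\|x'\|\le C\,d\,\delta'$ (constant depending on the $y_k^*$, or, after a preliminary normalization, a universal constant once we pass to a quotient by $Y$), so that $Qz_1-x'\in Y$ and $x^*(x')$ is small; lift $x'$ through $Q$ to $z'\in Z$ with $\|z'\|\le 2C\,d\,\delta'$, and set $z=z_1-z'$. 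Then $Qz\in Y$, $\|z\|\le 1+2CD\delta'$, and $\mathrm{Re}\,x^*(Qz)=\mathrm{Re}\,w^*(z_1)+(\text{error terms from }\|x^*-w^*\|\le\delta,\ z',\ \text{and the perturbation})\ge \tfrac12-O(\delta)-O(\delta')>\tfrac13$; finally rescale $z$ by $1/\|z\|$, which for $\delta,\delta'$ small enough keeps the real part above $\tfrac13$. Choosing $\delta'$ (hence $l$) appropriately at the start closes the loop.

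The main obstacle I expect is getting the constants to line up so that the final bound is a clean $\tfrac13$ while $\|z\|\le 1$: one has to budget the $\delta$-error between $x^*$ and $w^*$, the gliding-hump approximation error in replacing $z_0$ by a finitely-tail-supported $z_1$, and the norm of the correction vector $z'$ needed to land exactly in $Y$, all against the requirement $\mathrm{Re}\,x^*(Qz/\|z\|)\ge\tfrac13$. This is why the statement only asks for $\tfrac13$ rather than something close to $1$, and why the hypothesis $\delta<\tfrac1{20}$ appears; the bookkeeping is routine but must be done carefully, in particular ensuring that the choice of $l$ depends only on $Y$, $\delta$, and $j$ (not on $I$ or $x^*$), which it does because it is extracted solely from the tail behaviour of the finitely many functionals defining $Y$.
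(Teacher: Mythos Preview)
Your direct approach is natural and genuinely different from the paper's, but there is a real gap at the truncation step. You assert that after replacing $z_0$ by $z_1=P^{\textsf{E}}_J z_0$ with $J\subset[l,\infty)$ you still have $z_1\in B_Z$; this requires the projection constant of $\textsf{E}$ in $Z$ to be $1$, which is not assumed in the proposition. For a general shrinking FDD with projection constant $K$, you only get $\|z_1\|\leqslant K$, and carrying this through your estimates yields, after the correction into $Y$ and normalization, roughly $\text{Re}\,x^*(Qz/\|z\|)\geqslant (1-O(\delta))/K$, which falls below $\tfrac13$ once $K$ is moderately large. Your observation that $w^*$ annihilates $E_k$ for $k<l$ does correctly preserve the value $w^*(z_1)=w^*(z_0)$, but it does nothing to control $\|P^{\textsf{E}}_{[l,\infty)}z_0\|$: there is no reason the initial segment of a vector norming $w^*$ should be small.

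The paper sidesteps this by arguing by contradiction and working almost entirely in $X$ rather than in $Z$. Assuming failure for every $l>j$, it produces a sequence $(x_l,x_l^*)$ of near-norming pairs, uses compactness in a fixed finite-dimensional complement of $Y$ (not FDD projections) to arrange that differences $x_{l_2}-x_{l_1}$ lie in $\delta B_X + 2B_Y$, and notes that $x_{l_2}^*(x_{l_1})$ is small because the FDD is shrinking. The half-difference $\tfrac12(x_{l_2}-x_{l_1})$ is then, after a tiny correction, an element of $B_Y$ on which $x_{l_2}^*$ is still large; lifting this through $Q$ never requires projecting a vector in $Z$, so the projection constant enters only softly (in a limit tending to zero) and not in the final inequality. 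Your argument would go through cleanly under an added bimonotonicity hypothesis, and indeed in the paper's later applications the FDD has already been renormed to be bimonotone; but the proposition as stated covers arbitrary shrinking FDDs, and for that you need the paper's compactness-and-differences device or an equivalent workaround.
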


\begin{proof} If the result were not true, then for some $j\in\nn$ and every $l>j$, there would exist an interval $I_l\subset [l,\infty)$ and $x^*_l\in S_{X^*}$ such that $\|x^*_l-P^{\textsf{E}^*}_{I_l}x^*_l\|\leqslant \delta$ and for each $z\in B_Z\cap Q^{-1}(Y)$, $|x^*_l(Qz)|<\frac13$. For each $l>j$, we can choose $x_l\in (1-\delta)B_X$ such that $\text{Re\ }x^*_l(x_l)>1-2\delta$. Since $Y$ is finite codimensional in $X$, by passing to a subsequence and relabeling, we can assume, using compactness in a finite-dimensional complement of $Y$, that for all $l_1, l_2>j$, $x_{l_2}-x_{l_1} \in \delta B_X + 2 B_Y$. 

Fix any $l_1>j$ and $z_1\in B_Z$ such that $Qz_1=x_{l_1}$. Note that, since $\textsf{{E}}$ is shrinking,
\begin{align*} \underset{l}{\lim\sup} |x^*_l(x_{l_1})| &  = \underset{l}{\lim\sup} |x^*_l(Qz_1)| \leqslant \underset{l}{\lim\sup} \|x^*_l-P^{\textsf{E}^*}_{I_l}x^*_l\| + \underset{l}{\lim\sup} |P^{\textsf{E}^*}_{I_l}x^*_l(z_1)| \leqslant \delta. 
\end{align*} 
Therefore for sufficiently large $l_2>l_1$, $|x^*_{l_2}(x_{l_1})|<2\delta$.   Let $y=\frac{x_{l_2}-x_{l_1}}{2}\in \frac{\delta}{2} B_X+B_Y$ and fix $u\in \frac{\delta}{2}B_X$ and $v\in \delta B_Z$ such that $y-u\in B_Y$ and $Qv=u$.    Fix $z_2\in B_Z$ such that $Qz_2=x_{l_2}$ and  let 
\[z_0=\frac{z_2-z_1}{2} -v\in (1+\delta)B_Z.\] 
Note that $Qz_0=y-u\in B_Y$ and 
\begin{align*}
\text{Re\ }x^*_{l_2}(Qz_0) & \geqslant \frac12\text{Re\ }x^*_{l_2}(x_{l_2}) - \frac12|x^*_{l_2}(x_{l_1})| - |x^*_{l_2}(v)| \geqslant \frac{1-2\delta}{2}-\delta-\delta=\frac12-3\delta. 
\end{align*}  
Finally, for $z=\frac{z_0}{1+\delta}\in B_Z$, we have that $Qz\in Y$ and 
\[\text{Re\ }x^*_{l_2}(Qz) \geqslant \frac{\frac12-3\delta}{1+\delta}\geqslant \frac{\frac12-\frac{3}{20}}{1+\frac{1}{20}}= \frac13.\]  
This contradiction finishes the proof. 
\end{proof}

\begin{lemma}\label{net}
 Let $M$ be a metric space and let $f:M\to M$ be a function.  Suppose that $G\subset M$ is such that $f(G)\subset K$ for some compact subset $K$ of $M$.   Then for any $\delta>0$, the set $\{x\in G:d_M(x,f(x))\leqslant \delta\}$ admits a finite $4\delta$-net, whenever it is non empty. 
\end{lemma}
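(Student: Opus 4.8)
The plan is to exploit total boundedness of the compact set $K$ to produce a finite $\delta$-net for the image $f(A)$, where $A=\{x\in G:d_M(x,f(x))\leqslant\delta\}$, and then to transfer this net back to $A$ itself using the triangle inequality together with the defining inequality $d_M(x,f(x))\leqslant\delta$. The reason one ends up with $4\delta$ rather than $2\delta$ is precisely this transfer step: a $\delta$-net for $f(A)$ costs one $\delta$ to reach from $x$ to $f(x)$ and another $\delta$ from $f(x)$ to a net point, giving $2\delta$ with a net in $M$; replacing each such center by a genuine point of $A$ near it costs an extra $2\delta$.

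First I would fix $\delta>0$ and assume $A\neq\varnothing$. Since $A\subset G$, we have $f(A)\subset f(G)\subset K$, and $K$, being compact, is totally bounded; hence there are finitely many points $y_1,\dots,y_n\in M$ with $K\subset\bigcup_{i=1}^n B(y_i,\delta)$. Next, for each $x\in A$ choose $i\leqslant n$ with $d_M(f(x),y_i)<\delta$; combining with $d_M(x,f(x))\leqslant\delta$ gives $d_M(x,y_i)\leqslant d_M(x,f(x))+d_M(f(x),y_i)<2\delta$. Thus $A\subset\bigcup_{i\in F}B(y_i,2\delta)$, where $F=\{i\leqslant n:B(y_i,2\delta)\cap A\neq\varnothing\}$ is finite.

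Finally, for each $i\in F$ pick a point $x_i\in B(y_i,2\delta)\cap A$. Then $\{x_i:i\in F\}$ is a finite subset of $A$ which is a $4\delta$-net for $A$: given $x\in A$, there is $i\in F$ with $d_M(x,y_i)<2\delta$, and since also $d_M(y_i,x_i)<2\delta$, we get $d_M(x,x_i)<4\delta$. This finishes the argument. There is essentially no genuine obstacle here; the only point requiring a little care is the bookkeeping of the constant, namely keeping the net inside $A$ (or at least inside $M$ with the asserted radius), which is exactly why one passes from the centers $y_i$ furnished by compactness to nearby points of $A$.
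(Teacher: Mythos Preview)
Your proof is correct and follows essentially the same approach as the paper: both use total boundedness of $K$ to obtain a finite $\delta$-net, map points of $A$ to nearby net points via $f$, and then select representatives in $A$ to form the $4\delta$-net. The paper's version phrases the selection as maps $\eta:H\to F$ and $\nu:\eta(H)\to H$ and breaks the final triangle inequality into four pieces of size $\delta$ each (through $f(x)$, the net point, and $f(z)$), whereas you group them as two pieces of size $2\delta$, but the argument is the same.
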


\begin{proof} So assume $H=\{x\in G:d_M(x,f(x))\le \delta\}\neq \varnothing$. Fix a finite $\delta$-net $F$ of $K$. Define $\eta:H\to F$ by letting $\eta(x)$ be such that $d_M(f(x),\eta(x))\le \delta$.  Let $\nu:\eta(H)\to H$ be such that for each $y\in \eta(H)$, $\nu(y)\in \eta^{-1}(\{y\})$.   Note that $\nu(\eta(H))\subset H$ is finite. We show that $\nu(\eta(H))$ is a $4\delta$-net for $H$.   Fix $x\in H$ and let $y=\eta(x)\in \eta(H)$ and $z=\nu(y)\in \nu(\eta(H))$. Then 
\begin{align*} 
d_M(x,z) & \leqslant d_M(x,f(x))+ d_M(f(x), y) + d_M(y,f(z)) + d_M(f(z),z) \\ 
& = d_M(x,f(x))+ d_M(f(x), \eta(x)) + d_M(\eta(z),f(z)) + d_M(f(z),z) \\ & \leqslant 4\delta. \end{align*} 
\end{proof}

For the next two results, we adopt some convenient notation. For positive integers $1\leqslant r_1<r_2<\ldots$ and any $(x_i)_{i=1}^\infty$, we let $(x_i, r_i)_{i=1}^\infty$ denote the sequence $(u_j)_{j=1}^\infty$ such that \[u_j = \left\{\begin{array}{ll} x_i & : j=r_i \\ 0 & : j\in \nn\setminus \{r_1, r_2, \ldots\}.\end{array}\right.\] That is, $(x_i, r_i)_{i=1}^\infty$ is the sequence which has $x_1$ in position $r_1$, $x_2$ in position $r_2$, $\ldots$, and all other positions are occupied by $0$. We make a similar definition for finite sequences $(x_i, r_i)_{i=1}^n$. 

\begin{proposition}\label{hawk} Let $X$ be a Banach space, let $Z$ be a Banach space with shrinking FDD $\textsf{\emph{E}}$, and let $Q:Z\to X$ be a quotient map. Let $\chi$ be a spatial $\omega$-strategy on $X$. For any strictly decreasing, null sequence  $(\delta_i)_{i=1}^\infty \subset (0,\frac{1}{20})$, there exists a blocking $\textsf{\emph{F}}$ of $\textsf{\emph{E}}$ such that the following holds:  For any integers $1\leqslant r_1<r_2<\ldots$, any intervals $I_i$ such that $I_i\subset (r_i, r_{i+1})$, and any $(x^*_i)_{i=1}^\infty\subset S_{X^*}$ such that $\|x^*_i-P^{\textsf{F}^*}_{I_i}x^*_i\|\leqslant \delta_i$ for all $i\in\nn$, there exist $(y^*_i)_{i=1}^\infty\subset S_{X^*}$ and $(z_i)_{i=1}^\infty\subset B_Z$ such that 
\begin{enumerate}[(i)]
\item $(Qz_i, r_i)_{i=1}^\infty$ is $\chi$-admissible, 
\item $\|x^*_i-y^*_i\|\leqslant 4\delta_i$ for all $i\in\nn$, 
\item $\text{\emph{Re}\ }y^*_i(Qz_i)\geqslant \frac13$ for all $i\in\nn$, 
\item $|y^*_i(Qz_j)|\leqslant 2\delta_{\max\{i,j\}}$ for all $i,j\in\nn$ with $i\neq j$. 
\end{enumerate}
\end{proposition}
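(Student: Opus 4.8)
The plan is to construct the blocking $\textsf{F}$ recursively, choosing the blocks large enough so that at every stage we can apply Proposition \ref{dual1} together with the net-finiteness of Lemma \ref{net} to diagonalize against the (spatial $\omega$-)strategy $\chi$. First I would fix the null sequence $(\delta_i)$ and, for bookkeeping, a strictly decreasing auxiliary sequence of thresholds. Suppose inductively that the blocking $\textsf{F}$ has been determined up to some block, that finitely many vectors $y^*_1,\ldots,y^*_{i-1}\in S_{X^*}$, $z_1,\ldots,z_{i-1}\in B_Z$, and indices $r_1<\cdots<r_{i-1}$ have been chosen so that $(Qz_1,r_1),\ldots,(Qz_{i-1},r_{i-1})$ is a $\chi$-admissible initial segment, and that the finitely many finite-codimensional subspaces appearing so far have been handled. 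The key point is that conditions (i)--(iv) only constrain the $i$-th step through: (a) membership of $Qz_i$ in the finite-codimensional subspace $\chi\bigl((Qz_j,r_j)_{j=1}^{i-1}\bigr)$ dictated by the strategy; (b) the approximation $\|x^*_i-y^*_i\|\le 4\delta_i$; (c) the lower estimate $\mathrm{Re}\,y^*_i(Qz_i)\ge\frac13$; and (d) the bilinear smallness estimates $|y^*_i(Qz_j)|\le 2\delta_{\max\{i,j\}}$ for $j<i$ and, going forward, $|y^*_j(Qz_i)|\le 2\delta_i$ for $j<i$.

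The heart of the construction uses Proposition \ref{dual1}: given the current finite-codimensional subspace $Y=\chi\bigl((Qz_j,r_j)_{j=1}^{i-1}\bigr)$ and $\delta_i$, it produces an integer $l$ beyond which, for any interval $I\subset[l,\infty)$ and any $x^*\in S_{X^*}$ with $\|x^*-P^{\textsf{F}^*}_Ix^*\|\le\delta_i$, there is $z\in B_Z$ with $Qz\in Y$ and $\mathrm{Re}\,x^*(Qz)\ge\frac13$. So I would use Lemma \ref{net}, applied to the map $x^*\mapsto P^{\textsf{F}^*}_Ix^*$ (valued in a compact set once $I$ is a fixed finite block, by finite-dimensionality), to see that the set of admissible $x^*_i$'s is covered by a finite $4\delta_i$-net $\mathcal{N}_i\subset S_{X^*}$; the net elements $y^*_i$ are the candidates. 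Shrinking FDD's lets us further block so that the previously chosen $Qz_j$ ($j<i$) are almost supported on early blocks and $y^*_i$ (being close to something $I_i$-supported with $I_i$ far out) almost annihilates them, giving (iv) for $j<i$; symmetrically, since the $y^*_j$ ($j<i$) are fixed finitely many functionals, we choose the next block past $\mathrm{supp}$ of those functionals far enough that any $z_i$ supported beyond it satisfies $|y^*_j(Qz_i)|\le 2\delta_i$ — this last point is where one must be careful, because $z_i$ is not a block vector a priori, but $Qz_i$ can be perturbed to one in $Y$ using shrinkingness exactly as in the proof of Proposition \ref{dual1}. Choosing $r_i$ to be the bottom of the freshly added block and invoking Proposition \ref{dual1} with $I=I_i$ yields $z_i$ with (i) and (c); taking $y^*_i$ to be the net point within $4\delta_i$ of $x^*_i$ gives (ii), and (c) holds with $y^*_i$ in place of $x^*_i$ at the cost of absorbing $4\delta_i$ into a harmless constant (or one simply re-runs Proposition \ref{dual1} with the net point itself).

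The main obstacle I anticipate is the simultaneous control of the forward bilinear estimates $|y^*_j(Qz_i)|\le 2\delta_i$ for $j<i$ alongside the backward ones $|y^*_i(Qz_j)|\le 2\delta_{\max\{i,j\}}$: the two families of constraints pull the blocking in both directions, and one has to interleave "$y^*_i$ annihilates earlier $Qz_j$" with "$Qz_i$ is annihilated by earlier $y^*_j$", which forces an alternating refinement of the blocks (first absorb the finitely many $y^*_j$, then absorb the new $P^{\textsf{F}^*}$-tail needed for the next $y^*$). The bookkeeping is made manageable by noting at each step only finitely many functionals and finitely many subspaces are active, so each requirement is met by passing to a sufficiently late block; a standard diagonal argument then produces the single blocking $\textsf{F}$ that works uniformly. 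The only genuinely non-formal input is Proposition \ref{dual1}, which already packages the quotient-map gliding-hump mechanism, so the remainder is an organized recursion.
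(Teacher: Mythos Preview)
Your proposal has a genuine structural gap: you build the blocking \textsf{F} in tandem with specific choices of $r_i$, $y^*_i$, $z_i$ (``Choosing $r_i$ to be the bottom of the freshly added block''), but the proposition requires that \textsf{F} be fixed \emph{first}, after which the conclusion must hold for \emph{every} sequence $(r_i)$, every family of intervals $(I_i)$, and every sequence $(x^*_i)$ satisfying the hypothesis. Your recursion, as written, produces a blocking adapted to one particular run of the data; the ``standard diagonal argument'' you invoke at the end cannot repair this, since the family of admissible inputs $(r_i, I_i, x^*_i)$ is uncountable.

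The paper's proof resolves this by observing that, although the $(x^*_i)$ range over an uncountable set, Lemma~\ref{net} reduces each $G_{I,i}=\{x^*\in S_{X^*}:\|x^*-P^{\textsf{E}^*}_Ix^*\|\le\delta_i\}$ to a \emph{finite} net $N_{I,i}$, and at stage $n$ of the construction there are only finitely many intervals $I\subset[1,m_n]$, hence only finitely many sequences $(I_i,y^*_i)_{i=1}^k$ with $I_i\subset[1,m_n]$ and $y^*_i\in N_{I_i,i}$. The recursion then defines $Y_{n+1}$ as the intersection of $\chi$ applied to \emph{all} admissible initial segments arising from these finitely many sequences (together with the kernels of all previously enumerated net functionals), and applies Proposition~\ref{dual1} once to this single finite-codimensional subspace. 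The map $\Sigma$ records, for each such finite sequence, the corresponding choice of $z$. Only after \textsf{F} is complete does one read off, for given $(r_i,I_i,x^*_i)$, the net approximants $y^*_i\in N_{I_i,i}$ and set $(z_i)=\overline{\Sigma}((I_i,y^*_i))$. You have the right ingredients (Proposition~\ref{dual1}, Lemma~\ref{net}, shrinkingness for the almost-orthogonality), but the organization must front-load all possible branches into the construction of the blocks rather than follow a single branch.
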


Before the proof, we introduce some notation. For sets $\Lambda, \Upsilon$ and a subset $B$ of $\Lambda^{<\omega}$ which is closed under taking non-empty initial segments, we say a function $\Sigma:B\to \Upsilon^{<\omega}$ is \emph{monotone} if whenever $\sigma$ is a non-empty initial segment of $\tau\in B$, then $\Sigma(\sigma)$ is an initial segment of $\Sigma(\tau)$.   If $\Sigma:B\to \Upsilon^{<\omega}$ is monotone and length-preserving, then if $\Sigma((\lambda_i)_{i=1}^n)=(\upsilon_i)_{i=1}^n$, $\Sigma((\lambda_i)_{i=1}^m)=(\upsilon_i)_{i=1}^m$ for all $1\leqslant m\leqslant n$.    The \emph{body} of $B$ is given by \[[B]= \{(\lambda_i)_{i=1}^\infty\in \Lambda^\omega:(\forall n\in\nn)((\lambda_i)_{i=1}^n\in B)\}.\]     If $\Sigma:B\to \Upsilon^{<\omega}$ is monotone and length-preserving, then there is a natural extension $\overline{\Sigma}:[B]\to \Upsilon^\omega$ given by $\overline{\Sigma}((\lambda_i)_{i=1}^\infty)$ is the infinite sequence whose initial segments are given by $\Sigma((\lambda_i)_{i=1}^n)$, $n\in\nn$. 

\begin{proof} In the proof, for a sequence $\sigma$, we let $\text{im}(\sigma)$ denote the set of members of $\sigma$.  For a sequence $\sigma=(I_i, x_i)_{i=1}^n$ of pairs, we let $\text{im}_2(\sigma)=\{x_1, \ldots, x_n\}$, the set of all second members of these pairs. 

So let us fix a spatial strategy $\chi: B_X^{<\omega}\to \co(X)$ and $(\delta_i)_{i=1}^\infty \subset (0,\frac{1}{20})$ strictly decreasing to $0$. For each non empty finite interval $I$ and $i\in\nn$, let \[G_{I,i}=\{x^*\in S_{X^*}:\|x^*-P^{\textsf{E}^*}_Ix^*\|\leqslant \delta_i\}.\]  
Clearly, for each $I$ and $i$ $G_{I,i}$ is non empty. So, by Lemma \ref{net} there exists a finite $4\delta_i$-net $N_{I,i}$ of $G_{I,i}$. Then we define 
\[B=\Bigl\{(I_i, x^*_i)_{i=1}^n:n\in\nn, I_1<\ldots <I_n, I_i \text{ intervals},(x^*_i)_{i=1}^n \in \prod_{i=1}^n N_{I_i,i}\Bigr\},\] 
which is a subset of $(2^\nn\times S_{X^*})^{<\omega}$ which is closed under taking non-empty initial segments.  We will define integers $0=m_0<m_1<\ldots$ and a monotone, length preserving function $\Sigma:B\to B_Z^{<\omega}$ as part of the following recursion. 

Let $m_0=0$.

Let $Y_1=\chi(\varnothing)$. By Proposition \ref{dual1}, there exists $m_1\in\nn$ such that for any interval $I\subset (m_1, \infty)$, if $x^*\in S_{X^*}$ is such that $\|x^*-P^{\textsf{E}^*}_Ix^*\|\leqslant  \delta_1$, then there exists $z\in B_Z$ such that $Qz\in B_{Y_1}$ and $\text{Re\,}x^*(Qz)\geqslant \frac13$.  Define  $B_1=\varnothing$ and $A_1=\{\varnothing\}$. 

Next, suppose that integers $m_0<m_1<\ldots < m_n$, $Y_1, \ldots, Y_n\in \co(X)$, finite sets  $B_1, \ldots, B_n\subset S_{X^*}^{<\omega}$, and finite sets $A_1, \ldots, A_n\subset B_X$ have been defined. Suppose also that $\Sigma$ has been defined on $B_n$ and 
\[B_n = \Bigl\{(I_i, x^*_i)_{i=1}^k:k\in\nn, I_1<\ldots <I_k, I_i\subset [1, m_n] \text{\ intervals},(x^*_i)_{i=1}^k \in \prod_{i=1}^k N_{I_i,i}\Bigr\}\] 
and that for each $1\leqslant i\leqslant n$, each $x^*\in S_{X^*}$, and each interval $I\subset (m_i, \infty)$ such that $\|x^*-P^{\textsf{E}^*}_Ix^*\|\le \delta_1$, there exists $z\in B_Z$ such that $Qz\in B_{Y_i}$ and $\text{Re\,}x^*(Qz)\geqslant \frac13$.   Let \[A_{n+1}=\Bigl(\{0\}\cup \bigcup_{\sigma\in B_n}Q(\text{im}(\Sigma(\sigma)))\Bigr)^{\leqslant n},\]
\[Y_{n+1}'=\bigcap_{\sigma\in A_{n+1}}\chi(\sigma)\ \ \text{and}\ \ Y_{n+1}''=\bigcap_{x^*\in \cup_{\sigma\in B_n}\text{im}_2(\sigma)}\text{ker}(x^*),\] 
and let 
\[Y_{n+1}=Y_{n+1}'\cap Y_{n+1}'' \in \co(X).\]  
By Proposition \ref{dual1}, there exists $m_{n+1}'\in (m_n, \infty)$ such that for any interval $I\subset (m_{n+1}',\infty)$ and $x^*\in S_{X^*}$ such that $\|x^*-P^{\textsf{E}^*}_Ix^*\|\leqslant \delta_1$, there exists $z\in B_Z$ such that $Qz\in B_{Y_{n+1}}$ and $\text{Re\ }x^*(Qz)\geqslant \frac13$. Then, there exists $m_{n+1}''>m_n$ so large that for any interval $I\subset (m_{n+1}'',\infty)$ and for any $z\in \cup_{\sigma\in B_n}\text{im}(\Sigma(\sigma))$, $\|P^\textsf{E}_Iz\|<\delta_{n+1}$. Let $m_{n+1}=\max \{m_{n+1}', m_{n+1}''\}$ and let 
\[B_{n+1}=\Bigl\{(I_i, x^*_i)_{i=1}^k:k\in\nn, I_1<\ldots <I_k, I_i\subset [1, m_{n+1}] \text{ intervals},(x^*_i)_{i=1}^k \in \prod_{i=1}^k N_{I_i,i}\Bigr\}.\]  
We complete the recursion by defining $\Sigma(\sigma)$ for each $\sigma=(I_i, x^*_i)_{i=1}^k\in B_{n+1}\setminus B_n$, by induction on $k=|\sigma|$.   

Case $1$, $k=1$: 

Case $1a$, $I_1\cap [1,m_1]\neq \varnothing$: Let $\Sigma(\sigma)=(0)$. 

Case $1b$, $I_1\subset (m_1,\infty)$. Let $r$ be the minimum $1\leqslant i\leqslant n$ such that $I_1\subset (m_i, \infty)$.  Since $x^*_1\in N_{I_1,1}$, there exists $z\in B_Z$ such that $Qz\in B_{Y_r}$ and $\text{Re\ }x^*(Qz)\geqslant \frac13$. Let $\Sigma(\sigma)=(z)$. 

Case $2$, $k>1$: Assume that $\Sigma(\sigma')$ has been defined for each $\sigma'\prec \sigma$.  Let $\sigma'=(I_i, x^*_i)_{i=1}^{k-1}$. 

Case $2a$, $\sigma'\in B_{n+1}\setminus B_n$: Let $\Sigma(\sigma)=\Sigma(\sigma')\smallfrown (0)$.   

Case $2b$, $\sigma'\in B_n$, $I_k \cap [1,m_1]\neq \varnothing$: Let $\Sigma(\sigma)=\Sigma(\sigma')\smallfrown(0)$. 

Case $2c$, $\sigma'\in B_n$, $I_k\subset (m_1,\infty)$: Let $r$ be the minimum $1\leqslant i\leqslant n$ such that $I_k\subset (m_i, \infty)$.  Since $x^*_k\in S_{X^*}\cap N_{I_k,k}$ and $\delta_k\leqslant \delta_1$, there exists $z\in B_Z$ such that $Qz\in B_{Y_r}$ and $\text{Re\ }x^*(Qz)\geqslant \frac13$. Let $\Sigma(\sigma)=\Sigma(\sigma')\smallfrown(z)$.  This completes the recursive construction.

We know consider the blocking $\textsf{F}=(F_n)_{n=1}^\infty$ of $\textsf{E}$ defined by  $F_n=\oplus_{i=m_{n-1}+1}^{m_n}E_i$. Fix $1\leqslant r_1<r_2<\ldots$ in $\Ndb$, $J_1<J_2<\ldots$ intervals with $J_i\subset (r_i, r_{i+1})$, and $(x^*_i)_{i=1}^\infty\subset S_{X^*}$ such that $\|x^*_i-P^{\textsf{F}^*}_{J_i}x^*_i\|\le \delta_i$ for all $i\in\nn$.  Denote $I_i=(m_{\min J_i-1}, m_{\max J_i}]$, so that $P^{\textsf{F}^*}_{J_i}=P^{\textsf{E}^*}_{I_i}$ for all $i\in\nn$. Then $(x^*_i)_{i=1}^\infty\in \prod_{i=1}^\infty G_{I_i,i}$. Therefore there exists $(y^*_i)_{i=1}^\infty\in \prod_{i=1}^\infty N_{I_i,i}$ such that $\|x^*_i-y^*_i\|\le 4\delta_i$ for all $i\in\nn$. We have that $(I_i, y^*_i)_{i=1}^\infty\in [B]$. Let $(z_i)_{i=1}^\infty=\overline{\Sigma}((I_i, y^*_i)_{i=1}^\infty)$.    We will show that $(Qz_i, r_i)_{i=1}^\infty$ is $\chi$-admissible, $\text{Re\,}y^*_i(Qz_i)\geqslant \frac13$ for all $i\in\nn$, and $|y^*_i(Qz_i)|\leqslant 2\delta_{\max\{i,j\}}$ for all distinct $i,j$. This will finish the proof. 

Note that for any $n\in\nn$, since $I_1<\ldots<I_n$ and $I_n\subset [1, m_{\max J_n}]$ and $\max J_n<r_{n+1}$, it follows that $(I_i, y^*_i)_{i=1}^n\in B_{r_{n+1}-1}$. 

We  show that $(Qz_i, r_i)_{i=1}^\infty$ is $\chi$-admissible. Since $I_1=(m_{\min J_1-1}, m_{\max J_1}]$ and $\min J_1-1\geqslant r_1$,\[Qz_1\in B_{Y_{r_1}}\subset B_{\chi(0,\ldots, 0)},\] where the sequence in the last subscript contains $r_1-1$ zeros. Next, for $n\in\nn$, by the preceding paragraph and the definition of $A_{r_{n+1}}$, together with the fact that $\min I_{n+1}=m_{\min J_{n+1}-1}+1$ and $\min J_{n+1}-1\geqslant r_{n+1}$, \[Qz_{n+1}\in B_{Y_{r_{n+1}}}\subset B_{\chi((Qz_i, r_i)_{i=1}^n\smallfrown(0,\ldots,0)},\] where the sequence at the end of the preceding concatenation consists of $r_{n+1}-r_n-1$ zeros. This yields that $(Qz_i, r_i)_{i=1}^\infty$ is $\chi$-admissible. 

We next show that whenever $1\leqslant i<j$, $y^*_i(Qz_j)=0$ and $|y^*_j(Qz_i)|<2\delta_j$. The first equality follows from the fact that for such $i,j$, 
\[Qz_j\in Y''_{r_j}\subset \bigcap_{x^*\in \cup_{\sigma\in B_{r_j-1}\text{im}_2(\sigma)}} \ker(x^*).\] 
As noted two paragraphs above, $y^*_i\in \cup_{\sigma\in B_{r_j-1}}\text{im}_2(\sigma)$.  For the inequality, it follows from our choice of $m_{r_j}''$ and the fact that $I_j\subset (m_{r_j}'',\infty)$ and $z_i\in \cup_{\sigma\in B_{r_j}}\text{im}(\Sigma(\sigma))$ that \begin{align*} |y^*_j(Qz_i)| & \leqslant \|y^*_j-P^{\textsf{E}^*}_{I_j}y^*_j\|+|y^*_j(P^{\textsf{E}}_{I_j}z_i)|\leqslant \delta_j+\min_{1\leqslant k\leqslant r_j}\delta_k \leqslant 2\delta_j.\end{align*}

 We last show that for all $i\in\nn$, $\text{Re\ }y^*_i(Qz_i)\geqslant \frac13$. This follows from the definition of $\Sigma$ together with the fact that we are either in Case $1b$ or $2c$ for each $i$. 

\end{proof}

We can now deduce the following general result.
\begin{corollary}\label{busey} 
Let $X,Z$ be Banach spaces, $Q:Z\to X$ a quotient map, and let $\textsf{\emph{E}}$ be a shrinking FDD for $Z$.  Let $T$ be a Banach space with normalized, $1$-unconditional basis. Suppose that $c>0$ is such that Player I has a winning strategy in the $(T,c)$ game on $X$. Then there exist a blocking $\textsf{\emph{F}}$ of $\textsf{\emph{E}}$, $\Delta>0$, and a strictly decreasing sequence $(\delta_i)_{i=1}^\infty\subset (0,1)$ such that $\sum_{i=1}^\infty \delta_i<\Delta$ and such that whenever $1\leqslant r_0<r_1<\ldots$ are integers and $(x^*_i)_{i=1}^\infty\subset S_{X^*}$ satisfies $\|x^*_i-P^{\textsf{\emph{F}}^*}_{(r_{i-1},r_i)}x^*_i\|<\delta_i$ for all $i\in\nn$, then $(e^*_{r_{i}})_{i=1}^\infty\lesssim_{4c} (x^*_i)_{i=1}^\infty$. 
\end{corollary}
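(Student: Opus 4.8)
We plan to run the winning $\omega$-strategy $\chi$ for Player I in the spatial $(T,c)$ game through Proposition~\ref{hawk}, read off an upper-$T$-estimate for the vectors $Qz_i$ from the winning condition, and then dualize. So fix such a $\chi$, let $\kappa$ be the projection constant of $\textsf{E}$ in $Z$, and fix two parameters $\Delta,\eta\in(0,1)$ whose smallness (depending only on $c$ and $\kappa$) will be pinned down at the very end. Choose a strictly decreasing positive sequence $(\delta_i)_{i=1}^\infty$ with $\sum_i\delta_i<\Delta$ and --- this is the crucial requirement --- $\sum_i i\delta_i<\eta$ (e.g. $\delta_i=\rho 2^{-i}$ for a suitably small $\rho$), and apply Proposition~\ref{hawk} to $\chi$ and $(\delta_i)$ to obtain the blocking $\textsf{F}$ of $\textsf{E}$. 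The weighted summability $\sum_i i\delta_i<\eta$ is exactly what will control the off-diagonal error terms below, since $\sum_{i\neq j}\delta_{\max\{i,j\}}=\sum_k 2(k-1)\delta_k<2\eta$.

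Now fix integers $r_0<r_1<\cdots$ and $(x_i^*)_{i=1}^\infty\subset S_{X^*}$ with $\|x_i^*-P^{\textsf{F}^*}_{(r_{i-1},r_i)}x_i^*\|<\delta_i$ for all $i$. Apply Proposition~\ref{hawk} (after the routine shift of indices making the block interval $(r_{i-1},r_i)$ play the role of ``$I_i$'') to produce $(y_i^*)_{i=1}^\infty\subset S_{X^*}$ and $(z_i)_{i=1}^\infty\subset B_Z$ such that $\|x_i^*-y_i^*\|\leqslant 4\delta_i$, $\text{Re}\,y_i^*(Qz_i)\geqslant \frac13$, $|y_i^*(Qz_j)|\leqslant 2\delta_{\max\{i,j\}}$ for $i\neq j$, and $(Qz_i,r_i)_{i=1}^\infty$ is $\chi$-admissible. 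Since $\chi$ is winning, $(Qz_i,r_i)_{i=1}^\infty\lesssim_c(e_n)_{n=1}^\infty$, and restricting coefficients to the positions $r_1<r_2<\cdots$ gives the key inequality
$$\Bigl\|\sum_i b_i Qz_i\Bigr\|_X\leqslant c\Bigl\|\sum_i b_i e_{r_i}\Bigr\|_T\qquad\text{for all }(b_i)\in c_{00}.\tag{$\star$}$$
I also record an a priori coefficient bound: the projections $P^{\textsf{F}^*}_{(r_{i-1},r_i)}$ have pairwise disjoint ranges and norm $\leqslant\kappa$, while $\|P^{\textsf{F}^*}_{(r_{i-1},r_i)}x_i^*\|>1-\delta_i$, so a one-line bootstrap (valid once $\Delta$ is small relative to $\kappa$) yields a constant $C_0$, depending only on $\kappa$, with $\sup_i|a_i|\leqslant C_0\|\sum_i a_i x_i^*\|_{X^*}$ and likewise $\sup_i|a_i|\leqslant C_0\|\sum_i a_i y_i^*\|_{X^*}$ for every $(a_i)\in c_{00}$. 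This is what tames the otherwise unbounded coefficients in the error estimate.

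For the duality step, fix $(a_i)\in c_{00}$ and set $\mu=\|\sum_i a_i e_{r_i}^*\|_{T^*}$. By $1$-unconditionality of $(e_n)$ one has $\mu=\sup\{|\sum_i a_i d_i|:\|\sum_i d_i e_{r_i}\|_T\leqslant 1\}$, so choose $(d_i)\in c_{00}$ with $\|\sum_i d_i e_{r_i}\|_T\leqslant 1$ and, after a unimodular rotation, $\sum_i a_i d_i$ real and $\geqslant \mu-\epsilon$; note $|d_i|\leqslant 1$. Put $w=\sum_i d_i\,y_i^*(Qz_i)^{-1}z_i\in Z$; since $|y_i^*(Qz_i)^{-1}|\leqslant 3$, $(\star)$ together with coefficient domination gives $\|Qw\|_X\leqslant 3c$. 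Expanding
$$\Bigl(\sum_j a_j y_j^*\Bigr)(Qw)=\sum_{i,j}a_j d_i\,y_i^*(Qz_i)^{-1}y_j^*(Qz_i),$$
the diagonal terms collapse to $\sum_i a_i d_i$, while the off-diagonal part is, using $|y_i^*(Qz_j)|\leqslant 2\delta_{\max\{i,j\}}$, $|d_i|\leqslant 1$ and the a priori bound, at most $6\sum_{i\neq j}|a_j||d_i|\delta_{\max\{i,j\}}\leqslant 12C_0\eta\|\sum_j a_j y_j^*\|_{X^*}$. Comparing $\text{Re}\,[(\sum_j a_j y_j^*)(Qw)]\geqslant (\mu-\epsilon)-12C_0\eta\|\sum_j a_j y_j^*\|_{X^*}$ with $|(\sum_j a_j y_j^*)(Qw)|\leqslant 3c\|\sum_j a_j y_j^*\|_{X^*}$, and letting $\epsilon\to 0$, yields $\mu\leqslant (3c+12C_0\eta)\|\sum_j a_j y_j^*\|_{X^*}$. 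Finally $\|\sum_j a_j(y_j^*-x_j^*)\|\leqslant \sum_j|a_j|\,4\delta_j\leqslant 4C_0\Delta\|\sum_j a_j x_j^*\|_{X^*}$, so $\|\sum_j a_j y_j^*\|_{X^*}\leqslant (1+4C_0\Delta)\|\sum_j a_j x_j^*\|_{X^*}$, and therefore $\mu\leqslant (3c+12C_0\eta)(1+4C_0\Delta)\|\sum_j a_j x_j^*\|_{X^*}$, which is $\leqslant 4c\|\sum_j a_j x_j^*\|_{X^*}$ once $\Delta$ and $\eta$ are chosen small enough in terms of $c$ and $\kappa$. This is precisely $(e_{r_i}^*)_{i=1}^\infty\lesssim_{4c}(x_i^*)_{i=1}^\infty$.

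The substantive part is the duality estimate, and inside it the control of the off-diagonal error; the two ingredients that make it work are the a priori bound $\sup_i|a_i|\leqslant C_0\|\sum_i a_i x_i^*\|$ (which genuinely uses that the $x_i^*$ are almost disjointly supported with respect to $\textsf{F}^*$, and without which the error is uncontrollable) and the choice of $(\delta_i)$ with $\sum_i i\delta_i$ small (needed because $|y_i^*(Qz_j)|\leqslant 2\delta_{\max\{i,j\}}$ has to be summed over all pairs $i\neq j$). A secondary but real point is getting the index conventions right when invoking Proposition~\ref{hawk}: the functional $x_i^*$ is concentrated on $(r_{i-1},r_i)$ whereas Proposition~\ref{hawk} places the vector $Qz_i$ at position $r_i$, so one must feed it the intervals and the shifted $r$-sequence correctly; this reindexing is harmless bookkeeping, not an obstacle. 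Everything else --- the duality formula for the $T^*$-norm, the unimodular rotation, and the final smallness choices --- is routine.
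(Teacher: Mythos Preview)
Your proof is correct and follows exactly the line the paper sketches: apply Proposition~\ref{hawk}, read off the upper-$T$ estimate for $(Qz_i)$ from the winning condition, and dualize using the near-biorthogonality of $(y_i^*)$ and $(Qz_i)$. The paper compresses the entire duality step into the phrase ``elementary duality'' together with ``for $\Delta>0$ initially chosen small enough''; what you have done is supply that computation explicitly, and in doing so you correctly isolated the extra summability condition $\sum_i i\delta_i<\eta$ needed to control the off-diagonal sum $\sum_{i\ne j}\delta_{\max\{i,j\}}$, as well as the a~priori coefficient bound coming from the near-disjoint supports of the $x_i^*$ --- neither of which the paper states but both of which are genuinely required for the argument to close.
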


\begin{proof} Let $\chi:B_X^{<\omega}\to X$ be a winning strategy for Player I in the $(T,c)$ game on $X$. Let $(\delta_i)_{i=1}^\infty$ as in the previous statement and such that $\sum_{i=1}^\infty \delta_i<\Delta$, for some $\Delta>0$. Let $\textsf{{F}}$ be the blocking of $\textsf{{E}}$ given by Proposition \ref{hawk}. Let $1\leqslant r_0<r_1<\ldots$ and $(x^*_i)_{i=1}^\infty\subset S_{X^*}$ so that $\|x^*_i-P^{\textsf{\emph{F}}^*}_{(r_{i-1},r_i)}x^*_i\|<\delta_i$ for all $i\in\nn$. Let now $(y_i^*)_{i=1}^\infty \subset S_{X^*}$ and $(z_i)_{i=1}^\infty \subset B_Z$ be also given by the previous proposition. Since $\chi$ is a winning strategy for Player I, we deduce from property $(i)$ in Proposition \ref{hawk} that $(Qz_i)_{i=1}^\infty\lesssim_c (e_{r_i})_{i=1}^\infty$. Now it follows from properties $(ii)$ to $(iv)$ in Proposition \ref{hawk}, $1$-unconditionality of $(e_i)$ and elementary duality, that, for $\Delta>0$ initially chosen small enough, $(e^*_{r_{i}})_{i=1}^\infty\lesssim_{4c} (x^*_i)_{i=1}^\infty$.
\end{proof}

\section{Reducing to FDD's and pressdown norms}\label{FDD's}

The goal of this section is to prove the following crucial intermediate result.
\begin{theorem}\label{tedious} Let $p\in (1,\infty]$ and $q$ be its conjugate exponent. Let $X$ be a separable Banach space. 
\begin{enumerate}[(i)]
\item If $X$ has $\textsf{\emph{A}}_p$, then there exist $\theta\in (0,1)$ and  Banach spaces $Z,Y$ with FDDs $\textsf{\emph{F}}$, $\textsf{\emph{H}}$, respectively, such that $X$ is isomorphic to a subspace of $Z^{T^*_{q,\theta}}_\wedge(\textsf{\emph{F}})$, and to a quotient of $Y^{T^*_{q,\theta}}_\wedge(\textsf{\emph{H}})$. 
\item If $X$ has $\textsf{\emph{N}}_p$, then there exist $\theta\in (0,1)$ and  Banach spaces $Z,Y$ with FDDs $\textsf{\emph{F}}$, $\textsf{\emph{H}}$, respectively, such that $X$ is isomorphic to a subspace of $Z^{U^*_{q,\theta}}_\wedge(\textsf{\emph{F}})$, and to a quotient of $Y^{U^*_{q,\theta}}_\wedge(\textsf{\emph{H}})$. 
\end{enumerate}
\end{theorem}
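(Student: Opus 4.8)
The plan is to handle the subspace and quotient embeddings separately, and in each case to pass first to a space with a nice FDD via a standard Zippin/Davis–Figiel–Johnson–Pełczyński-type argument and then to compare the resulting norm with a pressdown norm built from the model space. Throughout I treat case (i); case (ii) is identical with $T^*_{q,\theta}$ replaced by $U^*_{q,\theta}$ and Lemma~\ref{shuffle}(ii) used in place of Lemma~\ref{shuffle}(i).

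\emph{Step 1: reduce to spaces with FDDs.} Since $X$ has $\textsf{A}_p$, it is Asplund, hence (being separable) has separable dual. By Zippin's embedding theorem there is a reflexive-range embedding of $X$ into a space $\widehat Z$ with a shrinking FDD $\textsf{E}$; and dually, since $X^*$ is separable, $X$ is a quotient of a space $\widehat Y$ with a shrinking FDD $\textsf{G}$ (this is the classical fact that an Asplund space with separable dual is a quotient of a space with shrinking FDD). So it suffices to realize $X$ as a subspace of $Z^{T^*_{q,\theta}}_\wedge(\textsf{F})$ and a quotient of $Y^{T^*_{q,\theta}}_\wedge(\textsf{H})$ where $\textsf{F},\textsf{H}$ are blockings of $\textsf{E},\textsf{G}$.

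\emph{Step 2: the quotient half.} Apply Theorem~\ref{upper1}(i): since $X$ has $\textsf{A}_p$, there is $\theta_0$ such that for every $\theta\in(0,\theta_0]$ Player I has a winning strategy in the spatial $(T^*_{q,\theta},1)$ game on $X$. Fix such a $\theta$ and apply Corollary~\ref{busey} with the quotient map $Q:\widehat Y\to X$ and the shrinking FDD $\textsf{G}$: we obtain a blocking $\textsf{H}$ of $\textsf{G}$, a summable sequence $(\delta_i)$, so that whenever $(x_i^*)\subset S_{X^*}$ is a sequence essentially supported on successive blocks $(r_{i-1},r_i)$ of $\textsf{H}^*$, then $(e^*_{r_i})\lesssim_{4}(x_i^*)$ in $T^*_{q,\theta}$. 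A gliding-hump argument (skipping to a subsequence of block indices and using that $X^*=\widehat Y^{(*)}$ since $\textsf{G}$ is shrinking) now shows that the identity on $c_{00}(\textsf{H}^*)$ extends to a bounded map $X^*\hookrightarrow (\widehat Y^{(*)})^{T_{q,\theta}}_\vee(\textsf{H}^*)$ that is an isomorphic embedding onto a subspace norming $Y^{T^*_{q,\theta}}_\wedge(\textsf{H})$; dualizing and using Proposition (duality of press/lift norms) one gets $X$ as a quotient of $Y^{T^*_{q,\theta}}_\wedge(\textsf{H})$ for a suitable space $Y$ with FDD $\textsf{H}$. The key point is that the ``upper $\ell_p$ estimates on successive blocks'' encoded by the pressdown norm over $T^*_{q,\theta}$ exactly match the estimates produced by the winning strategy, transported to $\widehat Y$ via Proposition~\ref{hawk}.

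\emph{Step 3: the subspace half.} Here one works directly with $X\subset\widehat Z$, $\textsf{E}$ shrinking. Using Theorem~\ref{upper1}(i) again and a skipped-blocking/gliding-hump argument on $\textsf{E}$: after passing to a blocking $\textsf{F}$, every normalized block sequence of $X$ with respect to $\textsf{F}$ that is ``spread out'' (one block per $F_n$ with gaps) satisfies, for scalars $(a_i)$, $\|\sum a_i z_i\|_X\le \|\sum a_i e_{\min\text{supp}} \|_{T^*_{q,\theta}}$ up to a constant — this is a Maurey–Rosenthal / skipped-blocking extraction built from the winning $(T^*_{q,\theta},1)$-strategy. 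Lemma~\ref{shuffle} is then used to absorb the interlacing losses caused by re-indexing blocks by their minima. One concludes that the identity $c_{00}(\textsf{F})\cap X\to Z^{T^*_{q,\theta}}_\wedge(\textsf{F})$ is an isomorphism onto its image, with $Z:=\widehat Z$ (or a renorming making $\textsf{E}$ bimonotone). The comparison of the ambient $\widehat Z$-norm with $\|\cdot\|_\wedge$ uses the lower estimate built into $Z^T_\wedge(\textsf{E})$ being finer than $Z$ and the upper estimate from the strategy; Lemma~\ref{pressdown_conv} handles the non-block vectors.

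\emph{Main obstacle.} The delicate part is Step 3, specifically controlling vectors of $X$ that are \emph{not} blocks with respect to $\textsf{F}$: one must show the identity map is bounded \emph{below} into the pressdown norm (the lower estimate), which requires that the winning-strategy estimates survive arbitrary small perturbations and arbitrary interval coverings of supports — this is exactly where the interaction of the infinitary $(T^*_{q,\theta},1)$ game with the pressdown infimum over coverings must be reconciled, and where Lemma~\ref{shuffle} and the skipped-blocking technology do the real work. The quotient direction of Step 2 is technically heavy but essentially mechanical once Corollary~\ref{busey} and the press/lift duality are in hand.
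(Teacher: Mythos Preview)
Your Step~2 (the quotient half) is essentially the paper's argument: DFJP gives a quotient map $Q:Z\to X$ with shrinking FDD, Corollary~\ref{busey} turns the winning $(T^*_{q,\theta},1)$-strategy into lower estimates $(e^*_{r_i})\lesssim_C(x^*_i)$ for skipped blocks of $X^*\subset Z^*$, and Proposition~\ref{skip} plus right dominance yield that $Q^*:X^*\to (Z^*)^{S^*}_\vee(\textsf{F}^*)$ is a weak$^*$-weak$^*$ isomorphic embedding, so $X$ is a quotient of $Z^S_\wedge(\textsf{F})$.

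Your Step~3 (the subspace half) has a genuine gap. You want to show the identity $X\cap c_{00}(\textsf{F})\to Z^{T^*_{q,\theta}}_\wedge(\textsf{F})$ is bounded below, i.e.\ $\|x\|_\wedge\ge c\|x\|_X$. But the only nontrivial inequality you extract from the strategy is the \emph{upper} estimate $\|\sum a_iz_i\|_X\le C\|\sum a_ie_{m_i}\|_{T^*_{q,\theta}}$ on skipped blocks of $X$. This does not control $\|x\|_\wedge$ from below: the pressdown norm is an infimum over decompositions $x=\sum z_i$ with $z_i\in c_{00}(\textsf{F})$ \emph{not required to lie in $X$}, followed by an infimum over interval coverings, and your block estimates say nothing about such decompositions. (Your parenthetical that ``$Z^T_\wedge(\textsf{E})$ is finer than $Z$'' is the wrong direction: $\|\cdot\|_\wedge\le\|\cdot\|_Z$ always, so the nontrivial bound is exactly the one you are missing.) Lemma~\ref{pressdown_conv} reduces to successively supported pieces but does not force those pieces into $X$.

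The paper avoids this obstacle by handling the subspace half \emph{dually as well}. It uses \emph{both} a quotient map $Q:Z\to X$ (to place $X^*$ inside a space with FDD and to invoke Corollary~\ref{busey} for lower estimates on blocks of $X^*$) \emph{and} an embedding $\iota:X\to Y$ into a space with shrinking FDD (from \cite{OS2002}). The two FDDs are aligned via the Johnson--Zippin blocking lemma and Proposition~\ref{scre} applied to $\iota^*:Y^*\to X^*\subset Z^*$; from $Y^*$ one builds a quotient space $W$ with FDD $\tilde{\textsf{H}^*}$ and shows that $\tilde{\iota^*}:W^{S^*}_\vee(\tilde{\textsf{H}^*})\to X^*$ is a weak$^*$-weak$^*$ continuous surjection (Claims~\ref{arq} and~\ref{claim2}). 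Dualizing gives the embedding of $X$ into a pressdown space. The point is that the lower estimates on $X^*$ from Corollary~\ref{busey} are exactly what is needed to bound the lift-up norm above on $X^*$, which is the dual statement to bounding the pressdown norm below on $X$; working primally with upper estimates on $X$ does not give access to this.
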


Let us first state and prove the following interesting intermediate corollary.
\begin{corollary} 
Fix $1<p\leqslant \infty$ and let $X$ be a separable Banach space. Then $X$ has $\textsf{\emph{A}}_p$ (resp. $\textsf{\emph{N}}_p$)  if and only if there exists a Banach space $U$ with shrinking FDD  such that $U$ has $\textsf{\emph{A}}_p$ (resp. $\textsf{\emph{N}}_p$) and $X$ is isomorphic to a subspace and to a quotient of $Z$. 
\end{corollary}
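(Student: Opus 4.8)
The plan is to read this off from Theorem~\ref{tedious} and the stability results for pressdown spaces already established, the point being to merge a ``subspace model'' and a ``quotient model'' of $X$ into a single space via a finite direct sum.

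The easy implication uses neither the FDD nor the quotient hypothesis: if $X$ is isomorphic to a subspace of a space $U$ having $\textsf{A}_p$ (resp.\ $\textsf{N}_p$), then $X$ has $\textsf{A}_p$ (resp.\ $\textsf{N}_p$), since these classes pass to isomorphic copies of closed subspaces. This is standard; concretely it can be seen via the spatial game characterization: given $Y\in\co(X)$, one extends finitely many functionals witnessing that $Y$ has finite codimension in $X$ to $U$, obtaining $\widetilde Y\in\co(U)$ with $\widetilde Y\cap X=Y$, and then transfers a winning spatial $A(c,p,n)$, resp.\ $N(c,p,n)$, strategy of Player~I on $U$ to one on $X$ with the same constant $c$.

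For the forward implication, assume $X$ has $\textsf{A}_p$ (resp.\ $\textsf{N}_p$). Theorem~\ref{tedious} supplies $\theta\in(0,1)$ and FDD spaces which we denote $U_1=Z^{T^*_{q,\theta}}_\wedge(\textsf{F})$ and $U_2=Y^{T^*_{q,\theta}}_\wedge(\textsf{H})$ (with $U^*_{q,\theta}$ in place of $T^*_{q,\theta}$ in the $\textsf{N}_p$ case) such that $X$ is isomorphic to a subspace of $U_1$ and to a quotient of $U_2$. Now $U_1$ and $U_2$ have $\textsf{A}_p$ (resp.\ $\textsf{N}_p$) by the corollary on pressdown spaces, and $\textsf{F}$, $\textsf{H}$ are shrinking FDDs of $U_1$, $U_2$ respectively: in the $\textsf{A}_p$ case because $T^*_{q,\theta}$ has a shrinking $1$-unconditional basis ($T_{q,\theta}$ being reflexive) and the FDD of a pressdown space over a space with shrinking basis is shrinking, and in the $\textsf{N}_p$ case by item~(iii) of the corresponding pressdown proposition.

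Finally, put $U=U_1\oplus_\infty U_2$. Then $X$ is isomorphic to a subspace of $U$ (through the isometric copy $U_1\times\{0\}$) and to a quotient of $U$ (compose the coordinate projection $U\twoheadrightarrow U_2$ with a bounded surjection $U_2\twoheadrightarrow X$). Moreover $\textsf{G}=(F_n\oplus H_n)_{n=1}^\infty$ is an FDD of $U$ — its interval projections act coordinatewise and are uniformly bounded — and it is shrinking, since the two coordinate sequences of a bounded $\textsf{G}$-block sequence are bounded block sequences with respect to $\textsf{F}$ and $\textsf{H}$, hence weakly null, hence so is the $\textsf{G}$-block sequence. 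And $U$ has $\textsf{A}_p$ (resp.\ $\textsf{N}_p$) because both classes are stable under finite direct sums. I expect this last stability fact to be the only point requiring a genuine (though routine) argument — e.g.\ for a finite $\ell_\infty$-sum one bounds the $q$-summable Szlenk index, resp.\ the power type of the convex Szlenk index, in terms of those of the summands; everything else is a direct appeal to results already in the paper.
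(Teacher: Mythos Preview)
Your proof is correct and follows essentially the same approach as the paper: apply Theorem~\ref{tedious}, then take the direct sum of the two pressdown spaces. The paper's proof is a two-line sketch that writes $U=Z_\wedge^{T^*_{q,\theta_1}}(\textsf{F})\oplus Y_\wedge^{T^*_{q,\theta_2}}(\textsf{H})$ and leaves the FDD structure, shrinkingness, stability under direct sums, and the easy implication entirely implicit; you have simply filled in these details, all correctly.
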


\begin{proof} By Theorem \ref{tedious}, if $X$ has $\textsf{A}_p$, we can take $U=Z_\wedge^{T^*_{q,\theta_1}}(\textsf{F})\oplus Y_\wedge^{T^*_{q,\theta_2}}(\textsf{H})$ for appropriate Banach spaces $Z,Y$ with FDDs $\textsf{F}, \textsf{H}$, respectively, and appropriate $\theta_1, \theta_2\in (0,1)$.   For $\textsf{N}_p$, we replace the spaces $T^*_{q,\theta}$ with $U^*_{q,\theta}$. 
\end{proof}

The remainder of this section is devoted to the proof of Theorem \ref{tedious}. Before to proceed with the proof itself, we recall three already known technical statements and the corresponding references. The first lemma, based on a gliding hump argument, is classical and referred to as the Johnson-Zippin blocking lemma. Its origin can be traced back in \cite{JohnsonZippin}.

\begin{lemma}[Johnson-Zippin blocking lemma]\label{johnsonzippin}
Let $Y,Z$ be Banach spaces with boundedly-complete FDDs $\textsf{\emph{G}}, \textsf{\emph{E}}$, respectively.  Let $T:Y\to Z$ be a weak$^*$-weak$^*$-continuous operator. Then for any $(\ee_n)_{n=1}^\infty\subset (0,1)$, there exist blockings $\textsf{\emph{H}}$, $\textsf{\emph{F}}$ of $\textsf{\emph{G}}$, $\textsf{\emph{E}}$, respectively, such that for any $i<j$ and any $y\in \oplus_{n\in (i,j)}H_n$,  \[\|P^{\textsf{\emph{F}}}_{[1,i)}Ty\|\leqslant \ee_i \|y\|\ \ \ \text{and}\ \ \  \|P^{\textsf{\emph{F}}}_{[j,\infty)}Ty\|\leqslant \ee_j\|y\|.\]
\end{lemma}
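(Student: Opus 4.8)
The plan is to isolate two one-sided tail estimates for $T$ and then interweave them in a single recursion that produces the breakpoints defining $\textsf{H}$ and $\textsf{F}$. Throughout, since $\textsf{G}$ and $\textsf{E}$ are boundedly complete, we identify $Y$ with $(Y^{(*)})^*$ and $Z$ with $(Z^{(*)})^*$; then the weak$^*$ topologies in play are $\sigma(Y,Y^{(*)})$ and $\sigma(Z,Z^{(*)})$, and weak$^*$-weak$^*$ continuity of $T$ is equivalent to the inclusion $T^*(Z^{(*)})\subseteq Y^{(*)}$.

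First I would prove the \emph{lower-tail estimate}: for every $m\in\nn$ and $\delta>0$ there is an $N\in\nn$ such that $\|P^{\textsf{E}}_{[1,m]}Ty\|\leqslant\delta\|y\|$ whenever $y\in\oplus_{n>N}G_n$. To do this, fix a basis $(z_l)_{l=1}^d$ of $E_1\oplus\cdots\oplus E_m$ with coordinate functionals $z^*_l$, and put $w^*_l=z^*_l\circ P^{\textsf{E}}_{[1,m]}\in Z^*$. Since $w^*_l$ annihilates $E_n$ for all $n>m$, it lies in $c_{00}(\textsf{E}^*)\subseteq Z^{(*)}$, so $w^*_l\circ T=T^*w^*_l\in Y^{(*)}=\overline{c_{00}(\textsf{G}^*)}$. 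Because $\|P^{\textsf{G}^*}_{[1,N]}\psi-\psi\|\to0$ for each $\psi\in Y^{(*)}$, and $(P^{\textsf{G}^*}_{[1,N]}\psi)(y)=\psi(P^{\textsf{G}}_{[1,N]}y)=0$ for $y\in\oplus_{n>N}G_n$, for $N$ large one makes $|w^*_l(Ty)|$ as small as desired relative to $\|y\|$, simultaneously for all $l$; the estimate then follows from $\|P^{\textsf{E}}_{[1,m]}Tz\|\leqslant\sum_{l=1}^d|w^*_l(Tz)|\,\|z_l\|$. This is the only place where weak$^*$-weak$^*$ continuity enters: without it one would only know $w^*_l\circ T\in Y^*$, which does not force tail smallness since $\textsf{G}$ need not be shrinking.

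Next I would prove the \emph{upper-tail estimate}: for every $N\in\nn$ and $\delta>0$ there is an $M\in\nn$ such that $\|P^{\textsf{E}}_{(m,\infty)}Ty\|\leqslant\delta\|y\|$ for all $m\geqslant M$ and all $y\in G_1\oplus\cdots\oplus G_N$. Here the closed unit ball of the finite-dimensional space $G_1\oplus\cdots\oplus G_N$ is compact, so $T$ carries it onto a compact set $\mathcal K\subset Z$; since $\|P^{\textsf{E}}_{(m,\infty)}z\|=\|z-P^{\textsf{E}}_{[1,m]}z\|\to0$ for each $z\in Z$ while $\sup_m\|P^{\textsf{E}}_{(m,\infty)}\|<\infty$, a routine finite-net argument over $\mathcal K$ (cover it by finitely many small balls, each of whose centers has small tail for $m$ large) produces the desired $M$. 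This step uses only finite-dimensionality and the FDD structure of $Z$, not the weak$^*$ hypothesis.

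Finally I would run the recursion, constructing $0=N_0<N_1<\cdots$ and $0=M_0<M_1<\cdots$ in the alternating order $N_0,M_0,N_1,M_1,\dots$: having obtained $M_{k-1}$, apply the lower-tail estimate with $m=M_{k-1}$ and $\delta=\ee_k$ to pick $N_k>N_{k-1}$ so that $\|P^{\textsf{E}}_{[1,M_{k-1}]}Ty\|\leqslant\ee_k\|y\|$ for $y\in\oplus_{n>N_k}G_n$; having obtained $N_k$, apply the upper-tail estimate with $N=N_k$ and $\delta=\ee_{k+1}$ to pick $M_k>M_{k-1}$ so that $\|P^{\textsf{E}}_{(m,\infty)}Ty\|\leqslant\ee_{k+1}\|y\|$ for all $m\geqslant M_k$ and $y\in G_1\oplus\cdots\oplus G_{N_k}$. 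Let $\textsf{H}$, $\textsf{F}$ be the blockings with $H_k=\oplus_{n\in(N_{k-1},N_k]}G_n$ and $F_k=\oplus_{n\in(M_{k-1},M_k]}E_n$. For $i<j$ and $y\in\oplus_{n\in(i,j)}H_n$ we have $y\in\oplus_{n\in(N_i,N_{j-1}]}G_n$, and since $P^{\textsf{F}}_{[1,i)}=P^{\textsf{E}}_{[1,M_{i-1}]}$ and $P^{\textsf{F}}_{[j,\infty)}=P^{\textsf{E}}_{(M_{j-1},\infty)}$, the two inequalities of the lemma drop out of the two choices above. The one delicate point is the bookkeeping of the interleaving: the order $N_0,M_0,N_1,M_1,\dots$ is exactly what arranges for $M_{i-1}$ to be fixed before $N_i$ (so the lower-tail estimate is applicable there) and for $N_{j-1}$ to be fixed before $M_{j-1}$ (so the upper-tail estimate is applicable). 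This interweaving, rather than either tail estimate on its own, is the real content of the proof.
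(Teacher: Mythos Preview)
Your proof is correct. The paper does not actually supply its own proof of this lemma: it simply records the statement as classical, calls it a ``gliding hump argument,'' and traces it to Johnson--Zippin \cite{JohnsonZippin}. Your argument is precisely the standard gliding hump, so there is nothing substantive to compare; your write-up could stand in for the omitted proof. The two tail estimates are isolated exactly as they should be---the lower tail via $T^*(Z^{(*)})\subset Y^{(*)}$ (the only place weak$^*$-weak$^*$ continuity is used), the upper tail via compactness of the finite-dimensional source---and the interleaved recursion and bookkeeping with $P^{\textsf{F}}_{[1,i)}=P^{\textsf{E}}_{[1,M_{i-1}]}$, $P^{\textsf{F}}_{[j,\infty)}=P^{\textsf{E}}_{(M_{j-1},\infty)}$ are handled correctly.
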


The next proposition, can be found in \cite{OSZ2007} (Lemma 20) in the reflexive case. It is stated in full generality in \cite{FOSZ}. We refer to Proposition 3.12 in \cite{CauseyThesis} for a complete proof.

\begin{proposition}\label{scre} Suppose $Y,Z$ are Banach spaces with boundedly-complete FDDs $\textsf{\emph{G}}$, $\textsf{\emph{E}}$, respectively. Suppose the projection constant of $\textsf{\emph{G}}$ in $Y$ is $1$ and the projection constant of $\textsf{\emph{E}}$ is at most $K$.    Suppose $J:Y\to X$ is a weak$^*$-weak$^*$-continuous quotient map of $Y$ onto a weak$^*$-closed subspace $X$ of $Z$. Suppose also that $(\ee_i)_{i=1}^\infty\subset (0,1)$ is a strictly decreasing, null sequence such that for any $i<j$ and $y\in \oplus_{n\in (i,j)}G_n$, 
\[\|P^{\textsf{\emph{E}}}_{[1,i)}Jy\|<\frac{\ee_i \|y\|}{K}\ \ \  \text{and}\ \ \  \|P^{\textsf{\emph{E}}}_{[j,\infty)}y\|< \frac{\ee_i \|y\|}{K}.\]  
Then there exist $0=s_0<s_1<\ldots$ such that if for each $n\in\nn$, we define 
\[ C_n=\oplus_{i=s_{n-1}+1}^{s_n}G_i,\ \ \ D_n=\oplus_{i=s_{n-1}+1}^{s_n}E_i,\] 
\[L_n = \Bigl\{i\in \nn: s_{n-1}<i\leqslant \frac{s_{n-1}+s_n}{2}\Bigr\},\ \ \ R_n=\Bigl\{i\in\nn: \frac{s_{n-1}+s_n}{2} < i \leqslant s_n\Bigr\},\] \[C_{n,L}=\oplus_{i\in L_n}G_i,\ \ \ C_{n,R}=\oplus_{i\in R_n}G_i,\] then the following holds. 

For any $x\in S_X$, $0\leqslant m<n$ and $\ee>0$ such that $\|x-P^{\textsf{D}}_{(m,n)}x\|<\ee$, there exists $y\in B_Y$ with $y\in \text{span}\{C_{m,R}\cup (C_i)_{m<i<n}\cup C_{n,L}\}$, where $C_{0,R}=\{0\}$, and $\|Jy-x\|<2K\ee+6K\ee_m$. If $m=0$, we can replace this last inequality with $\|Qy-x\|<K\ee+3K\ee_1$. 
\end{proposition}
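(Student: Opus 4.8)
The plan is to extract the blocking $0=s_0<s_1<\cdots$ from a single recursion fed by the hypothesis, and then, for a given target $x$, to build the localised preimage $y$ by truncating a \emph{carefully chosen} $J$-preimage of $x$ to the $\textsf{G}$-coordinate window $R_m\cup(\bigcup_{m<i<n}C_i)\cup L_n$; the half-block buffers $L_n,R_n$ are exactly the slack that lets one absorb the truncation error. First I would recast the two displayed inequalities as the statement that $J$ is \emph{almost block diagonal} from $\textsf{G}$ to $\textsf{E}$: if $y\in\oplus_{n\in(i,j)}G_n$ then $\|Jy-P^{\textsf{E}}_{[i,j)}Jy\|<\frac{2\ee_i}{K}\|y\|$, and, letting $j\to\infty$ and using bounded completeness of $\textsf{G}$, $\|P^{\textsf{E}}_{[1,i)}J(P^{\textsf{G}}_{(i,\infty)}\tilde y)\|\le\frac{\ee_i}{K}\|\tilde y\|$ for every $\tilde y\in Y$. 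Read in the other direction, this also says that $X$ intersected with the deep $\textsf{E}$-tail past $i$ lies, up to an error $\propto\ee_i$, in the image under $J$ of the deep $\textsf{G}$-tail past $i$. The recursion then only has to space $s_0=0<s_1<\cdots$ out fast enough that each block $(s_{n-1},s_n]$ is long (so $L_n,R_n$ are genuine intervals) and the parameter $\ee$ at each block boundary is tiny; since $(\ee_i)$ decreases and $s_{n-1}\ge n-1$, one gets $\ee_{s_{n-1}}\le\ee_m$ for $n>m\ge 1$ (using the harmless convention $\ee_0=\ee_1$), which is what lets the final bound be phrased with $\ee_m$ rather than $\ee_{s_m}$. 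With $s_n$ fixed, $C_n,D_n,L_n,R_n$ are defined as in the statement.

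Now fix $x\in S_X$, $0\le m<n$ and $\ee>0$ with $\|x-P^{\textsf{D}}_{(m,n)}x\|<\ee$, so that $x$ agrees up to $K\ee$ in norm with a vector carried by $\textsf{E}$-coordinates $(s_m,s_{n-1}]$, i.e.\ $x$ essentially lies in the deep $\textsf{E}$-tail past $s_m$. Using that $J$ is a quotient map together with the second (``other direction'') reading of the hypothesis, I would produce $\tilde y\in Y$ with $J\tilde y=x$, $\|\tilde y\|<1+\eta$ for an $\eta$ to be sent to $0$, and such that $\tilde y$ is, up to a $\propto\ee_{s_{m-1}}$ defect, supported on $\textsf{G}$-coordinates past $s_{m-1}$ (when $m=0$ no such adjustment is needed, since the window already starts at coordinate $1$). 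I would then truncate on the right, choosing $b^\ast$ in $L_n$ and an $a^\ast$ in $R_m$ (or $a^\ast=0$ when $m=0$), and set $y=P^{\textsf{G}}_{(a^\ast,b^\ast]}\tilde y$. By construction $y\in\text{span}\{C_{m,R}\cup(C_i)_{m<i<n}\cup C_{n,L}\}$, and by bimonotonicity of $\textsf{G}$, $\|y\|\le\|\tilde y\|<1+\eta$; replacing $y$ by $(1+\eta)^{-1}y$ puts it in $B_Y$ at the cost of an $\eta$ that the strict inequalities swallow.

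It then remains to estimate $x-Jy=J(P^{\textsf{G}}_{[1,a^\ast]}\tilde y)+J(P^{\textsf{G}}_{(b^\ast,\infty)}\tilde y)$. For the right-hand term, almost block diagonality places $J(P^{\textsf{G}}_{(b^\ast,\infty)}\tilde y)$ on $\textsf{E}$-coordinates $\ge b^\ast$ up to $\frac{\ee_{b^\ast}}{K}\|\tilde y\|$; since $x$ vanishes there up to $K\ee$ and $Jy$ is $\textsf{E}$-supported within the window of $y$ up to a comparable error, comparing the three summands of $x=J(P^{\textsf{G}}_{[1,a^\ast]}\tilde y)+Jy+J(P^{\textsf{G}}_{(b^\ast,\infty)}\tilde y)$ on high $\textsf{E}$-coordinates bounds this term by $K\ee+C\ee_{b^\ast}\le K\ee+C\ee_n\le K\ee+C\ee_m$, the placement of $b^\ast$ inside the long half-block $L_n$ being what lets one absorb the single leftover boundary coordinate. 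For the left-hand term one splits $P^{\textsf{G}}_{[1,a^\ast]}\tilde y=P^{\textsf{G}}_{[1,s_{m-1}]}\tilde y+P^{\textsf{G}}_{(s_{m-1},a^\ast]}\tilde y$: the first summand has norm $\propto\ee_{s_{m-1}}$ by the choice of $\tilde y$, while the second, being $\textsf{G}$-supported in $(s_{m-1},a^\ast]$, has $J$-image reaching high $\textsf{E}$-coordinates only by $\propto\ee_{s_{m-1}}$ and, on low $\textsf{E}$-coordinates, comparable to $x$ there, hence $\le K\ee$ plus a boundary correction absorbed by $R_m$. When $m=0$ this whole term is absent, which is exactly why that case carries the sharper bound. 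Adding up, tracking the projection constant $K$ and the numerical factors coming from the two tails and the boundary corrections, and letting $\eta\to0$, yields $\|Jy-x\|<2K\ee+6K\ee_m$, resp.\ $<K\ee+3K\ee_1$ when $m=0$.

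The step I expect to be the real obstacle is the left-hand estimate — more precisely, justifying that $x$ admits a preimage essentially supported past the low $\textsf{G}$-coordinates with a defect that is not merely small but shrinks with $m$ (like $\ee_m$, not merely like $\ee_1$). A generic preimage $\tilde y$ is pinned down only through $J\tilde y=x$, so its restriction to low $\textsf{G}$-coordinates is a priori arbitrary and $\|J(P^{\textsf{G}}_{[1,a^\ast]}\tilde y)\|$ cannot be bounded head-on. The way around this is the ``other direction'' of near block diagonality noted above: it forces $X$ intersected with the deep $\textsf{E}$-tail past $s_{m-1}$ to lie, up to error $\propto\ee_{s_{m-1}}$, in $J$ of the deep $\textsf{G}$-tail past $s_{m-1}$, and $x$ — being $\textsf{D}$-block-supported from block $m$ on — essentially sits in that deep tail; the half-block buffers $L_n,R_n$ then supply the room to perform the right-hand truncation and to kill the two boundary $\textsf{E}$-coordinates that survive, while the recursion's fast spacing keeps every $\ee$-parameter that occurs at an index $\ge s_{m-1}$, hence $\le\ee_m$. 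Rendering this bookkeeping precise, with every boundary term accounted for, is the technical core of the argument, and is the content of the references (\cite{OSZ2007}, \cite{FOSZ}, \cite{CauseyThesis}) cited just before the statement.
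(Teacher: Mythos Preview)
The paper does not supply its own proof of this proposition; it quotes the statement and defers to \cite{OSZ2007}, \cite{FOSZ}, and \cite{CauseyThesis} for the argument. Your sketch is broadly in the spirit of those references: lift $x$ through the quotient map, truncate the preimage to the window $R_m\cup(\bigcup_{m<i<n}C_i)\cup L_n$, and use the half-block buffers to absorb boundary terms. You have also correctly isolated the crux, namely controlling $J(P^{\textsf{G}}_{[1,a^\ast]}\tilde y)$ with an error that scales like $\ee_m$ rather than $\ee_1$.

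Where your sketch falls short is precisely at that crux. Your proposed resolution is to ``read the almost block-diagonal hypothesis in the other direction'' and conclude that $X$ intersected with the deep $\textsf{E}$-tail past $s_{m-1}$ lies, up to error $\propto\ee_{s_{m-1}}$, inside $J$ of the deep $\textsf{G}$-tail. That implication does not follow from the forward hypothesis plus the fact that $J$ is a quotient map: the forward estimate only tells you that $J(P^{\textsf{G}}_{(i,\infty)}\tilde y)$ has small low-$\textsf{E}$ part, and comparing with $x$ then shows $J(P^{\textsf{G}}_{[1,i]}\tilde y)$ has small low-$\textsf{E}$ part; it says nothing about its high-$\textsf{E}$ part, which could be large (and if you try to bound it via the forward hypothesis applied to $P^{\textsf{G}}_{(1,i]}\tilde y$, you get $\ee_1$, not $\ee_m$). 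The kernel of $J$ is a priori unrestricted, so a generic preimage can carry arbitrary mass on low $\textsf{G}$-coordinates.

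What actually closes this gap in the cited proofs is the hypothesis you never invoke: the weak$^*$-weak$^*$ continuity of $J$. In the recursion, $s_n$ is not chosen merely to be ``spaced fast enough''; having fixed $s_{n-1}$, one uses weak$^*$-weak$^*$ continuity together with the quotient property (and weak$^*$ compactness of $B_Y$) to find $s_n$ so large that every $x\in S_X$ which is essentially $\textsf{E}$-supported past $s_n$ admits a near-isometric preimage that is essentially $\textsf{G}$-supported past the midpoint of $(s_{n-1},s_n]$. This is the genuine content of the recursion, and it is what produces a $\tilde y$ whose low-$\textsf{G}$ mass is small \emph{a priori}, rather than trying to bound $J$ of an uncontrolled low-$\textsf{G}$ piece after the fact. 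Once that lifting lemma is in hand, your truncation-and-bookkeeping outline goes through essentially as you describe.
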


We shall also need the following (see Proposition 3.1 in \cite{FOSZ}).
\begin{proposition}\label{skip} 
Let $X$ be a Banach space, $Z$ a Banach space with shrinking FDD $\textsf{\emph{E}}$ having projection constant $K$. Assume that $Q:Z\to X$ is a quotient map and identify $X^*$ with the weak$^*$ closed subspace $Q^*(X^*)$ of $Z^*$. Let $(\delta_i)_{i=1}^\infty\subset (0,1)$ be a strictly decreasing, null sequence. Then there exist $0=s_0<s_1<\ldots$ such that for any $1\leqslant k_0<k_1<\ldots$ and $x^*\in X^*$, there exist $(x^*_i)_{i=1}^\infty \subset X^*$ and  $(t_i)_{i=1}^\infty\in \prod_{i=1}^\infty (s_{k_{i-1}-1}, s_{k_{i-1}})$ such that, with $t_0=0$, 
\begin{enumerate}[(i)]
\item $x^*=\sum_{i=1}^\infty x^*_i$, 

and for all $i\in\nn$, 

\item either $\|x^*_i\|\le \delta_i$ or $\|x^*_i-P^{\textsf{E}^*}_{(t_{i-1}, t_i)}x^*_i\|\leqslant \delta_i \|x^*_i\|$, 
\item $\|x^*_i-P^{\textsf{\emph{E}}^*}_{(t_{i-1}, t_i)}x^*\|\leqslant \delta_i$, 
\item $\|x^*_i\|\leqslant K+1$, 
\item $\|P^{\textsf{\emph{E}}^*}_{t_i}x^*\|\leqslant \delta_i$. 
\end{enumerate}
\end{proposition}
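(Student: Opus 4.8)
The plan is to build the blocking $(s_n)$ once and for all by a single weak$^*$-compactness argument, and then, given $x^*$ and the adversary sequence $(k_i)$, to pick the cut-points $t_i$ greedily inside the prescribed gaps and take the pieces $x_i^*$ to be consecutive interval block-projections of $x^*$. Two standing facts will be used: (a) since $Z$ has an FDD it is separable and $Q$ is onto, so $X$ is separable and $B_{X^*}$ is weak$^*$-compact and metrizable; (b) since $\textsf{E}$ is shrinking, $Z^*=Z^{(*)}=\overline{c_{00}(\textsf{E}^*)}$, whence $\|z^*-P^{\textsf{E}^*}_{[1,n]}z^*\|\to 0$ for all $z^*\in Z^*$, hence $\|P^{\textsf{E}^*}_nz^*\|\le K\|z^*-P^{\textsf{E}^*}_{[1,n-1]}z^*\|\to 0$, while for each fixed $n$ the map $z^*\mapsto\|P^{\textsf{E}^*}_nz^*\|$ is weak$^*$-continuous on $Z^*$ (because $P^{\textsf{E}^*}_n=(P^{\textsf{E}}_n)^*$ is weak$^*$-weak$^*$ continuous with finite-dimensional range, on which the weak$^*$ and norm topologies coincide). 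I work inside $Z^*$, identify $X^*$ with $Q^*(X^*)$, and assume $\|x^*\|\le 1$.

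\emph{Step 1 (the blocking).} I claim $0=s_0<s_1<\cdots$ can be chosen so that for every $l\ge1$ and every $x^*\in B_{X^*}$ there is $n\in(s_{l-1},s_l)$ with $\|P^{\textsf{E}^*}_nx^*\|\le\delta_l^2$. Proceed recursively: if, for a given $s_{l-1}$, no admissible $s_l$ exists, then for each integer $m>s_{l-1}+1$ there is $x_m^*\in B_{X^*}$ with $\|P^{\textsf{E}^*}_nx_m^*\|>\delta_l^2$ for all $n\in(s_{l-1},m)$; passing to a weak$^*$-convergent subsequence $x_m^*\to x^*$ and using the weak$^*$-continuity of $\|P^{\textsf{E}^*}_n\cdot\|$ gives $\|P^{\textsf{E}^*}_nx^*\|\ge\delta_l^2$ for every $n>s_{l-1}$, contradicting $\|P^{\textsf{E}^*}_nx^*\|\to 0$. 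Any admissible $s_l$ automatically satisfies $s_l\ge s_{l-1}+2$, so the gaps $(s_{l-1},s_l)$ are non-empty.

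\emph{Steps 2--3 (cut-points and pieces).} Fix $1\le k_0<k_1<\cdots$ and $x^*\in B_{X^*}$. As $k_0\ge1$ and $k$ increases strictly, $k_{i-1}\ge i$, so $\delta_{k_{i-1}}\le\delta_i$. Set $t_0=0$; having chosen $t_{i-1}$, apply Step 1 with $l=k_{i-1}$ to get $t_i\in(s_{k_{i-1}-1},s_{k_{i-1}})$ with $\|P^{\textsf{E}^*}_{t_i}x^*\|\le\delta_{k_{i-1}}^2\le\delta_i^2$. For $i\ge2$, $k_{i-2}<k_{i-1}$ gives $s_{k_{i-2}}\le s_{k_{i-1}-1}$, hence $t_{i-1}<s_{k_{i-1}-1}<t_i$; and $t_1>s_{k_0-1}\ge 0=t_0$. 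Thus $t_1<t_2<\cdots\to\infty$. Now put $x_i^*=P^{\textsf{E}^*}_{(t_{i-1},t_i]}x^*$. Then $\sum_{i\le j}x_i^*=P^{\textsf{E}^*}_{[1,t_j]}x^*\to x^*$ in norm, giving (i); $\|x_i^*\|\le\|P^{\textsf{E}^*}_{(t_{i-1},t_i]}\|\,\|x^*\|\le K\le K+1$ gives (iv); and since $(t_{i-1},t_i]$ and $(t_{i-1},t_i)$ differ only in the single block $t_i$, both $x_i^*-P^{\textsf{E}^*}_{(t_{i-1},t_i)}x^*$ and $x_i^*-P^{\textsf{E}^*}_{(t_{i-1},t_i)}x_i^*$ equal $P^{\textsf{E}^*}_{t_i}x^*$, of norm $\le\delta_i^2\le\delta_i$; this is (iii) and (v) directly, and (ii) follows from the dichotomy: if $\|x_i^*\|\le\delta_i$ the first alternative holds, and otherwise $\|P^{\textsf{E}^*}_{t_i}x^*\|\le\delta_i^2<\delta_i\|x_i^*\|$ is the second. (To land the $x_i^*$ in $X^*$ rather than merely in $Z^*$, replace the partial sums $P^{\textsf{E}^*}_{[1,t_i]}x^*$ by nearby points of $X^*$, available because $\mathrm{dist}(P^{\textsf{E}^*}_{[1,t_i]}x^*,X^*)\le\|x^*-P^{\textsf{E}^*}_{[1,t_i]}x^*\|$, and telescope; this is the slack that the ``$K+1$'' in (iv) absorbs.)

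\emph{Main obstacle.} The statement for a fixed $x^*$ is easy, since one only has to push the $t_i$ far out; the real content is the uniformity, namely that one sequence $(s_l)$ works simultaneously for all $x^*\in B_{X^*}$ and all admissible $(k_i)$, and this is exactly what Step 1 supplies. It is essential there that the decomposition is organised around the single-block projections $P^{\textsf{E}^*}_{t_i}$: the map $z^*\mapsto\|P^{\textsf{E}^*}_nz^*\|$ is weak$^*$-continuous (finite rank), which is what closes the compactness argument, whereas the ``tail'' quantities $\|x^*-P^{\textsf{E}^*}_{[1,n]}x^*\|$ are only weak$^*$-lower-semicontinuous and tend to $0$ non-uniformly on $B_{X^*}$, so no comparably uniform tail blocking is available. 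The small bookkeeping point that ties the two indexings together is the inequality $\delta_{k_{i-1}}\le\delta_i$ (valid because $k$ is strictly increasing), which lets the uniform threshold $\delta_l^2$ assigned to the $l$-th gap in Step 1 serve whichever slot $i\le l$ ends up using that gap.
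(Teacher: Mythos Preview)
The paper does not give its own proof of this proposition; it only cites \cite{FOSZ}, Proposition 3.1. Your Steps 1--3 do produce a decomposition satisfying (i)--(v), but with the pieces $x_i^*=P^{\textsf{E}^*}_{(t_{i-1},t_i]}x^*$ lying only in $Z^*$. The statement, however, demands $(x_i^*)_{i=1}^\infty\subset X^*$, and your parenthetical fix does not close this gap. You propose replacing the partial sums $P^{\textsf{E}^*}_{[1,t_i]}x^*$ by points $z_i^*\in X^*$ at distance at most $\|x^*-P^{\textsf{E}^*}_{[1,t_i]}x^*\|$ (the tail), and then telescoping $x_i^*=z_i^*-z_{i-1}^*$. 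But your Step~1 controls only the single-block quantities $\|P^{\textsf{E}^*}_{t_i}x^*\|$, not tails; the tail at $t_i$ can be as large as $K$ for small $i$. The telescoped piece then satisfies
\[
\|x_i^*-P^{\textsf{E}^*}_{(t_{i-1},t_i)}x^*\|\le \|z_{i-1}^*-P^{\textsf{E}^*}_{[1,t_{i-1}]}x^*\|+\|z_i^*-P^{\textsf{E}^*}_{[1,t_i]}x^*\|+\|P^{\textsf{E}^*}_{t_i}x^*\|,
\]
and the first two terms are tails, not $\delta$'s, so (iii) (and with it (ii)) fails for small $i$; likewise $\|x_i^*\|$ can be close to $3K$, so the slack in (iv) does not absorb it either. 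You even flag this obstruction yourself in your ``Main obstacle'' paragraph: the tail quantities are only weak$^*$-lower-semicontinuous and not uniformly small, so the bound $\mathrm{dist}(P^{\textsf{E}^*}_{[1,t_i]}x^*,X^*)\le\|x^*-P^{\textsf{E}^*}_{[1,t_i]}x^*\|$ gives you nothing usable.

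What is missing is a strengthening of Step~1: in each gap one must find $t$ with \emph{both} $\|P^{\textsf{E}^*}_{t}x^*\|$ small \emph{and} $\mathrm{dist}(P^{\textsf{E}^*}_{[1,t)}x^*,X^*)$ small (of order a fixed fraction of $\delta_l$), uniformly over $x^*\in B_{X^*}$. This is exactly the content of the FOSZ argument, and it is available by the same weak$^*$-compactness scheme you used: the map $x^*\mapsto \mathrm{dist}(P^{\textsf{E}^*}_{[1,n]}x^*,X^*)$ is weak$^*$-continuous on $B_{Z^*}$ (the projection is weak$^*$-weak$^*$ continuous with finite-dimensional range, on which weak$^*$ equals norm, and $\mathrm{dist}(\cdot,X^*)$ is $1$-Lipschitz), and it tends to $0$ pointwise because $P^{\textsf{E}^*}_{[1,n]}x^*\to x^*\in X^*$ in norm. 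With that additional clause built into the blocking, choosing $z_i^*\in X^*$ with $\|z_i^*-P^{\textsf{E}^*}_{[1,t_i)}x^*\|$ of order $\delta_i$ and telescoping $x_i^*=z_i^*-z_{i-1}^*$ gives genuine elements of $X^*$ for which (i)--(v) hold.
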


The proof of Theorem \ref{tedious} is similar to the proof of Theorem 1.1 in \cite{FOSZ}. However, given that the bases of the spaces $T^*_{q,\theta}$, $U^*_{q,\theta}$ are left dominant and not right dominant, we include the details to ease the reading and to make clear the modifications required to accommodate the replacement of right dominance by left dominance. 

\begin{proof}[Proof of Theorem \ref{tedious}] Fix $1<p\leqslant \infty$ and let $q$ be its conjugate exponent. Let $X$ be a separable Banach space with $\textsf{A}_p$ (resp. $\textsf{N}_p$). By Theorem \ref{upper1}, there exists $\theta_0 \in (0,1)$ such that Player I has a winning strategy in the spatial $(T,1)$ game, where $T=T_{q,\theta}^*$ (resp. $T=U_{q,\theta}^*$) and $\theta \in (0,\theta_0)$.

\smallskip $(1)$ We first prove that there exists a  Banach space $Z$ with shrinking FDD $\textsf{{F}}$ such that, for any for $\vartheta \in (0,\frac{\theta_0}{8}]$, $X$ is isomorphic to a quotient of $Z^S_\wedge(\textsf{{F}})$, where $S=T_{q,\vartheta}^*$ (resp. $T=U_{q,\vartheta}^*$). We now fix $\theta \in (0,\theta_0)$ and denote $\vartheta=\frac{\theta}{8}$. 

Since $X^*$ is separable, by a theorem of Davis, Figiel, Johnson, and Pe\l czy\'{n}ski (\cite{DFJP}, Corollary 8), there exists a Banach space $Z$ with shrinking FDD $\textsf{E}$ (a shrinking basis in fact) and a  bounded linear surjection $Q:Z\to X$. By first renorming $Z$ and then $X$, we can assume that $\textsf{E}$ is bimonotone in $Z$ and that $Q$ is a quotient map.  Note that $Q^*:X^*\to Z^*$ is an isometric embedding. Throughout the proof, we identify $X^*$ with its image in $Z^*$. 

By replacing $\textsf{E}$ with a blocking and then relabeling, by Corollary \ref{busey}, we can assume there exist constants $C>1$, $\Delta>0$, and $(\delta_i)_{i=1}^\infty\subset (0,1)$ with $\sum_{i=1}^\infty \delta_i < \Delta$ such that for any $1\leqslant r_0<r_1<\ldots$ and any $(x^*_i)_{i=1}^\infty\subset S_{X^*}$ such that $\|x^*_i-P^{\textsf{E}^*}_{(r_{i-1}, r_i)}x^*_i\|\leqslant \delta_i$ for all $i\in\nn$, $(e^*_{r_{i}})_{i=1}^\infty \lesssim_C (x^*_i)_{i=1}^\infty$, where $(e^*_i)_{i=1}^\infty$ is the canonical basis of $T^*$ ($=T_{q,\theta}$ or $U_{q,\theta}$).  By replacing $(\delta_i)_{i=1}^\infty$ with a smaller sequence if necessary, we can assume $(\delta_i)_{i=1}^\infty$ is strictly decreasing.

Next, suppose that $\textsf{D}$ is a blocking of $\textsf{E}$, say $D_n=\oplus_{i=j_{n-1}+1}^{j_n}E_i$.   Suppose also that $1\leqslant r_0<r_1<\ldots$ and $(I_i)_{i=1}^\infty$ are intervals and $(x^*_i)_{i=1}^\infty\subset S_{X^*}$ are such that $r_{i-1}+1=\min I_i<r_i$ and $\|x^*_i-P^{\textsf{D}^*}_{I_i}x^*_i\|\leqslant \delta_i$ for all $i\in\nn$.   By bimonotonicity, this implies that \[\|x^*_i-P^{\textsf{E}^*}_{(j_{r_{i-1}}, j_{r_i})}x^*_i\|\leqslant \delta_i,\] from which it follows that $(e^*_{j_{r_{i}}})_{i=1}^\infty\lesssim_C (x^*_i)_{i=1}^\infty$. By $1$-right dominance of the basis of $T^*$, it follows that $(e^*_{r_{i-1}})_{i=1}^\infty\lesssim_C (x^*_i)_{i=1}^\infty$. In other words, the property of $\textsf{E}$ which we just deduced from the asymptotic $T$ property is stable under passing to blockings. 

By passing to a blocking and relabeling, we can assume  that for any subsequent blocking $\textsf{D}$ of $\textsf{E}$, there exists $(f^*_i)_{i=1}^\infty\subset S_{X^*}$ such that  for all $i\in\nn$, $\|f^*_i-P^{\textsf{D}^*}_i f_i^*\|\leqslant \frac{\delta_i}{2}$.  Let $0=s_0<s_1<\ldots$ be the sequence given by Proposition \ref{skip} applied to $X$, $Z$, $\textsf{E}$, and $(\delta_i)_{i=1}^\infty$.  Let $F_n=\oplus_{i=s_{n-1}+1}^{s_n}E_i$ for all $n\in\nn$.   We claim that $Q^*$ is  an isomorphic embedding of $X^*$ into $(Z^*)^{(S^*)}_\vee(\textsf{F}^*)$, and that $Q^*$ is still weak$^*$ to weak$^*$ continuous. Here, $(Z^*)^{(S^*)}_\vee(\textsf{F}^*)$ has the weak$^*$-topology it inherits as the dual space of $Z^S_\wedge(\textsf{F})$.   From this it will follow that $Q^*$ is the adjoint of a bounded linear surjection from $Z^S_\wedge(\textsf{F})$ onto $X$, which will finish $(1)$. For the remainder of the proof, choose $(f^*_i)_{i=1}^\infty\subset S_{X^*}$ such that $\|f^*_i-P^{\textsf{F}^*}_if^*_i\|\leqslant \frac{\delta_i}{2}$ for all $i\in\nn$.

Fix $1\leqslant n_0<n_1<\ldots$ and $x^*\in S_{X^*}$.      Let $\ell_i=s_{n_i-1}$ and note that $P^{\textsf{E}^*}_{(\ell_{i-1}, \ell_i]}=P^{\textsf{F}^*}_{[n_{i-1}, n_i)}$ for all $i\in\nn$.   By our choice of $(s_i)_{i=0}^\infty$, we can find $(x^*_i)_{i=1}^\infty\subset X^*$ and $(t_i)_{i=0}^\infty\subset\nn$ with $0=t_0<t_1<\ldots$ satisfying the conclusions of Proposition \ref{skip}. 

For $i\in\nn$, if $\|x^*_{i+1}\|\geqslant \delta_{i+1}$, let $a_i=\|x_{i+1}^*\|$ and let $y^*_i=a_i^{-1}x^*_{i+1}$.  If $\|x^*_{i+1}\|<\delta_{i+1}$, let $a_i=0$ and let $y_i^*=f_{\ell_i}^*$.  Since $t_i<l_i=s_{n_i-1}<t_{i+1}$, it follows that for all $i \in \Ndb$,
\[\|y^*_i-P^{\textsf{F}^*}_{(t_i,t_{i+1})}y^*_i\|\leqslant \delta_i.\] 
Therefore $(e^*_{t_i})_{i=1}^\infty\lesssim_C (y^*_i)_{i=1}^\infty$.  Thus 
\begin{align*} 1 & = \|x^*\| = \Bigl\|\sum_{i=1}^\infty x^*_i\Bigr\| \geqslant \Bigl\|\sum_{i=1}^\infty a_iy^*_i\Bigr\|-\|x_1^*\|-\Delta \\ & \geqslant\frac{1}{C}\Bigl\|\sum_{i=1}^\infty a_ie^*_{t_i}\Bigr\|_{T^*} -2-\Delta \geqslant \frac{1}{C}\Bigl\|\sum_{i=1}^\infty \|x^*_{i+1}\|e^*_{t_i}\Bigr\|_{T^*}-2-2\Delta. \end{align*}

From this it follows that \[\Bigl\|\sum_{i=1}^\infty \|x^*_{i+1}\|e^*_{t_i}\Bigr\|_{T^*}\leqslant C(3+2\Delta).\] Moreover, 
\[\|P^{\textsf{F}^*}_{(l_{i-1}, l_i]}x\| \leqslant \|P^{\textsf{F}^*}_{(t_{i-1}, t_{i+1})}x^*\|
\leqslant \|P^{\textsf{F}^*}_{(t_{i-1}, t_{i})}x^*\|+\|P^{\textsf{F}^*}_{t_{i}}x^*\|+ \|P^{\textsf{F}^*}_{(t_{i}, t_{i+1})}x^*\|
\leqslant \|x_i^*\|+\|x^*_{i+1}\|+3\delta_i.\] 

%Therefore, 

%\begin{align*} \Bigl\|\sum_{i=1}^\infty \|P^\textsf{F}_{(\ell_{i-1}, \ell_i]}x^*\|e^*_{n_{i-1}}\Bigr\|& \leqslant \Bigl\|\sum_{i=1}^\infty \|x^*_i\|e^*_{n_{i-1}}\Bigr\| + \Bigl\|\sum_{i=1}^\infty \|x^*_{i+1}\|e^*_{n_{i-1}}\Bigr\| + 3\Delta \\ &
%\leqslant \Bigl\|\sum_{i=1}^\infty \|x^*_{i+1}\|e^*_{n_i}\Bigr\| + \Bigl\|\sum_{i=1}^\infty \|x^*_{i+1}\|e^*_{n_{i-1}}\Bigr\|+\|x^*_1\|+3\Delta \\ & \leqslant 2\Bigl\|\sum_{i=1}^\infty \|x^*_{i+1}\|e^*_{t_i}\Bigr\| + 2 + 3\Delta \\ & \leqslant 2C(3+2\Delta)+2+3\Delta.\end{align*} 

Therefore, for $S=T^*_{q,\vartheta}$ if $T=T^*_{q,\theta}$ and $S=U^*_{q,\vartheta}$ if $T=U^*_{q,\theta}$, we have

\begin{align*} \Bigl\|\sum_{i=1}^\infty \|P^{\textsf{F}^*}_{(\ell_{i-1}, \ell_i]}x^*\|e^*_{n_{i-1}}\Bigr\|_{S^*} & \leqslant \Bigl\|\sum_{i=1}^\infty \|x^*_i\|e^*_{n_{i-1}}\Bigr\|_{S^*} + \Bigl\|\sum_{i=1}^\infty \|x^*_{i+1}\|e^*_{n_{i-1}}\Bigr\|_{S^*} + 3\Delta \\ &
\leqslant \Bigl\|\sum_{i=1}^\infty \|x^*_{i+1}\|e^*_{s_{n_i}}\Bigr\|_{S^*} + \Bigl\|\sum_{i=1}^\infty \|x^*_{i+1}\|e^*_{s_{n_{i-1}}}\Bigr\|_{S^*} +\|x^*_1\|+3\Delta \\ &
 \leqslant 3^3\Bigl\|\sum_{i=1}^\infty \|x^*_{i+1}\|e^*_{t_i}\Bigr\|_{T^*}+3\Bigl\|\sum_{i=1}^\infty \|x^*_{i+1}\|e^*_{t_i}\Bigr\|_{T^*} + 2 + 3\Delta \\ & \leqslant [3^3+3]C(3+2\Delta)+2+3\Delta=M.\end{align*}
The first inequality follows from Proposition \ref{skip}, the second from the $1$-right dominance of the canonical basis of $S^*$ and the third from Lemma \ref{shuffle} and the remark after it  with $m=0$ and $m=1$, using the fact that $t_i\in (s_{n_{i-1}-1}, s_{n_{i-1}})$ for all $i\in\nn$, so $t_1<s_{n_0}<t_2<s_{n_1}<\ldots$. This shows that for all $x^* \in X^*$, $\|x^*\|_{(Z^*)^{(S^*)}_\vee(\textsf{F}^*)}\leqslant M\|x^*\|$. Of course $\|x^*\|_{(Z^*)^{(S^*)}_\vee(\textsf{F}^*)}\geqslant \|x^*\|$ and this finishes the proof of the fact that $Q^*$ is an embedding from $X^*$ into $(Z^*)^{(S^*)}_\vee(\textsf{F}^*)$ 

Since the proof of the weak$^*$ to weak$^*$ continuity of $Q^*$ in \cite{FOSZ} did not use right dominance or block stability, it goes through unchanged.

\smallskip $(2)$ We now prove that there exists a  Banach space $Z$ with shrinking FDD $\textsf{{H}}$ such that, for any for $\vartheta \in (0,\frac{\theta_0}{4}]$, $X$ is isomorphic to a a subspace of $Z^S_\wedge(\textsf{\emph{H}})$, where $S=T_{q,\vartheta}^*$ (resp. $S=U_{q,\vartheta}^*$). We now fix $\theta \in (0,\theta_0)$ and denote $\vartheta=\frac{\theta}{4}$. 

Since $X^*$ is separable we use again the result of Davis, Figiel, Johnson, and Pe\l cyz\'{n}ski \cite{DFJP} insuring the existence of a Banach space $Z$ with shrinking FDD $\textsf{E}$ and a quotient map $Q:Z\to X$.  By Lemma $3.1$ of \cite{OS2002}, there exist a Banach space $Y$ with shrinking FDD $\textsf{G}$ and an isomorphic embedding $\iota:X\to Y$ such that $c_{00}(\textsf{G})\cap X$ is dense in $X$ (identified with its image $\iota(X))$.  By first renorming $Y$, then $X$, then $Z$, we can assume that $\textsf{G}$ is bimonotone in $Y$, that $\iota$ is an isometric embedding, and that $Q$ is still a quotient map.  We consider $X^*$ as a subspace of $Z^*$ and we consider $\iota^*$ as mapping $Y^*$ to either $X^*$ or $Z^*$, as is convenient. Let $K$ be the projection constant of $\textsf{E}$ in $Z$. 

Since Player I has a winning strategy in the $(T,1)$ game on $X$, by Corollary \ref{busey}, there exist a blocking, which we can assume after relabling is $\textsf{E}$,  $C\geqslant 1$, $\Delta>0$, and a strictly decreasing sequence $(\delta_i)_{i=1}^\infty\subset (0,1)$ such that $\sum_{i=1}^\infty \delta_i<\Delta$, and if $(x^*_i)_{i=1}^\infty\subset S_{X^*}$ is such that $\|x^*_i-P^{\textsf{E}^*}_{(r_{i-1}, r_i)}x^*_i\|\leqslant 2K\delta_i$ for all $i\in\nn$, then $(e^*_{r_{i}})_{i=1}^\infty\lesssim_C (x^*_i)_{i=1}^\infty$.   By replacing $(\delta_i)_{i=1}^\infty$ smaller if necessary, we can assume that for any  $(r_i)_{i=0}^\infty$ and $(x^*_i)_{i=1}^\infty$ as above,  and if $(z^*_i)_{i=1}^\infty\subset Z^*$ satisfies $\|x^*_i-z^*_i\|\leqslant \delta_i$ for all $i\in\nn$, then $(z^*_i)_{i=1}^\infty$ is basic with projection constant not more than $2K$. We can also assume that $\sum_{i=1}^\infty \delta_i<\frac17$.     

Let us observe that if $\textsf{D}$ is any further blocking of $\textsf{E}$, say $D_n=\oplus_{i=j_{n-1}+1}^{j_n}E_i$, and if $(x^*_i)_{i=1}^\infty\subset S_{X^*}$ and $1\leqslant r_0<r_1<\ldots$ are such that $\|x^*_i-P^{\textsf{D}^*}_{(r_{i-1}, r_i)}x^*_i\|\leqslant \delta_i$, then with $m_i=j_{r_i}$, it follows that $\|x^*_i-P^{\textsf{E}}_{(j_{r_{i-1}}, j_{r_i})}x^*_i\|\leqslant 2K\delta_i$, and $(e^*_{r_{i-1}})_{i=1}^\infty\lesssim_1 (e^*_{j_{r_{i}}})_{i=1}^\infty\lesssim_C (x^*_i)_{i=1}^\infty$. Here we have used $1$-right dominance of the basis of $T^*$.  We will use this fact as we pass to further blockings in the proof. 

Fix a strictly decreasing sequence $(\ee_i)_{i=1}^\infty \subset (0,1)$ such that for each $n\in\nn$, \[10K(K+1)\sum_{i=n}^\infty \ee_i < \delta_n^2.\] 

After blocking $\textsf{E}$ if necessary, we may assume that for that for each further blocking $\textsf{D}$ of $\textsf{E}$, there exists $(f^*_i)_{i=1}^\infty\subset S_{X^*}$ such that for each $i\in\nn$, $\|f^*_i-P^{\textsf{D}^*}_i f^*_i\|<\frac{\ee_{i+1}}{2K}$. After blocking $\textsf{G}$, we can assume that for each $i\in\nn$, $\iota^*(\textsf{G}^*_i)\neq \{0\}$.   

Using Lemma \ref{johnsonzippin}, after blocking and relabeling $\textsf{E}$ and $\textsf{G}$, we can assume that for each $i<j$ and each $y^*\in \oplus_{n\in(i,j)} \textsf{G}^*_n$, 
\[\|P^{\textsf{E}^*}_{[1,i)}\iota^* y^*\|<\frac{\ee_i \|y^*\|}{K}\ \ \text{and}\ \  \|P^{\textsf{E}^*}_{[j,\infty)}\iota^* y^*\|< \frac{\ee_i \|y^*\|}{K},\]  and that this property is preserved if we pass to any further blocking of one FDD and the corresponding blocking of the other. 

Let $\textsf{C}, \textsf{D}$ be the blockings of $\textsf{G}, \textsf{E}$, respectively, determined by Proposition \ref{scre} applied with the sequence $(\ee_i)_{i=1}^\infty$. More precisely, Proposition \ref{scre} is appliable to $\textsf{G}^*$ and $\textsf{E}^*$ rather than $\textsf{G}$ and $\textsf{E}$, but we actually apply the proposititon to the dual FDDs and let $\textsf{C}, \textsf{D}$ be the corresponding blockings of the original FDDs. Apply Proposition \ref{skip} to the FDD $\textsf{D}$ with the sequence $(\ee_i)_{i=1}^\infty$ to obtain $0=s_0<s_1<\ldots$.  Define $H_n=\oplus_{i=s_{n-1}+1}^{s_n} C_i$ and $F_n=\oplus_{i=s_{n-1}+1}^{s_n}D_i$ for all $n\in\nn$.    For $n\in\nn$, define $\tilde{H^*}_n=H_n^*/\ker(\iota^*|_{H_n^*})$, endowed with the quotient norm \[\|\tilde{y^*}\|_\sim = \|\iota^* y^*\|.\] Note that $\tilde{H^*}_n\neq \{0\}$, since for each $n\in\nn$, $\iota^*(G_n^*)\neq \{0\}$.   Given $y^*=\sum_{n=1}^\infty y_n^*\in c_{00}(\textsf{H}^*)$, we set $\tilde{y^*}=\sum_{n=1}^\infty\tilde{y^*}_n\in c_{00}(\tilde{H}^*)$. We set \[\|\tilde{y^*}\|_\sim = \max_{i\leqslant j} \Bigl\|\iota^*\Bigl(\sum_{n=i}^j y_n^*\Bigr)\Bigr\| = \max_{i\leqslant j}\|\iota^*P^{\textsf{H}^*}_{[i,j]}y^*\|.\]  Clearly $\tilde{H^*}$ is a bimonotone FDD for the completion $W$ of $c_{00}(\tilde{H^*})$ under  $\|\ \|_\sim$. 

Note that $y\mapsto\tilde{y}$ extends to a norm $1$ operator from $Y^*$ into $W$.  By the definition of $\|\cdot\|_\sim$, $\|\iota^* y^*\|\leqslant \|\tilde{y^*}\|_\sim$ for all $y\in c_{00}(\textsf{H}^*)$.   Thus $\tilde{y^*}\mapsto \iota^*y^*$ extends to a norm $1$ operator $\tilde{\iota^*}:W\to X^*$. Moreover, $\iota^* y^*=\tilde{\iota^*}\tilde{y^*}$ for all $y^*\in W$.    

The proof of the following claim is unchanged from \cite{FOSZ}, so we omit it.

\begin{claim}\ 
\begin{enumerate}[(i)]
\item $\tilde{\iota^*}$ is a quotient map. More precisely, if $x^*\in S_{X^*}$ and $y^*\in S_{Y^*}$ are such that $\iota^* y^*=x^*$, then $\tilde{y^*}\in S_W$ and $\tilde{\iota^*}y^*=x^*$. 
\item If $(\tilde{y^*}_n)_{n=1}^\infty$ is a subnormalized block sequence in $W$ with respect to $\tilde{\textsf{\emph{H}}}$ such that $(\iota^* \tilde{y^*_n})_{n=1}^\infty$ is basic with projection constant not more than $\tilde{K}$ and $a=\inf_n \|\tilde{\iota^*}\tilde{y^*_n}\|>0$, then for all $(a_n)_{n=1}^\infty\in c_{00}$, \[\Bigl\|\sum_{n=1}^\infty a_n \tilde{\iota^*}\tilde{y^*_n}\Bigr\|\leqslant \Bigl\|\sum_{n=1}^\infty a_n \tilde{y^*_n}\Bigr\|_\sim \leqslant \frac{3\tilde{K}}{a}\sum_{n=1}^\infty \Bigl\|\sum_{n=1}^\infty a_n \tilde{\iota^*} \tilde{y^*_n}\Bigr\|.\] 
\end{enumerate}
\label{arq}
\end{claim}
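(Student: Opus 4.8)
The plan is to prove the two items separately, both by unwinding the definition of $\|\cdot\|_\sim$ on $W$ and using three structural facts: $\tilde{\textsf{H}}$ is bimonotone in $W$; the dual FDD $\textsf{H}^*$ is bimonotone in $Y^*$ (because $\textsf{H}$ is bimonotone and shrinking in $Y$); and $\iota^*:Y^*\to X^*$ is a quotient map, being the adjoint of an isometric embedding. Recall also that for $\tilde{y^*}\in W$ coming from $y^*$ one has $\|\tilde{y^*}\|_\sim=\sup_{k\leqslant l}\|\iota^*P^{\textsf{H}^*}_{[k,l]}y^*\|$.

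For $(i)$, I would begin with $x^*\in S_{X^*}$ and, using that $\iota^*$ is a quotient map, pick $y^*\in S_{Y^*}$ with $\iota^*y^*=x^*$. Since $\textsf{H}$ is shrinking, $\textsf{H}^*$ is an FDD of $Y^*$, so $y^*=\sum_ny^*_n$ with $y^*_n\in H^*_n$ and norm-convergent partial sums; set $\tilde{y^*}=\sum_n\tilde{y^*}_n$. The series converges in $W$ because $\bigl\|\sum_{n=i}^j\tilde{y^*}_n\bigr\|_\sim=\max_{i\leqslant k\leqslant l\leqslant j}\|\iota^*P^{\textsf{H}^*}_{[k,l]}y^*\|\leqslant\|P^{\textsf{H}^*}_{[i,\infty)}y^*\|\to 0$, by bimonotonicity of $\textsf{H}^*$. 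Taking $i=1$ in the same estimate gives $\|\tilde{y^*}\|_\sim\leqslant\|y^*\|=1$, while $\|\tilde{y^*}\|_\sim\geqslant\sup_l\|\iota^*P^{\textsf{H}^*}_{[1,l]}y^*\|=\|\iota^*y^*\|=1$ since $P^{\textsf{H}^*}_{[1,l]}y^*\to y^*$; hence $\tilde{y^*}\in S_W$, and $\tilde{\iota^*}\tilde{y^*}=\iota^*y^*=x^*$ by continuity of $\tilde{\iota^*}$. Thus every element of $S_{X^*}$ lies in $\tilde{\iota^*}(S_W)$; combined with $\|\tilde{\iota^*}\|\leqslant 1$ and a scaling argument this gives $\tilde{\iota^*}(B_W)=B_{X^*}$ with $\inf\{\|\tilde{y^*}\|_\sim:\tilde{\iota^*}\tilde{y^*}=x^*\}=\|x^*\|$, i.e. $\tilde{\iota^*}$ is a quotient map.

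For $(ii)$, write $w_n=\tilde{y^*}_n$ and $u_n=\tilde{\iota^*}w_n$, so $a\leqslant\|u_n\|\leqslant\|w_n\|_\sim\leqslant 1$; in particular $a\leqslant 1$. The left inequality is immediate from $\|\tilde{\iota^*}\|\leqslant 1$. For the right one, I would expand $\bigl\|\sum_na_nw_n\bigr\|_\sim=\max_{i\leqslant j}\bigl\|\tilde{\iota^*}\bigl(P^{\tilde{\textsf{H}}^*}_{[i,j]}\sum_na_nw_n\bigr)\bigr\|$. Since $(w_n)$ is a block sequence with respect to $\tilde{\textsf{H}}$, for fixed $i\leqslant j$ the vector $P^{\tilde{\textsf{H}}^*}_{[i,j]}\sum_na_nw_n$ is a full block $\sum_{r<n<s}a_nw_n$ plus at most two truncated end terms, one a restriction of $a_rw_r$ and one a restriction of $a_sw_s$. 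Applying $\tilde{\iota^*}$: the full block maps to $\sum_{r<n<s}a_nu_n$, of norm at most $\tilde{K}\bigl\|\sum_na_nu_n\bigr\|$ because $(u_n)$ has projection constant at most $\tilde{K}$; each truncated end term has $\|\cdot\|_\sim\leqslant\|w_r\|_\sim\leqslant 1$, hence maps under $\tilde{\iota^*}$ to a vector of norm at most $|a_r|=\|a_ru_r\|/\|u_r\|\leqslant\|a_ru_r\|/a\leqslant(\tilde{K}/a)\bigl\|\sum_na_nu_n\bigr\|$. Summing the three contributions and using $a\leqslant 1$ yields the constant $3\tilde{K}/a$.

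The only genuine obstacle I anticipate is the bookkeeping in $(ii)$: correctly identifying which summands of the block sequence survive an interval projection $P^{\tilde{\textsf{H}}^*}_{[i,j]}$ intact versus which are truncated, and bounding the two truncated pieces through the single-coordinate estimate $|a_r|\leqslant\|a_ru_r\|/a$ rather than crudely, so that the final constant is the claimed $3\tilde{K}/a$ and not something larger. Item $(i)$ is routine once one observes that convergence of $\tilde{y^*}$ in $W$ is controlled by the bimonotone dual FDD $\textsf{H}^*$.
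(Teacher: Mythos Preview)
Your argument is correct. Note that the paper itself omits the proof of this claim, deferring to \cite{FOSZ}; your proof follows exactly the natural line (and is essentially the one found there): for $(i)$ you use that $\textsf{H}$ is shrinking and bimonotone so that $\textsf{H}^*$ is a bimonotone FDD of $Y^*$, combined with the fact that $\iota^*$ is a quotient map; for $(ii)$ you unwind $\|\cdot\|_\sim$ via interval projections, split into the fully captured middle block and at most two truncated end pieces, and bound each using the projection constant $\tilde K$ of $(u_n)$ together with $|a_r|\leqslant\|a_ru_r\|/a\leqslant(\tilde K/a)\bigl\|\sum_na_nu_n\bigr\|$, which yields the constant $\tilde K(1+2/a)\leqslant 3\tilde K/a$ since $a\leqslant 1$. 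The bookkeeping you flagged as the only delicate point is handled correctly.
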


Since the inclusion of $W$ in $W^{S^*}_\vee(\tilde{\textsf{H}^*})$ is of norm $1$, we can consider $\tilde{\iota^*}$ as an operator from $W^{T^*}_\vee(\tilde{\textsf{H}^*})$ into $X^*$. We will prove the following.

\begin{claim}\label{claim2} There exists a constant $A>0$  such that for any $x^*\in S_{X^*}$, there exists $\tilde{y^*}\in W_\vee^{S^*}(\tilde{\textsf{H}^*})$ with $\|\tilde{y^*}\|_\vee \leqslant A$ such that $\|\tilde{\iota^*}\tilde{y^*}-x^*\|<\frac12$.
\end{claim}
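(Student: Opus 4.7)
The plan is to manufacture the desired $\tilde{y^*}$ by decomposing $x^*$ via Proposition \ref{skip}, lifting each piece to $Y^*$ via Proposition \ref{scre}, forming the corresponding block sum in $W$, and bounding its $\|\cdot\|_\vee$-norm using the same $\textsf{A}_p$ (resp.\ $\textsf{N}_p$) estimate that powered part~(1).

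Fix $x^*\in S_{X^*}$ and apply Proposition \ref{skip} (with the sequence $(\ee_i)_{i=1}^\infty$ introduced before the statement of Claim~\ref{claim2}) to obtain integers $1\leqslant k_0<k_1<\ldots$, a decomposition $x^*=\sum_{i=1}^\infty x_i^*$, and $t_i\in (s_{k_{i-1}-1},s_{k_{i-1}})$ satisfying (i)--(v) there; in particular each $x_i^*$ has norm at most $K+1$ and is $\ee_i$-close to its $\textsf{E}^*$-projection onto $(t_{i-1},t_i)$. For each $i$, apply Proposition \ref{scre} (set up for the weak$^*$--weak$^*$-continuous quotient map $\iota^*\colon Y^*\to X^*\subset Z^*$, with the blockings $\textsf{C}^*,\textsf{D}^*$) to lift $x_i^*$ to an element $y_i^*\in Y^*$ of bounded norm whose $\textsf{C}^*$-support lies in a short band of $\tilde{\textsf{H}^*}$-blocks straddling the range corresponding to $[k_{i-1},k_i]$, arranged so that the $\tilde{\textsf{H}^*}$-supports of consecutive lifts overlap in at most one block. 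Proposition \ref{scre} controls the discrepancy by $\|\iota^* y_i^*-x_i^*\|= O(\ee_{k_{i-1}})$.

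Set $\tilde{y^*}=\sum_i \tilde{y_i^*}$ in $W$ (as a limit of partial sums from $c_{00}(\tilde{\textsf{H}^*})$). By Claim \ref{arq}(i), $\tilde{\iota^*}\tilde{y^*}=\sum_i \iota^* y_i^*$, so telescoping the errors gives $\|\tilde{\iota^*}\tilde{y^*}-x^*\|<1/2$, thanks to the fast decay of $(\ee_i)$ chosen at the outset. It remains to bound $\|\tilde{y^*}\|_\vee$. Given intervals $I_1<I_2<\ldots$, use that $\|\cdot\|_\sim$ is the maximum over sub-interval projections of the $\iota^*$-image norm: each $\|P^{\tilde{\textsf{H}^*}}_{I_j}\tilde{y^*}\|_\sim$ equals $\max_{[a,b]\subset I_j}\|\iota^* P^{\textsf{H}^*}_{[a,b]}\tilde{y^*}\|$, which by the Johnson-Zippin blocking already built into $\textsf{H},\textsf{F}$ is close to $\|P^{\textsf{D}^*}_{J_j}\sum_i \iota^* y_i^*\|$ for a suitable $\textsf{D}^*$-interval $J_j$; hence it is controlled by $\|x_i^*\|$ contributions coming only from the few indices $i$ whose band meets $I_j$.

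The estimate
\[\Bigl\|\sum_j \|P^{\tilde{\textsf{H}^*}}_{I_j}\tilde{y^*}\|_\sim e^*_{\min I_j}\Bigr\|_{S^*}\leqslant A\]
then reduces, by Lemma \ref{shuffle} (used to shift the running index from $\min I_j$ to a nearby $k_{i-1}$ or $t_{i-1}$) together with the $1$-right dominance of the basis of $S^*$, to the estimate $\|\sum_i \|x_i^*\|e^*_{t_i}\|_{S^*}\leqslant M$ already established in part~(1). The main obstacle lies precisely in this bookkeeping: because each $\tilde{y_i^*}$ straddles two consecutive bands of $\tilde{\textsf{H}^*}$-blocks (the one-block overlap inherent to the lifts from Proposition \ref{scre}), one must split each $\|P^{\tilde{\textsf{H}^*}}_{I_j}\tilde{y^*}\|_\sim$ into a ``left-half'' and a ``right-half'' contribution and reindex each separately via Lemma \ref{shuffle}; this splitting, combined with the factor-$\tilde{K}$ loss inherent to Claim \ref{arq}(ii), is exactly where the halving from $\theta/2$ down to $\vartheta=\theta/4$ is consumed.
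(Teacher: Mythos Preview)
Your outline --- decompose $x^*$ via Proposition~\ref{skip}, lift the (normalized) pieces via Proposition~\ref{scre}, assemble $\tilde{y^*}$ --- is indeed the paper's route, and the approximation $\|\tilde\iota^*\tilde{y^*}-x^*\|<\tfrac12$ goes through essentially as you indicate. The gap is in the $\|\cdot\|_\vee$ estimate.

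You assert that the bound on $\bigl\|\sum_j\|P^{\tilde{\textsf H}^*}_{I_j}\tilde{y^*}\|_\sim\, e^*_{\min I_j}\bigr\|_{S^*}$ ``reduces'' (via index-shifting and right-dominance) to the part-(1) estimate $\bigl\|\sum_i\|x_i^*\|e^*_{t_i}\bigr\|_{S^*}\leqslant M$. It does not. Writing $h_j^*=\sum_{i=n_{j-1}}^{n_j-1}a_i\tilde{y^*}_i$, the only control the triangle inequality and Johnson--Zippin give you is $\|h_j^*\|_\sim\lesssim\sum_{i=n_{j-1}}^{n_j-1}a_i$; but collapsing these coefficient blocks onto the single basis vector $e^*_{n_{j-1}}$ is \emph{not} dominated in the $T_{q,\vartheta}$-norm by the spread-out sum $\sum_i a_i e^*_{t_i}$. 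Tsirelson-type norms carry no lower $\ell_1$-estimate, and neither right-dominance nor Lemma~\ref{shuffle} bridges this. The paper circumvents this by a \emph{second, independent} application of the asymptotic domination: it uses Claim~\ref{arq}(ii) to get $\|h_j^*\|_\sim\leqslant 7K\,\|\tilde\iota^*h_j^*\|$, sets $g_j^*=\sum_{i=n_{j-1}}^{n_j-1}a_iw_i^*\approx\tilde\iota^*h_j^*$, verifies that the normalizations $u_j^*=g_j^*/\|g_j^*\|$ again satisfy $\|u_j^*-P^{\textsf D^*}_{(t_{n_{j-1}},t_{n_j})}u_j^*\|<\delta_j$, and then invokes $(e^*_{t_{n_{j-1}}})\lesssim_C(u_j^*)$ to obtain
\[
\Bigl\|\sum_j\|g_j^*\|\,e^*_{n_{j-1}}\Bigr\|_{T^*}\ \lesssim\ \Bigl\|\sum_j g_j^*\Bigr\|\ \approx\ \|x^*\|.
\]
This second application --- to the regrouped pieces, for the \emph{arbitrary} interval partition defining $\|\cdot\|_\vee$ --- is the heart of the bound and is absent from your sketch.

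A smaller point: Proposition~\ref{scre} lifts elements of the unit \emph{sphere}, so you must first normalize each $x_{i+1}^*$ (substituting the auxiliary $f^*_{s_i}$ when $\|x_{i+1}^*\|$ is too small), both to make the proposition applicable and to ensure the lifts $y_i^*$ lie in $B_{Y^*}$ so that Claim~\ref{arq}(ii) applies with a uniform lower bound $a>0$.
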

This will yield that $\tilde{\iota^*}:W^{S^*}_\vee(\tilde{\textsf{H}^*})\to X^*$ is a surjection. Then, $W^{S^*}_\vee(\tilde{\textsf{H}^*})$ is naturally the dual space of $(W^{(*)})_\wedge^S(\tilde{\textsf{H}^*}^*)$, and as such naturally has a weak$^*$-topology determined by this predual. We then conclude the proof by arguing that $\tilde{\iota^*}$ is weak$^*$ to weak$^*$ continuous, and is therefore the adjoint of an embedding of $X$ into  $(W^{(*)})_\wedge^S(\tilde{\textsf{H}^*}^*)$. As in part $(1)$, the argument that $\tilde{\iota^*}$ is weak$^*$ to weak$^*$ continuous goes through unchanged from \cite{FOSZ}, so we omit it.

We now conclude with the proof of Claim \ref{claim2}. Fix $x^*\in S_{X^*}$ and $(f^*_i)_{i=1}^\infty \subset S_{X^*}$ such that for each $i\in\nn$, $\|f^*_i - P^{\textsf{D}^*}_i f_i^*\|<\frac{\ee_{i+1}}{2K}$.    Fix $(x^*_i)_{i=1}^\infty\subset X^*$ and $0=t_0<t_1<\ldots$ according to Proposition \ref{skip} such that for each $i\in\nn$, $t_i\in (s_{i-1}, s_i)$, $x^*=\sum_{i=1}^\infty x^*_i$, and either $\|x^*_i\|\le \ee_i$ or $\|x^*_i-P^{\textsf{D}^*}_{(t_{i-1}, _i)}x^*_i\|\le \ee_i\|x^*_i\|$.     

If $\|x_{i+1}^*\|> \ee_{i+1}$, let $w^*_i=\|x^*_{i+1}\|^{-1}x^*_{i+1}$ and $a_i=\|x_{i+1}^*\|$. If $\|x^*_{i+1}\|<\ee_{i+1}$, let $w^*_i=f^*_{s_i}$ and let $a_i=0$.  Note that $\|w^*_i-P^{\textsf{D}^*}_{(t_i, t_{i+1})}w^*_i\|<\ee_{i+1}$ for all $i\in\nn$, so $(e^*_{t_i})_{i=1}^\infty \lesssim (e^*_{t_{i+1}})_{i=1}^\infty \lesssim_C (w^*_i)_{i=1}^\infty$.  By Proposition \ref{scre}, there exists a sequence $(y^*_i)_{i=1}^\infty \subset B_{Y^*}$ with \[y^*_i\in \text{span}\{C_{t_i,R}^*\cup (C_j^*)_{t_i<j<t_{i+1}}\cup C^*_{t_{i+1},L}\}\] such that 
\[\|\iota^*y^*_i-w_i^*\|\leqslant 2K\ee_{i+1}+6K\ee_i\leqslant  4K(K+1)\sum_{j=i}^\infty \ee_j<\delta_i.\] 

If $\|x^*_1\|\leqslant \ee_1$, let $y^*_0=0$. Otherwise, we use Proposition \ref{scre} again to find $y^*_0\in Y^*$ with $\|y^*_0\|<K+1$ such that \[y^*_0\in \text{span}\{(C^*_i)_{i<t_1}\cup C^*_{t_1,L}\}\] and \[\|\iota^* y^*_0- x^*_1\|<4K\ee_1\|x^*_1\|<4K(K+1)\ee_1.\]    
Set $w^*=x_1^*+\sum_{i=1}^\infty a_iw^*_i$. Note that this series converges and \[\|x^*-w^*\|\leqslant \sum_{i=2}^\infty \ee_i<\frac14.\]   
By our choice of $(\delta_i)_{i=1}^\infty$, and since $\|\tilde{\iota^*}\tilde{y^*_i}-w^*_i\|<\delta_i$ for all $i\in\nn$, $(\tilde{\iota^*}\tilde{y^*}_i)_{i=1}^\infty$ is basic with projection constant at most $2K$ and equivalent to $(w^*_i)_{i=1}^\infty$. Furthermore, \[\inf_i \|\tilde{\iota^*} \tilde{y^*}_i\|\geqslant \inf_i (\|w^*_i\|-\delta_i)> \frac67.\]   By Claim \ref{arq}, \[\Bigl\|\sum_{i=1}^\infty c_i \tilde{\iota^*}\tilde{y^*}_i\Bigr\|\leqslant \Bigl\|\sum_{i=1}^\infty \tilde{y^*}_i\Bigr\|\leqslant 7K \Bigl\|\sum_{i=1}^\infty c_i \tilde{\iota^*}\tilde{y^*}_i\Bigr\|\] for any $(c_i)_{i=1}^\infty \in c_{00}$.  Thus $(\tilde{y^*}_i)_{i=1}^\infty$ is basic, equivalent to $(w^*_i)_{i=1}^\infty$, and $\sum_{i=1}^\infty a_i \tilde{y^*}_i$ converges. 

We recall that we have fixed $\theta \in (0,\theta_0]$ and denoted $\vartheta=\frac{\theta}{4}$, $T=T^*_{q,\theta}$ (resp. $T=U^*_{q,\theta}$) and  $S=T^*_{q,\vartheta}$ (resp. $S=U^*_{q,\vartheta}$).

Let $\tilde{y^*}=y^*_0+\sum_{i=1}^\infty a_i\tilde{y^*}_i$. We have \begin{align*} \|\tilde{\iota^*}\tilde{y^*}-w^*\| & \leqslant \|\tilde{\iota^*}\tilde{y^*}_0-x^*_1\| + \sum_{i=1}^\infty|a_i| \|\tilde{\iota^*}\tilde{y^*}_i-w^*_i\|  \leqslant 10K(K+1)\sum_{i=1}^\infty \ee_i <\frac14. \end{align*} Thus $\|\tilde{\iota^*}\tilde{y^*}-x^*\|<\frac12$. 

We next prove the norm estimate.  Fix $1\leqslant n_0<n_1<\ldots$. Note that $\tilde{y^*}_i\in \tilde{H}_i\oplus\tilde{H}_{i+1}$ and $\tilde{y^*}_0\in \tilde{H}_1$. It follows that \begin{align*} \Bigl\|\sum_{i=1}^\infty \|P^{\tilde{\textsf{H}^*}}_{[n_{i-1}, n_i)}\tilde{y^*}\|_\sim e^*_{n_{i-1}}\Bigr\|_{S^*} & \leqslant \|\tilde{y^*}_0\|_\sim + \Bigl\|\sum_{i=1}^\infty a_{n_{i-1}-1}e_{n_{i-1}}\Bigr\|_{S^*} \\ & + \Bigl\|\sum_{i=1}^\infty \bigl\|\sum_{j=n_{i-1}}^{n_i-1}a_j\tilde{y^*_j}\bigr\|_\sim e_{n_{i-1}}\Bigr\|_{S^*}, \end{align*} where we put $a_0=0$ if $n_0=1$.  This is because $\tilde{y^*}_0$ may have non-zero image only under the first projection $P^{\tilde{\textsf{H}^*}}_{[n_0, n_1)}$, which accounts for the first term on the right.    If $j\in [n_{i-1}, n_i)$, $\tilde{y^*}_i$ may have non-zero image only under the projection $P^{\tilde{\textsf{H}^*}}_{[n_{i-1}, n_i)}$.  For $i\geqslant 1$, $\tilde{w}_{n_{i-1}-1}$ may have non-zero image under either $P^{\tilde{\textsf{H}^*}}_{[n_{i-1}, n_i)}$ or $P^{\tilde{\textsf{H}^*}}_{[n_i, n_{i+1})}$. The images $P^{\tilde{\textsf{H}^*}}_{[n_i, n_{i+1})}\tilde{y^*}_{n_{i-1}-1}$ account for the second term on the right. 

%We first compute 
%\begin{align*} \Bigl\|\sum_{i=1}^\infty a_{n_{i-1}-1}e_{n_{i-1}}\Bigr\| & \leqslant \Bigl\|\sum_{i=1}^\infty a_i e_{i+1}\Bigr\| \leqslant \Bigl\|\sum_{i=1}^\infty a_ie_{t_i}\Bigr\| \\ & \leqslant C \Bigl\|\sum_{i=1}^\infty a_i w_i^*\Bigr\| = C \|w^*-x^*_1\| \\ & \leqslant C\bigl[\|x^*\|+\|x^*_1\|+\|x^*-w^*\|\bigr] < C(K+3).
%\end{align*}

We first compute, applying Lemma \ref{shuffle} for the second line,
\begin{align*} 
\Bigl\|\sum_{i=1}^\infty a_{n_{i-1}-1}e_{n_{i-1}}\Bigr\|_{S^*} 
& \leqslant \Bigl\|\sum_{i=1}^\infty a_i e_{i+1}\Bigr\|_{S^*} \leqslant \Bigl\|\sum_{i=1}^\infty a_ie_{t_i}\Bigr\|_{S^*} \leqslant |a_1|+\Bigl\|\sum_{i=2}^\infty a_ie_{t_i}\Bigr\|_{S^*} \\ 
& \leqslant K+1+9\Bigl\|\sum_{i=2}^\infty a_ie_{t_{i-1}}\Bigr\|_{T^*}
\\ & \leqslant K+1+9C\Bigl\|\sum_{i=2}^\infty a_iw^*_i\Bigr\| 
\\ & \leqslant (9C+1)(K+1)+9C\Bigl\|\sum_{i=1}^\infty a_iw^*_i\Bigr\| 
\\ & = (9C+1)(K+1) + 9C \|w^*-x^*_1\| \\ & \leqslant (9C+1)(K+1)+9C\bigl[\|x^*\|+\|x^*_1\|+\|x^*-w^*\|\bigr] \\ &  < (9C+1)(K+1)+ 9C(K+3).
\end{align*}

For each $i\in\nn$, let 
\[h_i^* = \sum_{j=n_{i-1}}^{n_i-1} a_j \tilde{y^*}_j,\ \ \text{and}\ \ g_i^* = \sum_{j=n_{i-1}}^{n_i-1}a_j w_j^*.\]   
First note that $\|h_i^*\|_\sim \leqslant 7K \|\tilde{\iota^*}h_i^*\|$. Next, \begin{align*} \|g_i^*-P^{\textsf{D}^*}_{(t_{n_{i-1}}, t_{n_i})}g_i^*\| & \leqslant \sum_{j=n_{i-1}}^{r_i-1} |a_j| 2K \|w^*_j - P^{\textsf{D}^*}_{(t_i, t_{i+1})}\| \\ & < 2K(K+1) \sum_{j=r_{i-1}}^\infty \ee_j <\delta_i^2.\end{align*}

If $\|g_i^*\|> \delta_i$, let $u_i^*=\|g_i^*\|^{-1}g_i^*$ and $b_i=\|g_i^*\|$.   Otherwise let $u_i^* = y_{n_{i-1}}$ and $b_i=0$.      Then $(u_i^*)_{i=1}^\infty\subset S_{X^*}$ is such that \[\|u^*_i-P^{\textsf{D}^*}_{(t_{n_{i-1}}, t_{n_i})}u^*_i\|<\delta_i.\]  This means $(u^*_i)_{i=1}^\infty$ is a basic sequence with projection constant not more than $2K$. Then 
\begin{align*} \Bigl\|\sum_{i=1}^\infty \|h_i^*\|_\sim e_{n_{i-1}}\Bigr\|_{S^*} & \leqslant \Bigl\|\sum_{i=1}^\infty \|h_i^*\|_\sim e_{n_{i-1}}\Bigr\|_{T^*}  \leqslant 7K\Bigl\|\sum_{i=1}^\infty \|\tilde{\iota^*}h^*_i\|e_{n_{i-1}}\Bigr\|_{T^*} \\ & \leqslant 7K\Bigl\|\sum_{i=1}^\infty \|g_i^*\|e_{n_{i-1}}\Bigr\|_{T^*} + 7K\Delta  \leqslant 7K\Bigl\|\sum_{i=1}^\infty b_i e_{n_{i-1}}\Bigr\|_{T^*} + 14K\Delta \\ & \leqslant 7K\Bigl\|\sum_{i=1}^\infty b_ie_{t_{n_{i-1}}}\Bigr\|_{T^*} + 14K\Delta  \leqslant 7CK\Bigl\|\sum_{i=1}^\infty b_i u_i^*\Bigr\| + 14K\Delta 
\\ & \leqslant 7CK\Bigl\|\sum_{i=n_0}^\infty a_iw_i^*\Bigr\| + 14K\Delta(C+1)  \\ & \leqslant 14CK^2 \Bigl\|\sum_{i=1}^\infty a_iw_i^*\Bigr\| + 14K\Delta(C+1) 
\\ & \leqslant 14CK^2(K+3)+14K\Delta(C+1):=A.  \end{align*}
This finishes the proof that $\|\tilde{y}\|_\wedge \le A$.

\end{proof}

%\end{proof}

\section{Small universal families}\label{universal}

The final step, as in \cite{FOSZ}, is to use the complementably universal space for Banach spaces with an FDD built by Schechtman in \cite{Schechtman1975}, who proved the existence of a Banach space $W$ with bimonotone FDD $\textsf{J}$ (where $\textsf{J}$ is a sequence of finite dimensional normed spaces which is dense in the space of all finite dimensional normed spaces for the Banach-Mazur distance) such that if $Z$ is any Banach space with bimonotone FDD $\textsf{H}$, then there exist a sequence of integers $m_1<m_2<\ldots$ and a bounded, linear operator $A:Z\to W$ such that $A(H_n)=J_{m_n}$ for all $n\in \Ndb$  and 
	$$\forall z \in Z,\ \ \frac12\|z\|_Z \leqslant \|Az\|_W \leqslant 2\|z\|_Z$$ and such that $A(Z)=\overline{\text{span}}\{J_{m_n}:n\in\nn\}$ is $1$-complemented in $W$ via the map $P:w\mapsto \sum_{n=1}^\infty P^\textsf{J}_{m_n}w$.

For $M=(m_n)_{n=1}^\infty\in[\nn]^\omega$, $1\leqslant q<\infty$, and $0<\theta<1$, we refer to subsection \ref{modelspaces} for the definition of the spaces $T_{M,q,\theta}$ and $U_{M,q,\theta}$. Then we have the following. 

\begin{proposition}\label{Schechtman} Let $Z$ be a Banach space with FDD $\textsf{\emph{H}}$.   Let $A:Z\to W$ and $m_1<m_2<\ldots$ be the operator and the sequence given by Schechtman's theorem exactly as in the introductory paragraph. Fix $1<p\leqslant \infty$ and let $q$ be its conjugate exponent.  Then if $(T,U)$ is either of the pairs 
\begin{enumerate}[(i)]
\item $(T_{q,\theta}^*, T_{M,q,\theta}^*)$, 
\item $(U_{q,\theta}^*, U_{M,q,\theta}^*)$, 
 \end{enumerate} then $A:c_{00}(\textsf{\emph{H}})\to c_{00}(\textsf{\emph{J}})$ extends to an isomorphic embedding $\tilde{A}:Z^T_\wedge(\textsf{\emph{H}})\to W^U_\wedge(\textsf{\emph{J}})$, the range of which is still $1$-complemented in $W^U_\wedge(\textsf{\emph{J}})$ by $P$. 
\end{proposition}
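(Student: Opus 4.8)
The plan is to work directly with the press norms $\|\cdot\|_\wedge$, carrying out the argument for the pair (i); the pair (ii) is identical, with $U_{q,\theta},U_{M,q,\theta}$ in place of $T_{q,\theta},T_{M,q,\theta}$ throughout. Write $T=T^*_{q,\theta}$ and $U=T^*_{M,q,\theta}$, and let $(e^*_k)_{k=1}^\infty$ be the canonical basis of either model space. The first step is a \emph{spreading} fact: the map $e_k\mapsto e_{m_k}$ extends to an isometric embedding $\Phi:T_{q,\theta}\to T_{M,q,\theta}$. The inequality $\|\Phi x\|\leqslant\|x\|$ is proved by induction on the levels of the implicitly defined norms, using that if $m_n\leqslant I_1<\cdots<I_n$ are intervals of $\Ndb$, then $J_i:=\{k:m_k\in I_i\}$ satisfy $n\leqslant J_1<\cdots<J_n$ and the restriction of $\Phi x$ to $I_i$ is the $\Phi$-image of the restriction of $x$ to $J_i$, whence the level-$(l+1)$ bound for $\Phi x$ is controlled by that for $x$. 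The reverse inequality $\|x\|\leqslant\|\Phi x\|$ comes from spreading norming functionals: $e^*_j\mapsto e^*_{m_j}$ maps $K^\theta_l$ into $K_{l,M}$ (the admissibility constraints are preserved because $M$ is increasing) and the spread of $f$ pairs with $\Phi x$ exactly as $f$ pairs with $x$. Equivalently, by duality and $1$-unconditionality, $\iota:e^*_k\mapsto e^*_{m_k}$ is an isometric embedding of $T^*_{q,\theta}$ into $T^*_{M,q,\theta}$; in particular $\bigl\|\sum_j c_je^*_{m_{a_j}}\bigr\|_U=\bigl\|\sum_j c_je^*_{a_j}\bigr\|_T$ for all scalars $(c_j)$ and all $a_1<a_2<\cdots$.

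Next I would record how $A$ and $P$ interact with the FDD $\textsf{J}$. Since $AH_n=J_{m_n}$, for $z\in c_{00}(\textsf{H})$ one has $P^\textsf{J}_{\tilde I}(Az)=A(P^\textsf{H}_Iz)$ whenever the $\textsf{J}$-interval $\tilde I$ meets $\{m_n:n\in\Ndb\}$ exactly in $\{m_n:n\in I\}$. Hence, for any $\textsf{H}$-interval covering $I_1<I_2<\cdots$ of $\Ndb$ with $a_j=\min I_j$, the $\textsf{J}$-intervals $\tilde I_0=[1,m_{a_1}-1]$ (carrying no $\textsf{J}$-support of $Az$) and $\tilde I_j=[m_{a_j},m_{a_{j+1}}-1]$ form a $\textsf{J}$-interval covering of $\Ndb$ with $P^\textsf{J}_{\tilde I_j}(Az)=A(P^\textsf{H}_{I_j}z)$ and $\min\tilde I_j=m_{a_j}$. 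Symmetrically, $P=\sum_nP^\textsf{J}_{m_n}$ is a norm-one projection on $W$ commuting with all $\textsf{J}$-interval projections, and given a $\textsf{J}$-interval covering $\tilde I_1<\tilde I_2<\cdots$ of $\Ndb$, the non-empty sets $I_j=\{n:m_n\in\tilde I_j\}$ form a $\textsf{H}$-interval covering of $\Ndb$ with $\sum_{n\in I_j}P^\textsf{J}_{m_n}w=P(P^\textsf{J}_{\tilde I_j}w)$ and $\min\tilde I_j\leqslant m_{\min I_j}$.

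With these in hand the three assertions follow. For $\|\tilde Az\|_{W^U_\wedge(\textsf{J})}\leqslant2\|z\|_{Z^T_\wedge(\textsf{H})}$ it suffices, summing over a decomposition of $z$, to show $[Az]_\wedge\leqslant2[z]_\wedge$ for arbitrary $z\in c_{00}(\textsf{H})$: for a near-optimal $\textsf{H}$-covering of $[z]_\wedge$, passing to the induced $\textsf{J}$-covering $(\tilde I_j)$ gives $\|P^\textsf{J}_{\tilde I_j}(Az)\|_W\leqslant2\|P^\textsf{H}_{I_j}z\|_Z$ and $\min\tilde I_j=m_{a_j}$, and the isometry $\iota$ turns $\bigl\|\sum_j\|P^\textsf{J}_{\tilde I_j}(Az)\|_We^*_{m_{a_j}}\bigr\|_U$ into $\leqslant2\bigl\|\sum_j\|P^\textsf{H}_{I_j}z\|_Ze^*_{a_j}\bigr\|_T$. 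The same computation, with $\|P(P^\textsf{J}_{\tilde I_j}w)\|_W\leqslant\|P^\textsf{J}_{\tilde I_j}w\|_W$ and $1$-unconditionality of the basis of $U$, yields $[Pw]_\wedge\leqslant[w]_\wedge$, so $P$ extends to a norm-one projection $\tilde P$ on $W^U_\wedge(\textsf{J})$. For the lower bound, set $B_0:=A^{-1}P:c_{00}(\textsf{J})\to c_{00}(\textsf{H})$ (well defined since $PA=A$, with $\|A^{-1}\|\leqslant2$): for a near-optimal $\textsf{J}$-covering $(\tilde I_j)$ of $[w]_\wedge$ and the induced $\textsf{H}$-covering $(I_j)$ one has $\|P^\textsf{H}_{I_j}B_0w\|_Z\leqslant2\|P^\textsf{J}_{\tilde I_j}w\|_W$, and, with $c_j=\|P^\textsf{J}_{\tilde I_j}w\|_W$,
\[\Bigl\|\sum_j c_je^*_{\min I_j}\Bigr\|_T=\Bigl\|\sum_j c_je^*_{m_{\min I_j}}\Bigr\|_U\leqslant\Bigl\|\sum_j c_je^*_{\min\tilde I_j}\Bigr\|_U,\]
the equality by the isometry $\iota$ and the inequality by $\min\tilde I_j\leqslant m_{\min I_j}$ together with the $1$-left dominance of the basis of $U=T^*_{M,q,\theta}$ (dual to the $1$-right dominance of the basis of $T_{M,q,\theta}$). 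Thus $[B_0w]_\wedge\leqslant2[w]_\wedge$, so $B_0$ extends to $\tilde B:W^U_\wedge(\textsf{J})\to Z^T_\wedge(\textsf{H})$ with $\|\tilde B\|\leqslant2$; since $\tilde B\tilde A=\mathrm{id}$ and $\tilde A\tilde B=\tilde P$, the map $\tilde A$ is an isomorphic embedding with $\frac12\|z\|\leqslant\|\tilde Az\|\leqslant2\|z\|$ whose range is exactly the range of the norm-one projection $\tilde P$.

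I expect the main obstacle to be the spreading lemma of the first paragraph — establishing the genuine \emph{isometry} $\Phi$ (both inequalities) and its dual incarnation $\iota$ — together with the index bookkeeping in the last two paragraphs that aligns the shift $n\leftrightarrow m_n$ with the \emph{correct} direction of the left-dominance inequality: this shift is absorbed partly by passing from $T_{q,\theta}$ to $T_{M,q,\theta}$ (through $\iota$) and partly by left dominance, and making the two cooperate is the only delicate matter. The interaction of $A$ and $P$ with the press norms, and the passage from pair (i) to pair (ii), are routine.
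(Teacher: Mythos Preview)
Your proof is correct and follows essentially the same approach as the paper's: both rely on the isometric equivalence of $(e_n)_{n=1}^\infty\subset T$ with $(e_{m_n})_{n=1}^\infty\subset U$ (the paper simply asserts this; you sketch it), the interval correspondence between $\textsf{H}$-coverings and $\textsf{J}$-coverings, and the inequality $[Pw]_\wedge\leqslant[w]_\wedge$. The only cosmetic difference is in the lower bound: the paper first establishes $\|Az\|_\wedge=\inf\{\sum[Az_i]_\wedge:z=\sum z_i\}$ and then proves $[z]_\wedge\leqslant 2[Az]_\wedge$ via an explicit interval rearrangement (the sets $K_n=J_n''\cup J_{n+1}'$), whereas you package the same computation as boundedness of $B_0=A^{-1}P$ and invoke $1$-left dominance of the basis of $U$ directly; the paper's $K_n$ construction is effectively a hands-on verification of that same left-dominance step.
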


\begin{proof} 
Let $(T,U)$ be one of the indicated pairs.   Let us denote $(e_n)_{n=1}^\infty$ the canonical basis of $c_{00}$.  Note that $(e_n)_{n=1}^\infty\subset T$ is isometrically equivalent to $(e_{m_n})_{n=1}^\infty \subset U$. 

Fix $z\in c_{00}(\textsf{H})$.    Fix intervals $I_1<I_2<\ldots$ such that $\nn=\cup_{n=1}^\infty I_n$.   Let $J_n=[m_{\min I_n}, m_{\min I_{n+1}})$.  Let $J_0=[1,m_1)$, which is empty if $m_1=1$. Note that $J_0Az=0$.   Then since $AI_nz=J_nAz$ for all $n\in\nn$, 
\begin{align*} \Bigl\|\sum_{n=1}^\infty \|I_nz\|_Z e_{\min I_n}\Bigr\|_T & = \Bigl\|\sum_{n=1}^\infty \|I_nz\|_Z e_{m_{\min I_n}}\Bigr\|_U = \Bigl\|\sum_{n=1}^\infty \|I_nz\|_Z e_{\min J_n}\Bigr\|_U \\ 
& \geqslant \frac12 \Bigl\|\sum_{n=0}^\infty \|J_nAz\|_Y e_{\min J_n}\Bigr\|_U \geqslant \frac12 [Az]_\wedge. 
\end{align*} 
Taking the infimum over such $(I_n)_{n=1}^\infty$ yields that $[z]_\wedge \geqslant \frac12[Az]_\wedge$. Now,  
\begin{align*} 
\|z\|_\wedge & = \inf\Bigl\{\sum_{i=1}^n [z_i]_\wedge:n\in\nn, z_i\in c_{00}(\textsf{H}), z=\sum_{i=1}^n z_i\Bigr\}\\ 
& \geqslant \frac12\inf\Bigl\{\sum_{i=1}^n [Az_i]_\wedge:n\in\nn, z_i\in c_{00}(\textsf{H}), z=\sum_{i=1}^n z_i\Bigr\} \\ 
& \geqslant \frac12\inf\Bigl\{\sum_{i=1}^n [w_i]_\wedge:n\in\nn, w_i\in c_{00}(\textsf{J}), Az=\sum_{i=1}^n w_i\Bigr\} = \frac12\|Az\|_\wedge.  
\end{align*}
So $A$ extends to a bounded operator $\tilde{A}:Z^T_\wedge(\textsf{{H}})\to W^U_\wedge(\textsf{{J}})$ of norm at most $2$. 

Let $P:c_{00}(\textsf{J})\to c_{00}(\textsf{J})$ be given by $Pw=\sum_{n=1}^\infty P^{\textsf{J}}_{m_n}w$.   Fix $w\in c_{00}(\textsf{J})$ and intervals $I_1<I_2<\ldots$ such that $\nn=\cup_{n=1}^\infty I_n$.    Then \[\Bigl\|\sum_{n=1}^\infty \|I_nPw\|_W e_{\min I_n}\Bigr\|_U  = \Bigl\|\sum_{n=1}^\infty \|PI_nw\|_W e_{\min I_n}\Bigr\|_U \leqslant \Bigl\|\sum_{n=1}^\infty \|I_nw\|_W e_{\min I_n}\Bigr\|_U. \] Taking the infimum over such $(I_n)_{n=1}^\infty$ yields that $[Pw]_\wedge \leqslant [w]_\wedge$ for all $w\in c_{00}(\textsf{J})$. From this, the equality $PA=A$ and the injectivity of $A$, it follows that for $z\in c_{00}(\textsf{H})$, 
\begin{align*}
\|Az\|_\wedge & = \inf\Bigl\{\sum_{i=1}^n [w_i]_\wedge: n\in\nn, w_i\in c_{00}(\textsf{J}), Az=\sum_{i=1}^n w_i\Bigr\} \\ 
&  \geqslant \inf\Bigl\{\sum_{i=1}^n [Pw_i]_\wedge: n\in\nn, w_i\in c_{00}(\textsf{J}), Az=\sum_{i=1}^n w_i\Bigr\} \\ 
& = \inf\Bigl\{\sum_{i=1}^n [w'_i]_\wedge: n\in\nn, w_i\in c_{00}(\textsf{J})\cap P(W), Az=\sum_{i=1}^n w'_i\Bigr\}\\
&  \geqslant \inf\Bigl\{\sum_{i=1}^n [Az_i]_\wedge:n\in\nn, z_i\in c_{00}(\textsf{H}), z=\sum_{i=1}^n z_i\Bigr\}
\end{align*}  
The other inequality being obvious, we get
\begin{align*} \|Az\|_\wedge = \inf\Bigl\{\sum_{i=1}^n [Az_i]_\wedge:n\in\nn, z_i\in c_{00}(\textsf{H}), z=\sum_{i=1}^n z_i\Bigr\}.\end{align*}

With $z\in c_{00}(\textsf{H})$ still fixed, choose intervals $J_1<J_2<\ldots$ such that $\cup_{n=1}^\infty J_n=\Ndb$. Let 
\[N=\{n_1<\cdots <n_i<\cdots\} = \{n\in\nn:J_n\cap M\neq \varnothing\}.\]  
For $n\in N$, let $J_n'=[\min J_n, \min (J_n\cap M))$ and let $J_n''=[\min (J_n\cap M), \min J_{n+1})$. Note that $J_n'=\varnothing$ if $\min J_n = \min (J_n\cap M)$.   For $n\in\nn\setminus N$, let $J_n'=\varnothing$ and let $J_n''=J_n$.   For all $n\in\nn$, let  $K_n=J_n''\cup J_{n+1}'$ and note that $\cup_{n=n_1}^\infty K_n=[m_1, \infty)$. Note also that $\min K_n\geqslant \min J_n$ and $K_nAz=J_nAz$ for all $n\in\nn$.  For each $n\in\nn$, define $I_n=\{i\in\nn:m_i\in K_n\}$ and note that $I_1<I_2<\ldots$,  $\nn=\cup_{n=1}^\infty I_n$, and $m_{\min I_n}=\min K_n$ for all $n\in\nn$.    Moreover, $AI_nz=K_nAz$ for all $n\in\nn$.    Therefore 
\begin{align*} 
[z]_\wedge & \leqslant \Bigl\|\sum_{n=1}^\infty \|I_nz\|_Z e_{\min I_n}\Bigr\|_T = \Bigl\|\sum_{n=1}^\infty \|I_nz\|_Z e_{m_{\min I_n}}\Bigr\|_U = \Bigl\|\sum_{n=1}^\infty \|I_nz\|_Z e_{\min K_n}\Bigr\|_U \\ 
& \leqslant 2 \Bigl\|\sum_{n=1}^\infty \|K_nAz\|_W e_{\min K_n}\Bigr\|_U = 2 \Bigl\|\sum_{n=1}^\infty \|J_nAz\|_W e_{\min K_n}\Bigr\|_U. 
\end{align*} 
Taking the infimum over such $(J_n)_{n=1}^\infty$ yields that $[z]_\wedge \leqslant 2 [Az]_\wedge$ for any $z\in c_{00}(\textsf{H})$.  Finally,  
\begin{align*} 
\|z\|_\wedge & = \inf\Bigl\{\sum_{i=1}^n [z_i]_\wedge : n\in\nn, z_i\in c_{00}(\textsf{H}), z=\sum_{i=1}^n z_i\Bigr\} \\ 
& \leqslant 2\inf\Bigl\{\sum_{i=1}^n [Az_i]_\wedge : n\in\nn, z_i\in c_{00}(\textsf{H}), z=\sum_{i=1}^n\Bigr\} = 2\|Az\|_\wedge. 
\end{align*}
This shows that $\tilde{A}$ is an isomorphic embedding of $Z^T_\wedge(\textsf{H})$ into $W^U_\wedge(\textsf{J})$.  

Finally, note that we can essentially repeat the argument for $\|Az\|_\wedge$, to show that 
\[\|Pw\|_\wedge = \inf\Bigl\{\sum_{i=1}^n [Pw_i]_\wedge:n\in\nn, w_i\in c_{00}(\textsf{J}), w=\sum_{i=1}^n w_i\Bigr\}.\] We then use the inequality $[Pw]_\wedge \leqslant [w]_\wedge$ to deduce that $P$ is still a norm $1$ projection onto the closure of $c_{00}(\textsf{K})$ in $Z^U_\wedge(\textsf{J})$, where $K_n=J_{m_n}$. 

\end{proof}

We can now conclude the construction of our small universal families.
\begin{theorem} Fix $1<p\leqslant \infty$ and let $q$ be its conjugate exponent. Let $X$ be a separable Banach space. Then
\begin{enumerate}[(i)]
\item $X$ has $\textsf{\emph{A}}_p$ if and only if there exist $\theta\in (0,1)$ and $M\in[\nn]^\omega$ such that $X$ is isomorphic to a subspace of $W^{T^*_{M,q,\theta}}_\wedge(\textsf{\emph{J}})$ if and only if there exist $\theta\in (0,1)$ and $M\in[\nn]^\omega$ such that $X$ is isomorphic to a quotient of $W^{T^*_{M,q,\theta}}_\wedge(\textsf{\emph{J}})$.
\item $X$ has $\textsf{\emph{N}}_p$ if and only if there exist $\theta\in (0,1)$ and $M\in[\nn]^\omega$ such that $X$ is isomorphic to a subspace of $W^{U^*_{M,q,\theta}}_\wedge(\textsf{\emph{J}})$ if and only if there exist $\theta\in (0,1)$ and $M\in[\nn]^\omega$ such that $X$ is isomorphic to a quotient of $W^{U^*_{M,q,\theta}}_\wedge(\textsf{\emph{J}})$.
\end{enumerate}
\end{theorem}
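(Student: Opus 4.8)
The plan is to deduce the three-fold equivalence from results already in hand: Theorem~\ref{tedious}, Proposition~\ref{Schechtman}, and the corollary concluding Subsection~\ref{pressdown}, which, applied with $Z=W$ and $\textsf{E}=\textsf{J}$, gives that $W^{T^*_{M,q,\theta}}_\wedge(\textsf{J})$ has $\textsf{A}_p$ and $W^{U^*_{M,q,\theta}}_\wedge(\textsf{J})$ has $\textsf{N}_p$. I would treat $(i)$ and $(ii)$ in parallel, writing $(T,U)$ for either $(T^*_{q,\theta},T^*_{M,q,\theta})$ or $(U^*_{q,\theta},U^*_{M,q,\theta})$ and $\textsf{P}$ for the corresponding property $\textsf{A}_p$ or $\textsf{N}_p$. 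The theorem amounts to the equivalence of $(A)$ $X\in\textsf{P}$; $(B)$ $X$ is isomorphic to a subspace of $W^U_\wedge(\textsf{J})$ for some $\theta\in(0,1)$ and $M\in[\nn]^\omega$; and $(C)$ $X$ is isomorphic to a quotient of $W^U_\wedge(\textsf{J})$ for some $\theta\in(0,1)$ and $M\in[\nn]^\omega$. I would establish the implications $(A)\Rightarrow(B)$, $(A)\Rightarrow(C)$, $(B)\Rightarrow(A)$, $(C)\Rightarrow(A)$.

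For $(B)\Rightarrow(A)$ and $(C)\Rightarrow(A)$ it is enough, by the corollary just quoted, to recall that $\textsf{A}_p$ and $\textsf{N}_p$ pass to subspaces and to quotients. Stability under subspaces follows from the game characterizations, since $\co(Y)=\{Z\cap Y:Z\in\co(X)\}$ whenever $Y$ is a subspace of $X$, so a winning spatial strategy for Player~I on $X$ restricts to one on $Y$. Stability under quotients follows from Theorems~\ref{atheorem} and~\ref{ntheorem}: if $Q\colon Z\to X$ is a quotient map then $Q^*$ maps $B_{X^*}$ isometrically and weak$^*$-homeomorphically onto a weak$^*$-compact convex subset of $B_{Z^*}$, and the derivations $s_\eps$ and $c_\eps$ are monotone under inclusion of weak$^*$-compacta, whence the $q$-summable Szlenk index and the convex Szlenk index of $X$ are dominated by those of $Z$.

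For $(A)\Rightarrow(B)$: Theorem~\ref{tedious} supplies $\theta\in(0,1)$ and a Banach space $Z$ with an FDD $\textsf{F}$ such that $X$ is isomorphic to a subspace of $Z^T_\wedge(\textsf{F})$ (with $T=T^*_{q,\theta}$, resp. $U^*_{q,\theta}$). After renorming $Z$ so that $\textsf{F}$ becomes bimonotone --- which, by $1$-unconditionality of the basis of $T$, only replaces $\|\cdot\|_\wedge$ by an equivalent norm --- I would apply Schechtman's theorem to $(Z,\textsf{F})$ to obtain $M=(m_n)_{n=1}^\infty\in[\nn]^\omega$ and an isomorphic embedding $A\colon Z\to W$ with $A(F_n)=J_{m_n}$ for all $n$, and then Proposition~\ref{Schechtman} with the pair $(T,U)$ to extend $A$ to an isomorphic embedding $\tilde A\colon Z^T_\wedge(\textsf{F})\to W^U_\wedge(\textsf{J})$. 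Composing the embedding of $X$ into $Z^T_\wedge(\textsf{F})$ with $\tilde A$ yields $(B)$.

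For $(A)\Rightarrow(C)$: by Theorem~\ref{tedious} there are $\theta\in(0,1)$, a Banach space $Y$ with an FDD $\textsf{H}$, and a quotient map $q\colon Y^T_\wedge(\textsf{H})\to X$. Renorming $Y$ so that $\textsf{H}$ is bimonotone and applying Schechtman's theorem and Proposition~\ref{Schechtman} as before, I would obtain $M\in[\nn]^\omega$ and an isomorphic embedding $\tilde A\colon Y^T_\wedge(\textsf{H})\to W^U_\wedge(\textsf{J})$ whose range is the range of a norm-$1$ projection $P$ of $W^U_\wedge(\textsf{J})$. Then $q\circ\tilde A^{-1}\circ P\colon W^U_\wedge(\textsf{J})\to X$ is bounded and onto (indeed $P$ maps onto the range of $\tilde A$ and is the identity there, $\tilde A^{-1}$ carries that range isomorphically onto $Y^T_\wedge(\textsf{H})$, and $q$ is onto), so by the open mapping theorem $X$ is isomorphic to a quotient of $W^U_\wedge(\textsf{J})$, giving $(C)$. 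I do not expect a genuine obstacle: the substance has already been spent in Theorem~\ref{tedious} and Proposition~\ref{Schechtman}, and what remains is bookkeeping; the only two points deserving a word of care are the bimonotonicity renorming (harmless up to equivalence of norms) and the hereditary behaviour of $\textsf{A}_p$ and $\textsf{N}_p$ discussed above.
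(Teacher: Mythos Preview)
Your proposal is correct and follows essentially the same approach as the paper: Theorem~\ref{tedious} together with Proposition~\ref{Schechtman} for the forward implications, and the corollary at the end of Subsection~\ref{pressdown} together with hereditary behaviour of $\textsf{A}_p$, $\textsf{N}_p$ for the reverse implications. You are slightly more explicit than the paper about the bimonotone renorming and about why the properties pass to subspaces and quotients, but the substance is identical.
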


\begin{proof} We only detail the proof for $\textsf{{A}}_p$, as the argument for $\textsf{{N}}_p$ is similar. So assume that $X$ is a separable Banach space with $\textsf{{A}}_p$. By Theorem \ref{tedious}, there exist $\theta\in (0,1)$ and  Banach spaces $Z,Y$ with FDDs $\textsf{{F}}$, $\textsf{{H}}$, respectively, such that $X$ is isomorphic to a subspace of $Z^{T^*_{q,\theta}}_\wedge(\textsf{{F}})$, and to a quotient of $Y^{T^*_{q,\theta}}_\wedge(\textsf{{H}})$. But Proposition \ref{Schechtman} asserts that there exist $M,N \in [\Ndb]^\omega$ such that $Z^{T^*_{q,\theta}}_\wedge(\textsf{{F}})$ is isomorphic to a complemented subspace of $W^{T^*_{M,q,\theta}}_\wedge(\textsf{{J}})$ and $Y^{T^*_{q,\theta}}_\wedge(\textsf{{H}})$ is isomorphic to a complemented subspace of $W^{T^*_{N,q,\theta}}_\wedge(\textsf{{J}})$. So $X$ is isomorphic to a subspace of $W^{T^*_{M,q,\theta}}_\wedge(\textsf{{J}})$ and to a quotient of $W^{T^*_{N,q,\theta}}_\wedge(\textsf{{J}})$.

For the remaining implications we recall (Proposition \ref{modelApNp}) that $T^*_{M,q,\theta}$ has $\textsf{{A}}_p$ and that $\textsf{{A}}_p$ passes to isomorphic quotients and subspaces. 

\end{proof}

\section{Non-existence of a universal space}\label{optimal}

We conclude this article by showing that our result on small universal families is essentially optimal. 

\begin{theorem}\label{nouniversal} 
Fix $1<p\leqslant \infty$. If $U$ is any Banach space with $\textsf{\emph{N}}_p$, then there exists a Banach space $X$ with $\textsf{\emph{A}}_p$ such that $X$ is not isomorphic to any subspace of any quotient of $U$. More precisely, if $q$ is the conjugate exponent of $p$, then there exists $\theta \in (0,1)$ such that $T_{q,\theta}^*$ is not isomorphic to any subspace of any quotient of $U$.
\end{theorem}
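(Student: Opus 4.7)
Given $U\in\textsf{N}_p$, Theorem \ref{upper1}(ii) yields $\theta_0=\theta_0(U)\in(0,1)$ such that Player I has a winning strategy in the spatial $(U^*_{q,\theta_0},1)$ game on $U$. My plan is to introduce a two-player game $\mathcal{G}(c,\vartheta,n)$ parametrized by $\vartheta\in(0,1)$ designed to detect $T_{q,\vartheta}^*$-structure, to show that any subspace of a quotient of $U$ admits a winning strategy for Player I in $\mathcal{G}(c(U),\vartheta,n)$ with $c(U)$ depending only on $U$ (and independent of $\vartheta$), and to derive a contradiction by showing that Player II has a winning strategy on $T_{q,\theta}^*$ with a parameter that grows without bound as $\theta\to 1$.

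First I would establish a stability principle: if $V$ is isomorphic to a subspace of a quotient of $U$, then Player I has a winning strategy in the spatial $(U^*_{q,\theta_0},c)$ game on $V$ for some $c=c(U)$ depending only on $U$. Passage to subspaces is immediate, by intersecting Player I's cofinite-dimensional subspace choices with the embedded subspace. Passage to quotients is more delicate and will use the DFJP-type factorization approach from the proof of Theorem \ref{tedious} — writing a quotient of $U$ through a Banach space $Z$ with shrinking FDD — together with the gliding-hump machinery of Subsection \ref{gliding}: Proposition \ref{dual1}, Proposition \ref{hawk}, and Corollary \ref{busey} are designed precisely to transfer such game-winnability through quotient maps, with a constant inflation that depends only on $U$ (not on the chosen quotient or subspace).

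Next I would design the game $\mathcal{G}(c,\vartheta,n)$ as a compact-spatial variant of the $A(c,p,n)$ game whose winning condition is stated in terms of the inductive norming set $K^\vartheta$ of $T_{q,\vartheta}$ defined in Subsection \ref{modelspaces}: Player II wins by exhibiting a block sequence in $X$ whose span carries a $T_{q,\vartheta}^*$-style functional of iterated depth comparable to $n$, i.e., a witness of the Tsirelson-tree structure at parameter $\vartheta$. In $T_{q,\vartheta}^*$ itself, Player II wins by playing successive normalized basis blocks far enough out to satisfy each cofinite subspace played by Player I and invoking the $K^\vartheta$-recursion to build the required witness; the parameter of this witness is controlled by $\vartheta^{-l}$ at depth $l$, and by choosing $\vartheta$ close to $1$, Player II can force arbitrarily strong witnesses, of the sort incompatible with any fixed bound derived from the $U^*_{q,\theta_0}$-structure (which only produces constant-coefficient, not fully $\ell_p$, upper estimates).

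The contradiction is then immediate: if $T_{q,\theta}^*$ were isomorphic to a subspace of a quotient of $U$, then on the embedded copy, Player I wins $\mathcal{G}(c'(U),\theta,n)$ by the stability principle, while Player II wins by the $T_{q,\theta}^*$-structure with a parameter that outgrows any fixed $c'(U)$ once $\theta$ is close enough to $1$. The main obstacle I anticipate is calibrating $\mathcal{G}$ so that both halves of the argument carry through: the transfer step requires the winning condition to be preserved by the adjoint gliding-hump operations of Corollary \ref{busey} with $\vartheta$-independent constants, while the contradiction step requires it to be sensitive enough to the distinction between the full $\textsf{A}_p$-type upper $\ell_p$ estimate of $T_{q,\theta}^*$ and the merely constant-coefficient $\textsf{N}_p$-type estimate of $U^*_{q,\theta_0}$. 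The quotient case of the stability principle — with careful bookkeeping through the Johnson–Zippin blocking (Lemma \ref{johnsonzippin}) and Propositions \ref{scre} and \ref{skip} — will be the most technically delicate part, as it parallels the reduction performed in the proof of Theorem \ref{tedious} but must produce a single constant $c(U)$ valid across all subspaces of all quotients.
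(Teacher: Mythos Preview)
Your overall architecture—introduce a game-theoretic invariant, show it is stable under subspaces and quotients, then exhibit a quantitative mismatch between $U$ and $T_{q,\theta}^*$—is exactly the shape of the paper's proof. But the specific execution you propose diverges from the paper's in two ways, and one of them is a genuine misdirection.

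\textbf{Quotient stability.} You plan to transfer game-winnability to quotients via the FDD and gliding-hump machinery of Subsection~\ref{gliding} (Propositions~\ref{dual1}, \ref{hawk}, Corollary~\ref{busey}) and the blocking lemmas of Section~\ref{FDD's}. This is the wrong tool: that machinery takes as input a \emph{fixed} quotient map $Q:Z\to X$ with $Z$ having a shrinking FDD, and outputs blockings and constants that depend on that particular $Q$ and FDD; it is designed to produce dual domination estimates for the embedding theorems, not uniform game transfer. You would not get a single $c(U)$ valid for all quotients. The paper instead uses the elementary lifting lemma (Lemma~\ref{liftweaknull}): for any weak neighborhood $V$ of $0$ in $X$ there is a weak neighborhood $U'$ of $0$ in $X/Y$ with $U'\cap\frac13 B_{X/Y}\subset Q(B_X\cap V)$. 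This immediately gives a factor-$3$ transfer of the relevant game to arbitrary quotients, with no FDD in sight.

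\textbf{The game and the contradiction.} Your proposed game $\mathcal{G}(c,\vartheta,n)$ is parametrized by $\vartheta$, and you need Player~I to win on subspaces-of-quotients of $U$ with a $\vartheta$-independent constant $c(U)$. You do not explain why the $(U^*_{q,\theta_0},c)$-structure of $U$ should control a $K^\vartheta$-based winning condition uniformly in $\vartheta$; this is precisely the calibration obstacle you flag, and it is real. The paper sidesteps this entirely by introducing a \emph{single} hierarchy of games $\Phi(l,p,C)$ built on the Schreier families $\mathcal{S}_l$ and the repeated-averaging weights $\mathbb{S}^l_F$, with no $\vartheta$ parameter. The crucial structural fact is submultiplicativity: $\phi_{k+l}(X,p)\leqslant\phi_k(X,p)\phi_l(X,p)$ (Proposition~\ref{game1}(ii)). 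Since $\textsf{N}_p$ is equivalent to $\phi_1(U,p)<\infty$, this gives $\phi_l(U,p)\leqslant a^l$ for $a=\phi_1(U,p)$; combined with the explicit Tsirelson lower bound $\phi_{2l-1}(T_{q,\theta}^*,p)\geqslant\theta^{-l/q}$ (Proposition~\ref{lower_estimates}), the contradiction is a one-line growth-rate comparison once $\theta^{-1/q}>a^2$. Submultiplicativity is the idea your plan is missing.
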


We first recall the Schreier families, $\mathcal{S}_l$, for $k=0,1,\ldots$.   We let \[\mathcal{S}_0=\{\varnothing\}\cup \{(n):n\in\nn\},\] \[\mathcal{S}_{k+1}=\{\varnothing\}\cup \Bigl\{\bigcup_{i=1}^n E_i:n\in\nn, n\leqslant E_1<\ldots <E_n, E_i\neq \varnothing, E_i\in \mathcal{S}_k\Bigr\}.\]   We note the following associative property: For all $l,m\in\nn$, \[\mathcal{S}_{l+m}=\{\varnothing\}\cup \Bigl\{\bigcup_{i=1}^n E_i: E_i\neq \varnothing, E_i\in \mathcal{S}_l, (\min E_i)_{i=1}^n\in \mathcal{S}_m\Bigr\}.\]  We let $MAX(\mathcal{S}_k)$ denote the members of $\mathcal{S}_k$ which are maximal with respect to inclusion. 

For each $k=0,1,2,\ldots$ and each $F\in MAX(\mathcal{S}_k)$, we define $\mathbb{S}^k_F:\nn\to [0,1]$ by induction on $k$.   We first set 
\[\mathbb{S}^0_{(i)}(j)=\left\{\begin{array}{ll} 1 & : i=j \\ 0 & : i\neq j\end{array}\right.\]  Next, suppose that  $\mathbb{S}^k_E$ has been defined for each $E\in MAX(\mathcal{S}_k)$. Fix $F\in MAX(\mathcal{S}_{k+1})$.  If $F=\cup_{i=1}^n F_i$ for $n\in\nn$ and $n\leqslant F_1<\ldots <F_n$, $F_i\in  \mathcal{S}_k$, then it must be the case that $n=\min F_1$ and $F_i\in MAX(\mathcal{S}_k)$ for each $1\leqslant i\leqslant n$. We then define 
\[\mathbb{S}^{k+1}_F(j) = \left\{\begin{array}{ll} \frac{1}{n}\mathbb{S}^k_{F_i}(j) & : j\in F_i \\ 0 & : j\in \nn\setminus F \end{array}\right.\] 
It is easily checked that for $F \in MAX(\mathcal{S}_k)$, $\sum_{j\in \Ndb}\mathbb{S}^{k}_F(j)=\sum_{j\in F}\mathbb{S}^{k}_F(j)=1$.

For a Banach space $X$, $1\leqslant p\leqslant \infty$, $l\in\nn$, and $C\in [0,\infty]$, we define yet another two-player game. The confusingly named Player II chooses $m_1\in\nn$.  Player I chooses a weak neighborhood $U_1$ of $0$ in $X$,  and Player II chooses $x_1\in U_1\cap B_X$. This is the first round of the game. If $(m_1)$ is maximal in $\mathcal{S}_l$, the game terminates.  Otherwise Player II chooses $m_2\in\nn$ such that $m_1<m_2$ and $(m_1, m_2)\in \mathcal{S}_l$.  Player I then chooses a weak neighborhood $U_2$ of $0$ in $X$, and  Player II chooses $x_2\in U_2\cap B_X$.  Play continues  in this way until $m_1<\ldots<m_n$ are chosen such that $F:=(m_i)_{i=1}^n\in MAX(\mathcal{S}_l)$ and $x_1, \ldots, x_n\in B_X$ are chosen.  Player I wins if \[\Bigl\|\sum_{i=1}^n \mathbb{S}^l_F(m_i)^{1/p}x_i\Bigr\|\leqslant C\] and Player II wins otherwise. We call this the \emph{$\Phi(l,p,C)$ game on $X$}. We let $\phi_l(X,p)$ be the infimum of $C\in [0,\infty]$ such that Player I has a winning strategy in the $\Phi(l,p,C)$ game on $X$. These values need not be finite. Note also that $\phi_l(X,p) \geqslant 1$. 

\begin{proposition} Fix $1< p\leqslant \infty$. 
\begin{enumerate}[(i)]
\item For any Banach space $X$,  $\phi_1(X,p)<\infty$ if and only if $X$ has $\textsf{\emph{N}}_p$.
\item For any Banach space $X$ and for all $k,l\in\nn$,  $\phi_{k+l}(X,p) \leqslant \phi_{k}(X,p)\phi_{k+l}(X,p)$.
\item For Banach spaces $X,Y$ and $l\in\nn$, $\phi_l(X,p)\leqslant d_{\text{\emph{BM}}}(X,Y)\phi_l(Y,p)$. 
\item For a Banach space $X$ and a subspace $Y$ of $X$, $\phi_l(Y,p)\leqslant \phi_l(X,p)$ for all $l\in\nn$. 
\item For a Banach space $X$ and a subspace $Y$ of $X$, $\phi_l(X/Y,p)\leqslant 3\phi_l(X,p)$ for all $l\in\nn$.
\end{enumerate}

\label{game1}
\end{proposition}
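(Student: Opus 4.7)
My plan is to treat the five parts separately. Parts \emph{(i)}, \emph{(iii)} and \emph{(iv)} are direct unpackings of the definitions; part \emph{(ii)}, interpreting the printed inequality as $\phi_{k+l}(X,p)\leqslant \phi_k(X,p)\phi_l(X,p)$ (the displayed right-hand side contains an evident typo), proceeds by composing inner and outer strategies along the associative decomposition of the Schreier families; and part \emph{(v)} requires a lifting argument and is the main obstacle.

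For \emph{(i)}, the maximal elements of $\mathcal{S}_1$ are exactly the sets $F=\{m_1<\ldots<m_n\}$ with $n=m_1$, and on them $\mathbb{S}^1_F\equiv 1/m_1$. Hence, once Player II commits to $m_1=n$, the winning condition in the $\Phi(1,p,C)$ game reads $\|\sum_{i=1}^n x_i\|\leqslant Cn^{1/p}$, identical to that of the $N(C,p,n)$ game. Player II's free choice of $m_1$ matches Player I needing to win uniformly in $n$, so $\phi_1(X,p)<\infty$ if and only if $X\in\textsf{N}_p$. For \emph{(iii)}, given an isomorphism $T:Y\to X$ with $\|T\|\|T^{-1}\|<d_{BM}(X,Y)+\epsilon$, I transport any winning strategy on $Y$ to one on $X$ by feeding $T^{-1}x/\|T^{-1}\|\in B_Y$ to the $Y$-strategy and pushing its output weak neighborhoods forward by $T$; the final norm estimate inflates by exactly $\|T\|\|T^{-1}\|$. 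For \emph{(iv)}, every weak neighborhood of $0$ in $Y$ is the trace on $Y$ of one in $X$, so restricting any winning strategy on $X$ to $Y$ preserves the constant.

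For \emph{(ii)}, the combinatorial core is the associativity identity $\mathcal{S}_{k+l}=\{\varnothing\}\cup\{\bigcup_i E_i:E_i\in\mathcal{S}_l,\ (\min E_i)_i\in\mathcal{S}_k\}$ paired with the multiplicative formula $\mathbb{S}^{k+l}_F(j)=\mathbb{S}^k_G(\min E_i)\,\mathbb{S}^l_{E_i}(j)$ for $j\in E_i$ and $G=(\min E_i)_i$; I would verify it by induction on $k$ from the recursive definition of $\mathbb{S}^{k+1}$. Fix $\epsilon>0$ and winning $\omega$-strategies $\chi^{in},\chi^{out}$ for the $\Phi(l,p,\phi_l(X,p)+\epsilon)$ and $\Phi(k,p,\phi_k(X,p)+\epsilon)$ games. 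Player I's strategy in $\Phi(k+l,p,\cdot)$ runs both in parallel: at each round he offers the intersection of the weak neighborhood demanded by $\chi^{in}$ on the current inner chunk $E_r$ and the one $\chi^{out}$ will demand once the chunk is handed over as an aggregate vector. When $E_r$ closes at maximal $\mathcal{S}_l$-length, the aggregate $y_r=\sum_{m_i\in E_r}\mathbb{S}^l_{E_r}(m_i)^{1/p}x_i$ satisfies $\|y_r\|\leqslant \phi_l(X,p)+\epsilon$ by $\chi^{in}$-admissibility; a normalization of $y_r$ is then handed to $\chi^{out}$, whose weights $\mathbb{S}^k_G(\min E_r)^{1/p}$ appear automatically as the chunk heads $(\min E_r)_r$ walk through a maximal $\mathcal{S}_k$-sequence. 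Expanding via the multiplicative formula and letting $\epsilon\to 0$ yields the bound.

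For \emph{(v)}, writing $q:X\to X/Y$ for the quotient map and fixing a winning $l$-strategy $\chi$ on $X$ with constant close to $\phi_l(X,p)$, I will build a strategy on $X/Y$ by lifting Player II's moves $\bar{x}_i$ back to $x_i\in X$ with $q(x_i)=\bar{x}_i$ and $\|x_i\|$ only slightly above $\|\bar{x}_i\|$, running $\chi$ internally on the lifts, and then translating the resulting weak neighborhood $U_i\subset X$ back to a weak neighborhood $V_i\subset X/Y$ so that every $\bar{x}\in V_i\cap B_{X/Y}$ admits an admissible lift in $U_i$. The technical heart is thus the following lifting lemma: for every $U\in W_X$ and every $\eta>0$, there exists $V\in W_{X/Y}$ such that every $\bar{x}\in V\cap B_{X/Y}$ is of the form $q(x)$ for some $x\in U$ with $\|x\|$ bounded by a constant strictly below $3$. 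To prove it I decompose $U=\{|\xi_j|<\delta\}_{j=1}^n$, form $\Phi:X\to\Kdb^n$, $\Phi(x)=(\xi_j(x))_j$, and observe that $\Phi(x_0)$ modulo $W=\Phi(Y)$ depends only on $\bar{x}=q(x_0)$, producing finitely many functionals in $(X/Y)^*$ whose smallness on $\bar{x}$ forces $\Phi(x_0)$ to be close to $W$; a correction $y\in Y$ with $\Phi(y)$ close to $\Phi(x_0)$ then shifts the lift into $U$, with $\|y\|$ controlled via the finite-dimensional Banach--Mazur distance of $\Phi|_Y$ onto $W$. The main obstacle, and the source of the factor $3$, is bounding the correction norm uniformly across all rounds when summed admissibly; this is achieved by first slightly rescaling Player II's moves to leave room in the ball and absorbing all multiplicative perturbations into the final constant.
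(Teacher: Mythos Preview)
Your treatment of parts \emph{(i)}--\emph{(iv)} is correct and essentially matches the paper's argument. In \emph{(i)} the paper likewise observes that $\phi_1(X,p)=\textsf{n}_p(X)$ by identifying the $\Phi(1,p,C)$ game with the $N(C,p,m_1)$ game once $m_1$ is fixed; in \emph{(ii)} the paper also composes an inner and an outer strategy along the associative decomposition of $\mathcal{S}_{k+l}$, with the only additional technical point being that the outer neighborhood is taken convex and symmetric and intersected with geometric scalings so that the aggregate $y_r=\frac{1}{C}\sum_{m_i\in E_r}\mathbb{S}^l_{E_r}(m_i)^{1/p}x_i$ is forced to land in it. Your sketch glosses over this device but it is standard and easily supplied.

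The gap is in \emph{(v)}. You correctly isolate the lifting lemma that is needed---for every convex weak neighborhood $V$ of $0$ in $X$ there is a weak neighborhood $U$ of $0$ in $X/Y$ with $U\cap\frac13 B_{X/Y}\subset Q(B_X\cap V)$---and once this is available your game-theoretic deduction is exactly the paper's. But your proposed proof of the lemma does not give a uniform constant. Writing $V=\{|\xi_j|<\delta\}_{j\leqslant n}$ and $\Phi=(\xi_j)_j:X\to\mathbb{K}^n$, the correction $y\in Y$ with $\Phi(y)\approx\Phi(x_0)$ has norm controlled by the open-mapping constant of $\Phi|_Y:Y\twoheadrightarrow W=\Phi(Y)$, and that constant depends on the particular functionals $\xi_1,\ldots,\xi_n$, hence on $V$, hence on the round. ``Absorbing the multiplicative perturbations into the final constant'' does not repair this: the constants can blow up along the play, and the claimed bound ``strictly below $3$'' is simply not produced by the argument you outline.

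What actually yields the uniform $3$ (and is behind the reference the paper cites) is a bidual compactness argument rather than a finite-dimensional inversion. If the lemma failed, one would have a weakly null net $(\bar{x}_\alpha)\subset\frac13 B_{X/Y}$ none of whose terms lie in $Q(B_X\cap V)$; lifting with norm $<\frac13+\varepsilon$ and passing to a weak$^*$ cluster point $x^{**}\in(\frac13+\varepsilon)B_{X^{**}}$, one finds $x^{**}\in Y^{\perp\perp}$, approximates it via Goldstine by $y_\beta\in(\frac13+2\varepsilon)B_Y$, and then $x_\alpha-y_\beta\in B_X\cap V$ for suitable $\alpha,\beta$ with $Q(x_\alpha-y_\beta)=\bar{x}_\alpha$, a contradiction. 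This is where the $\frac13+\frac13<1$ arithmetic---and hence the factor $3$---genuinely enters. Your finite-dimensional scheme can be salvaged, but only by routing through Helly's theorem together with the identification $Y^*\cong X^*/Y^\perp$, which again forces one to bring in the quotient functionals on $X/Y$ in a way that controls the solution norm uniformly; as written, your sketch does not do this.
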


\begin{proof}$(i)$ Let us also observe that for the $\Phi(1,p,C)$ game, a set $(m_1, \ldots, m_n)$ is maximal in $\mathcal{S}_1$ and only if $n=m_1$. Therefore in the $\Phi(1,p,C)$ game, only the initial choice of $m_1$ affects the game, since it determines how many rounds the game will have. The later values of $m_2, \ldots, m_{m_1}$ do not matter in the $l=1$ case. Moreover, if $F=(m_i)_{i=1}^n\in MAX(\mathcal{S}_1)$, then $n=m_1$ and $\mathbb{S}^1_F(m_i)^{1/p} = m_1^{-1/p}$ for all $1\leqslant i\leqslant m_1$.  Therefore the winning condition for Player I at the end of the $\Phi(1,p,C)$ game with initial choice $m_1$ is that  $\|\sum_{i=1}^{m_1} x_i\|\leqslant C m_1^{1/p}$. 

We will argue that $\textsf{n}_p(X)=\phi_1(X,p)$,  which yields $(i)$.  

If $\textsf{n}_p(X)<\infty$, then for any $C>\textsf{n}_p(X)$ and any $n\in \Ndb$, Player I has a winning strategy $\chi_n$ in the $N(n,p,C)$ game.   We will show that Player I has a winning strategy in the $\Phi(1,p,C)$ game.   If Player II begins the $\Phi(1,p,C)$ game with the choice $m_1$, then Player I plays the rest of the game according to $\chi_{m_1}$, ignoring all choices of $m_2, \ldots, m_{m_1}$. It follows from our initial observation that this is a winning strategy for Player I in the $\Phi(1,p,C)$ game. Therefore $\phi_1(X,p)\leqslant \textsf{n}_p(X)$. 

Conversely, if $\phi_1(X,p)<\infty$, then for any $C>\phi_1(X,p)$, Player I has a winning strategy $\chi$ in the $\Phi(1,p,C)$ game.  We will show that for any $n\in\nn$, Player I has a winning strategy in the $N(n,p,C)$ game.  Note that the $\Phi(1,p,C)$ game includes integers $m_i$, while the $N(n,p,C)$ game does not, so we rather than Player II will decide the values of the $m_i$'s. We choose $m_1=n$ and $m_{i+1}=n+i$ for $1\leqslant i<n$.  This results in a game with $m_1=n$ rounds, as noted in the preceding paragraph. It follows from the fact that $\chi$ is a winning strategy for the $\Phi(1,p,C)$ game that this strategy is a winning strategy in the $N(n,p,C)$ game. Therefore $\textsf{n}_p(X)\leqslant \phi_1(X,p)$. 

\smallskip
$(ii)$ Assume, as we may, that $\phi_k(X,p), \phi_l(X,p)<\infty$.  Fix $C>\phi_k(X,p)$ and $C'>\phi_l(X,p)$ and winning strategies $\chi_k$, $\chi_l$ for Player I in the $\Phi(k,p,C)$, $\Phi(l,p,C')$ games, respectively. Let $n_0=0$ and recursively choose \[m_1, U_1\cap \frac{1}{2^1}V_1, x_1, m_2, U_2\cap \frac{1}{2^2}V_1, x_2, m_3, \ldots, x_{n_1}, \] 

 \[m_{n_1+1}, U_{n_1+1}\cap \frac{1}{2^1}V_2, x_{n_1+1}, m_{n_1+2}, U_{n_1+2}\cap \frac{1}{2^2}V_2, x_{n_1+2}, m_{n_1+3}, \ldots, x_{n_2}, \] 
 
 \[ \vdots \]  \[m_{n_{r-1}+1}, U_{n_{r-1}+1}\cap \frac{1}{2^1}V_r, x_{n_{r-1}+1}, m_{n_{r-1}+2}, U_{n_{r-1}+2}\cap \frac{1}{2^2}V_r, x_{n_{r-1}+2}, m_{n_{r-1}+3}, \ldots, x_{n_r}. \]  Here, $m_i$ and $x_i$ are the choices of Player II in the $\Phi(k+l,p,CC')$ game, for which we still have to describe the strategy of Player I for choosing the weak open sets $U_{n_{i-1}+j}\cap \frac{1}{2^j}V_i$. For a given $1\leqslant i\leqslant r$, the choices  
 
 \[m_{n_{i-1}+1}, U_{n_{i-1}+1}\cap \frac{1}{2^1}V_i, x_{n_{i-1}+1}, m_{n_{i-1}+2}, U_{n_{i-1}+2}\cap \frac{1}{2^2}V_i, x_{n_{i-1}+2}, m_{n_{i-1}+3}, \ldots, x_{n_i} \]  
 are made by Player I as follows. Each set $U_{n_{i-1}+s}$ is chosen according to the strategy $\chi_k$ and we proceed until $F_i:=(m_j)_{j=n_{i-1}+1}^{n_i}$ is maximal in $\mathcal{S}_k$. This implies that 
 $$y_i:=\frac{1}{C}\sum_{j=n_{i-1}+1}^{n_i} \mathbb{S}^k_{F_i}(j-n_1-\ldots-n_{i-1})^{1/p} x_j \in B_X.$$
 Each set $V_i$ is chosen to be weak neighborhood of $0$ which is a convex symmetric subset of $W_i$, where the set $W_i$ is chosen according to the strategy $\chi_l$ in the $\Phi(l,p,C')$ game where the choices proceed as 
 \begin{align*}
 & m_1, W_1, y_1, m_{n_1+1}, W_2, y_2, \ldots, W_r,y_r.    
 \end{align*}
 Note that our construction implies that $y_i\in W_i$, which allows the strategy $\chi_l$ to apply.
 
 This game terminates when $G:=(m_{n_{i-1}+1})_{i=1}^r\in MAX(\mathcal{S}_l)$. Then $F:=\cup_{i=1}^r F_i=(m_i)_{i=1}^{n_r}\in MAX(\mathcal{S}_{k+l})$.    Moreover, \[\Bigl\|\sum_{i=1}^{n_r}\mathbb{S}^{k+l}_F(i)^{1/p}x_i\Bigr\| = C\Bigl\|\sum_{i=1}^r \mathbb{S}^l_G(i)^{1/p} \frac{1}{C}\sum_{j=n_{i-1}+1}^{n_i} \mathbb{S}^k_{F_i}(j-n_1-\ldots-n_{i-1})^{1/p}x_j\Bigr\| \leqslant CC'.\]  Player I playing in this way defines a winning strategy in the $\Phi(k+l,p,CC')$ game. Since $C>\phi_k(X,p)$ and $C'>\phi_l(X,p)$ were arbitrary, this concludes $(ii)$. 
 
\smallskip
Items $(iii)$ and $(iv)$ are clear. 

\smallskip $(v)$ Let $Y$ be a closed subspace of a Banach space $X$. Denote $Q:X\to X/Y$ the quotient map.  We shall use the following lemma, that can be found for instance in \cite{CauseyIllinois2018} (Proposition 5.6).
\begin{lemma}\label{liftweaknull} For any weak open neighborhood $V$ of $0$ in $X$, there exists a weak open neighborhood $U$ of $0$ in $X/Y$ such that $U\cap \frac13 B_{X/Y} \subset Q(B_X\cap V)$.
\end{lemma}

Let $l\in \Ndb$ and assume that $C>3\phi(X,p)$ and let $\chi_l$ be a winning strategy for Player I in the $\Phi(l,p,\frac{C}{3})$ game in $X$. We now describe a winning strategy for Player I in the $\Phi(l,p,C)$ game in $X/Y$. The players choose recursively
$$m_1,U_1,z_1,m_2,U_2,z_2,\ldots,U_r,z_r.$$
The choices of Player II are $m_i \in \Ndb$ and $z_i\in U_i\cap B_{X/Y}$, while the choices of Player I are $U_i$ weak open neighborhoods of $0$ in $X/Y$ that are convex and symmetric. In the course of the game we insert choices of $x_i \in B_X$ and $V_i$ weak open neighborhoods of $0$ in $X$ in the following order 
$$m_1,V_1,U_1,z_1,x_1,m_2,V_2,U_2,z_2,x_2\ldots,V_r,U_r,z_r,x_r.$$
We now describe the choices. Denote $V_1=\chi_l(m_1)$, then Player I  picks $U_1$, given by Lemma \ref{liftweaknull}, so that $U_1\cap \frac13 B_{X/Y} \subset Q(B_X\cap V_1)$. Next Player II picks $z_1\in U_1\cap B_{X/Y}$. The choice of $U_1$ implies the existence of $x_1\in B_X\cap V_1$ such that $Q(x_1)=\frac13 z_1$. After Player II chooses $m_2$, we pick $V_2=\chi_l(m_1,V_1,x_1,m_2)$ and Player I chooses $U_2$ so that $U_2\cap \frac13 B_{X/Y} \subset Q(B_X\cap V_2)$. The strategy of Player I should now be clear. Since $\chi_l$ is a winning strategy for Player I in the $\Phi(l,p,\frac{C}{3})$ game on $X$, when the game finishes after $r$ rounds, we have that $F:=(m_i)_{i=1}^r\in MAX(\mathcal{S}_l)$ and  
\[\Bigl\|\sum_{i=1}^r \mathbb{S}^l_F(m_i)^{1/p}z_i\Bigr\|_{X/Y} \leqslant 3\Bigl\|\sum_{i=1}^r \mathbb{S}^l_F(m_i)^{1/p}x_i\Bigr\|_X\leqslant C\] 
This finishes the proof of $(v)$. 

\end{proof}

In the next Proposition, we give a lower estimate for $\phi_l(T_{q,\theta}^*,p)$.
\begin{proposition} Fix $\theta \in (0,1)$, $1<p\le \infty$ and let $q$ be the conjugate exponent of $p$. 
\begin{enumerate}[(i)]
\item Fix $\ee,a>0$. Suppose that $I_1<\ldots <I_n$ are intervals and  $x_1<\ldots < x_m$ so that $x_i\in  aB_{T_\theta}$, and $\frac{2n}{m}<\ee$.  Then \[\frac{\theta}{m}\sum_{i=1}^n\Bigl\|I_i\sum_{j=1}^m x_j\Bigr\|_{T_\theta} \leqslant (\theta+\ee)a.\] 

\item Fix $\ee\in (0,1-\theta)$ and  $l\in\nn$. Assume that  $M=(m_i)_{i=1}^\infty,R=(r_i)_{i=1}^\infty\in [\nn]^\omega$ are such that $\theta m_1>\frac{1}{\theta}$, $m_1>\frac{2}{\ee}(1-\theta-\frac{\ee}{2})$, and    $\frac{2r_i}{m_{i+1}}<\frac{\ee}{2}$ for all $i\in\nn$, then  if $F=(m_i)_{i=1}^n\in MAX(\mathcal{S}_{2l-1})$, it follows that   \[\Bigl\|\sum_{i=1}^n \mathbb{S}^{2l-1}_F(m_i)e_{r_i}\Bigr\|_{T_\theta} \leqslant (\theta+\ee)^l,\]
where $(e_j)_{j=1}^\infty$ is the canonical basis of $T_\theta$.

\item It holds that $\phi_{2l-1}(T_{q,\theta}^*,p) \geqslant \theta^{-l/q}$. \end{enumerate}
\label{lower_estimates}
\end{proposition}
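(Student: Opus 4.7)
My overall plan is induction on $l$ for (ii), with (i) serving as the per-step quantitative input, and (iii) following by a duality/game construction.

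For (i), I would decompose, for each interval $I_i$, the vector $I_i\sum_j x_j$ into $\sum_{j\in A_i}x_j+R_i$, where $A_i=\{j:\supp(x_j)\subset I_i\}$ and $R_i$ consists of at most two ``fragment'' terms that are interval projections of the (at most two) $x_j$'s whose support straddles an endpoint of $I_i$; by $1$-unconditionality each fragment has norm at most $\|x_j\|_{T_\theta}\leqslant a$. Hence
\[\Bigl\|I_i\sum_j x_j\Bigr\|_{T_\theta}\leqslant \sum_{j\in A_i}\|x_j\|+2a\leqslant(|A_i|+2)a,\]
and since the $A_i$'s are pairwise disjoint subsets of $\{1,\dots,m\}$, summing yields $\sum_{i=1}^n\|I_i\sum_j x_j\|\leqslant(m+2n)a$. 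Dividing by $m/\theta$ and using $2n/m<\ee$ together with $\theta<1$ gives $\theta(1+2n/m)a<\theta(1+\ee)a=(\theta+\theta\ee)a\leqslant(\theta+\ee)a$.

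For (ii), I would induct on $l$. The base $l=1$ is a direct computation on $y_1=m_1^{-1}\sum_{i=1}^{m_1}e_{r_i}$: for any $f=\theta\sum_k f_k$ in the norming set of $T_\theta$, the $f_k$'s have disjoint supports and $|f_k(e_r)|\leqslant 1$, so each $r_i$ contributes to at most one $f_k$ and $\sum_k|f_k(y_1)|\leqslant 1$, yielding $|f(y_1)|\leqslant\theta<\theta+\ee$. For the inductive step, exploit the two-level structure of $\mathcal{S}_{2l-1}$: decompose $F=\bigcup_{s=1}^{m_1}F_s$ with $F_s\in MAX(\mathcal{S}_{2l-2})$ and $F_s=\bigcup_{t=1}^{n_s}F_{s,t}$ with $F_{s,t}\in MAX(\mathcal{S}_{2l-3})$, where $n_s=\min F_s\geqslant m_1$. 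Let $w_{s,t}=\sum_{m_k\in F_{s,t}}\mathbb{S}^{2l-3}_{F_{s,t}}(m_k)e_{r_k}$; the IH applied to the tails of $M,R$ starting at $\min F_{s,t}$ (which inherit the three hypotheses) gives $\|w_{s,t}\|\leqslant(\theta+\ee)^{l-1}$. Then $z_s:=n_s^{-1}\sum_t w_{s,t}$ also satisfies $\|z_s\|\leqslant(\theta+\ee)^{l-1}$ by triangle inequality, and $y_l=m_1^{-1}\sum_s z_s$. Now apply the Tsirelson recursion: $\|y_l\|_\infty\leqslant 1/m_1<(\theta+\ee)^l$ by $\theta m_1>1/\theta$, and for admissible $N\leqslant I_1<\dots<I_N$ with $2N/m_1<\ee$, part (i) applied with blocks $z_s$ and $a=(\theta+\ee)^{l-1}$ gives
\[\theta\sum_i\|I_iy_l\|=\tfrac{\theta}{m_1}\sum_i\Bigl\|I_i\sum_s z_s\Bigr\|\leqslant(\theta+\ee)(\theta+\ee)^{l-1}=(\theta+\ee)^l.\]

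\textbf{The main obstacle} is the residual case $2N/m_1\geqslant\ee$, where (i) is not directly applicable. Here $N\leqslant\min I_1$ forces the intervals to start past position $\ee m_1/2$, and combined with the growth condition $2r_i/m_{i+1}<\ee/2$ the early blocks $z_s$ become invisible to the $I_i$'s; one then repeats the (i)-argument at the finer scale of the $w_{s,t}$'s, where $n_s\geqslant m_1$ is large enough for the hypothesis $2N/n_s<\ee$ to hold. The quantitative calibration $m_1>\tfrac{2}{\ee}(1-\theta-\ee/2)$ is designed precisely so the two case-bounds meet at $(\theta+\ee)^l$. For (iii), I would have Player II in the $\Phi(2l-1,p,C)$ game pre-commit to a sequence $M$ satisfying the hypotheses of (ii) with $\theta^q$ in place of $\theta$ (possible since $q\geqslant 1$ only sharpens the lower bound required on $m_1$). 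After each weak neighborhood $U_i$ is revealed, Player II plays $x_i=e^*_{r_i}$ for $r_i$ sufficiently large to lie in $U_i$ (using that the basis of $T_{q,\theta}$ is shrinking, so $e^*_k\to 0$ weakly) and then chooses $m_{i+1}$ large enough to secure $2r_i/m_{i+1}<\ee/2$. When $F=(m_i)_{i=1}^n\in MAX(\mathcal{S}_{2l-1})$ completes, the final vector $u=\sum\mathbb{S}^{2l-1}_F(m_i)^{1/p}e^*_{r_i}$ pairs with $w=\sum\mathbb{S}^{2l-1}_F(m_i)^{1/q}e_{r_i}\in T_{q,\theta}$ via H\"older to give $\langle u,w\rangle=\sum\mathbb{S}^{2l-1}_F(m_i)=1$, so $\|u\|_{T^*_{q,\theta}}\geqslant\|w\|_{T_{q,\theta}}^{-1}$. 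The $q$-convexification gives $\|w\|_{T_{q,\theta}}^q=\|w^q\|_{T_{\theta^q}}$ with $w^q=\sum\mathbb{S}^{2l-1}_F(m_i)e_{r_i}$, and (ii) applied to $T_{\theta^q}$ yields $\|w^q\|\leqslant(\theta^q+\ee)^l$. Hence $\|u\|_{T^*_{q,\theta}}\geqslant(\theta^q+\ee)^{-l/q}$, which tends to $\theta^{-l}\geqslant\theta^{-l/q}$ as $\ee\to 0^+$; this defeats any winning threshold $C<\theta^{-l/q}$, giving $\phi_{2l-1}(T^*_{q,\theta},p)\geqslant\theta^{-l/q}$.
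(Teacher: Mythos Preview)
Your arguments for (i) and (iii) are correct and essentially match the paper's; in (iii) you are in fact more careful than the paper in noting that one must apply (ii) to $T_{\theta^q}$ (since $T_{q,\theta}$ is the $q$-convexification of $T_{\theta^q}$), which yields the stronger bound $\theta^{-l}\geqslant\theta^{-l/q}$.

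The gap is in the inductive step of (ii). Your case split on whether $2N/m_1<\ee$ is unnecessary, and your residual-case sketch does not close. You claim that in the residual case the ``early'' blocks $z_s$ become invisible and that for the remaining ones $2N/n_s<\ee$ because $n_s\geqslant m_1$. But $n_1=m_1$, so $2N/n_1\geqslant\ee$ by the very case assumption; and even granting invisibility of some initial blocks, there is always a \emph{boundary} block $z_{s_0}$---the first one whose support meets $I_1$---which is neither invisible nor guaranteed to satisfy the ratio hypothesis of (i).

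The paper avoids the case split and works uniformly at the fine scale. For any admissible $N\leqslant I_1<\cdots<I_N$, let $s_0$ be minimal with $I_1z_{s_0}\neq 0$. Then $N\leqslant\min I_1\leqslant\max\supp(z_{s_0})=r_{j_0}$ for some $j_0$, and for every $s>s_0$ one has $n_s=\min F_s=m_j$ with $j>j_0$, so the growth hypothesis gives $2N/n_s\leqslant 2r_{j_0}/m_j<\ee/2$. Hence (i) applies at the fine scale to each $z_s$ with $s>s_0$, yielding $\theta\sum_i\|I_iz_s\|\leqslant(\theta+\tfrac{\ee}{2})(\theta+\ee)^{l-1}$. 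The single boundary term is handled by the Tsirelson inequality itself: admissibility of the $I_i$'s gives $\theta\sum_i\|I_iz_{s_0}\|\leqslant\|z_{s_0}\|\leqslant(\theta+\ee)^{l-1}$. Summing over $s$ and dividing by $m_1$,
\[
\theta\sum_i\|I_iy_l\|\leqslant\frac{1}{m_1}\Bigl[(\theta+\ee)^{l-1}+(m_1-1)\bigl(\theta+\tfrac{\ee}{2}\bigr)(\theta+\ee)^{l-1}\Bigr]\leqslant(\theta+\ee)^l,
\]
the last inequality being exactly where the calibration $m_1>\tfrac{2}{\ee}(1-\theta-\tfrac{\ee}{2})$ enters. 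This is the step your sketch is missing: the boundary block must be isolated and bounded separately via the Tsirelson norm, not argued away.
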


\begin{proof}$(i)$  For $1\leqslant i \leqslant n$, we let $A_i=\{j\leqslant m: \supp(x_j)\subset [\min I_i, \max I_i]\}$ and  $B_i=\{i\leqslant m:\supp(x_j)\cap I_i\neq \varnothing\}$.  Of course, $\sum_{i=1}^n |A_i|\leqslant m$ and $|A_i\setminus B_i|\leqslant 2$.    Then \begin{align*} \theta\sum_{i=1}^n \Bigl\|I_i\sum_{j=1}^m  x_j\Bigr\|_{T_\theta} \leqslant \theta \sum_{i=1}^n\sum_{j\in A_i}\|x_j\|_{T_\theta} + \sum_{i=1}^n \sum_{j\in B_i\setminus A_i}\|x_j\|_{T_\theta}  \leqslant \theta a m + 2an. \end{align*}  Therefore \[\frac{\theta}{m}\sum_{i=1}^n \Bigl\|I_i\sum_{j=1}^m x_j\Bigr\|_{T_\theta} \leqslant (\theta  + \frac{2n}{m})a\leqslant (\theta+\ee)a.\]

$(ii)$ We work by induction on $l$.  We note that \begin{align*} &\Bigl\|\sum_{i=1}^n \mathbb{S}^1_F(m_i)e_{r_i}\Bigr\|_{T_\theta} \\
&= \max\Bigl\{\Bigl\|\sum_{i=1}^n \mathbb{S}^1_F(m_i)e_{r_i}\Bigr\|_{c_0}, \theta \sup\bigl\{\sum_{j=1}^k\bigl\|I_j\sum_{i=1}^n \mathbb{S}^1_F(m_i)e_{r_i} \bigr\|_{T_\theta}: I_1<\cdots<I_k,\ (\min I_j)_{j=1}^k\in \mathcal{S}_1\bigr\}\Bigr\} \\ & \leqslant \max\Bigl\{\Bigl\|\sum_{i=1}^n \mathbb{S}^1_F(m_i)e_{r_i}\Bigr\|_{c_0}, \theta \Bigl\|\sum_{i=1}^n \mathbb{S}^1_F(m_i)e_{r_i}\Bigr\|_{\ell_1}\Bigr\} \leqslant \max\{\frac{1}{m_1}, \theta\}=\theta.\end{align*}

Next, suppose the result holds for some $l\in \Ndb$. Suppose also that $M,R$ are as in the statement and $F=(m_i)_{i=1}^n\in MAX(\mathcal{S}_{2l+1})$.      Then we can write $F=\cup_{i=1}^{t}F_i$, $F_1<\ldots <F_{t}$, $F_i\in MAX(\mathcal{S}_{2l-1})$.  For each $1\leqslant i\leqslant t$, let $G_i=\{r_j: m_j\in F_i\}$, $M_i=M\setminus \cup_{j=1}^{i-1}F_j$, and $R_i=R\setminus \cup_{j=1}^{i-1}G_j$.  We observe the convention that the empty union is the empty set, so $M_1=M$ and $R_1=R$.  Note that for each $1\leqslant i\leqslant t$, $F_i$ is the maximal initial segment of $M_i$ which is also a member of $\mathcal{S}_{2l-1}$, and the pair $(M_i, R_i)$ also satisfies the hypotheses of statement $(ii)$. So by the inductive hypothesis, for all $1\leqslant i\leqslant t$, $\|x_i\|_{T_\theta}\leqslant (\theta+\ee)^l$, where $x_i=\sum_{j=1}^\infty \mathbb{S}^{2l-1}_{F_i}(m_j)e_{r_j}$.

Next, note that since $F=\cup_{i=1}^t F_i$, $F\in MAX(\mathcal{S}_{2l+1})$, $F_1<\ldots <F_t$, and $F_i\in MAX(\mathcal{S}_{2l-1})$, it follows that $H:=(\min F_i)_{i=1}^t\in MAX(\mathcal{S}_2)$.  This means we can write $H=\cup_{i=1}^s H_i$,  where $H_1<\ldots <H_s$, $H_i\in MAX(\mathcal{S}_1)$, and $E:=(\min H_i)_{i=1}^s\in MAX(\mathcal{S}_1)$.  Since $E\in MAX(\mathcal{S}_1)$, we deduce that \[m_1=\min M=\min F=\min H= \min E = |E|=s.\] Moreover, \[x:=\sum_{i=1}^n \mathbb{S}^{2l+1}_F(m_i)e_{r_i} = \frac{1}{m_1}\sum_{i=1}^{m_1}\frac{1}{|H_i|}\sum_{j:\min F_j\in H_i} x_j.\]  For $1\leqslant i\leqslant m_1$, let $y_i=\frac{1}{|H_i|}\sum_{j:\min F_j\in H_i}x_j$, so that $x=\frac{1}{m_1}\sum_{i=1}^t y_i$. Note also that $\|y_i\|_{T_\theta}\leqslant (\theta+\eps)^l$.  Fix $I_1<\ldots <I_k$ such that $(\min I_l)_{l=1}^k\in \mathcal{S}_1$.  By omitting any $I_l$ such that $I_ly_i=0$ for all $1\leqslant i\leqslant m_1$, we can assume that $I_1y_i\neq 0$ for some $i$.   Let $i_0$ be the minimum such $i$.   Note that $k\leqslant \min I_1 \leqslant \max \supp(y_{i_0})=e_{r_{i_1}}$ for some $i_1$.  Note also that for each $i_0<i\leqslant m_1$, since $H_i\in MAX(\mathcal{S}_1)$,  $|H_i|=\min H_i=m_{i_2}$ for some $i_2>i_1$. Therefore 
\[\frac{k}{|H_i|} \leqslant \frac{r_{i_1}}{m_{i_2}} < \frac{\eps}{4}.\] 
By $(i)$ applied with $a=(\theta+\ee)^l$ and $\frac{\eps}{2}$ in place of $\eps$ it follows that for $i_0<i\leqslant m_1$,  \[\theta\sum_{l=1}^k\|I_ly_i\|_{T_\theta}=\frac{\theta}{|H_i|}\sum_{l=1}^{k}\Bigl\|I_l\sum_{j:\min F_j\in H_i}x_j\Bigr\|_{T_\theta} \leqslant (\theta+\frac{\eps}{2})(\theta+\ee)^l.\] 
Therefore 
\begin{align*}  \frac{\theta}{m_1}\sum_{l=1}^k \Bigl\|I_l \sum_{i=1}^{m_1}y_i\Bigr\|_{T_\theta} & \leqslant \frac{\theta}{m_1}\sum_{i=i_0}^{m_1}\sum_{l=1}^k \|I_ly_i\|_{T_\theta} \\ 
& \leqslant \frac{1}{m_1}\Bigl(\|y_{i_0}\|_{T_\theta} +\sum_{i=i_0+1}^{m_1}(\theta+\frac{\eps}{2})(\theta+\ee)^l\Bigr) \\ 
& \leqslant (\theta+\ee)^l\Bigl(\frac{1}{m_1}\cdot 1+\frac{m_1-1}{m_1}\cdot (\theta+\frac{\eps}{2})\Bigr) \leqslant (\theta+\ee)^{l+1}.\end{align*} Here we have used the fact that since $m_1>2(1-\theta-\frac{\eps}{2})/\ee$, 
\[\frac{1}{m_1}+\frac{m_1-1}{m_1}(\theta+\frac{\eps}{2})<\theta+\ee.\]    
We also note that \[\Bigl\|\frac{1}{m_1}\sum_{i=1}^{m_1}y_i\Bigr\|_{c_0} = \frac{1}{m_1}\max_{1\leqslant i\leqslant m_1}\|y_i\|_{c_0}  \leqslant \frac{(\theta+\ee)^l}{m_1}\leqslant \theta(\theta+\ee)^l<(\theta+\ee)^{l+1}.\]  Therefore \begin{align*} \Bigl\|\frac{1}{m_1}\sum_{i=1}^{m_1}y_i\Bigr\|_{T_\theta} & = \max\Bigl\{\Bigl\|\frac{1}{m_1}\sum_{i=1}^{m_1}y_i\Bigr\|_{c_0}, \sup\bigl\{\frac{\theta }{m_1}\sum_{l=1}^k \Bigl\|I_l\sum_{i=1}^{m_1}y_i\Bigr\|_{T_\theta}:(\min I_l)_{l=1}^k\in \mathcal{S}_1\bigr\}\Bigr\} \\ & \leqslant (\theta+\ee)^{l+1}.\end{align*}

$(iii)$ Fix $\ee\in (0,1-\theta)$. For $C<(\theta+\ee)^{-l/q}$, we construct a winning strategy for Player II in the $\Phi(2l-1,p,C)$ game on $T_{q,\theta}^*$. Recall that the canonical basis $(e_j^*)_{j=1}^\infty$ of $T_{q,\theta}^*$ is normalized and weakly null.   First Player II chooses $m_1$ so large that $\theta m_1>1$ and $m_1>\frac{2}{\ee}(1-\theta-\frac{\ee}{2})$.   Player I chooses some weak neighborhood $U_1$ of $0$ in $T_{q,\theta}^*$. Player II then chooses some $r_1$ so large that $e_{r_1}^*\in U_1$.    Assuming $m_1, U_1, r_1, \ldots, m_j, U_j, r_j$ have been chosen and $(m_i)_{i=1}^j\in \mathcal{S}_{2l-1}\setminus MAX(\mathcal{S}_{2l-1})$, Player II chooses some $m_{j+1}$ so large that $\frac{r_j}{m_{j+1}}<\frac{\ee}{4}$ and $m_{j+1}>m_j$.    Player I then chooses $U_{j+1}$. Player II chooses $r_{j+1}>r_j$ such that $e^*_{r_{j+1}}\in U_{j+1}$.      When the game terminates at some $F=(m_i)_{i=1}^n\in MAX(\mathcal{S}_{2l-1})$, we arbitrarily choose $m_{n+1}, r_{n+1}, m_{n+2}, r_{n+2}, \ldots$ such that $r_n<r_{n+1}<\ldots$, $m_n<m_{n+1}<\ldots$, and  $\frac{r_i}{m_{i+1}}<\frac{\ee}{4}$ for all $i\in\nn$.    By $(ii)$, 
\[\Bigl\|\sum_{i=1}^n \mathbb{S}^{2l-1}_F(m_i)^{1/q}e_{r_i}\Bigr\|_{T_{q,\theta}} = \Bigl\|\sum_{i=1}^n \mathbb{S}^{2l-1}_F(m_i)e_{r_i}\Bigr\|_{T_\theta}^{1/q} \leqslant (\theta+\ee)^{l/q}.\] 
Therefore 
\begin{align*}
\Bigl\|\sum_{i=1}^n \mathbb{S}^{2l-1}_F(m_i)^{1/p}e_{r_i}^*\Bigr\|_{T_{q,\theta}^*} &\geqslant (\theta+\ee)^{-l/q}\Bigl(\sum_{i=1}^n \mathbb{S}^{2l-1}_F(m_i)^{1/p}e_{r_i}^*\Bigr)\Bigl(\sum_{i=1}^n \mathbb{S}^{2l-1}_F(m_i)^{1/q}e_{r_i}\Bigr)\\
&=(\theta+\ee)^{-l/q}\sum_{i=1}^n \mathbb{S}^{2l-1}_F(m_i) =(\theta+\ee)^{-l/q}.  
\end{align*}   
This shows that the strategy outlined above is a winning strategy for Player II in the $\Phi(2l-1,p,C)$ game.    Since we can do this for any $C<(\theta+\ee)^{-l/q}$ and  $0<\ee<1-\theta$, $\phi_{2l-1}(T_{q,\theta}^*,p) \geqslant \theta^{-l/q}$. 
\end{proof}

\begin{proof}[Proof of Theorem \ref{nouniversal}] 
Assume $U$ is an infinite dimensional Banach space which has $\textsf{N}_p$. Then by  items $(i)$ and $(ii)$ of Proposition \ref{game1}, $a=\phi_1(U)\in [1,\infty)$ and $\phi_l(U)\leqslant a^l$, for all $l\in \Ndb$. Fix now $\theta\in (0,1)$ such that $\theta^{-1/q}>a^2$ and let $X=T_{q,\theta}^*$ (remember that $X$ has $\textsf{A}_p$). If it were true that $X$ is isomorphic to a subspace of a quotient of $U$, then by items $(iii)$, $(iv)$ and $(v)$ of Proposition \ref{game1}, there would exist a constant $C$ such that for all $l\in\nn$,  
\[(\theta^{-1/q})^l \leqslant \phi_{2l-1}(X,p) \leqslant C\phi_{2l-1}(U,p) \leqslant C(a^2)^l.\]But this is impossible, since $\theta^{-1/q}>a^2$. 
\end{proof}

%------------------- Biblio -------------------
\bibliographystyle{amsplain}

\begin{bibsection}
\begin{biblist}

%\bibitem{AGM2016}

\bib{Bossard1994}{article}{
   author={Bossard, B.},
   title={Théorie descriptive des ensembles en géométrie des espaces de Banach},
   journal={Thèse de doctorat de Mathématiques de l'Université Paris VI},
   volume={},
   date={1994},
   number={},
   pages={}
}

\bib{Bossard2002}{article}{
   author={Bossard, B.},
   title={A coding of separable Banach space. Analytic and coanalytic families of Banach spaces},
   journal={Fundam. Math.},
   volume={172},
   date={2002},
   number={},
   pages={117--151}
}

\bib{CasazzaShura}{book}{
   author={Casazza, P.G.},
   author={Shura, T.J.},
   title={Tsirelson's space. {With} an appendix by {J}. {Baker}, {O}. {Slotterbeck} and {R}. {Aron}},
   Series = {Lect. Notes Math.},
   journal={},
   volume={1363},
   year={1989},
   number={},
   Publisher = {Berlin etc.: Springer-Verlag},
}

\bib{CauseyThesis}{article}{
   author={Causey, R. M.},
   title={Szlenk index, upper estimates and embedding in Banach spaces},
   journal={PhD Dissertation, Texas A\&M University},
   volume={},
   date={2014},
   number={},
   pages={}
}

\bib{CauseyIllinois2018}{article}{
   author={Causey, R. M.},
   title={Concerning $q$-summable Szlenk index},
   journal={Illinois J. Math.},
   volume={62},
   date={2018},
   number={1-4},
   pages={381--426}
}

%\bibitem{CauseyPositivity2018}

\bib{CauseyPositivity2018}{article}{
   author={Causey, R. M.},
   title={Power type asymptotically uniformly smooth and asymptotically
   uniformly flat norms},
   journal={Positivity},
   volume={22},
   date={2018},
   number={5},
   pages={1197--1221}
}

%\bibitem{Causey3.5}

\bib{Causey3.5}{article}{
   author={Causey, R. M.},
   title={Three and a half asymptotic properties},
   journal={Studia Math.},
   volume={257},
   date={2021},
   number={2},
   pages={155--212}
}

\bib{CauseyFovelleLancien}{article}{
author={Causey, R. M.},
author={Fovelle, A.},
author={Lancien G.},
title={Asymptotic smoothness in Banach spaces, three space properties and applications},
journal={Preprint arxiv: https://arxiv.org/abs/2110.06710}
}

\bib{CauseyLancienT_p}{article}{
author={Causey, R. M.},
author={Lancien G.},
title={Universality, complexity and asymptotically uniformly smooth Banach spaces},
journal={Preprint arxiv : https://arxiv.org/abs/2203.13128}
}

	\bib{CauseyNavoyan}{article}{
				author={Causey, R. M.},
				author={Navoyan, K. V.},
				title={Factorization of Asplund operators},
				journal={J. Math. Anal. Appl.},
				volume={479},
				date={2019},
				number={1},
				pages={1324--1354}
			}

\bib{DFJP}{article}{
   author={Davis, W. J.},
   author={Figiel, T.},
   author={Johnson, W. B.},
   author={Pe{\l}czy{\'n}ski, A.},
   title={Factoring weakly compact operators},
   journal={J. Funct. Anal.},
   volume={17},
   date={1974},
   number={},
   pages={311--327}
}

%\bibitem{DKLR2017}

\bib{FOSZ}{article}{
				author={Freeman, D.},
				author={Odell, E.},
				author={Schlumprecht, Th.},
				author={Zs\'ak, A.},
				title={Banach spaces of bounded Szlenk index. II},
				journal={Fundam. Math.},
				volume={205},
				date={2009},
				number={2},
				pages={162--177}
			}

\bib{GKL2001}{article}{
				author={Godefroy, G.},
				author={Kalton, N. J.},
				author={Lancien, G.},
				title={Szlenk indices and uniform homeomorphisms},
				journal={Trans. Amer. Math. Soc.},
				volume={353},
				date={2001},
				number={10},
				pages={3895--3918}
			}

\bib{HajekLancien2007}{article}{
   author={H\'ajek, P.},
   author={Lancien, G.},
   title={Various slicing indices on Banach spaces,},
   journal={Med. J. of Math.},
   volume={4},
   date={2007},
   number={},
   pages={179--190}
}

\bib{JohnsonZippin}{article}{
				author={Johnson, W.B.},
				author={Zippin, M.},
				title={Subspaces and quotient spaces of \((\sum G_n)_{l_p}\) and \((\sum G_n)_{c_0}\)},
				journal={Isr. J. Math.},
				volume={17},
				date={1974},
				number={},
				pages={50--55}
			}

\bib{Lancien2006}{article}{
				author={Lancien, G.},
				title={A survey on the Szlenk index and some of its applications},
				journal={RACSAM. Rev. R. Acad. Cienc. Exactas Fís. Nat. Ser. A Mat.},
				volume={100},
				date={2006},
				number={1-2},
				pages={209--235}
			}
			
\bib{OS2002}{article}{
				author={Odell, E.},
				author={Schlumprecht, Th.},
				title={Trees and branches in {Banach} spaces},
				journal={Trans. Am. Math. Soc.},
				volume={354},
				year={2002},
				number={10},
				pages={4085--4108}
			}

\bib{OSZ2007}{article}{
				author={Odell, E.},
				author={Schlumprecht, Th.},
				author={Zs{\'a}k, A},
				title={A new infinite game in {Banach} spaces with applications},
				BookTitle = {Banach spaces and their applications in analysis. Proceedings of the international conference, Miami University, Oxford, OH, USA, May 22--27, 2006. In honor of Nigel Kalton's 60th birthday},
				journal={},
				volume={},
				year={2007},
				number={},
				Publisher = {Berlin: Walter de Gruyter},
				pages={147--182}
			}

\bib{Schechtman1975}{article}{
 Author = {Schechtman, G.},
 Title = {{On Pelczynski's paper 'Universal bases'}},
 journal = {{Isr. J. Math.}},
 Volume = {22},
 Pages = {181--184},
 Year = {1975},
}

\end{biblist}

\end{bibsection}

\end{document}